\numberwithin{equation}{section}
\tikzset{sgplattice/.style={inner sep=1pt,norm/.style={red!50!blue},char/.style={blue!50!black},
  lin/.style={black!50}},cnj/.style={black!50,yshift=-2.5pt,left=-1pt of #1,scale=0.5,fill=white}}
\DeclareFontFamily{U}{mathb}{\hyphenchar\font45}
\DeclareFontShape{U}{mathb}{m}{n}{
      <5> <6> <7> <8> <9> <10> gen * mathb
      <10.95> mathb10 <12> <14.4> <17.28> <20.74> <24.88> mathb12
      }{}
\DeclareSymbolFont{mathb}{U}{mathb}{m}{n}
\DeclareMathSymbol{\righttoleftarrow}{3}{mathb}{"FD}
\theoremstyle{plain}
\newtheorem{prop}{Proposition}[section]
\newtheorem{theo}[prop]{Theorem}
\newtheorem{coro}[prop]{Corollary}
\newtheorem{lemm}[prop]{Lemma}
\theoremstyle{definition}
\newtheorem{rema}[prop]{Remark}
\newtheorem{exam}[prop]{Example}
\newtheorem{probn}{Problem}
\newcommand{\actsfromleft}{\mathrel{\reflectbox{$\righttoleftarrow$}}}
\newcommand{\actsfromright}{\righttoleftarrow}
\def\cB{{\mathcal B}}
\def\cN{{\mathcal N}}
\def\cO{{\mathcal O}}
\def\cT{{\mathcal T}}
\def\cX{{\mathcal X}}
\def\fA{{\mathfrak A}}
\def\fD{{\mathfrak D}}
\def\fS{{\mathfrak S}}
\def\fS{{\mathfrak S}}
\def\bA{{\mathbb A}}
\def\bP{{\mathbb P}}
\def\bZ{{\mathbb Z}}
\def\bC{{\mathbb C}}
\def\rH{{\mathrm H}}
\def\rI{{\mathrm I}}
\def\bF{{\mathbb F}}
\def\Pic{\mathrm{Pic}}
\def\Aut{\mathrm{Aut}}
\def\SL{\mathsf{SL}}
\def\GL{\mathsf{GL}}
\def\PGL{\mathsf{PGL}}
\def\Burn{\mathrm{Burn}}
\def\lim{\mathrm{lim}}
\def\IJ{\mathrm{IJ}}
\def\JJ{\mathrm{J}}
\def\Cr{\mathrm{Cr}}
\begin{document}

\title[Equivariant geometry of cubic threefolds]{Equivariant geometry of singular cubic threefolds}

\author{Ivan Cheltsov}
\address{Department of Mathematics, University of Edinburgh, UK}

\email{I.Cheltsov@ed.ac.uk}

\author{Yuri Tschinkel}
\address{
  Courant Institute,
  251 Mercer Street,
  New York, NY 10012, USA
}

\email{tschinkel@cims.nyu.edu}

\address{Simons Foundation\\
160 Fifth Avenue\\
New York, NY 10010\\
USA}

\author{Zhijia Zhang}

\address{
Courant Institute,
  251 Mercer Street,
  New York, NY 10012, USA
}

\email{zhijia.zhang@cims.nyu.edu}

\date{\today}

\begin{abstract}
We study linearizability of actions of finite groups on singular cubic threefolds, using cohomological tools, intermediate Jacobians, Burnside invariants, and the equivariant Minimal Model Program.  
\end{abstract}

\maketitle

\section{Introduction}

In this paper, we continue our investigations of actions of finite groups on rational threefolds over an algebraically closed field $k$ of characteristic zero, up to equivariant birationality. The main problem is to decide {\em linearizability}, i.e., birationality of the given action to a {\em linear} action on projective space, see, e.g., \cite{CS} for background and references. The linearizability problem is essentially settled in dimension 2 \cite{DI,sari}, but remains largely open in dimension 3. 
Here, we focus on: 

\begin{probn}
Let $X\subset \bP^4$ be a singular rational cubic threefold and let $G$ be a finite 
subgroup of its automorphisms. When is the $G$-action on $X$ linearizable?
\end{probn}

Note that linearizability of a $G$-action for a cubic threefold $X$ is equivalent to projective linearizability, since the action lifts 
 to $\GL_5$ (see Section~\ref{sect:intjac} for a proof,  and \cite{HT-intersect} for a general discussion of these notions). 

Smooth cubic threefolds are not rational, and their 
automorphisms have been classified in \cite[Theorem 1.1]{weiyu}: there are 6 maximal groups
$$
C_3^4\rtimes \fS_5, ((C_3^2\rtimes C_3)\rtimes C_4) \times \fS_3, C_{24}, C_{16}, \mathsf{PSL}_2(\bF_{11}), C_3\times \fS_5.
$$
On the other hand, all singular ones, except cones over smooth cubic curves, are rational. Cubic threefolds with isolated singularities have been classified in \cite{viktorova}; but it is not immediately clear how to identify possible symmetries from that analysis. 

Here, we restrict our attention to nodal cubics, i.e., those with ordinary double points, as this is the most interesting and difficult class of singular cubics, from the perspective of equivariant geometry. In all these cases the automorphism group $\Aut(X)$ is finite, by, e.g., \cite[Theorem 1.1]{All}.   

Note that the existence of a $G$-fixed node yields a straightforward linearization construction: projection from this node gives an equivariant birational map to $\bP^3$, with linear action. Thus, we will be primarily interested in actions not fixing a singular point of $X$. Another such  construction comes from a $G$-stable plane and a disjoint $G$-stable line: 

\begin{lemm}
\label{lemm:line}
Let $X$ be a nodal cubic threefold. Let $G\subseteq \Aut(X)$ be such that it preserves a plane $\Pi\subset X$ and a line $l\subset X$, disjoint from $\Pi$. Then the $G$-action on $X$ is linearizable.
\end{lemm}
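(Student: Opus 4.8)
The plan is to produce a $G$-equivariant birational map from $X$ onto a product of projective spaces carrying the diagonal of two linear actions, and then to linearize that product. Since the $G$-action lifts to $\GL_5=\GL(V)$ with $V=k^5$ and $\bP(V)=\bP^4$, and since $\Pi$ and $l$ are $G$-stable, the underlying linear subspaces $W,U\subseteq V$ (with $\bP(W)=\Pi$, $\dim W=3$, and $\bP(U)=l$, $\dim U=2$) are $G$-subrepresentations. As $\Pi\cap l=\varnothing$ we have $W\cap U=0$, so $V=W\oplus U$ is a $G$-equivariant decomposition. Writing coordinates $x=(x_0,x_1,x_2)$ on $W$ and $y=(x_3,x_4)$ on $U$, the containments $\Pi\subset X$ and $l\subset X$ force the defining cubic to have the bihomogeneous shape $F=F_{2,1}+F_{1,2}$, where $F_{i,j}$ has degree $i$ in $x$ and $j$ in $y$: the pure terms $F_{3,0}$ and $F_{0,3}$ vanish because $F$ restricts to $0$ on $\Pi$ (set $y=0$) and on $l$ (set $x=0$). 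Only the disjointness of a plane and a line in $X$ is used here, not nodality as such.

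Next I would combine the two linear projections from the $G$-stable centers. Projection from $\Pi$ gives a $G$-equivariant map $X\dashrightarrow\bP(V/W)\cong\bP(U)$, $[x:y]\mapsto[y]$, and projection from $l$ gives $X\dashrightarrow\bP(V/U)\cong\bP(W)$, $[x:y]\mapsto[x]$. Together they define
\[
\psi\colon X\dashrightarrow\bP(W)\times\bP(U),\qquad [x:y]\mapsto([x],[y]),
\]
equivariant for the product of the two linear actions. To see that $\psi$ is birational I would restrict $F$ to the line $L_{[x],[y]}$ joining $[x:0]\in\Pi$ to $[0:y]\in l$: along it $F(sx,ty)=st\,(sF_{2,1}(x,y)+tF_{1,2}(x,y))$, so $L_{[x],[y]}$ meets $X$ in the two base points on $\Pi\cup l$ and in the single residual point $[s:t]=[-F_{1,2}(x,y):F_{2,1}(x,y)]$. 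Hence a general fiber of $\psi$ is one reduced point, $\psi$ is birational, and the problem is reduced to linearizing the product $\bP(W)\times\bP(U)$ with its diagonal linear action.

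It remains to linearize $\bP(W)\times\bP(U)$, and this is where I expect the real work. Because $G$ acts faithfully on $X$, hence on $\bP(W)\times\bP(U)$, the kernels $K_W=\ker(G\to\PGL(W))$ and $K_U=\ker(G\to\PGL(U))$ satisfy $K_W\cap K_U=\{e\}$, since an element acting trivially on both factors acts trivially on the product. If $G$ acts faithfully on $\bP(U)$, I would view $\bP(W)\times\bP(U)$ as the projectivization of the $G$-linearized bundle $W\otimes\mathcal O_{\bP(U)}$ and apply the no-name lemma to obtain a $G$-birational equivalence with $\bP(U)\times\bP^2$ (trivial action on $\bP^2$); passing to the $G$-stable chart $\bP(U)\times\bA^2$ and recognizing it as the complement of $\bP(k^2)$ in $\bP(U\oplus k^2)$ then identifies it, via the no-name lemma applied to $\mathcal O_{\bP(U)}(1)\otimes k^2$, with the linear action on $\bP(U\oplus k^2)=\bP^3$. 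The case where $G$ acts faithfully on $\bP(W)$ is symmetric, fibering over $\bP(W)$ and ending at $\bP(W\oplus k)=\bP^3$. The genuine obstacle is the remaining case, where $G$ acts faithfully on neither factor, i.e. $K_W\neq\{e\}\neq K_U$ while $K_W\cap K_U=\{e\}$; then no single projection has a faithful base and the plain no-name lemma does not apply. I would resolve it by decomposing $W$ into $K_U$-isotypic components (permuted by $G$ since $K_U\trianglelefteq G$) and exploiting that $K_U\hookrightarrow\PGL(W)$ and $K_W\hookrightarrow\PGL(U)$ are faithful, either iterating the no-name lemma relative to the faithful quotient $G/K_U$ acting on $\bP(U)$, or directly building a $G$-stable affine chart on which the action is linear — as in the elementary model where $G$ acts by coordinate scalings and the standard chart $\bA^2\times\bA^1$ is $G$-stable with linear action. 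I expect this isotypic bookkeeping, rather than the clean geometric reduction of the first two steps, to be the technical heart.
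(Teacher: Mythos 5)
Your reduction to $\bP(W)\times\bP(U)$ is correct and clean: the equivariant splitting $V=W\oplus U$, the bihomogeneous form $F=F_{2,1}+F_{1,2}$, and the birationality of $\psi$ (with explicit inverse $([x],[y])\mapsto[F_{1,2}(x,y)\,x:-F_{2,1}(x,y)\,y]$) all hold, and your no-name argument does linearize the product whenever $G$ acts faithfully on one of the two factors. But the remaining case $K_W\neq\{e\}\neq K_U$ is a genuine gap, not a deferrable technicality. First, it occurs: already $G=C_2^2$ with one generator acting by a scalar on $W$ (hence trivially on $\Pi$) and the other by a scalar on $U$ (hence trivially on $l$) is of this type, and one can write down invariant nodal cubics of the form $F_{2,1}+F_{1,2}$ for such groups, so the case cannot be discarded. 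Second, neither of your proposed fixes is a proof. The no-name lemma cannot be ``iterated relative to the faithful quotient $G/K_U$'': it requires a generically free action of the \emph{whole} acting group on the base, and in this case $K_U$ acts trivially on the base $\bP(U)$ but nontrivially on the fibers $\bP(W)$ --- exactly the situation the lemma excludes, and one cannot pass to $G/K_U$ because $K_U$ acts nontrivially on the total space. The alternative of exhibiting a $G$-stable affine chart with linear action works for abelian (diagonalizable) actions but fails as stated in general: this case is compatible with $W$ being an \emph{irreducible} $G$-representation (e.g. $G=\fA_4\times C_2$, $W$ the $3$-dimensional irreducible representation twisted by the sign character, $U$ a sum of two characters), and then $\bP(W)$ contains no invariant hyperplane, so no chart of the kind you describe exists. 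Such examples can still be linearized, but only by further arguments (e.g. explicit birational maps built from invariants of the $\fA_4$-action), which is precisely the ``technical heart'' you acknowledge is missing.

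For contrast, the paper's proof bypasses the product entirely and has no case analysis: unprojection from $\Pi$ gives a $G$-equivariant birational map $X\dashrightarrow X_{2,2}\subset\bP^5$ onto a (nodal) complete intersection of two quadrics, the image of $l$ is a $G$-invariant line on $X_{2,2}$, and projection from that line is birational onto $\bP^3$, since a general plane through the line meets $X_{2,2}$ in the line plus a single residual point. This lands on $\bP^3$ with a biregular, hence projectively linear, action, and the Amitsur-group argument of Section~\ref{sect:intjac} upgrades projective linearizability to linearizability. So your first two steps constitute a genuinely different (and correct) geometric reduction, but to complete your route you would have to prove the missing statement about products of projectivized representations --- a real piece of work --- whereas the paper's two projections avoid ever needing it.
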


\begin{proof}
Let $\phi: X\dashrightarrow X_{2,2}$ be the unprojection from $\Pi$; $X_{2,2}\subset \bP^5$ is a (nodal) complete intersection of two quadrics. Then $\phi$ is $G$-equivariant, and $\phi(l)$ is a $G$-invariant line. Taking a projection from this line, we obtain a $G$-equivariant birational map $X\dashrightarrow \bP^3$; see \cite[Proposition 2.2]{CT-quad} for an application of this construction over nonclosed fields. 
\end{proof}

Let $s=s(X)$ be the cardinality of the set $\mathrm{Sing}(X)$ of nodes of $X$.  It is well known that $s\le 10$. 
Moreover, there is a unique cubic threefold $X$ with $s=10$, the Segre cubic, 
treated in \cite{Avilov,CTZ}. In \cite{Avilov}, it is shown that the subgroup $\fA_5\subset \fS_6=\Aut(X)$ that 
leaves invariant a hyperplane section is not linearizable. 
In \cite{CTZ}, we have completed this analysis by proving
that the action of $G\subseteq\fS_6$ is linearizable if and only if:
\begin{itemize}
\item $G$ fixes a singular point of $X$, or
\item $G$ is contained in the subgroup $\fS_5\subset \fS_6$ that does not leave invariant a hyperplane section of $X$,  or
\item $G\simeq C_2^2$, $X$ contains three $G$-invariant planes, and $\mathrm{Sing}(X)$ splits as a union of five $G$-orbits of length $2$.
\end{itemize}
Using this description one can list all subgroups of $\fS_6$ giving rise to linearizable actions --- there are 37 such subgroups up to conjugation (among 56 conjugacy classes of subgroups of $\fS_6$).

In this paper,  we study the cases where 
$$
2\le s(X)\le 9.
$$
To address the linearizability problem for these outstanding cases, we use explicit geometric constructions, as well as the following techniques: 
\begin{itemize}
\item 
cohomological tools \cite{BogPro,KT-Cremona},
\item 
intermediate Jacobians, in the equivariant context \cite{BW}, 
\item 
Burnside invariants and their specialization \cite{BnG},
\item $G$-birational rigidity and $G$-solidity, see, e.g., \cite{CSar}. 
\end{itemize}

To describe our results, we distinguish cases based on linear position properties of nodes, following \cite{Finkelnbergcubic}.  
According to \cite{Finkelnbergcubic}, 
there are 15 configurations, labeled (J1), ..., (J15),
with (J1), ..., (J5) corresponding to 1, ..., 5 nodes in general linear position, 
and (J15) corresponding to the Segre cubic threefold. 
The relevant invariants are:
\begin{itemize}
\item $s$, the number of nodes of $X$,
\item $d=\mathrm{rk}\, \mathrm{Cl}(X)-1$, the defect of $X$, which equals the number of dependent linear conditions imposed on $\rH^0(X,\cO_X(1))$ by the nodes,  
and
\item $p$ - the number of planes $\Pi\subset X$. 
\end{itemize}
We list all possibilities for the triples $(s,d,p)$, and describe our results in each of the cases:

\begin{itemize}
    \item $s=2, d=0, p=0$: We prove that the $G$-action is linearizable if and only if $G$ fixes each of the two nodes, and classify actions of cyclic groups not fixing any node,
    see Section~\ref{sect:two}. 
    \item $s=3, d=0, p=0$: We conjecture that the $G$-action is linearizable if and only if it fixes a node, and classify all automorphism groups not fixing any node. We provide examples of nonlinearizable actions of $G=C_3^2$, see Section~\ref{three}. 
    \item $s=4$:
    \begin{itemize}
        \item $d=0, p=0$: There is an equivariant birational map to a smooth divisor of degree $(1,1,1,1)$ in $(\bP^1)^4$. Following considerations over nonclosed fields in \cite[Conjecture 1.3]{KuznetsovProkhorov2022}, we conjecture that the $G$-action is not linearizable if it is transitive on the nodes. We provide examples of nonlinearizable actions of $G=C_2^2$ confirming this conjecture. We classify all automorphism groups not fixing any node.
        \item $d=1, p=1$: An action is nonlinearizable if and only if it does not fix a node and $X$ does not  contain $G$-stable lines disjoint from the unique plane in $X$, see Section~\ref{sect:four}. 
    \end{itemize}
    \item $s=5$:
    \begin{itemize}
        \item $d=0,p=0$: All actions are linearizable, except for actions of $\fA_5$ and $\fS_5$, which are birational to standard actions on a smooth quadric threefold. The $\fS_5$-action is not linearizable \cite{VAZ}; we conjecture that the $\fA_5$-action is also not linearizable, see Section~\ref{sect:5}.  
        \item $d=1,p=1$: All actions are linearizable, as there is
        a unique node outside the plane and fixed by the action  \cite{Finkelnbergcubic}.
    \end{itemize}
    \item $s=6$:
    \begin{itemize}
        \item $d=1,p=0$: We classify all automorphism groups not fixing any node, establish a sufficient condition for nonlinearizability, and apply it to provide examples of nonlinearizable actions of $G=C_2$.
        \item $d=1,p=1$: All actions are linearizable by Lemma~\ref{lemm:line}. 
        \item $d=2,p=3$: We classify all actions and solve the linearizability problem for most of them, see Section~\ref{sect:six}.
    \end{itemize}
    \item $s=7$: 
    \begin{itemize}
        \item $d=2, p=2$: All actions are linearizable, since 
        each of the two planes contains 4 nodes, and exactly one of the nodes is on both planes,
        thus preserved by the action \cite{Finkelnbergcubic}.   
        \item $d=2, p=3$: All actions are linearizable, since there is a unique node not contained in any plane in $X$, thus fixed by the action \cite{Finkelnbergcubic}. 
    \end{itemize}
    \item $s=8, d=3, p=5$: We classify automorphism groups not fixing any node, and solve the linearizability problem in Section~\ref{sect:8}. 
    \item $s=9, d=4, p=9$: Linearizability problem is solved, except for specific actions of $\fS_3$ and $\fD_6$, which are birational to actions on a smooth quadric, see
    Section~\ref{sect:nine}.
\end{itemize}

We conclude the introduction by summarizing the cases for which the linearizability problem remains open. 
\begin{itemize}
    \item $s=3, d=0, p=0$: Actions in Proposition~\ref{prop:3nodclassification} not containing the $C_3^2$ in Example~\ref{exam:3nodabelian} and not fixing any node.
    \item $s=4, d=0, p=0$: Actions  in Theorem~\ref{theo:Aut-4-nodes} not containing the $C_2^2$ in Example~\ref{exam:4nodBurn} and not fixing any node.
    \item $s=5, d=0, p=0$: A unique $\fA_5$-action described in Section~\ref{sect:5}, which is equivariantly birational to the $\fA_5$-action on a smooth quadric~\eqref{eq:Quadric5}.
    \item $s=6, d=1, p=0$: Actions  in Proposition~\ref{prop:6nodegeneralprop} not fixing any node and not containing an involution not fixing any nodes.
    \item$s=6, d=2, p=3$: Actions in Proposition~\ref{prop:Aut6nod3pl} not containing the $C_2^2$ in Lemma~\ref{lemm:63Burn}, not containing the $C_2^2$ or $\fS_3$ in Remark~\ref{rema:63Burn}, not contained in the $C_2^2$ in Lemma~\ref{lemm:63linear}, and not fixing any node.
    \item $s=9, d=4, p=9$: The actions of $\fD_6$, $\fS_3$ and $\fS_3'$ specified in \eqref{eq:9nodalremain}; these are also equivariantly birational to actions on a smooth quadric \eqref{eqn:9nodquadric}.
\end{itemize}
In many of these cases, equivariant  specialization of \cite{BnG}, applied here in the geometric context for the first time, shows nonlinearizability of the actions for a very general member of the family, see Propositions~\ref{prop:spec3-9},~\ref{prop:special4to8},~\ref{prop:special4to10}, Lemmas~\ref{lemm:spec63pl-9nod}, ~\ref{lemm:spec63pl-10nod} and Remark~\ref{rema:C2C4spec6nod}.

\

\noindent
{\bf Acknowledgments:} 
The first author was partially supported by the Leverhulme Trust grant RPG-2021-229.
The second author was partially supported by NSF grant 2301983. We are grateful to Anton Mellit for his help with computations in Proposition~\ref{prop:2-nodes} and to Olivier Wittenberg for his comments.

\section{Obstructions to linearizability}
\label{sect:intjac}
Among available obstruction theories to  linearizability are:
\begin{itemize}
\item Existence of fixed points 
upon restriction to abelian subgroups,
    \item Group cohomology,  
    \item Intermediate Jacobians, and their equivariant versions,
    \item Burnside invariants,
    \item Specialization of birational types, 
    \item Birational rigidity. 
\end{itemize}
We briefly review relevant results and constructions.   

\subsection*{Fixed points by abelian subgroups}
Recall that existence of fixed points for actions of abelian groups is an equivariant birational invariant, see \cite{reichsteinyoussinessential}. Precisely, let $G$ be a finite abelian group acting generically freely on a smooth projective variety $V$. Assume that  there exists a $G$-equivariant birational map $W\dashrightarrow V$ from a smooth $G$-variety $W$. Then 
$$
W^G\ne \emptyset \quad\iff\quad V^G\ne\emptyset.
$$
Linear actions of abelian groups on projective spaces always have fixed points, and thus:

\begin{lemm}
\label{lemm:fixedptabelian}
Let $V$ be a smooth projective variety with generically free and linearizable action by a finite group $G$. Then
$V^H\ne\emptyset$ for all abelian subgroups $H\subseteq G$.
\end{lemm}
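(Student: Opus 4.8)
The plan is to reduce the claim to the two facts already recorded above: the Reichstein--Youssin birational invariance of the existence of fixed points for abelian group actions, and the existence of fixed points for a linear action of an abelian group on a projective space.

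Fix an abelian subgroup $H\subseteq G$. First I would observe that the restricted $H$-action on $V$ is again generically free: a very general point of $V$ has trivial $G$-stabilizer, since the $G$-action is generically free, and hence a fortiori trivial $H$-stabilizer. Thus $H$ acts generically freely on the smooth projective variety $V$, which is exactly the hypothesis needed to invoke the invariance statement with the group $H$.

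Next I would unwind linearizability. By definition there is a $G$-equivariant birational map $V\dashrightarrow \bP^n$, where $G$ acts on $\bP^n$ through a linear representation $G\to\GL_{n+1}(k)$. Restricting the group to $H$, the same map is $H$-equivariant, so its inverse exhibits a smooth projective $H$-variety $W:=\bP^n$ together with an $H$-equivariant birational map $W\dashrightarrow V$, the $H$-action on $W$ being linear. Since $H$ is abelian and $k$ is algebraically closed of characteristic zero, the $(n+1)$-dimensional representation of $H$ splits as a direct sum of characters; choosing an eigenbasis, each coordinate point of $\bP^n$ spans an $H$-stable line and is therefore $H$-fixed, so $W^H\ne\emptyset$.

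Finally I would apply the displayed equivalence with the group $H$ and the smooth $H$-variety $W\dashrightarrow V$ to conclude that $V^H\ne\emptyset$; as $H$ was an arbitrary abelian subgroup, the lemma follows. I do not expect a genuine obstacle here. The only points requiring care are verifying that generic freeness survives restriction to $H$, so that the invariance statement genuinely applies, and that \emph{linear action} is interpreted so that the diagonalizability of abelian representations over $k$ produces the fixed coordinate points on $\bP^n$.
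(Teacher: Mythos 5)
Your proposal is correct and follows essentially the same route as the paper: the paper derives the lemma directly from the Reichstein--Youssin birational invariance of fixed-point existence for generically free abelian actions, combined with the observation that linear actions of abelian groups on projective space always have fixed points (by diagonalization). Your additional checks — that generic freeness survives restriction to $H$, and that the eigenbasis produces fixed coordinate points — are exactly the details the paper leaves implicit.
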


In particular, let $X$ be a nodal cubic threefold and $G\subseteq\Aut(X)$. The $G$-action on $X$ is linearizable if and only if it is projectively linearizable. 
To see this, one can apply the argument in \cite{lifting}. Alternatively, we provide a direct proof below. 

First, we show that the $G$-action is induced from an action of the ambient $\bP^4$, i.e., $G\subset\PGL_5.$ Indeed, by Lefschetz hyperplane theorem, the Picard group $\Pic(X)=\bZ$ is generated by a general hyperplane section of $X$. The induced $G$-action on $\Pic(X)$ is trivial, sending a hyperplane to another hyperplane in $\bP^4$. This implies that the $G$-action on $X$ lifts to $\bP^4$. 

   It follows that the $G$-action naturally lifts to $\cO_{X}(-5)$, the restriction of the canonical bundle of $\bP^4$ to $X.$ Similarly, the $G$-action lifts to $\cO_X(-2)$, the canonical bundle of $X$. Since $\cO_X(-5)$ and $\cO_X(-2)$ generate $\cO_X(1)$, we know that the $G$-action lifts to $\cO_X(1)$, and thus to $\rH^0(X,\cO_{X}(1))$ and $\rH^0(\bP^4,\cO_{\bP^4}(1))$. Therefore the $G$-action lifts to $\GL_5.$

   This also shows that the Amitsur group $\mathrm{Am}(X,G)$ (see \cite[Section 6]{blancfinite}) is trivial. If the $G$-action is projectively linearizable, i.e., equivariantly birational to a $G$-action on $\bP^3$, then $\mathrm{Am}(\bP^3,G)=0$ since the Amitsur group is an equivariant birational invariant. This implies that the $G$-action on $\bP^3$ is linear, namely, it lifts to $\GL_4$. So the notions of linearizable and projectively linearizable actions on $X$ are equivalent.

 Thus, if an abelian subgroup $H\subseteq G$ does not fix a point in the standard desingularization $\widetilde{X}$ of $X$ (the blowup of the nodes), then there exists no $G$-equivariant birational map $X\dasharrow\mathbb{P}^3$. We found two applications of this obstruction, see Example~\ref{exam:3nodabelian} and Section~\ref{sect:nine}.

\subsection*{Cohomology}
Let $X$ be a nodal cubic threefold, 
$\tilde{X}\to X$ its standard desingularization, 
and $G\subseteq \Aut(X)$. 
Here we consider the induced $G$-actions on the Picard group
$\Pic(\tilde{X})$ and the class group $\mathrm{Cl}(X)$; we often identify divisors and their classes. 

A well-studied obstruction to (stable) linearizability is the failure of $\Pic(\tilde{X})$ to be a {\em stably permutation module}, we call this the {\bf (SP)}-obstruction. 
In turn, this is implied by the 
nonvanishing of 
$$
\rH^1(G',\Pic(\tilde{X})), \quad 
\text{ or } \quad \rH^1(G',\Pic(\tilde{X})^\vee),  \quad 
\text{for some $G'\subseteq G$}.
$$
We call this the {\bf (H1)}-obstruction, see \cite[Section 2]{CTZ}.

\begin{prop}
\label{prop:coho}
When $s(X)\le 7$ and $s(X)\ne 6$, or when $s=6$ and the nodes are not in general linear position,
$\Pic(\tilde{X})$ is a permutation module. 
\end{prop}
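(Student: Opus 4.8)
The plan is to produce, for every configuration covered by the statement, an explicit $\bZ$-basis of $\Pic(\tilde{X})$ that is permuted by $G$; because $G\subseteq\Aut(X)$ fixes the hyperplane class $H$, permutes the nodes $p_1,\dots,p_s$, and permutes the planes contained in $X$, the geometric classes below are automatically $G$-stable, so the genuine content is integrality. First I would record the rank. Writing $\pi\colon\tilde{X}\to X$ for the blowup and $E_i=\pi^{-1}(p_i)\cong\bP^1\times\bP^1$ for the exceptional divisors, excision for the smooth variety $\tilde{X}$ yields a $G$-equivariant exact sequence
$$
\bigoplus_{i=1}^{s}\bZ[E_i]\longrightarrow \Pic(\tilde{X})\longrightarrow \Pic(X_{\mathrm{sm}})=\mathrm{Cl}(X)\longrightarrow 0 .
$$
The $[E_i]$ are independent, as $E_i\cdot\ell_j=-\delta_{ij}$ for a ruling $\ell_j\subset E_j$ (the exceptional quadric has normal bundle $\cO(-1,-1)$), so the left-hand map is injective and $\mathrm{rk}\,\Pic(\tilde{X})=s+\mathrm{rk}\,\mathrm{Cl}(X)=1+s+d$.

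Since $\bigoplus_i\bZ[E_i]$ is a permutation module, it suffices to lift a $G$-stable permutation basis of $\mathrm{Cl}(X)$ to $\Pic(\tilde{X})$ through $G$-permuted classes, a direct sum of permutation modules being again one. When $d=0$ (the cases with $s\le 5$ and nodes in general linear position) one has $\mathrm{Cl}(X)=\bZ H$ with trivial $G$-action, the sequence splits via $H=\pi^{*}\cO_X(1)$, and $\{H,E_1,\dots,E_s\}$ is the desired $G$-permuted basis. When $d>0$ I would feed in the planes $\Pi_1,\dots,\Pi_p\subset X$ provided by Finkelnberg's classification \cite{Finkelnbergcubic}, taking their strict transforms $\tilde{\Pi}_j\subset\tilde{X}$ as lifts of the extra generators of $\mathrm{Cl}(X)$. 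For the configurations with $p=d$ --- namely $s=4,5$ with $(d,p)=(1,1)$, $s=6$ with $(d,p)=(1,1)$, and $s=7$ with $(d,p)=(2,2)$ --- the candidate basis is $\{H,E_1,\dots,E_s,\tilde{\Pi}_1,\dots,\tilde{\Pi}_p\}$, of the correct cardinality $1+s+d$ and $G$-stable, since $G$ fixes $H$ and permutes the $E_i$ and the $\tilde{\Pi}_j$. For the configurations with $p=d+1$ --- both $s=6$ and $s=7$ with $(d,p)=(2,3)$ --- the four classes $H,\tilde{\Pi}_1,\tilde{\Pi}_2,\tilde{\Pi}_3$ become dependent modulo $\langle E_1,\dots,E_s\rangle$, since $\mathrm{Cl}(X)$ has rank $1+d=3$; I would check that the resulting relation expresses $H$ as a $\bZ$-combination of the $\tilde{\Pi}_j$ and the $E_i$ (geometrically, the three planes form a degenerate hyperplane section of $X$), so that $H$ may be discarded and $\{E_1,\dots,E_s,\tilde{\Pi}_1,\tilde{\Pi}_2,\tilde{\Pi}_3\}$ serves as the $G$-stable candidate basis.

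It then remains to verify that each displayed $G$-stable set is an honest $\bZ$-basis of $\Pic(\tilde{X})$, and this is where I expect the real difficulty to lie. The $\bQ$-independence and the count $1+s+d$ are automatic, so the issue is purely integral: one must exclude finite-index pathologies arising from the relations $H\sim\tilde{\Pi}_j+\tilde{Q}_j$ (a hyperplane through a plane splits off a residual quadric) and, in the $(d,p)=(2,3)$ cases, from the three-planes relation, taking into account the precise incidences of the $\Pi_j$ with the nodes recorded in \cite{Finkelnbergcubic}. I would settle this by computing the relevant intersection pairings on $\tilde{X}$ --- equivalently, by writing the standard generators of $\mathrm{Cl}(X)$ in terms of the $\tilde{\Pi}_j$ and $E_i$ --- and checking that the change-of-basis matrix has determinant $\pm1$.

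The one excluded case, $s=6$ with the nodes in general linear position $(d,p)=(1,0)$, falls outside this scheme precisely because there is no plane available to realize the single defect generator as a permuted lift; that generator is interchanged with its flopped companion and carries nontrivial $\rH^1$, consistent with its being the unique configuration in this range where $\Pic(\tilde{X})$ fails to be a permutation module.
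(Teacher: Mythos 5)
Your proposal follows essentially the same route as the paper's proof: for each Finkelnberg configuration it produces the same permutation basis of $\Pic(\tilde{X})$ --- the exceptional divisors together with the hyperplane class and/or the strict transforms of the planes, with $H$ discarded in the $(d,p)=(2,3)$ cases via the relation $H\sim\Pi_1+\Pi_2+\Pi_3$ (plus exceptional contributions). The only difference is one of emphasis: the paper simply asserts that $\mathrm{Cl}(X)$ is integrally free on these geometric generators, relying on the classification in \cite{Finkelnbergcubic}, whereas you defer that same integrality point to an intersection-theoretic change-of-basis check; the case division and the bases themselves are identical.
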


\begin{proof}
We use labels for configurations of nodes and planes from \cite{Finkelnbergcubic}. 

\noindent

\begin{itemize} 
\item $s=1, \ldots, 5$, $p=0$; (J1-J5):

\noindent 
 $\mathrm{Cl}(X)=\bZ$ is freely generated by the hyperplane section, with trivial $G$-action, and $\Pic(\tilde X)$ is freely generated by the exceptional divisors of the blowup $\tilde X\to X$ and the pullback to $\tilde X$ of the basis of $\mathrm{Cl}(X)$. The $G$-action permutes the exceptional divisors. So $\Pic(\tilde X)$ is a permutation module.

\item $s=4,5,6$, $p=1$; (J6-J8):

\noindent  
$\mathrm{Cl}(X)=\bZ^2$ is freely generated by the hyperplane section and the unique, necessarily $G$-stable, plane in $X$, with trivial $G$-action on their classes. 
Thus $\Pic(\tilde X)$ is a permutation module.

\item $s=7$, $p=2$; (J10):

\noindent
$\mathrm{Cl}(X)=\bZ^3$ is freely generated by the hyperplane section and the classes of the
two planes in $X$, with $G$ possibly permuting these two classes. Thus $\Pic(\tilde X)$ is a permutation module.

\item $s=6,7$, $p=3$; (J11-J12):

\noindent
$\mathrm{Cl}(X)=\bZ^3$ is freely generated by the classes of the three planes in $X$ which form a tetrahedron (with one face missing), with $G$ possibly permuting these classes. So $\Pic(\tilde X)$ is a permutation module. 
\end{itemize}
\end{proof}

The remaining cases are more involved; we handle these in subsequent sections.

\subsection*{Intermediate Jacobians}

Applications of intermediate Jacobians to rationality problems (over $k=\bC$) go back to the seminal work of Clemens-Griffiths \cite{CG}: if a smooth threefold $X$ is rational then its intermediate Jacobian $\IJ(X)$ is a product of Jacobians of curves. Refinements of this, taking into account group actions, have appeared in, e.g.,  \cite{Beauville2012}; an arithmetic analog of these arguments has been developed in \cite{BW}. In particular, intermediate Jacobians exist over arbitrary fields, see \cite{ACV,BW-tor}. 

The key point is that, geometrically, $\IJ(X)$ could be a product of Jacobians of curves, but this does not necessarily hold equivariantly, respectively, over a nonclosed base field. 
This idea is implemented in, e.g., \cite[Theorem 1.1]{BW}. Pursuing the analogy, we have:

\begin{theo}
\label{theo:IJ}
Let $X$ be a smooth projective rationally connected threefold over an algebraically closed field such that its intermediate Jacobian $\IJ(X)$ is isomorphic to the Jacobian of a smooth nonhyperelliptic curve $C$ of genus $g\ge 3$ as a principally polarized abelian variety.
Suppose that $\mathrm{Aut}(X)$ contains an involution $\tau$ acting 
on $\IJ(X)$ by multiplication by $(-1)$.
Then $X$ is not $\langle \tau\rangle$-equivariantly birational to any smooth projective variety with trivial intermediate Jacobian.
\end{theo}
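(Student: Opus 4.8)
The plan is to use the Clemens--Griffiths criterion in its equivariant/refined form together with the Torelli theorem to derive a contradiction. Suppose, for contradiction, that $X$ is $\langle\tau\rangle$-equivariantly birational to a smooth projective variety $Y$ with trivial intermediate Jacobian $\IJ(Y)=0$. The key input is that the intermediate Jacobian, as a principally polarized abelian variety (ppav) equipped with the induced action of $\Aut(X)$, is an equivariant birational invariant of smooth projective rationally connected threefolds; more precisely, under a $\langle\tau\rangle$-equivariant birational map the difference of the two intermediate Jacobians (with their induced $\tau$-actions) must be expressible as a product of Jacobians of curves carrying the natural $\tau$-action arising from the blowup formula. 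This is exactly the equivariant refinement of Clemens--Griffiths recorded in, e.g., \cite{Beauville2012} and its arithmetic counterpart \cite{BW}.

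First I would recall the behaviour of $\IJ$ under a blowup, since any birational map between smooth projective threefolds factors (by the weak factorization theorem) into a sequence of blowups and blowdowns along smooth centers, which can be taken $\langle\tau\rangle$-equivariantly. Blowing up a point leaves $\IJ$ unchanged, while blowing up a smooth curve $C'$ adds a factor $\JJ(C')$ to the intermediate Jacobian as a ppav. Tracking the $\tau$-action, the centers come in $\tau$-orbits, and the induced action on each added Jacobian factor $\JJ(C')$ is by a genuine automorphism of the ppav induced by an automorphism of the curve $C'$ (possibly permuting factors within an orbit). Consequently, if $X \dashrightarrow Y$ is $\langle\tau\rangle$-equivariant and $\IJ(Y)=0$, then as ppav with $\tau$-action there is an isomorphism
\begin{equation}
(\IJ(X),\tau) \;\simeq\; \prod_i (\JJ(C_i), \sigma_i),
\end{equation}
where each $C_i$ is a smooth projective curve and each $\sigma_i$ is induced by an automorphism of $C_i$ (with the action possibly permuting isomorphic factors in a $\tau$-orbit).

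Next I would bring in the two structural hypotheses. By assumption $\IJ(X)\simeq \JJ(C)$ as a ppav for a smooth nonhyperelliptic curve $C$ of genus $g\ge 3$, and $\tau$ acts on $\IJ(X)$ as $-1$. Combining with the displayed isomorphism, we get $(\JJ(C),-1)\simeq \prod_i(\JJ(C_i),\sigma_i)$. Since $C$ is nonhyperelliptic of genus $\ge 3$, the Jacobian $\JJ(C)$ is an \emph{indecomposable} ppav (its theta divisor is irreducible), so by uniqueness of decomposition of ppav into indecomposables the product must consist of a single factor, giving $\JJ(C)\simeq\JJ(C')$ for one curve $C'$ with $\sigma=\sigma'$ corresponding to $-1$. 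By the Torelli theorem, $C'\simeq C$, and the automorphism $\sigma$ of $\JJ(C)$ inducing $-1$ must come from an automorphism of the curve $C$. The decisive point is then that $-1$ on $\JJ(C)$ is induced by a curve automorphism if and only if $C$ is hyperelliptic: the only automorphism of a smooth projective curve inducing $\pm 1$ via the Torelli map with the $-1$ sign is the hyperelliptic involution (up to the standard translation/sign ambiguity in how $\Aut(C)$ maps to $\Aut(\JJ(C))$). Since $C$ is assumed nonhyperelliptic, this is a contradiction, completing the proof.

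The hard part will be making the ``$-1$ is not induced by a curve automorphism for nonhyperelliptic $C$'' step precise, including the correct bookkeeping of the sign ambiguity in the map $\Aut(C)\to\Aut(\JJ(C),\Theta)$ (which sends an automorphism to $\pm$ its induced action, the two signs differing by $-1$), and in handling the subtlety that a $\tau$-orbit of blowup centers could a priori contribute a permutation action rather than a diagonal one. I would address the latter by noting that on $\IJ(X)\simeq\JJ(C)$ the action is scalar $-1$, hence cannot be a nontrivial permutation of several factors unless those factors are identified in a way still forcing indecomposability to collapse them to one; and I would address the former by invoking the classical fact (see the Torelli literature, e.g. as used in \cite{Beauville2012}) that $-\mathrm{id}$ on a ppav lies in the image of $\Aut(C)$ precisely when $C$ admits the hyperelliptic involution. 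This is the genuinely geometric obstruction and the crux of the argument.
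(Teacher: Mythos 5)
Your proposal is correct and follows essentially the same route as the paper's proof: equivariant weak factorization plus the blowup formula for the intermediate Jacobian reduce everything to the key fact that, for a nonhyperelliptic curve $C$ of genus $\ge 3$, one has $\Aut(\JJ(C))\simeq\Aut(C)\times C_2$ as automorphisms of the principally polarized abelian variety (the paper cites Theorem 3 of the appendix to \cite{LauterSerre} for exactly this), so an involution acting as $-1$ cannot be induced by the geometry of the blown-up curves. The paper merely writes out the single-blowup case explicitly and then invokes equivariant weak factorization for the general case, which is precisely the bookkeeping you carry out.
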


\begin{proof}
Suppose first that there exists a $\langle \tau\rangle$-equivariant blowup 
$\pi\colon X\to Y$ of a nonhyperelliptic curve $C\subset Y$, 
where $Y$ is a smooth threefold with trivial intermediate Jacobian. 
Since $C$ is not hyperelliptic, it follows from Theorem 3 in \cite[Appendix]{LauterSerre} that
\begin{equation}
    \label{eqn:second}
\mathrm{Aut}\big(\IJ(X)\big)\simeq\mathrm{Aut}(C)\times C_2,
\end{equation}
where the second factor corresponds to the action of multiplication by $(-1)$. 
If $C$ is pointwise fixed by $\tau$, then $\tau$ acts trivially on $\IJ(X)$. If $\tau$ acts faithfully on $C$, then its action on $\IJ(X)=\JJ(C)$ is induced by the action on $C$; thus,
$\tau$ cannot project nontrivially to the second factor in \eqref{eqn:second}. 

The general case is treated similarly, using equivariant weak factorization. 
\end{proof}

\begin{exam}
Consider the conic bundle
$$
x_1x_2 = f(y_1,y_2,y_3)\subset\mathbb{A}^2\times\bP^2,  
$$
where $f$ is a form of degree $\geqslant 4$
defining a smooth curve in~$\bP^2$, and $C_2$-action via permutation on $x_1,x_2$. Then this action is not linearizable by Theorem~\ref{theo:IJ}, see the proof of Theorem~\ref{theo:2-nodal}. 
\end{exam}

\begin{rema}
\label{rema:IJ}
In the assumptions and notation of Theorem~\ref{theo:IJ}, 
suppose that there exists a $G$-equivariant birational map $X\dasharrow\mathbb{P}^3$,
for some subgroup $G\subseteq\mathrm{Aut}(X)$.
From the isomorphism \eqref{eqn:second}, we deduce that 
the $G$-action on $\IJ(X)$ gives rise to a homomorphism 
$$
\nu\colon G\to\Aut(\IJ(X))=\mathrm{Aut}(C)\times C_2, 
$$
The projection of $\nu(G)$ to the $C_2$-factor must be trivial, cf. the proof of \cite[Proposition~3.2]{BW}.
\end{rema}

\subsection*{Burnside obstructions}

It is well-known that the classification of involutions $\tau\in \Cr_2$, the plane Cremona group, is based on the geometry of $\tau$-fixed loci $F(\tau)$, see, e.g.,  \cite{baylebeauville}. The 
different cases are characterized by 
geometric properties of a (necessarily unique) curve $C$ of genus $\ge 1$ in $F(\tau)$, primarily by whether or not this curve is hyperelliptic. A more refined birational invariant of actions of general cyclic groups on rational surfaces, the {\em normalized fixed curve with action}, 
appeared in \cite{dF}
and \cite{Blanc}; the invariant takes into account the stabilizer of the fixed curve, as well as the residual action on it. 

These invariants are special cases of the Burnside formalism of \cite{BnG}, which applies to actions of arbitrary finite groups and 
takes into account {\em all} strata with nontrivial generic stabilizers. 
We will use a simplified version, explained in \cite[Section 4]{CTZ}. 
It is based on the notion of {\em incompressible divisorial symbols}, which should be viewed as analogs of higher-genus curves in the classification of involutions on rational surfaces.
A sample result in our context is the following:

\begin{prop}
\label{prop:burnform}
Let $X$ be a nodal cubic threefold, with a regular action of $G$, and  assume that there is an element  $\tau \in G$ such that 
\begin{itemize}
    \item[(1)] the $\tau$-fixed locus contains a cubic surface $S\subset X$,
    \item[(2)] the subgroup $Y\subseteq G$ preserving $S$ acts generically nontrivially on $S$ and contains an element fixing a curve of genus $\ge 1$. 
\end{itemize}
Then the $G$-action on $X$ is not linearizable. 
\end{prop}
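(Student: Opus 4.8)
The plan is to produce a class in the equivariant Burnside group $\Burn_3(G)$ that distinguishes the $G$-action on $X$ from every linear action, using the divisorial symbol formalism of \cite{BnG} in the simplified form of \cite[Section~4]{CTZ}. First I would pin down the geometry of $\tau$. Since $G\subseteq\PGL_5$ by the lifting argument of Section~\ref{sect:intjac}, and the $\tau$-fixed locus contains a cubic surface, that surface is necessarily a hyperplane section $S=X\cap H$ with $\tau$ fixing $H$ pointwise; thus $\tau$ acts on $\bP^4$ as a pseudo-reflection, fixing $H$ and scaling a complementary line by a nontrivial root of unity. Consequently $S$ is the \emph{unique} divisorial component of the $\tau$-fixed locus on the standard desingularization $\tilde X$, the generic stabilizer of $S$ is the cyclic group $\symb{\tau}$, and $\tau$ acts on the conormal direction of $S$ by a single nontrivial character $\beta$.

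Next I would read off the divisorial symbol attached to $S$. Writing $[X,G]\in\Burn_3(G)$ for the class of the action, the codimension-one stratum $S$ with generic stabilizer $\symb{\tau}$ contributes a divisorial symbol recording the triple: the cyclic group $\symb{\tau}$, the residual action of $Y/\symb{\tau}$ on the function field $k(S)$, and the normal character $\beta$. Because cubic surfaces with at worst du Val singularities are rational, $S$ is a rational surface, so this residual datum is precisely a class for the rational surface $S$ equipped with its $Y/\symb{\tau}$-action, of the kind governed by the classical surface theory of involutions and cyclic actions in \cite{baylebeauville,Blanc}.

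The heart of the argument, and where hypothesis (2) enters, is to show that this divisorial symbol is \emph{incompressible} in the sense of \cite[Section~4]{CTZ}. By (2) the quotient $Y/\symb{\tau}$ acts generically nontrivially on $S$ and some element fixes a curve $C\subset S$ of genus $\ge 1$; by the classification of cyclic actions on rational surfaces (\cite{baylebeauville,Blanc}), the presence of such a higher-genus fixed curve is a nontrivial equivariant birational invariant — the normalized fixed curve with action is nonrational. I would leverage this to conclude that the symbol of $S$ survives all the relations defining $\Burn_3(G)$ and is not canceled by the contribution of any other stratum, there being no further divisorial fixed component of $\tau$ to provide a competing symbol.

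Finally I would compare with a linear model. If the $G$-action were linearizable we would have $[X,G]=[\bP^3,G]$ for a linear action on $\bP^3$. But every divisorial stratum of a linear action is a linear subspace carrying a linear residual action on a projective space, whose fixed curves are all rational; hence no incompressible divisorial symbol carrying a genus $\ge 1$ fixed curve can occur on the $\bP^3$ side. The two classes therefore disagree, and the action is not linearizable. The step I expect to be the main obstacle is incompressibility: one must verify, against the blow-up, blow-down and conjugation relations of \cite{BnG}, that the symbol of $S$ cannot be absorbed, and check that hypothesis (2) — generic nontriviality of $Y$ on $S$ together with a fixed curve of genus $\ge 1$ — translates exactly into incompressibility of the threefold divisorial symbol at the surface level.
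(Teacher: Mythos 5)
Your plan follows the same architecture as the paper's proof: extract the divisorial symbol $(\langle\tau\rangle,\, Y/\langle\tau\rangle \actsfromleft k(S),\, (\beta))$, show it is incompressible, and conclude that such a symbol cannot occur for a linear action. Your preliminary observations (that $\tau$ lifts to a pseudo-reflection, so $S$ is a hyperplane section and the unique fixed divisor of $\tau$) are correct but not needed. The genuine gap sits precisely at the step you yourself flag as the main obstacle, and what you propose does not close it. The paper converts hypothesis (2) into a cohomological statement: by Bogomolov--Prokhorov \cite{BogPro}, an element of the residual group fixing pointwise a curve of genus $\ge 1$ on the rational surface $S$ forces $\rH^1(Y/\langle\tau\rangle, \Pic(S)) \ne 0$, and nonvanishing of this group is exactly the incompressibility criterion of \cite[Section 4]{CTZ}; then \cite[Corollary 6.1]{TYZ-3} states that incompressible symbols do not appear in classes of linear actions. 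Your substitute invariant --- the normalized fixed curve with action of \cite{baylebeauville,Blanc} --- is indeed a birational invariant of the two-dimensional residual action, but it is not, in the cited machinery, a criterion for incompressibility of a threefold divisorial symbol; turning it into one would amount to redoing the analysis of the blow-up relations of \cite{BnG}, which you defer rather than carry out. The missing idea is this cohomological bridge from the genus~$\ge 1$ fixed curve to $\rH^1 \ne 0$.

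Your closing comparison with linear actions also rests on a false reduction. Equality $[X \actsfromright G] = [\bP^3 \actsfromright G]$ in $\Burn_{3}(G)$ does not allow you to match symbols on the two sides term by term: classes are sums of symbols modulo blow-up relations, so individual symbols can be created, cancelled, or rewritten. In particular it is not enough to observe that divisorial strata of a linear action are linear subspaces --- a claim that is anyway false on the regularized model used to compute the class, whose strata include exceptional divisors over invariant (not necessarily linear) centers. The only symbols guaranteed to persist under equality of classes are the incompressible ones, which returns you to the unproven incompressibility step. These two points are exactly what the paper's citations of \cite{BogPro}, \cite[Section 4]{CTZ}, and \cite[Corollary 6.1]{TYZ-3} supply; without them, or proofs of adequate substitutes, your argument does not go through.
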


\begin{proof}
Let $H=\langle \tau\rangle$; 
the action produces the symbol
$$
(H, Y/H\actsfromleft k(S), (b)),
$$
By \cite{BogPro}, $\rH^1(Y/H, \Pic(S))\neq 0$, which implies that the symbol is {\em incompressible}, see \cite[Section 4]{CTZ}. Such symbols cannot appear for linear actions, see \cite[Corollary 6.1]{TYZ-3}.
\end{proof}

\begin{exam}
    Let $X\subset \bP^4$ be a 2-nodal cubic given by
    $$
    x_1x_2x_3+x_1(x_4^2+x_5^2)+x_2(x_4^2-x_5^2)+x_3^3=0,
    $$
    with $G\simeq C_4$-action generated by
   $$
\iota : (x_1,x_2,x_3,x_4,x_5)\mapsto(x_2,x_1,x_3,x_4,\zeta_4x_5).
   $$
    The model satisfies the conditions in Proposition~\ref{prop:burnform}. In particular, the subgroup $\langle \iota^2\rangle$ fixes the cubic surface $S=X\cap\{x_5=0\}$. The residual $C_2$-action fixes a genus $1$ curve $S\cap\{x_1=x_2\}$. By Proposition~\ref{prop:burnform}, the $G$-action on $X$ is not linearizable. 
\end{exam}

\subsection*{Specialization of birational types}

We will use the 
specialization homomorphism for Burnside groups
$$
\rho_{\pi}^G: \Burn_{n,K}(G)\to \Burn_{n,k}(G)
$$ 
from \cite[Definition 6.4]{BnG}, and in particular, 
\cite[Corollary 6.8]{BnG}. Here $K$ is the fraction field of a DVR and $k$ its residue field. 
In applications, one considers the local geometry of fibrations, seeking to specialize the birational type of the generic fiber $X$ to a special fiber $X_0$. In practice,  
the special fiber $X_0$ is an irreducible variety, with {\em mild} singularities; the relevant notion of $BG$-rational singularities on the special fiber $X_0$ is in \cite[Definition 6.9]{BnG}.  

\begin{exam}
\label{exam:BG}
Let $\cX\to\cB$ be a $G$-equivariant flat and projective morphism onto a smooth curve $\cB$, with smooth generic fiber $X$ and a special fiber $X_0$ with ordinary double points. Then the singularities of $X_0$ are $BG$-rational singularities in the following situations:
\begin{itemize}
\item $G$-orbits of isolated ordinary double points, with trivial stabilizers \cite[Example 6.10]{BnG};
\item $G=C_2$, fixing a singular point; one verifies directly that the required condition for $BG$-rational singularities holds, namely,
$$
\rho_\pi^G([X\actsfromright G]) = [X_0\actsfromright G].
$$
\end{itemize}
\end{exam}


A Hilbert scheme argument, used in \cite{voisin},  \cite{CTP}, and \cite[Theorem 9]{HKT-conic} in the context of specialization of rationality properties, implies:

\begin{prop} 
\label{prop:flat}
Let $k$ be an uncountable algebraically closed field of characteristic zero and $G$ a finite group. 
Let 
$$
\pi :\cX\to B
$$
be a $G$-equivariant flat and projective morphism onto a smooth curve over $k$ with smooth generic fiber, such that 
\begin{itemize}
\item $G$ acts trivially on $B$ and generically freely on the fibers of $\pi$,
\item 
for some $b_0\in B$,
the special fiber $\cX_{b_0}$ is irreducible, has $BG$-rational singularities, and 
the $G$-action on $\cX_{b_0}$ is not linearizable.  
\end{itemize}
Then, for very general $b\in B$, the $G$-action on the special fiber $\cX_b$ is not 
linearizable. 
\end{prop}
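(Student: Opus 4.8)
The plan is to prove nonlinearizability for very general members of the family by a specialization argument combined with a constructibility statement, following the Hilbert scheme technique of \cite{voisin,CTP,HKT-conic}. The central object is the specialization homomorphism $\rho_\pi^G$ from \cite[Definition 6.4]{BnG}: under the hypothesis that $\cX_{b_0}$ has $BG$-rational singularities, \cite[Corollary 6.8]{BnG} yields a well-defined specialization map on Burnside groups which is compatible with the classes of the fibers. The strategy is to argue by contradiction: if the $G$-action on $\cX_b$ were linearizable for a very general $b$, then its Burnside class equals that of a linear action on $\bP^3$; applying $\rho_\pi^G$ would force the class $[\cX_{b_0}\actsfromright G]$ to coincide with the image of a linear class, contradicting the assumed nonlinearizability of the special fiber.

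The delicate point is that "very general" must be made precise, and this is where the Hilbert scheme argument enters. First I would reduce to the case where $B$ is the spectrum of a DVR with residue field $k$ by base-changing to the local ring at $b_0$; the fraction field $K$ of this DVR has residue field $k$, and $\rho_\pi^G\colon \Burn_{n,K}(G)\to\Burn_{n,k}(G)$ applies. The class $[\cX_K\actsfromright G]\in\Burn_{n,K}(G)$ of the generic fiber over $K$ specializes, via \cite[Corollary 6.8]{BnG} together with the $BG$-rationality of $\cX_{b_0}$ recorded in Example~\ref{exam:BG}, to $[\cX_{b_0}\actsfromright G]\in\Burn_{n,k}(G)$. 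Since the latter is not the class of any linear action, neither is $[\cX_K\actsfromright G]$, so the $G$-action on the geometric generic fiber is not linearizable.

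The remaining and genuinely technical step is to pass from the geometric generic fiber to a very general closed fiber $\cX_b$ over the uncountable field $k$. Here the idea is that linearizability is detected by the existence of a $G$-equivariant birational map to $\bP^3$, and such maps are parametrized by a countable union of locally closed subschemes of a relative Hilbert scheme (or $\Hom$-scheme) over $B$: one spreads out a hypothetical birational map over a suitable subvariety of $B$ and uses that the locus of $b$ for which a linearization exists is a countable union of constructible sets. If the geometric generic fiber is \emph{not} linearizable, none of these constructible sets is dense, so their union, being a countable union of proper (non-dense) subsets, misses a very general point since $k$ is uncountable. Concretely, I would fix degrees and use the existence of a flat family of such birational maps to conclude that if $\cX_b$ were linearizable for $b$ in a set that is not contained in a countable union of proper closed subsets, then the generic fiber would be linearizable as well, contradicting the previous step. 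The main obstacle is making the spreading-out rigorous in the equivariant setting --- ensuring the $G$-action is respected throughout and that the $BG$-rationality hypothesis on the single special fiber suffices, via the constructibility of the linearization locus, to propagate nonlinearizability from the generic fiber to very general fibers --- but this is precisely the content that the cited Hilbert scheme arguments of \cite{voisin,CTP} and \cite[Theorem 9]{HKT-conic} provide, now transported from rationality to the equivariant Burnside setting.
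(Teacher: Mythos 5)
Your proposal follows exactly the route the paper intends: the paper gives no written proof of Proposition~\ref{prop:flat}, only citing the equivariant specialization homomorphism $\rho_\pi^G$ of \cite{BnG} (made applicable by the $BG$-rational singularities of $\cX_{b_0}$) together with the Hilbert-scheme constructibility argument of \cite{voisin}, \cite{CTP}, \cite{HKT-conic}, which are precisely the two ingredients you assemble, in the same logical order (specialize to kill linearizability of the generic fiber, then spread out to very general closed fibers). The one step worth making explicit is that $\rho_\pi^G$ carries the class of a \emph{linear} action on $\bP^3_K$ to the class of a \emph{linear} action on $\bP^3_k$ (a finite subgroup of $\PGL_4(K)$ is conjugate into $\PGL_4(\cO)$ over the DVR, so linear actions have good, linear reduction); granting this, equality in $\Burn_{3,k}(G)$, which is free on $G$-birational types, does yield the contradiction with nonlinearizability of $\cX_{b_0}$ exactly as you claim.
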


Specialization allows to exhibit nonlinearizable actions which are ``invisible'' to classical obstructions, i.e., cannot be distinguished from linearizable actions with other available tools. 
On the other hand, the {\em very general} condition makes it difficult to determine
linearizability for any specific variety in the family. A central problem is to give {\em criteria} for linearization.

In our applications, we work with models with nodes in the generic fiber. We reduce to the situation of Proposition~\ref{prop:flat} by equivariantly resolving the nodes in the generic fiber.

\begin{exam} 
    \label{exam:2-spec}
    Let $\cX\to\bA_k^1$ be a family of cubic threefolds whose fibers $X_a:=\cX_a$ over $a\in k$ are given in $\bP^4$ by
\begin{multline*}
    a(x_1x_2^2-4x_3^2x_4+x_3x_4^2-3x_3^2x_5-x_4^2x_5)+\\
    +(a+1)(x_1x_2x_3+x_1x_2x_4+x_1x_2x_5-x_1x_4x_5-x_2x_4x_5)+\\
    +x_1x_3x_4-3x_2x_3x_4-3x_1x_3x_5+x_2x_3x_5+(5a+3)x_3x_4x_5=0.
\end{multline*}
One can check that the family carries a $G=\langle\iota\rangle\simeq C_2$-action, with $\iota$ acting on $\bP^4$ via
$$
(x_1,\ldots,x_5)\mapsto (-x_3+x_5, -x_3+x_4,-x_3,-x_3+x_2,-x_3+x_1).
$$
For a very general $a\in k$, $X_a$ is a 2-nodal cubic threefold with nodes at
$$
p_1=[1:0:0:0:0]\quad\text{and }\quad p_2=[0:0:0:0:1].
$$
But the special fiber over $a=0$ is 6-nodal; the nodes are in general linear position and $\iota$ does not fix any of the nodes. By Proposition~\ref{prop:H16nodescroll}, the $G$-action on the special fiber $X_0$ is not stably linearizable. The 4 additional nodes form two $G$-orbits with trivial stabilizer, so they are $BG$-rational singularities, by Example~\ref{exam:BG}. Blowing up the singularities in the generic fiber, we are in the situation of Proposition~\ref{prop:flat}. This allows us to conclude that the $G$-action on a very general member in the family $\cX$ is not stably linearizable.
\end{exam}



\subsection*{Birational rigidity}

Let $X\subset \mathbb{P}^4$ be a nodal cubic threefold and let $G\subseteq \Aut(X)$.
If $\mathrm{rk}\,\mathrm{Cl}^G(X)=1$, then $X$ is a $G$-Mori fiber space \cite[Definition~1.1.5]{CheltsovShramov},
and every $G$-birational map from $X$ to another $G$-Mori fiber space can be decomposed into a sequence of elementary links, known as $G$-Sarkisov links  \cite{Corti1995,HaconMcKernan2013}. If there are no $G$-Sarkisov links that start at $X$, we say that $X$ is $G$-birationally super-rigid.
Similarly, if every $G$-Sarkisov link that starts at $X$ also ends at $X$,  we say that $X$ is $G$-birationally rigid. 
Finally, if $X$ is not $G$-birational to any $G$-Mori fiber space with a positive dimensional base (a conic bundle or a Del Pezzo fibration), we say that $X$ is $G$-solid.
We have the following implications:
\begin{center}
$G$-birationally super-rigid $\Rightarrow$ $G$-birationally rigid $\Rightarrow$ $G$-solid.
\end{center}
Note that all of these conditions assume that $\mathrm{rk}\,\mathrm{Cl}^G(X)=1$.

Recall that the $G$-action on $X$ lifts to $\mathbb{P}^4$.
Using the $G$-action on  $\mathbb{P}^4$, we can state an obstruction for a cubic threefold $X$ to be $G$-solid: 

\begin{lemm}[{\cite[Lemma 2.6]{Avilov}}]
\label{lem:G-solid}
If $G$ leaves invariant a line or a plane in $\mathbb{P}^4$, then $X$ is not $G$-solid.
\end{lemm}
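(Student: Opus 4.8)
The plan is to prove the contrapositive via the Sarkisov-link machinery: if $G$ leaves invariant a line $\ell\subset\bP^4$ or a plane $\Pi\subset\bP^4$, I will exhibit a $G$-equivariant birational map from $X$ to a $G$-Mori fiber space with positive-dimensional base, which by definition means $X$ is not $G$-solid. The key observation is that $X$ is a cubic threefold in $\bP^4$, so a line meets $X$ in a degree-$3$ scheme and a plane meets $X$ in a cubic curve, and both configurations give rise to a natural fibration structure after a suitable $G$-equivariant modification.

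First, suppose $G$ fixes a line $\ell\subset\bP^4$. Consider the linear projection $\bP^4\dashrightarrow\bP^2$ away from $\ell$; since $\ell$ is $G$-invariant, the target $\bP^2$ carries a linear $G$-action and the projection is $G$-equivariant. Restricting to $X$ and resolving the indeterminacy locus (the intersection $X\cap\ell$) by a $G$-equivariant blowup, I obtain a $G$-equivariant morphism $\widetilde{X}\to\bP^2$ whose general fiber is the residual conic in the pencil of planes through $\ell$: indeed, each plane containing $\ell$ cuts $X$ in a cubic curve containing $\ell\cap X$, and after removing the line one is left with a conic. This realizes $X$ birationally, $G$-equivariantly, as a conic bundle over $\bP^2$, a $G$-Mori fiber space with two-dimensional base, so $X$ is not $G$-solid.

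Second, suppose $G$ fixes a plane $\Pi\subset\bP^4$. Here I use projection from $\Pi$, i.e.\ the linear map $\bP^4\dashrightarrow\bP^1$ sending a point to the pencil of hyperplanes containing $\Pi$; again $G$-invariance of $\Pi$ makes the target $\bP^1$ a $G$-representation and the map $G$-equivariant. The fibers of the induced rational map $X\dashrightarrow\bP^1$ are the hyperplane sections of $X$ through $\Pi$; since $\Pi$ is a plane, each such hyperplane section is a cubic surface containing $\Pi$, and removing $\Pi$ leaves a residual quadric surface. After $G$-equivariantly resolving, this exhibits $X$ as birational to a del Pezzo (quadric-surface, degree $8$) fibration over $\bP^1$, a $G$-Mori fiber space with one-dimensional base. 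Hence $X$ is not $G$-solid. (I note that if $\Pi\subset X$ lies inside the cubic, the same pencil argument applies with the unprojection/projection refined as in Lemma~\ref{lemm:line}.)

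The main obstacle I anticipate is verifying that these fibration structures genuinely satisfy the $G$-Mori fiber space conditions after resolution, rather than merely being rational fibrations. Specifically, I must check that the resolved total space $\widetilde{X}$ can be chosen with $\mathrm{rk}\,\mathrm{Cl}^G(\widetilde{X})=2$ relative to the base and that the contraction to the base is an extremal Mori contraction with connected fibers and relative Picard rank one. The delicate point is the interaction between the indeterminacy resolution (blowing up $X\cap\ell$ or $\Pi$) and the nodes of $X$: one must arrange the $G$-equivariant resolution so that the output is a genuine Mori fiber space and not just a weak fibration. Since the statement is attributed to \cite[Lemma 2.6]{Avilov}, I expect the rigorous treatment there handles precisely this bookkeeping, and I would follow that reference for the extremality verification.
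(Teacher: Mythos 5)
Your proposal contains a genuine gap, and it is in the line case. Your claim that projection from $\ell$ turns $X$ into a conic bundle presupposes $\ell\subset X$, but the hypothesis of the lemma only provides a $G$-invariant line in $\bP^4$, which typically is \emph{not} contained in $X$. If $\ell\not\subset X$, then a general plane $P\supset\ell$ meets $X$ in an irreducible plane cubic curve passing through the three points of $\ell\cap X$; the line $\ell$ is not a component of $X\cap P$, so there is no line to ``remove'' and no residual conic. The general fiber of the resolved map $\widetilde{X}\to\bP^2$ is then a curve of genus $1$, and this cannot be repaired by running the relative MMP: since the generic fiber is not uniruled, the $G$-MMP over $\bP^2$ terminates in a relative minimal model, never in a Mori fiber space with positive-dimensional base. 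The same confusion appears in your plane case, but with the roles reversed: the hyperplane section $X\cap H$ contains $\Pi$ (and splits off a residual quadric) only when $\Pi\subset X$; when $\Pi\not\subset X$ the general fiber is an honest cubic surface. There your argument still survives --- a cubic surface fibration over $\bP^1$, after equivariant resolution and $G$-MMP over $\bP^1$, does yield a $G$-Mori fiber space with positive-dimensional base --- but the asserted degree-$8$ del Pezzo fibration is not what one gets in general.

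The missing idea, which is the first line of the paper's proof, is linear-algebraic: the $G$-action lifts to $\GL_5$ (Section~\ref{sect:intjac}), and by complete reducibility of representations of finite groups in characteristic zero, $G$ leaves invariant a line in $\bP^4$ if and only if it leaves invariant a plane (take a $G$-invariant complementary subspace). This reduces the line case to the plane case, so one only ever projects from a plane onto $\bP^1$, where the general fiber is a (possibly singular) cubic surface and the relative $G$-MMP produces the required $G$-Mori fiber space. Without this reduction (or some substitute), your treatment of an invariant line with $\ell\not\subset X$ does not go through.
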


\begin{proof}
Note that $G$ leaves invariant a plane in $\mathbb{P}^4$ if and only if it leaves invariant a line.
Thus, we may assume that there exists a $G$-stable plane in $\mathbb{P}^4$.
Linear projection $\mathbb{P}^4\dasharrow\mathbb{P}^1$ from this plane
induces a rational dominant map $X\dasharrow\mathbb{P}^1$ whose general fiber is a (possibly singular) cubic surface.
Taking a $G$-equivariant resolution of indeterminacies of this map, a $G$-invariant resolution of singularities (if necessarily), and applying the $G$-equivariant Minimal Model program over $
\bP^1$, we obtain a $G$-birational map from $X$ to 
a $G$-Mori fiber space with a positive-dimensional base.
\end{proof}

This yields the following result:

\begin{theo}[Avilov]
\label{theo:Avilov}
Let $X\subset \bP^4$ be a nodal cubic threefold and 
$G\subseteq \mathrm{Aut}(X)$ a finite subgroup such that $\mathrm{rk}\,\mathrm{Cl}^G(X)=1$. If $X$ is $G$-solid, then one of the following holds:
\begin{enumerate}
\item[(1)] $|\mathrm{Sing}(X)|=10$, $X$ is the Segre cubic, $\mathrm{Aut}(X)\simeq\mathfrak{S}_6$, and $G$ contains a subgroup isomorphic to $\fA_5$ that leaves invariant a hyperplane section of $X$,
\item[(2)] $|\mathrm{Sing}(X)|=9$, $X$ is given in $\mathbb{P}^5$ by
$$
x_1x_2x_3-x_4x_5x_6=\sum_{i=1}^6x_i=0,
$$
$\mathrm{Aut}(X)\simeq\mathfrak{S}_3^2\rtimes C_2$, $G$ acts transitively on $\mathrm{Sing}(X)$,
and is isomorphic to  $\mathfrak{S}_3^2\rtimes C_2$, $\mathfrak{S}_3^2$ or $C_3^2\rtimes C_4$,
\item[(3)] $|\mathrm{Sing}(X)|=5$, $X\subset \bP^4$ is given by 
\begin{multline*}
x_1x_2x_3+x_1x_2x_4+x_1x_2x_5+x_1x_3x_4+x_1x_3x_5+\\
+x_1x_4x_5+x_2x_3x_4+x_2x_3x_5+x_2x_4x_5+x_3x_4x_5=0,
\end{multline*}
$\mathrm{Aut}(X)\simeq\mathfrak{S}_5$, and either $G\simeq\mathfrak{S}_5$ or $G\simeq\mathfrak{A}_5$.
\end{enumerate}
\end{theo}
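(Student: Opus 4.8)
The plan is to extract the maximal amount of information from the non-solidity criterion of Lemma~\ref{lem:G-solid} and then finish the residual cases with the equivariant Sarkisov program. Since the $G$-action lifts to $\GL_5$ (Section~\ref{sect:intjac}), write $\bP^4=\bP(V)$ for an honest $5$-dimensional representation $V$ with $G\subseteq\PGL(V)$. By Lemma~\ref{lem:G-solid}, together with the remark in its proof that an invariant plane exists if and only if an invariant line does, $G$-solidity forces $V$ to contain no invariant subspace of dimension $2$ or $3$. As $k$ has characteristic zero, $V$ is completely reducible, and the only decomposition types avoiding such a subspace are
\[
V\ \text{irreducible of dimension }5,\qquad\text{or}\qquad V=\chi\oplus W
\]
with $\chi$ a character and $W$ irreducible of dimension $4$. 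This representation-theoretic dichotomy is the master constraint, and it already forces $G$ to be reasonably large.

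I would then combine this with the hypothesis $\mathrm{rk}\,\mathrm{Cl}^G(X)=1$ and the configuration list (J1)--(J15) of \cite{Finkelnbergcubic}. The linear span $\langle\mathrm{Sing}(X)\rangle$ is $G$-invariant, so whenever the nodes span a line or a plane (in particular $s=2,3$) that span is an invariant line or plane and $X$ is not solid. For configurations containing planes I use the descriptions of $\mathrm{Cl}(X)$ in Proposition~\ref{prop:coho}: a unique plane (J6--J8) is $G$-stable; three planes forming a tetrahedron with one face missing (J11, J12) share a common vertex, giving a $G$-fixed point inside their invariant spanning $\bP^3$, whose complementary fixed point then produces an invariant line; and for two planes meeting in a point (J10), either a plane is stable or the swapped pair yields a $G$-invariant class $[\Pi_1]+[\Pi_2]$ independent of $H$, contradicting $\mathrm{rk}\,\mathrm{Cl}^G(X)=1$. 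The same dichotomy---an invariant positive-dimensional linear subspace, or a $G$-invariant Weil class not proportional to $H$---is what I would bring to bear on the intermediate cases $s=6,d=1$ and $s=8,d=3$, reducing the candidates to (J5), (J14), (J15), up to a few borderline configurations (notably the $4$-nodal cubic) settled below.

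For the surviving configurations I would pin down the admissible groups from the master constraint together with a computation on the class lattice. In (J5) the nodes are the five coordinate points, $V$ is the permutation module $\chi\oplus W$ of $G\subseteq\fS_5$, and $W$ is irreducible precisely for the transitive subgroups, with $\mathrm{rk}\,\mathrm{Cl}^G(X)=1$ automatic since $d=0$; for the Segre cubic (J15) one has $V\cong$ the standard $5$-dimensional representation of $\fS_6$, and the surviving subgroups are those containing the transitive (non-standard) $\fA_5$ stabilizing a hyperplane section; in (J14) one computes the $G$-action on the rank-$4$ lattice generated by the plane classes and retains only those $G$ with one-dimensional invariants and no invariant line or plane, producing the list $\fS_3^2\rtimes C_2$, $\fS_3^2$, $C_3^2\rtimes C_4$ together with transitivity on the nine nodes.

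The hard part is that Lemma~\ref{lem:G-solid} detects only the non-solidity produced by invariant linear subspaces, so surviving this test does not establish solidity. Already in (J5) the Frobenius subgroup $C_5\rtimes C_4\subset\fS_5$ acts on $W$ irreducibly and fixes no line or plane, yet is not solid; analogous borderline subgroups arise in the other medium-size configurations, for instance certain transitive actions on the four nodes of the $4$-nodal cubic. Excluding these requires genuinely birational arguments: on the known birational models---such as the smooth $(1,1,1,1)$-divisor in $(\bP^1)^4$ attached to the $4$-nodal cubic---one exhibits $G$-equivariant conic-bundle or del Pezzo fibration structures, and more generally one runs the $G$-equivariant Minimal Model Program on the small resolutions to produce a $G$-Sarkisov link to a Mori fiber space over a positive-dimensional base. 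One must also check conversely that no such link degrades the three surviving configurations for the listed groups. This equivariant-MMP analysis, which cannot be reduced to the representation theory of $V$ alone, is the crux of the argument and is carried out in \cite{Avilov}.
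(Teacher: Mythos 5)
Your route is genuinely different from the paper's, and it is worth saying how. The paper makes no attempt to redo the classification: it first notes that $G$-solidity forces non-linearizability (a $G$-equivariant map $X\dasharrow\bP^3$ would make $\bP^3$ $G$-solid, contradicting \cite{CSar}), then quotes \cite{avilov-forms,Avilov-note,Avilov} to conclude that either $(X,G)$ is as in (1), (2), (3), or $X$ is the cubic of case (3) with $G\simeq C_4\rtimes C_5$, and then spends the entire proof eliminating that last possibility. Your representation-theoretic filter (complete reducibility of $V$ plus Lemma~\ref{lem:G-solid} forbidding invariant $2$- and $3$-dimensional subspaces), combined with the configuration-by-configuration use of $\mathrm{Cl}^G$ and Finkelnberg's list, is a reasonable reconstruction of where such a classification comes from, and your treatments of (J10) and (J11)--(J12) are correct. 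What this buys is a self-contained explanation of the shape of the list; what it costs is that every group surviving the linear-algebra and lattice tests still has to be excluded by hand, and there are more of these than you indicate (for instance, in the $6$-nodal configuration (J9) the transitive $\fS_5$ and $\fS_3^2\rtimes C_2$ have $\mathrm{rk}\,\mathrm{Cl}^G(X)=1$ and act irreducibly on $V$, and are only ruled out by the $\bP^1$-bundle Sarkisov link \eqref{eq:6-nodes-P1-bundle}).

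The genuine gap is in your final paragraph. You correctly identify the Frobenius group $C_5\rtimes C_4$ on the $5$-nodal cubic of case (3) as a group that passes every test yet must be excluded, but you then assert that its exclusion ``is carried out in \cite{Avilov}.'' It is not: according to the paper, Avilov's results leave \emph{precisely} this case open, and eliminating it is the only substantive new content of the paper's proof. The paper does this by an explicit construction: the standard Cremona involution gives an $\fS_5$-equivariant link from $X$ to the smooth quadric $Q$ of \eqref{eq:Quadric5} with $a=1$; projecting from the $\fS_5$-fixed point gives a double cover $\eta\colon Q\to\bP^3$, and the preimages $C_1,C_2$ of a $G$-invariant pair of skew lines in $\bP^3$ are disjoint conics with $C_1+C_2$ $G$-invariant; blowing them up produces a conic bundle over $\bP^1\times\bP^1$, so $X$ is not $G$-solid (Section~\ref{sect:5} even shows this $G$-action is linearizable, via a link centered at a quintic elliptic curve). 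Without this step, your argument only proves a weaker theorem in which case (3) must also admit $G\simeq C_4\rtimes C_5$, so the deferral to \cite{Avilov} at exactly this point is not a citation but a hole.
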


\begin{proof}
Suppose that $X$ is $G$-solid. 
If there exists a $G$-equivariant birational map $X\dasharrow\mathbb{P}^3$,
then $\mathbb{P}^3$ is $G$-solid, which contradicts \cite{CSar}. 
Thus, the $G$-action on $X$ is not linearizable.
It follows from \cite{avilov-forms,Avilov-note,Avilov} and the proofs of the main results in these papers
that either $X$ and $G$ are as in (1), (2), (3), or $X$ is the cubic threefold in (3) and $G\simeq C_4\rtimes C_5$.
Let us show that $X$ is not $G$-solid in the latter case, contradicting the assumption.

Namely, suppose $X$ is the  threefold from (3), and $G\simeq C_4\rtimes C_5$.
By  \cite{Avilov,VAZ}, there exists the~following $\mathfrak{S}_5$-Sarkisov link:
$$
\xymatrix{
&\widetilde{X}\ar@{-->}[rr]^{\beta}\ar@{->}[ld]_{\alpha}&&\widehat{X}\ar@{->}[rd]^{\gamma}&\\
X\ar@{-->}[rrrr]^{\chi} &&  && Q}
$$
where $Q$ is the smooth quadric threefold 
$$
\{x_1x_2+x_2x_3 +\cdots +x_4x_5= 0\}\subset\mathbb{P}^4,
$$
$\mathfrak{S}_5$ acts on $Q$ by permuting the coordinates,
$\chi$ is the birational map induced by the standard Cremona involution of $\mathbb{P}^4$,
$\alpha$ is the standard resolution of singularities,
$\beta$ is a~composition of $10$ Atiyah flops, 
$\gamma$ is a~blowup of an $\mathfrak{S}_5$-orbit of length $5$.
Let $\eta\colon Q\to\mathbb{P}^3$ be the double cover induced by the projection from the $\mathfrak{S}_5$-fixed point in $\mathbb{P}^4$. Then $\eta$ is $\mathfrak{S}_5$-equivariant, 
and $\mathbb{P}^3$ contains two skew lines $L_1$ and $L_2$ such that the curve $L_1+L_2$ is $G$-invariant. 
Let $C_1$ and $C_2$ be the preimages of these lines on $Q$. Then $C_1$ and $C_2$ are disjoint conics,
and the curve $C_1+C_2$ is $G$-invariant.
Blowing up these two conics, we obtain a $G$-equivariant birational map from $Q$ to a conic bundle over $\mathbb{P}^1\times\mathbb{P}^1$, in particular, $X$ is not $G$-solid, which contradicts our assumption. In fact, the $G$-action on $Q$ is linearizable, see Section~\ref{sect:5}.
\end{proof}

Moreover, in Case (1) in Theorem~\ref{theo:Avilov}, $X$ is $G$-birationally super-rigid \cite{avilov-forms}.
Similarly, if follows from \cite{VAZ} that $X$ is $G$-solid in Case (3) when $G\simeq\mathfrak{S}_5$.
In Section~\ref{sect:nine}, we show that $X$ is $G$-birationally super-rigid in Case (2) when $G\simeq\mathfrak{S}_3^2\rtimes C_2$. We believe that $X$ is $G$-solid for the remaining groups $G$ in Cases (2) and (3). 


\section{Two nodes}
\label{sect:two}

\subsection*{Standard form}
We may assume that 
$\mathrm{Sing}(X)$ consists of the points $$
[1:0:0:0:0], \quad  [0:1:0:0:0],
$$
and that $G=\mathrm{Aut}(X)$ swaps these points. 
Then $X$ can be given by: 
\begin{equation}
    \label{eqn:cube-2}
x_1x_2x_3+x_1q_1+x_2q_2+f_3=0,
\end{equation}
for forms $q_1, q_2 \in k[x_4,x_5]$, and $f_3\in k[x_3, x_4, x_5]$, of degree $2$, $2$, $3$, respectively.

\subsection*{Conic bundle}
Introducing new coordinates $y_1=x_1x_3$ and $y_2=x_2x_3$
(of weight two), and 
multiplying \eqref{eqn:cube-2} by $x_3$, we rewrite \eqref{eqn:cube-2} as
$$
y_1y_2+y_1q_1+y_2q_2+x_3f_3=0,
$$
which defines a quartic hypersurface $V_4\subset\mathbb{P}(1,1,1,2,2)$;
the coordinate change defines a $G$-equivariant birational map 
$$
\chi\colon X\dasharrow V_4.
$$
We can $G$-equivariantly simplify the equation of $V_4$ further as 
$$
z_1z_2=q_1q_2-x_3f_3,
$$
where $z_1=y_1+q_2$ and $z_2=y_2+q_1$.
Observe that $V_4$ has $2$ singular points of type $\frac{1}{2}(1,1,1)$ --- these 
are the points 
$$
[0:0:0:1:0], \quad \text{ and } \quad [0:0:0:0:1],
$$
in coordinates $(x_3,x_4,x_5,z_1,z_2)$.
This yields the following $\mathrm{Aut}(X)$-equivariant commutative diagram:
$$
\xymatrix{
&\widehat{X}\ar@{->}[dl]_{\alpha}\ar@{->}[dr]^{\beta}&\\%
X\ar@{-->}[rr]_{\chi} &&V_4}
$$
where $\alpha$ is an extremal divisorial contraction of a surface to the line $\{x_3=x_4=x_5=0\}\subset X$,
and $\beta$ is an extremal divisorial contraction of the strict transform of the non-normal cubic surface $\{x_3=0\}\cap X$. 
The description of the morphism $\alpha$ can be found in the proof of \cite[Proposition~6.1]{CP2010}, see also \cite{Tziolas}.
Note that $\widehat{X}$ has $2$ singular points of type $\frac{1}{2}(1,1,1)$,
which are mapped to the nodes of $X$.

Let $D$ be the quartic curve $\{q_1q_2-x_3f_3=0\}\subset\mathbb{P}^2_{x_3,x_4,x_5}$.
Then $D$ is smooth,
and we have the following $G$-equivariant commutative diagram
\begin{align}
    \label{diagram:conicbundle}
\xymatrix{
&Y\ar@{->}[dl]_{\gamma}\ar@{->}[dr]^{\pi}&\\%
V_4\ar@{-->}[rr]&&\mathbb{P}^2_{x_3,x_4,x_5}}
\end{align}
where $\gamma$ is the blow up of the singular points of $V_4$,
$\pi$ is a conic bundle with discriminant curve $D$, and the dashed arrow is the projection 
$$
(x_3,x_4,x_5,z_1,z_2)\mapsto(x_3,x_4,x_5).
$$
This gives a natural homomorphism
$$
\gamma:\Aut(X)\to \Aut(D).
$$

\subsection*{Automorphisms}
The full classification of automorphisms of 2-nodal cubics can be 
addressed via the conic bundle presentation, combined with the 
(classically known) classification of automorphisms of smooth quartic plane curves, 
see, e.g., \cite[Lemma 6.16 and Table 6]{DI}, \cite{Kur}; and using Torelli for nodal cubics, as in \cite[Section 7]{CGH}.
Starting with equation \eqref{eqn:cube-2} and passing to the conic bundle, we see that  
the $G$-action on $X$ gives rise to 
\begin{itemize}
    \item a {\em linear} representation on $\bP^2$, preserving a line, corresponding to $x_3=0$, and thus a fixed point in $\bP^2$,
    \item an automorphism of the discriminant curve $D$. 
\end{itemize}
Combining these two conditions with the list of automorphisms of plane quartic curves, we find that the possible images of the $G$-actions on the base $\bP^2_{x_3,x_4,x_5}$ of the conic bundle are
\begin{align*}
    &C_2,  \,\,C_3,\,\, C_4,\,\,C_2^2,\,\,\fS_3,\,\,C_6, \,\,C_7, \,\,C_2\times C_4,\,\, C_8,\,\, Q_8, \,\,C_9,\,\, 
    C_4^2,\\
    & C_{12}, \,\,D_4\rtimes C_2,\,\, \SL_2(\bF_3), \,\,  OD16,\,\,  \fD_4, \,\,
C_4wrC_2,\,\, \SL_2(\bF_3)\rtimes C_2.
\end{align*}
Here are examples with interesting groups $\Aut(X)$:

\begin{exam}
    \label{exam:group}
    We keep the notation of \eqref{eqn:cube-2}, with $X\subset \bP^4$ and the discriminant curve $D\subset \bP^2_{x_3,x_4,x_5}$, with $G=\Aut(X)$ and $G'=\Aut(D)$.   

\begin{itemize}
    \item[(1)] Let $D=x_3^4+x_4^4+x_5^4+(4\rho+2)x_3^2x_4^2$, and $X$ be given by 
    $$
     f_3=x_3^3,\quad q_1=x_4^2+(2\rho+1+2i)x_5^2, \quad 
   q_2=x_4^2+(2\rho+1-2i)x_5^2.
    $$
    Then $G=\fD_4\rtimes C_2$ and $G'={\SL_2(\bF_3)\rtimes C_2}$.
  \item[(2)] Let $D=x_3^4+x_4^4+x_5^4$, and $X$ be given by 
    $$
    f_3=-x_3^3,\quad q_1=x_4^2+ix_5^2,\quad  q_2=x_4^2-ix_5^2.
    $$
    Then $G=C_4wr C_2$ and $G'=C_4^2\rtimes \fS_3$. 
\end{itemize}
\end{exam}


\begin{prop}
\label{prop:2-nodes}
Let $X\subset \bP^4$ be a 2-nodal cubic threefold with an action of 
a cyclic group $G=\langle \iota\rangle \subseteq\Aut(X)$ not fixing any node. Then, up to a change of coordinates, $X$ is given by 
$$
x_1x_2x_3+x_1q_1+x_2q_2+f_3=0,
$$
for $q_1,q_2\in k[x_4,x_5]$ and $f_3\in k[x_3,x_4,x_5]$ that can be described together with $\iota$ as follows.
\begin{enumerate}
\item[($C_2$)] $\iota(x_1,x_2,x_3,x_4,x_5)=(x_2,x_1,x_3,x_4,x_5)$, 
\begin{align*} 
\hskip -3.5cm q_1&=q_2=x_4x_5, \\ 
\hskip -3.5cm f_3&\in k[x_3,x_4,x_5];
\end{align*}
\item[($C_2^\prime$)] $\iota(x_1,x_2,x_3,x_4,x_5)=(-x_2,-x_1,x_3,x_4,-x_5)$, 
\begin{align*} 
q_1&=a_1x_4^2+x_4x_5+a_3x_5^2,\\
q_2&=-a_1x_4^2+x_4x_5-a_3x_5^2,\\
f_3&=c_1x_3^3+d_1x_3^2x_4+x_3(e_1x_4^2+e_3x_5^2)+r_1x_4^3+r_3x_4x_5^2,
\end{align*}
for some $a_1,a_3,c_1,d_1,e_1,e_3,r_1,r_3\in k$;

\item[($C_2^{\prime\prime}$)] $\iota(x_1,x_2,x_3,x_4,x_5)=(x_2,x_1,x_3,x_4,-x_5)$, 
\begin{align*} 
q_1&=q_2=x_4^2+x_5^2\\
f_3&=c_1x_3^3+d_1x_3^2x_4+x_3(e_1x_4^2+e_3x_5^2)+r_1x_4^3+r_3x_4x_5^2, 
\end{align*}
for some $c_1,d_1,e_1,e_3,r_1,r_3\in k$;

\item[($C_4$)] $\iota(x_1,x_2,x_3,x_4,x_5)=(x_2,x_1,x_3,\zeta_4x_4,-\zeta_4x_5)$, $\zeta_4=e^{\frac{2\pi i}4},$ 
\begin{align*} 
\hskip -2.5cm
q_1&=a_1x_4^2+x_4x_5+a_3x_5^2,\\
\hskip -2.5cm q_2&=-a_1x_4^2+x_4x_5-a_3x_5^2,\\
\hskip -2.5cm f_3&=x_3^3+e_2x_3x_4x_5, 
\end{align*}
for some $a_1,a_2,e_2\in k$;

\item[($C_4^\prime$)] $\iota(x_1,x_2,x_3,x_4,x_5)=(x_2,x_1,x_3,x_4,\zeta_4x_5)$, 
\begin{align*}
\hskip -3.6cm q_1&=x_4^2-x_5^2, \\
\hskip -3.6cm q_2&=x_4^2+x_5^2, \\
\hskip -3.6cm f_3&\in k[x_3,x_4];
\end{align*}
%

\item[($C_4^{\prime\prime}$)] $\iota(x_1,x_2,x_3,x_4,x_5)=(x_2,x_1,x_3,-x_4,\zeta_4x_5)$,
\begin{align*} 
\hskip -2.5cm q_1&=x_4^2-x_5^2, \\
\hskip -2.5cm q_2&=x_4^2+x_5^2,\\
\hskip -2.5cm f_3&=x_3^3+e_1x_3x_4^2+r_3x_4x_5^2,
\end{align*}
for some $e_1,r_3\in k$;

\item[($C_6$)] $\iota(x_1,x_2,x_3,x_4,x_5)=(x_2,x_1,x_3,\zeta_3x_4,\zeta_3^2x_5)$ for $\zeta_3=e^{\frac{2\pi i}{3}}$, 
\begin{align*}\hskip 1cm
 \hskip -2.9cm   q_1&=q_2=x_4x_5,\\
  \hskip -2.9cm  f_3&=x_3^3+e_2x_3x_4x_5+r_1(x_4^3+x_5^3),
\end{align*}
for some $e_2,r_1\in k$;

\item[($C_6^\prime$)] $\iota(x_1,x_2,x_3,x_4,x_5)=(\zeta_6x_2,\zeta_6x_1,x_3,\zeta_6^5x_4,\zeta_6^2x_5)$ for $\zeta_6=e^{\frac{2\pi i}{6}}$, 
\begin{align*}
  \hskip -2.7cm    q_1&= a_1x_4^2+ x_4x_5+ a_3x_5^2,   \\
   \hskip -2.7cm   q_2&= -a_1x_4^2+ x_4x_5-a_3x_5^2, \\ 
    \hskip -2.7cm  f_3&= x_3^2x_5,
\end{align*}
for some $a_1,a_3\in k$;
\item[($C_{12}$)] $\iota(x_1,x_2,x_3,x_4,x_5)=(\zeta_{12}^8x_2,\zeta_{12}^8x_1,x_3,\zeta_{12}^4x_4,\zeta_{12}x_5)$ for $\zeta_{12}=e^{\frac{2\pi i}{12}}$, 
\begin{align*}
  \hskip -3.8cm     q_1&=(x_4^2-x_5^2), \\
 \hskip -3.8cm     q_2&=(x_4^2+x_5^2), \\
   \hskip -3.8cm   f_3&=x_3^2x_4.
\end{align*}
\end{enumerate}  
\end{prop}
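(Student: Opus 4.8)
The plan is to classify all cyclic subgroups $G = \langle \iota \rangle \subseteq \Aut(X)$ acting on a 2-nodal cubic $X$ without fixing either node, by exploiting the conic bundle presentation developed in the preceding subsections. The starting point is that after fixing $\mathrm{Sing}(X) = \{[1:0:0:0:0], [0:1:0:0:0]\}$, any $\iota$ not fixing a node must \emph{swap} the two nodes; hence on the coordinates $x_1, x_2$ (which cut out the singular points) the generator $\iota$ acts by $x_1 \leftrightarrow x_2$ possibly up to scalars, and must preserve the form \eqref{eqn:cube-2}.

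\medskip

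\noindent\textbf{Setup and reduction via the induced actions.} First I would record the constraints coming from the homomorphism $\gamma \colon \Aut(X) \to \Aut(D)$ to the automorphism group of the discriminant quartic $D \subset \bP^2_{x_3,x_4,x_5}$, together with the linear action on the base $\bP^2$ that preserves the line $\{x_3 = 0\}$ and hence fixes a point. Since $\iota$ swaps the nodes, its square $\iota^2$ fixes each node, and the possible orders of $\iota$ are constrained by the list of cyclic subgroups appearing among the base-automorphism groups enumerated earlier: the cyclic possibilities are $C_2, C_3, C_4, C_6, C_7, C_8, C_9, C_{12}$. The next step is to eliminate those orders that cannot arise from a \emph{node-swapping} element: because $\iota$ interchanges $x_1, x_2$, its action on the $2$-dimensional space $\langle x_1, x_2 \rangle$ has eigenvalues $\pm\lambda$, forcing an even contribution to the order, which rules out $C_3, C_7, C_9$, and further analysis of the compatibility with the $x_3 f_3$ and $q_i$ terms rules out $C_8$, leaving exactly the orders $2, 4, 6, 12$ that appear in the statement.

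\medskip

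\noindent\textbf{Normalizing each case.} For each surviving order, I would diagonalize the $\iota$-action on $\langle x_4, x_5 \rangle$ (and the scalar by which $x_1, x_2$ are swapped), then impose invariance of \eqref{eqn:cube-2} term by term. The condition that $x_1 x_2 x_3$ be fixed pins down the scalar on $x_3$ and a relation among the eigenvalues; invariance of $x_1 q_1 + x_2 q_2$ under the swap forces $q_2$ to be the $\iota$-transform of $q_1$ (explaining the paired sign patterns such as $q_1 = a_1 x_4^2 + x_4 x_5 + a_3 x_5^2$, $q_2 = -a_1 x_4^2 + x_4 x_5 - a_3 x_5^2$ in cases $(C_2'), (C_4), (C_6')$); and invariance of $f_3$ selects the admissible monomials in $k[x_3, x_4, x_5]$. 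After recording the raw invariant forms, I would use the residual freedom to rescale $x_3, x_4, x_5$ and to add multiples of the node-coordinates to normalize leading coefficients to $1$ (as in the $f_3 = x_3^3$ or $f_3 = x_3^2 x_4$ reductions), and to separate genuinely distinct conjugacy classes — this is what produces the primed variants $C_2', C_2'', C_4', C_4''$, distinguished by the eigenvalue of $x_5$ and the parity twist on $x_4$.

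\medskip

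\noindent\textbf{Main obstacle.} The hardest part will be the bookkeeping that guarantees the list is \emph{exhaustive and nonredundant}: namely verifying that every node-swapping cyclic action is conjugate, via a coordinate change preserving the standard form \eqref{eqn:cube-2} and the node locus, to exactly one entry, and that no two entries are conjugate. This requires controlling the stabilizer of the normal form inside $\PGL_5$ — i.e., the residual coordinate changes respecting the grading $(x_1, x_2 \mid x_3 \mid x_4, x_5)$ — and checking that the scalar twists distinguishing, say, $(C_4')$ from $(C_4'')$ cannot be absorbed. I expect smoothness of $X$ away from the two nodes, equivalently smoothness of the discriminant quartic $D$, to be the key nondegeneracy input that forces the surviving coefficients (such as the $x_4 x_5$ coefficient normalized to $1$ in $q_1, q_2$) to be nonzero, thereby rigidifying each case. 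The computational verification of smoothness and of genericity of the surviving parameters is where I would rely on the symbolic assistance acknowledged in the paper.
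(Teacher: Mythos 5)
Your computational core---bringing $\iota$ to the monomial form $(x_1,\ldots,x_5)\mapsto(sx_2,tx_1,x_3,ux_4,vx_5)$, imposing invariance of \eqref{eqn:cube-2} term by term, using nodality of the two singular points to normalize $q_1,q_2$, and absorbing residual coefficients by coordinate changes preserving the normal form---is exactly the paper's proof, which then solves the resulting polynomial system (in the coefficients \emph{and} in $s,t,u,v$) with {\tt Magma} and normalizes the ten solution families. The genuine gap is your preliminary step, reducing the possible orders of $\iota$ to $\{2,4,6,12\}$ via the list of cyclic groups among the possible images on the base $\bP^2_{x_3,x_4,x_5}$. That reduction is invalid for two reasons. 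First, the homomorphism from $\Aut(X)$ to automorphisms of the base (equivalently, to $\Aut(D)$) is not injective: its kernel contains the node swap acting trivially on $x_3,x_4,x_5$, which is a fiberwise automorphism of the conic bundle. So the order of $\iota$ need not equal the order of its base image: in case $(C_2)$ of the statement the base image of $\iota$ is \emph{trivial}, and in case $(C_6)$ it has order $3$---precisely one of the odd orders your parity argument claims to rule out, even though this case occurs in the classification. (The parity claim itself is fine for the order of $\iota$, but it says nothing about the order of the base image.) Second, an individual cyclic subgroup only needs to have image inside a \emph{subgroup} of one of the enumerated groups, and the non-cyclic entries ($Q_8$, $OD16$, $C_4\wr C_2$, $\SL_2(\bF_3)\rtimes C_2$, \dots) contain cyclic subgroups of order $8$; combined with the kernel, orders $8,14,16,18,24$ are all left open by your argument. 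Ruling them out is not a footnote but the actual content: for the cubic of Example~\ref{exam:group}(2) one has $\Aut(X)\simeq C_4\wr C_2$, which genuinely contains elements of order $8$, and only the explicit invariance equations show that all of them fix the two nodes and therefore do not appear in Proposition~\ref{prop:2-nodes}.

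Because your case analysis is then run only over the pre-selected orders, exhaustiveness of the final list is never established; this is the step a referee would reject. The repair is to do what the paper does: do not fix the order in advance, but keep $s,t,u,v$ as unknown nonzero scalars, write the full system expressing $\iota^*(q_1)=sq_2$, $\iota^*(q_2)=tq_1$, $\iota^*(f_3)=stf_3$ together with the nondegeneracy conditions $a_2^2-4a_1a_3\ne 0$ and $b_2^2-4b_1b_3\ne 0$ forced by the two nodes, and solve it completely (the paper does this by computer algebra after normalizing $q_1,q_2$ up to scaling and swapping $x_4,x_5$). The admissible orders $2,4,6,12$ are then an \emph{output} of the computation rather than an input, and exhaustiveness is automatic. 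With that replacement, your normalization and non-redundancy discussion (residual coordinate changes respecting the splitting $\langle x_1,x_2\rangle\oplus\langle x_3\rangle\oplus\langle x_4,x_5\rangle$) matches the paper's concluding step.
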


\begin{proof}
We can choose the coordinates so that
$$
\iota: (x_1,x_2,x_3,x_4,x_5)\mapsto(sx_2,tx_1,x_3,ux_4,vx_5),
$$
for some $s,t,u,v\in k^\times$, and $X$ is given by 
$$
x_1x_2x_3+x_1q_1+x_2q_2+f_3=0, 
$$
for
\begin{align*}
q_1&=a_1x_4^2+a_2x_4x_5+a_3x_5^2 + a_4x_3^2+a_5x_3x_4+a_6x_3x_5,
\\
q_2&=b_1x_4^2+b_2x_4x_5+b_3x_5^2 + b_4x_3^2+b_5x_3x_4+b_6x_3x_5,
\\
f_3&=c_1x_3^3+x_3^2d(x_4,x_5)+x_3e(x_4,x_5) + r(x_4,x_5),
\end{align*}
where 
\begin{align*} 
&\hskip -3cm d=d_1x_4+d_2x_5, \\
& \hskip -3cm e=e_1x_4^2+e_2x_4x_5+e_3x_5^2,
\\ 
&\hskip -3cm r=r_1x_4^3+r_2x_4^2x_5+r_3x_4x_5^2+r_4x_5^3.
\end{align*}
Since $X$ is $\langle\iota\rangle$-invariant, one has $\iota^*(f)=stf$, and thus
the zero loci of $q_1q_2$ and $f_3$ are preserved; and these polynomials cannot identically vanish, under our assumptions on singularities of $X$. Concretely, 
$$
\iota^* (f)  = stx_1x_2x_3+sx_2\iota^*(q_2)+tx_1\iota^*(q_2)+\iota^*(f_3)=stf
$$
which implies that 
\begin{equation} 
\label{eqn:q1}
\iota^*(q_2)  = tq_1, \quad 
\iota^* (q_1) = sq_2, 
\end{equation}
and 
\begin{equation}
    \label{eqn:f3}
\iota^*(f_3)=stf_3.
\end{equation}
Expanding
and substituting into \eqref{eqn:q1} 
we obtain 12 equations: 
\begin{align*}
& \hskip -2.8cm u^2a_1-tb_1=uva_2-tb_2=v^2a_3-tb_3=0, 
\\
& \hskip -2.8cm sa_1-u^2b_1=sa_2-uvb_2=sa_3-v^2b_3=0, 
\\
& \hskip -2.8cm a_4-tb_4=ua_5-tb_5=va_6-tb_6=0,  
\\
& \hskip -2.8cm sa_4-b_4=sa_5-ub_5=sa_6-vb_6=0, 
\end{align*}
and, writing down the \eqref{eqn:f3} constraints on $f_3$, additional equations
\begin{align*}
& c_1(1-st)=0,  \\
& d_1(u-st)=d_2(v-st)=0,\\
& e_1(u^2-st)=e_2(uv-st)=e_3(v^2-st)=0, \\
& r_1(u^3-st)=r_2(uv-st)=r_3(uv^2-st)=r_4(v^3-st)=0, 
\end{align*}
in the variables 
$$
a_1,\ldots,a_6,b_1,\ldots,b_6,c_1,d_1,d_2,e_1,e_2,r_1,\ldots,r_4\in k.
$$

Since $X$ has nodes at $[1:0:0:0:0]$ and $[0:1:0:0:0]$, we have 
$$
a_2^2-4a_1a_3\ne 0, \quad b_2^2-4b_1b_3\ne 0.
$$
Thus, up to scaling $x_4,x_5$ and swapping them, we may further assume that one of the following holds: 
\begin{itemize} 
\item $a_2=b_2=1$, or
\item $a_2=b_1=b_3=1$, $b_2=0$, or 
\item $a_1=b_1=b_3=1$, $a_2=b_2=0$.
\end{itemize}
The second option is impossible, since $b_2=0$ forces $a_2=0$. 
Solving the system of equations for the remaining two options using {\tt Magma}, we obtain a complete set of solutions:

\


\ 

\begin{center}

{\small  

\begin{tabular}{|r|l|l|l|l|c|}
\hline
      & $(a_1,a_2,a_3,a_4,a_5,a_6)$ &  $(d_1,d_2)$  & $c_1$   &$s$ & $u$    \\
      & $(b_1,b_2,b_3,b_4,b_5,b_6)$ & $(e_1,e_2,e_3)$ &$(r_1,r_2,r_3,r_4)$        &$t$ & $v$   \\
\hline 
\hline
(1)   & $(a_1,1,a_3,a_4,a_5,a_6)$ &  & &    1  &   1 \\
      & $(a_1,1,a_3,a_4,a_5,a_6)$ &  & &    1  &   1\\
\hline 
\hline
(2)   & $(a_1,1,a_3,a_4,a_5,a_6)$   &   $(d_1,0)$  &  & -1      &  1   \\
      & $(-a_1,1,-a_3,-a_4,-a_5,a_6)$ & $(e_1,0,e_3)$ & $(r_1,0,r_3,0)$ & -1     &  -1\\
\hline
\hline
(3)   & $(0,1,0,a_4,0,0)$   &   $(0,0)$  &                    & 1      &  $\zeta_3$   \\
      & $(0,1,0,a_4,0,0)$ & $(0,e_2,0)$ & $(r_1,0,0,r_4)$ & 1     &  $\zeta_3^2$\\
\hline 
\hline
(4)   & $(a_1,1,a_3,a_4,0,0)$   &   $(0,0)$  &                  & 1   &  $\zeta_4$   \\
      & $(-a_1,1,-a_3,a_4,0,0)$ & $(0,e_2,0)$ & $(0,0,0,0)$ & 1   &  $-\zeta_4$\\
\hline
\hline
(5)   & $(a_1,1,a_3,0,0,0)$   &   $(0,d_2)$  &    0                & $\zeta_6$      &  $\zeta_6^5$   \\
      & $(-a_1,1,-a_3,0,0,0)$ & $(0,0,0)$ & $(0,0,0,0)$ & $\zeta_6$     &  $\zeta_6^2$\\
\hline 
\hline
(6)   & $(1,0,1,a_4,a_5,a_6)$ &  & &    1  &   1 \\
      & $(1,0,1,a_4,a_5,a_6)$ &  & &    1  &   1\\
\hline 
\hline 
(7)   & $(1,0,1,a_4,a_5,a_6)$ & $(d_1,0)$     & &    1  &   1 \\
      & $(1,0,1,a_4,a_5,-a_6)$ & $(e_1,0,e_3)$ & $(r_1,0,r_3,0)$ &    1  & -1  \\
\hline 
\hline
(8)   & $(1,0,-1,a_4,a_5,0)$ & $(d_1,0)$     & &    1  &   1 \\
      & $(1,0,1,a_4,a_5,0)$ & $(e_1,0,0)$ & $(r_1,0,0,0)$ &    1  & $\zeta_4$  \\
\hline 
\hline
(9)   & $(1,0,-1,a_4,a_5,0)$ & $(0,0)$     & &    1  &   -1 \\
      & $(1,0,1,a_4,-a_5,0)$ & $(e_1,0,0)$ & $(0,0,r_3,0)$ &    1  & $\zeta_4$  \\
\hline 
\hline
(10)   & $(1,0,-1,0,0,0)$ & $(d_1,0)$     & 0 &    $\zeta_{12}^8$  &   $\zeta_{12}^4$ \\
      & $(1,0,1,0,0,0)$ & $(0,0,0)$ & $(0,0,0,0)$ &    $
      \zeta_{12}^8$ & $\zeta_{12}$  \\
\hline 
\end{tabular} 

}
\end{center}

\

Here, we omitted solutions obtained by swapping coordinates $x_4$ and $x_5$ and scaling coordinates $x_1$ and $x_2$. After an additional equivariant change of coordinates, we  obtain the required assertion.
\end{proof}

\subsection*{Intermediate Jacobian}
Using arguments as in Section~\ref{sect:intjac}, we settle the linearizability problem for 2-nodal cubic threefolds.

\begin{theo}
\label{theo:2-nodal}
Let $X\subset \bP^4$ be a 2-nodal cubic, and $G\subseteq \Aut(X)$ a subgroup not fixing any node of $X$.
Then the $G$-action on $X$ is not linearizable.
\end{theo}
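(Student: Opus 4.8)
The plan is to deploy the intermediate Jacobian obstruction of Theorem~\ref{theo:IJ}, transported through the conic bundle model of the preceding subsection. The first, purely group-theoretic, step is to note that since $X$ carries exactly two nodes and $G$ fixes neither, the action of $G$ on $\mathrm{Sing}(X)$ gives a surjection $G\twoheadrightarrow\fS_2$; thus $G$ contains a node-exchanging element, which in the coordinates of \eqref{eqn:cube-2} acts (up to scalars, as in Proposition~\ref{prop:2-nodes}) by $x_1\leftrightarrow x_2$. Passing to $V_4\colon z_1z_2=q_1q_2-x_3f_3$ and to the smooth conic bundle $\pi\colon Y\to\bP^2_{x_3,x_4,x_5}$ of \eqref{diagram:conicbundle}, such an element acts by $z_1\leftrightarrow z_2$, i.e.\ it swaps the two rulings $\{z_1=0\}$ and $\{z_2=0\}$ of the degenerate conics.

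The heart of the argument is to compute $\IJ(Y)$ together with this action. Since the conic degenerates along the discriminant quartic $D=\{q_1q_2-x_3f_3=0\}$ into the globally ordered pair of lines $\{z_1=0\},\{z_2=0\}$, the double cover $\widetilde D\to D$ attached to the conic bundle is \emph{split}: $\widetilde D=D^{+}\sqcup D^{-}$. The intermediate Jacobian is the associated Prym variety, which in this split situation is the anti-invariant summand
$$
\IJ(Y)\;\simeq\;\Prym(\widetilde D/D)\;=\;\{\,(a,-a):a\in\JJ(D)\,\}\;\simeq\;\JJ(D),
$$
a principally polarized abelian variety of dimension $g(D)=3$. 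As $D$ is a smooth plane quartic it is automatically nonhyperelliptic, so the hypotheses of Theorem~\ref{theo:IJ} on $C=D$ are met. (Here $Y$ is rational, so $\IJ(Y)$ being a nontrivial Jacobian is entirely consistent with Clemens--Griffiths; the content is equivariant.) This is exactly the computation anticipated in the earlier Example of the split conic bundle $x_1x_2=f$.

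It then remains to read off the action on $\IJ(Y)$ and invoke Remark~\ref{rema:IJ}. A node-exchanging $g\in G$ swaps the sheets $D^{+}\leftrightarrow D^{-}$ while inducing some $\phi_g\in\Aut(D)$ on the base; hence on $\JJ(\widetilde D)=\JJ(D)\times\JJ(D)$ it acts by $(a,b)\mapsto(\phi_g b,\phi_g a)$, so on the Prym it sends $(a,-a)\mapsto-(\phi_g a,-\phi_g a)$. Thus $g$ acts on $\IJ(Y)\simeq\JJ(D)$ as $-\phi_g$, and under $\nu\colon G\to\Aut(\JJ(D))=\Aut(D)\times C_2$ of \eqref{eqn:second} its $C_2$-component is nontrivial precisely when $g$ exchanges the nodes; elements fixing both nodes preserve each ruling and act by $+\psi_g$, with trivial $C_2$-component. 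Hence the $C_2$-projection of $\nu$ coincides with the node-exchange character $G\twoheadrightarrow\fS_2$, nontrivial by hypothesis. By Remark~\ref{rema:IJ} this rules out any $G$-equivariant birational map $X\dashrightarrow\bP^3$; equivalently, when $G$ contains a node-swapping involution $\tau$, Theorem~\ref{theo:IJ} applies verbatim to the smooth model $Y$. Either way the $G$-action is not linearizable.

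The main obstacle is the intermediate Jacobian computation in the split case together with the sign of the involution. The delicate point is that the discriminant double cover is split rather than connected, so the usual Prym dimension $g(D)-1$ is replaced by the full $g(D)=3$; one must carry out the anti-invariant Hodge computation on the resolved conic bundle $Y$, match the principal polarization with that of $\JJ(D)$, and verify --- tracking $z_1\leftrightarrow z_2$ through the weighted contractions $\alpha,\beta$ and the blow-up $\gamma$ --- that the node-exchange genuinely induces the sheet-swap of $\widetilde D$, hence multiplication by $(-1)$ on the Prym. Once this is in place, the group-theoretic conclusion via Remark~\ref{rema:IJ} is immediate.
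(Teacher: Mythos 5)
Your proposal is correct in outline and shares the paper's overall strategy — reduce to a node-exchanging element, pass to the conic bundle $\pi\colon Y\to\bP^2_{x_3,x_4,x_5}$ of \eqref{diagram:conicbundle}, identify $\IJ(\widetilde X)\simeq\IJ(Y)\simeq\JJ(D)$ for the smooth discriminant quartic $D$, and conclude via Remark~\ref{rema:IJ} — but you establish the crucial sign by a genuinely different mechanism. The paper computes both actions explicitly on cotangent spaces: using Cynk's results on the defect, it identifies $\rH^2(\widetilde X,\Omega^1_{\widetilde X})^\vee$ with the sections of $\cO_{\bP^4}(1)$ vanishing at the two nodes, writes down explicit rational $4$-forms on $\bP^4$ and $2$-forms on $\bP^2$, and reads off that the eigenvalues of $\iota$ on $\cT_0\,\IJ(\widetilde X)^\vee$ and on $\cT_0\,\JJ(D)^\vee$ are $\bigl(-\tfrac{uv}{\lambda^2},-\tfrac{u^2v}{\lambda^2},-\tfrac{uv^2}{\lambda^2}\bigr)$ versus $\bigl(\tfrac{uv}{\lambda^2},\tfrac{u^2v}{\lambda^2},\tfrac{uv^2}{\lambda^2}\bigr)$, i.e.\ differ exactly by $-1$. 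You instead argue structurally: the discriminant double cover is split, $\widetilde D=D^{+}\sqcup D^{-}$ with the two rulings globally cut out by $z_1=0$ and $z_2=0$, so $\IJ(Y)$ is the anti-invariant part of $\JJ(\widetilde D)=\JJ(D)^2$, and a node swap exchanges the sheets, hence acts by $-\phi_g$. This is the conceptual explanation of the sign, and it predicts exactly the eigenvalue pattern the paper computes. The caveat — which you correctly flag as the main obstacle, but do not discharge — is that the pivotal identification $\IJ(Y)\simeq\Prym(\widetilde D/D)\simeq\JJ(D)$, as principally polarized abelian varieties and equivariantly for the full automorphism action, is not off-the-shelf: Beauville's theorem assumes a standard conic bundle with connected discriminant cover, whereas here the bundle is non-standard and the cover is split, so the Prym-type statement (compatibility of the cylinder map with polarizations and with the $G$-action) must itself be proven; the paper's explicit Hodge computation is precisely the verification that fills this hole, so your blueprint and the paper's proof coincide in technical substance at that point. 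Two minor corrections: the parenthetical claim that Theorem~\ref{theo:IJ} applies ``verbatim'' to a node-swapping involution $\tau$ is inaccurate, since $\tau$ acts on $\IJ(Y)$ by $-\phi_\tau$ rather than by $-\mathrm{id}$ unless $\phi_\tau$ is trivial — the correct tool is Remark~\ref{rema:IJ}, which your main argument does use; and the assertion that a node swap exchanges $z_1$ and $z_2$ (rather than mixing them) requires normalizing the scalars $s=t$ in $\iota(x_1,x_2)=(sx_2,tx_1)$, which the coordinate choices of Proposition~\ref{prop:2-nodes} guarantee.
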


\begin{proof}
By the assumptions, $G$ contains an element $\iota$ switching the nodes of $X$.
It suffices to prove the required assertion for $G=\langle\iota\rangle$.
With the notation as above, 
we may assume that $X$ is given by \eqref{eqn:cube-2}, i.e., 
$$
f=x_1x_2x_3 + x_1q_2 + x_2q_2 + f_3=0,
$$
and $\iota$ acts on the coordinates via
$$
\iota: (x_1,x_2,x_3,x_4,x_5)\mapsto(sx_2,tx_1,x_3,ux_4,vx_5),
$$
for roots of unity $s,t,u,v$. 
Introducing new coordinates
$$
w_1=\sqrt{t}x_1+\sqrt{s}x_2,\quad 
w_2=\sqrt{t}x_1-\sqrt{s}x_2,
$$
we diagonalize $\iota$, so that it acts via 
\begin{align}\label{eq:iota2nod}
    \iota: (w_1,w_2,x_3,x_4,x_5)\mapsto(\lambda w_1,-\lambda w_2,x_3,ux_4,vx_5),
\end{align}
where $\lambda=\sqrt{st}$. The equation of $X$ in the new coordinates becomes 
$$
f'=(w_1^2-w_2^2)x_3+(w_1+w_2)2\sqrt s q_1+(w_1-w_2)2\sqrt t q_2+4\sqrt{st}f_3=0
$$
Note that $\iota^*(f')=\lambda^2f'$. 

Recall that $X$ is $G$-birational to the conic bundle \eqref{diagram:conicbundle}. The conic bundle is not standard. In particular, the intermediate Jacobian of $\widetilde{X}$ and the Jacobian of the curve $D$ are isomorphic, as principally polarized abelian varieties:
$$
\IJ(\widetilde{X})\simeq \IJ(Y)\simeq \JJ(D),
$$
 where $\widetilde{X}$ is the standard desingularization of $X$, $Y$ is the conic bundle in \eqref{diagram:conicbundle} and $D$ is its discriminant curve in $\bP^2_{x_3,x_4,x_5}$ given by the quartic form $h=q_1q_2-x_3f_3$. Note that $\iota^*(h)=\lambda^2h$.

Arguing as in the proof of \cite[Lemma~1]{Beauville2013},
we see that $\iota$ acts faithfully on
$\IJ(\widetilde{X})$ and preserves the principal polarization. On the other hand, the $\iota$-action on coordinates $x_3,x_4,x_5$ induces an action on $D$ and its Jacobian $\JJ(D)$. We claim that the $\iota$-actions on $\IJ(\widetilde X)$ and $\JJ(D)$ differ by multiplication by $-1$. Since $D$ is not hyperelliptic, this would imply that the $G$-action on $X$ is not linearizable, by Remark~\ref{rema:IJ}.

To compute the action of $\iota$ on $\IJ(\widetilde{X})$, recall that
its tangent space at zero
$\cT_0 \,\IJ(\widetilde{X})$ is isomorphic to $\rH^2(\widetilde{X},\Omega^1_{\widetilde{X}})$.
We show that $\rH^2(\widetilde{X},\Omega^1_{\widetilde{X}})^\vee$ 
is canonically isomorphic to the linear subspace in 
$$
\rH^0\big(\mathbb{P}^4,\Omega^4_{\mathbb{P}^4}\otimes\mathcal{O}_{\mathbb{P}^4}(2X)\big)\cong 
\rH^0\big(\mathbb{P}^4,\mathcal{O}_{\mathbb{P}^4}(K_{\mathbb{P}^4}+2X)\big)\cong \rH^0\big(\mathbb{P}^4,\mathcal{O}_{\mathbb{P}^4}(1)\big)
$$
consisting of all sections that vanish at the nodes of $X$. 
The proof is essentially contained in \cite{Cynk}. We follow the proof of \cite[Lemma 1]{Beauville2013}. Let $\pi: \widetilde\bP^4\to\bP^4$ be the blow up of $\bP^4$ centered at two nodes $p_1$ and $p_2$ of $X$, and identify $\widetilde X$ with the strict transform of $X$ in $\widetilde \bP^4.$ The exact sequence 
$$
0\to (\cN_{\widetilde{X}/\widetilde{\bP^4}})^\vee\to\Omega_{\widetilde{\bP^4}\vert\widetilde{X}}^1\to\Omega^1_{\widetilde{X}}\to 0
$$
gives rise to a $\langle\iota\rangle$-equivariant exact sequence 
$$
0\to \rH^2(\widetilde{X},\Omega_{\widetilde{X}}^1)\to\rH^3(\widetilde{X},(\cN_{\widetilde{X}/\widetilde{\bP^4}})^\vee)\to \rH^3(\widetilde X,\Omega^1_{\widetilde{\bP^4}\vert\widetilde{X}})\to 0.
$$
By \cite{Cynk}, the dimension of $\rH^3(\widetilde X,\Omega^1_{\widetilde{\bP^4}\vert\widetilde{X}})$ equals the defect of $X$, which is $0$ in our case. It follows that 
$$
\rH^2(\widetilde{X},\Omega_{\widetilde{X}}^1)\cong\rH^3(\widetilde{X},(\cN_{\widetilde{X}/\widetilde{\bP^4}})^\vee).
$$
Similarly, the $\langle\iota\rangle$-equivariant exact sequence 
$$
0\to\cO_{\widetilde{\bP^4}}(-2\widetilde X)\to\cO_{\widetilde{\bP^4}}(-\widetilde{X})\to (\cN_{\widetilde{X}/\widetilde{\bP^4}})^\vee\to0
$$ and the vanishing of $\rH^i(\widetilde{\bP^4},\cO_{\widetilde{\bP^4}}(-\widetilde X))$ (\cite[Corollary 2]{Cynk}) provides an $\langle\iota\rangle$-isomorphism
$$
\rH^3(\widetilde{X},(\cN_{\widetilde{X}/\widetilde{\bP^4}})^\vee)\cong\rH^4(\widetilde\bP^4,\cO_{\widetilde{\bP^4}}(-2\widetilde{X})).
$$
By Serre duality, we have canonical isomorphisms between
$$
\rH^4(\widetilde\bP^4,\cO_{\widetilde{\bP^4}}(-2\widetilde{X}))^\vee\cong\rH^0(\widetilde{\bP^4},K_{\widetilde{\bP^4}}\otimes\cO_{\widetilde{\bP^4}}(2\widetilde{X}))\cong\rH^0(\widetilde{\mathbb{P}^4},\Omega^4_{\widetilde{\mathbb{P}^4}}\otimes\mathcal{O}_{\widetilde{\mathbb{P}^4}}(2\widetilde X)).
$$
So we have a $\langle\iota\rangle$-equivariant isomorphism 
$$
\rH^2(\widetilde{X},\Omega^1_{\widetilde{X}})^\vee\cong\rH^0(\widetilde{\bP^4},K_{\widetilde{\bP^4}}\otimes\cO_{\widetilde{\bP^4}}(2\widetilde{X}))
$$
Let $E_1$ and $E_2$ be the exceptional divisors of $\pi$ over $p_1$ and $p_2$ respectively. By adjunction,
$$
K_{\widetilde{\bP^4}}=\pi^*(\cO_{\bP^4}(-5))\otimes\cO_{\widetilde{\bP^4}}(3E_1+3E_2),
$$
and 
$$
\cO_{\widetilde{\bP^4}}(2\widetilde{X})=\pi^*(\cO_{\bP^4}(6))\otimes\cO_{\widetilde{\bP^4}}(-4E_1-4E_2).
$$
Then we know
$$
K_{\widetilde{\bP^4}}\otimes\cO_{\widetilde{\bP^4}}(2\widetilde{X})=\pi^*(\cO_{\bP^4}(1))\otimes\cO_{\widetilde{\bP^4}}(-E_1-E_2).
$$
It follows that we can canonically identify $\rH^2(\widetilde{X},\Omega^1_{\widetilde{X}})^\vee$ with linear subspace in $\rH^0(\mathbb{P}^4,\Omega^4_{\mathbb{P}^4}\otimes\mathcal{O}_{\mathbb{P}^4}(2X))$, (or equivalently, in $\rH^0(\bP^4,\cO_{\bP^4}(1))$), which consists of all sections that vanish at $p_1$ and $p_2.$
Now we can compute the induced $G$-action on $\cT_0\IJ(\widetilde X)^\vee$ explicitly. Set
$$
z_2=\frac{w_2}{w_1}, \, z_3=\frac{x_3}{w_1}, \, z_4=\frac{x_4}{w_1}, \, z_5=\frac{x_5}{w_1},
$$
and consider the rational $4$-forms 
$$
z_3\omega, \, z_4\omega,\, z_5\omega, \qquad \text{where}\quad\omega=\frac{dz_2\wedge dz_3\wedge dz_4\wedge dz_5}{\left(f'(1,z_2,z_3,z_4,z_5)\right)^2}.
$$
These give sections of 
$\rH^0(\mathbb{P}^4,\Omega^4_{\mathbb{P}^4}\otimes\mathcal{O}_{\mathbb{P}^4}(2X))$,
forming a basis of the subspace consisting of sections that vanish at the nodes of $X$. One computes 
$$
\iota^*(z_2)=-z_2,\quad\iota^*(z_3)=\frac1\lambda z_3,\quad\iota^*(z_4)=\frac{u}{\lambda}z_4,\quad\iota^*(z_5)=\frac{v}{\lambda}z_5
$$
and 
$$
\iota^*(f'(1,z_2,\ldots,z_5)^2)=\iota^*\left(\frac{f'(w_1,w_2,x_3,x_4,x_5)^2}{w_1^6}\right)=\frac{f'(1,z_2,\ldots,z_5)^2}{\lambda^2}.
$$
Using these, we see that $\iota$ acts on $\cT_0 \,\IJ(\widetilde{X})^\vee$ 
with eigenvalues 
$$
-\frac{uv}{\lambda^2},\quad -\frac{u^2v}{\lambda^2},\quad-\frac{uv^2}{\lambda^2}.
$$
Similarly, to compute the action of $\iota$ on $\JJ(D)$, we note that $\cT_0 \,\JJ(D)^\vee$ is canonically isomorphic to   
$$
\rH^0\big(\mathbb{P}^2,\Omega^2_{\mathbb{P}^2}\otimes\mathcal{O}_{\mathbb{P}^2}(D)\big). 
$$
 Set $y_4=\frac{x_4}{x_3}$ and $y_5=\frac{x_5}{x_3}$.
The rational $2$-forms 
$$
\frac{dy_4\wedge dy_5}{h(1,y_4,y_5)},\quad
y_4\frac{dy_4\wedge dy_5}{h(1,y_4,y_5)},\quad 
y_5\frac{dy_4\wedge dy_5}{h(1,y_4,y_5)}
$$
define sections of 
$\rH^0(\mathbb{P}^2,\Omega^2_{\mathbb{P}^2}\otimes\mathcal{O}_{\mathbb{P}^2}(D))$,
forming its basis. One has 
$$
\iota^*(y_4)=uy_4,\quad \iota^*(y_5)=vy_5,\quad \iota^*(h(1,y_4,y_5))=\lambda^2h(1,y_4,y_5)
$$
and $\iota$ acts on $\cT_0 \,\JJ(D)^\vee$ 
with eigenvalues 
$$
\frac{uv}{\lambda^2},\quad \frac{u^2v}{\lambda^2},\quad \frac{uv^2}{\lambda^2}.
$$
This shows that the $\iota$-action on $\IJ(\widetilde{X})\simeq\JJ(D)$
differs from the action on $\JJ(D)$ induced by the action on $D$ by 
multiplication by $-1$ as claimed.    
Therefore, the $G$-action on $X$ is not linearizable.
\end{proof}





\section{Three nodes}
\label{three}

\subsection*{Standard form}
The three nodes are necessarily in general linear position; 
they span a distinguished $G$-stable plane, which is {\em not} contained in $X$. 
This case is labelled  (J3), in \cite{Finkelnbergcubic}. Assume the nodes are 
$$
p_1=[1:0:0:0:0],\quad p_2=[0:1:0:0:0],\quad p_3=[0:0:1:0:0].
$$
The standard form is given by
\begin{equation}
\label{eqn:form}
x_1x_2x_3+x_1q_1 + x_2q_2+x_3q_3+ f_3=0,
\end{equation}
with quadratic $q_1,q_2,q_3\in k[x_4,x_5]$, and cubic $f_3\in k[x_4,x_5]$.
Note that $q_1,q_2,q_3$ must have rank $2$, and
$q_1,q_2,q_3,f_3$ do not share common factors.

\subsection*{Automorphisms} 
We proceed to classify automorphism groups of 3-nodal cubics acting transitively on nodes.

\begin{prop}\label{prop:3nodclassification}
Let $X\subset \bP^4$ be a 3-nodal cubic threefold. 
Assume that $\Aut(X)$ contains an element acting transitively on the nodes. 
Then, up to a change of coordinates, $X$ is given by 
\begin{equation*}
x_1x_2x_3+x_1q_1 + x_2q_2+x_3q_3+ f_3=0,
\end{equation*}
for $q_1,q_2,q_3,f_3\in k[x_4,x_5]$ that can be described together with $\Aut(X)$ as follows.  \begin{enumerate}
    \item  $\Aut(X)=C_3$, generated by 
    $$
     \sigma_1:(x_1,x_2,x_3,x_4,x_5)\mapsto (x_2,x_3,x_1, x_4, \zeta^2_6 x_5), \quad \zeta_6=e^{\frac{2\pi i}{6}},
    $$

        \item[$\bullet$]  with $f_3=ax_4^3+bx_5^3,
            $ for $b\ne 0$, $(a,b)\ne(0,1)$ and
        \begin{align*}
  \hskip -3cm          q_1&=x_4(x_4+x_5),\\ 
   \hskip -3cm         q_2&=x_4(x_4+\zeta^2_6 x_5),\\
   \hskip -3cm         q_3&=x_4(x_4+\zeta^4_6x_5), \quad\text{or}
         \end{align*}
            
        \item[$\bullet$]  with  $f_3=dx_4^3+ex_5^3$,
        \begin{align*}
\hskip -0.6cm q_1&=x_4^2+bx_4x_5+x_5^2,
\\
\hskip -0.6cm  q_2&=x_4^2+\zeta^2_6 bx_4x_5 +\zeta^4_6 x_5^2,\\ \hskip -0.6cm q_3&=x_4^2+\zeta^4_6bx_4x_5+\zeta^2_6x_5^2, \quad  d\ne \pm e, \,\,\text{and } \,\,b\ne\pm2.
\end{align*}

    \item $\Aut(X)=C_6$ generated by 
    $$
     \sigma_2:(x_1,x_2,x_3,x_4,x_5)\mapsto (x_2,x_3,x_1, x_4, \zeta_6 x_5),
    $$
    $f_3=dx_4^3$ for some $d\ne 0$, 
    and \begin{align*}
 \hskip -3.7cm   q_1&=x_4^2+x_5^2,\\
   \hskip -3.7cm   q_2&=x_4^2+\zeta^2_6 x_5^2,\\
   \hskip -3.7cm   q_3&=x_4^2+\zeta^4_6x_5^2.\end{align*}

    \item $\Aut(X)\simeq\fS_3$ generated by  $\sigma_1$ and 
    $$
     \sigma_3: (x_1,x_2,x_3,x_4,x_5)\mapsto (\zeta^2_6x_2,\zeta^4_6x_1,x_3, \zeta^4_6x_5, \zeta^2_6 x_4),
    $$
    \begin{align*} 
\hskip -1.4cm     f_3&=d(x_4^3+x_5^3),\\
\hskip -1.4cm   q_1&=x_4^2+bx_4x_5+x_5^2,\\ 
\hskip -1.4cm   q_2&=x_4^2+\zeta^2_6 bx_4x_5 +\zeta^4_6 x_5^2, \\
\hskip -1.4cm   q_3&=x_4^2+\zeta^4_6bx_4x_5+\zeta^2_6x_5^2,\quad d\ne 0, b\ne\pm2.\end{align*}

    \item $\Aut(X)\simeq C_2\times\fS_3$ generated by  $\sigma_1, \sigma_3$ and 
    $$
     \iota: (x_1,x_2,x_3,x_4,x_5)\mapsto (x_1,x_2,x_3,-x_4,-x_5),
    $$
    with $f_3=0$ for $b\ne\pm2$ and $b^2\ne -2$, and 
    \begin{align*} 
\hskip -2.8cm q_1&=x_4^2+bx_4x_5+x_5^2,\\
\hskip -2.8cm q_2&=x_4^2+\zeta^2_6 bx_4x_5 +\zeta^4_6 x_5^2, \\
\hskip -2.8cm q_3&=x_4^2+\zeta^4_6bx_4x_5+\zeta^2_6x_5^2.\end{align*}
   
    \item $\Aut(X)\simeq C_2\times\fS_3$ generated by 
    $$
     \sigma_4: (x_1,x_2,x_3,x_4,x_5)\mapsto (x_2,x_3,x_1,x_4,x_5),
    $$
    $$
     \sigma_5: (x_1,x_2,x_3,x_4,x_5)\mapsto (x_2,x_1,x_3,x_4,x_5),
    $$
    $$
     \sigma_6: (x_1,x_2,x_3,x_4,x_5)\mapsto (x_1,x_2,x_3,x_5,x_4),
    $$
    with 
    $$
    f_3=\begin{cases} 
    d(x_4+x_5)^3 & d\ne 0, \text{or} \\
    d(x_4+x_5)(x_4-x_5)^2& d\ne 0, \text{or} \\
    (x_4+x_5)(ax_4+bx_5)(bx_4+ax_5) & \text{for } a,b\ne 0, \end{cases}
    $$
    $$
    q_1=q_2=q_3=x_4x_5.
    $$
    
    \item $\Aut(X)\simeq C_3\rtimes\fD_4$ generated by $\sigma_2$ and 
    $$
     \sigma_7: (x_1,x_2,x_3,x_4,x_5)\mapsto (\zeta^4_6x_2,\zeta^2_6x_1,x_3,\zeta^2_6 x_5,\zeta_6x_4),
    $$
    with $f_3=0$ and 
    \begin{align*} 
\hskip -3.7cm    q_1&=x_4^2+x_5^2,\\
 \hskip -3.7cm    q_2&=x_4^2+\zeta_6^2 x_5^2,\\
 \hskip -3.7cm    q_3&=x_4^2+\zeta^4_6x_5^2.\end{align*}
 
    \item $\Aut(X)\simeq \fS_3\times\fS_3$ generated by $\sigma_4,\sigma_5,\sigma_6$ and 
    $$
     \sigma_8: (x_1,x_2,x_3,x_4,x_5)\mapsto (x_1,x_2,x_3,\zeta^2_6 x_4,\zeta^4_6x_5),
    $$
    with $f_3=d(x_4^3+x_5^3)$ for some $d\ne 0$ and 
    $$q_1=q_2=q_3=x_4x_5.$$ 

    \item $\Aut(X)\simeq \GL_2(\bF_3)$ generated by  $\iota, \sigma_1, \sigma_3$ and 
    $$
     \sigma_9: (x_1,x_2,x_3,x_4,x_5)\mapsto(x_2,\zeta^5_6x_1,\zeta_6 x_3, \frac{\zeta_6 bx_4+x_5}{1-\zeta^2_6},\frac{\zeta_6 x_4+bx_5}{1-\zeta^2_6}),
    $$
    with $f_3=0$, $b^2= -2$, and 
\begin{align*} 
 \hskip -2.8cm  q_1&=x_4^2+bx_4x_5+x_5^2,\\ 
  \hskip -2.8cm  q_2&=x_4^2+\zeta^2_6 bx_4x_5 +\zeta^4_6 x_5^2,\\  \hskip -2.8cm  q_3&=x_4^2+\zeta^4_6bx_4x_5+\zeta^2_6x_5^2.\end{align*}

\item  $\Aut(X)=\fS_3=\langle\sigma_4,\sigma_5\rangle$, $q_1=q_2=q_3=x_4x_5$ and $f_3$ such that $X$ is not isomorphic to any cubic in cases $\mathrm{(5)}$ and $\mathrm{(7)}$.
 \end{enumerate}
\end{prop}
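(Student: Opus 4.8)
The plan is to follow the strategy of Proposition~\ref{prop:2-nodes}: first put a node-transitive element into a rigid monomial normal form, then solve the resulting linear constraints on $(q_1,q_2,q_3,f_3)$, and finally upgrade from the transitive subgroup to the full automorphism group.

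\textbf{Normalizing the transitive element.} The three nodes span the $\Aut(X)$-stable plane $\Pi=\{x_4=x_5=0\}$, and since $\Aut(X)$ acts linearly on $\bP^4$ (Section~\ref{sect:intjac}) and is finite, Maschke's theorem gives an $\Aut(X)$-invariant complement to $\langle x_1,x_2,x_3\rangle$; after a coordinate change we may take it to be $\langle x_4,x_5\rangle$, so the action is block-diagonal. Any element $\sigma$ acting transitively on $\{p_1,p_2,p_3\}$ then permutes $x_1,x_2,x_3$ cyclically (up to scalars) and acts on $x_4,x_5$ by a finite-order, hence diagonalizable, transformation; diagonalizing, we may assume
$$\sigma:(x_1,x_2,x_3,x_4,x_5)\mapsto (sx_2,tx_3,rx_1,ux_4,vx_5).$$
The image of $\Aut(X)$ in $\mathrm{Sym}(\{p_1,p_2,p_3\})\cong\mathfrak{S}_3$ is transitive, hence $C_3$ or $\mathfrak{S}_3$; this dichotomy organizes the case analysis.

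\textbf{Solving for the invariant forms.} Imposing $\sigma^*f=(\text{scalar})\,f$ on the standard form~\eqref{eqn:form} and comparing monomials forces the cyclic relations permuting $q_1,q_2,q_3$ (up to scalars) together with the eigenvector condition $\sigma^*f_3\propto f_3$. Writing $q_i=a_ix_4^2+b_ix_4x_5+c_ix_5^2$ and $f_3$ in coordinates yields a polynomial system in the coefficients and in $s,t,r,u,v$, exactly as in the proof of Proposition~\ref{prop:2-nodes}; the rank-$2$ conditions on the $q_i$ and the no-common-factor condition (ensuring the only singularities are the three prescribed nodes) cut out the admissible solutions. Solving this system (e.g.\ with \texttt{Magma}) and absorbing the residual coordinate freedom --- rescaling $x_4,x_5$, swapping $x_4\leftrightarrow x_5$, and rescaling $x_1,x_2,x_3$ --- produces the normal forms (1)--(10); the extra generators $\sigma_3$, respectively $\sigma_5,\sigma_6$, realizing a transposition account for the $\mathfrak{S}_3$-image cases.

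\textbf{Computing the full automorphism group.} For each normal form we must show the listed group is all of $\Aut(X)$. Every automorphism preserves the intrinsic plane $\Pi$ and induces both a node-permutation in $\mathfrak{S}_3$ and an element of the $\PGL_2$ acting on $\bP^1_{x_4,x_5}=\bP\big(k^5/\langle x_1,x_2,x_3\rangle\big)$; this pair determines the automorphism up to the residual diagonal scalars fixed by the equation. The constraint is that the induced transformation must preserve the labelled nine-point configuration on $\bP^1_{x_4,x_5}$ cut out by $q_1q_2q_3f_3$, with its labelling of the three conic-pairs by the nodes. Computing the stabilizer of this configuration bounds $\Aut(X)$ from above, while the explicit generators give the matching lower bound. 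The sporadic enhancements occur at special parameter values: $f_3=0$ admits the extra involution $\iota$ (cases (5),(7)), and $b^2=-2$ makes the configuration invariant under an additional order-$4$ transformation, producing $\GL_2(\bF_3)$ in (9); one checks these exhaust the coincidences.

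\textbf{Main obstacle.} The genuine difficulty is twofold: verifying that no \emph{further} automorphisms exist beyond those listed --- i.e.\ that the upper bounds above are sharp and that the enhancement loci ($f_3=0$, $b=\pm2$, $b^2=-2$, $d=\pm e$, etc.) are exactly as stated --- and establishing the genericity clause in case (10), namely that a cubic with $q_1=q_2=q_3=x_4x_5$ and general $f_3$ is \emph{not} isomorphic to any member of families (5) or (7). For the latter I would use a separating invariant: the $\PGL_2$-equivalence class of the labelled configuration $q_1q_2q_3f_3$ on $\bP^1_{x_4,x_5}$ (equivalently, the degeneration data of the projection from $\Pi$), showing that the cross-ratio constraints forced in (5),(7) are violated by a general $f_3$.
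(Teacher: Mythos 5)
This is essentially the paper's own proof: it too normalizes a node-transitive element to monomial-diagonal form (its version fixes $u=1$, $v=\zeta_6^r$ using that the zeros of $q_1q_2q_3$ give at most six points of $\bP^1_{x_4,x_5}$), imposes $\sigma^*$-invariance to get the cyclic relations among $q_1,q_2,q_3$ and the eigenvector condition on $f_3$, case-analyzes the resulting labelled point configurations on $\bP^1_{x_4,x_5}$ (by hand, organized by the gcd structure of the $q_i$, rather than by a \texttt{Magma} solve), and then determines the kernel $H$ and the extra generators much as you outline. The one step needing repair is your Maschke argument: an arbitrary invariant complement of $\langle x_1,x_2,x_3\rangle$ need not keep the equation in the cross-term-free form \eqref{eqn:form} (the coordinate change adapted to it can reintroduce terms $x_ix_j\ell_{ij}(x_4,x_5)$); what one should say is that the cross-term-free complement is \emph{unique}, since any substitution $x_i\mapsto x_i+\ell_i(x_4,x_5)$ with some $\ell_i\ne 0$ creates such terms, hence it is automatically $\Aut(X)$-stable, which yields the block-diagonal form that both you and the paper rely on.
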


\begin{proof}
 
Let $X\subset\bP^4$ be a 3-nodal cubic threefold given by~\eqref{eqn:form}, with 
$\Aut(X)$ not fixing any node.  
There exists an exact sequence
\begin{align}\label{eq:3nodaexactseq}
0\to H\to\Aut(X)\stackrel{\rho}{\to}\fS_3\to 0
\end{align}
and a $\sigma_{123}\in \Aut(X)$  acting transitively on the nodes, so that $\rho(\sigma_{123})=(1,2,3)$.
The zeroes of $q_1,q_2,q_3$ define at most 6 points on  $\bP^1_{x_4,x_5}$, 
thus 
\begin{equation}
    \label{eqn:diag-form}
\sigma_{123}=\begin{pmatrix}
0&0&s_3&0&0\\
s_1&0&0&0&0\\
0&s_2&0&0&0\\
0&0&0&1&0\\
0&0&0&0&\zeta_6^r
\end{pmatrix}
\end{equation}
for some $s_1,s_2,s_3\in k^\times$, where $\zeta_6=e^{\frac{2\pi i}{6}}$.
We have  the following cases:
\begin{enumerate}    
    \item[(a)] $\mathrm{gcd}(q_1,q_2,q_3)=1$. We may assume that     \end{enumerate} 
$$
q_1=x_4^2+bx_4x_5+x_5^2, \qquad 
b\in k,\quad b\ne\pm 2.
$$ 
The cyclic action on $x_1,x_2$, and $x_3$ implies that $q_2$ and $q_3$ are multiples of $\sigma_{123}^*(q_1)$ and $\sigma_{123}^*(q_2)$, respectively, and $\sigma_{123}^*(f_3)=s_1s_2s_3f_3.$ The torus action on the coordinates $x_1,x_2$ and $x_3$
allows us to assume that  
\begin{align*}
q_2&=\sigma_{123}^*(q_1),\\
q_3&=\sigma_{123}^*(q_2).
\end{align*}
Since $q_1$, $q_2$, $q_3$ are coprime,
we have $r\ne 0, 3$. 
Thus, $r=1$ or $r=2$. 
    
\begin{itemize}
\item If $r=1$, then $b=0$, the entries in \eqref{eqn:diag-form} are $s_1=s_2=s_3=\pm 1$, and
\begin{align*}
    q_1&=x_4^2+x_5^2, \\ q_2&=x_4^2+\zeta^2_6x_5^2, \\ q_3&=x_4^2+\zeta^4_6x_5^2,
\end{align*}
There are subcases: 
\begin{itemize}
    \item $f_3(x_4,x_5)\not\equiv 0.$ Then $\sigma_{123}$ fixes the points defined by $f_3$ in $\bP^1$. And up to isomorphism, $f_3=dx_4^3$ or $dx_4x_5^2$, for some $d\ne 0$. Since $\sigma_{123}^*(f_3)=s_1s_2s_3f_3$, the latter is impossible. So $f_3=dx_4^3$ and $s_1=1$. This gives $\rho(\Aut(X))=C_3$. On the other hand, any $\gamma\in H$ takes the form 
$$
(x_1,x_2,x_3,x_4,x_5)\mapsto(t_1x_1,t_2x_2,t_3x_3,t_4x_4+t_5x_5,t_6x_4+t_7x_5),
$$
for some $t_j\in k^\times$.
Since $\gamma$ leaves \eqref{eqn:form} invariant, one finds 
$H=C_2=\langle\sigma_{2}^3\rangle$ and $\Aut(X)\simeq C_6=\langle\sigma_{2}\rangle$,
where
    $$
   \sigma_2\colon (x_1,x_2,x_3,x_4,x_5)\mapsto (x_2,x_3,x_1,x_4,\zeta_6 x_5).
    $$
\item $f_3(x_4,x_5)\equiv 0$. Then 
    $$
    H\simeq C_2^2=\langle\iota,\sigma_{2}^3\rangle,
    \quad \mathrm{Aut}(X)\simeq C_3\rtimes\mathfrak{D}_4 = \langle \sigma_{2}, \sigma_{7}\rangle, 
    $$
    where
\begin{align} 
\label{eqn:inv-iota}
\iota\colon (x_1,x_2,x_3,x_4,x_5) & \mapsto (x_1,x_2,x_3,-x_4,-x_5), \\ 
\sigma_{7} \colon (x_1,x_2,x_3,x_4,x_5)&\mapsto 
(\zeta^4_6x_2,\zeta^2_6x_1,x_3,\zeta^2_6x_5,\zeta_6 x_4).
\end{align}
\end{itemize}

\item If $r=2$, then 
\begin{align*} 
q_1&=x_4^2+bx_4x_5+x_5^2, \\ q_2&=x_4^2+\zeta^2_6 bx_4x_5 +\zeta^4_6 x_5^2, \\ q_3&=x_4^2+\zeta^4_6bx_4x_5+\zeta^2_6x_5^2,
\end{align*}
$b\ne 0$ and $s_1=s_2=s_3=\pm 1$. When $b\ne 1$, $q_1,q_2,q_3$ define $6$ points in $\bP^1$, but when $b=1$, they define 3 points. There are subcases:
\begin{itemize}
    \item $f_3(x_4,x_5)=el_1l_2l_3$, $e\in k^\times$.  Then $\sigma_{123}$ permutes the points defined by $f_3$ in $\bP^1$,
    i.e., 
    $$l_1=x_4+dx_5, \quad l_2=x_4+\zeta^2_6dx_5, \quad  l_3=x_4+\zeta^4_6 dx_5, \quad d\in k^\times. 
    $$ 
    In this case, $\sigma_{123}$ takes the form
    $$
\sigma_{1}\colon (x_1,x_2,x_3,x_4,x_5)\mapsto (x_2,x_3,x_1,x_4,\zeta^2_6 x_5).
$$
One finds that $H=0$ and 
$$
\mathrm{Aut}(X)\simeq \begin{cases} 
C_3=\langle\sigma_{1}\rangle & d^6\ne 1,\\
\fS_3=\langle\sigma_{1},\sigma_{3}\rangle, & d^3=1,\\
\fS_3=\langle\sigma_{1},\sigma_{3}'\rangle, & d^3=-1.
\end{cases}
$$
$$
\sigma_{3}\colon (x_1,x_2,x_3,x_4,x_5)\mapsto (\zeta^2_6x_2,\zeta^4_6x_1,x_3,\zeta^4_6x_5,\zeta^2_6 x_4),
$$ 
$$
\sigma_{3}'\colon (x_1,x_2,x_3,x_4,x_5)\mapsto (\zeta^2_6x_2,\zeta^4_6x_1,x_3,\zeta_6x_5,\zeta^5_6 x_4).
$$
    \item $f_3=l_1l_2^2$. Then $\sigma_{123}$ fixes two points defined by $f_3$ in $\bP^1$, and $f_3=dx_4^2x_5$ or $dx_4x_5^2$, for some $d\in k^\times$. But then ~\eqref{eqn:form} cannot be $\sigma_{123}$-invariant. So this case does not exist.
    \item $f_3=l^3$. Then $f_3=dx_4^3$ or $dx_5^3$, for some $d\ne 0$. One finds 
    $$H\simeq 0, \quad \Aut(X)\simeq C_3=\langle\sigma_{1}\rangle.
    $$
    \item $f_3\equiv 0$. Then $\Aut(X)$ contains the involution $\iota$ from \eqref{eqn:inv-iota}. Up to a twist by $\iota$, we may assume $\sigma_{123}=\sigma_1$.
Note that $\mathrm{Aut}(X)$ also contains $\sigma_3.$
Using the same argument to find $H$ as above, one gets that when $b^2\ne -2$, 
$$
H\simeq C_2=\langle\iota\rangle, \quad \mathrm{Aut}(X)
\simeq C_2\times\mathfrak{S}_3
=\langle\iota,\sigma_{1},\sigma_{3}\rangle;
$$
when $b^2=-2$, 
$$
H\simeq Q_8, \quad 
\mathrm{Aut}(X)\cong\mathrm{GL}_2(\mathbb{F}_3)=\langle 
\iota,\sigma_{1},\sigma_{3}, \sigma_{9}\rangle, 
$$ 
$$
\sigma_{9}: (x_1,x_2,x_3,x_4,x_5)\mapsto(x_2,\zeta^5_6x_1,\zeta_6 x_3, \frac{\zeta_6 bx_4+x_5}{1-\zeta^2_6},\frac{\zeta_6 x_4+bx_5}{1-\zeta^2_6}).
$$
\end{itemize}
\end{itemize}
\begin{enumerate}
    \item[(b)]
When $q_1=l_1l, q_2=l_2l$ and $q_3=l_3l$ and $l_1, l_2, l_3$ are coprime. Then $\Aut(X)$ fixes the point in $\bP^1_{x_4,x_5}$ defined by $l$, 
and acts as $C_3$ on the three points defined by $l_1, l_2$ and $l_3$. 
This implies that $r=2$ in \eqref{eqn:diag-form},
and that 
$$
l=x_4, \quad l_1=x_4+x_5,\quad  l_2=x_4+\zeta_3 x_5, \quad  l_3=x_4+\zeta^{2}_3x_5.
$$ 
Then either $f_3=l'^3$ defines one point and $\sigma_{123}$ fixes the point, or $f_3=l_1'l_2'l_3'$, defining three distinct points, with $\sigma_{123}$ permuting them, i.e., 
$f_3=ax_4^3+bx_5^3$ for some $a,b\in k$. Since $X$ is 3-nodal, one has $b\ne 0$ and $(a,b)\ne(0,1)$. From the form of $f_3$, one sees $\sigma_{123}=\sigma_1$. And $H=0$ since any element in $H$ fixes 4 points on $\bP^1$, defined by $l,l_1,l_2,l_3$, and acts trivially on $x_4, x_5$. Moreover, one can show that no action on $\bP^1$ fixes two points defined by $l$ and $l_1$ and swaps those defined by $l_2$ and $l_3$ at the same time. Therefore, $\rho(\Aut(X))=C_3$ and 
$$
H\simeq 0, \quad \Aut(X)\simeq C_3=\langle\sigma_{1}\rangle.
$$
\end{enumerate}
\begin{enumerate}
    \item[(c)] $q_1=q_2=q_3=q$:
We may assume that $q=x_4x_5$. In this case, the exact sequence~\eqref{eq:3nodaexactseq} splits and
$$
\quad \Aut(X)\simeq H\times\fS_3,
$$
with the factor $\fS_3$ acting via permutations of $x_1, x_2$ and $x_3$ and trivially on $x_4, x_5$. 
Moreover, it is easy to see that $H$ must act faithfully on $\bP^1_{x_4,x_5}$. 
Since $H$ preserves the pair of points defined by $q$ in $\bP^1_{x_4,x_5}$, 
it is either cyclic or dihedral. 
Assume that $H\ne 1$. Then the structure of $\Aut(X)$ depends on $f_3$ as follows:
\end{enumerate} 
\begin{itemize}
    \item $f_3=dl^3$, for some $d\ne0$ and linear form $l$ in $x_4$ and $x_5$. Then $H$ has a fixed point, i.e., $H$ is a cyclic group. Moreover, $H$ swaps two points and thus $H\simeq C_2$ with $l=x_4+x_5$ and $H$ is generated by swapping coordinates $x_4$ and $x_5.$
    \item $f_3=dl_1^2l_2$, for some $d\ne 0$ and linear forms $l_1$ and $l_2$ defining two distinct points in $\bP^1$. Then $H$ fixes two points defined by $l_1$ and $l_2$, and swaps two points defined by $q$. Similarly, we have $H\simeq C_2$ with 
    $$l_1=x_4-x_5,\quad l_2=x_4+x_5,
    $$
    where $H$ is generated by swapping $x_4$ and $x_5.$
    \item $f_3=dl_1l_2l_3$, defining three distinct points. There are subcases:
    \begin{itemize}
        \item $H$ permutes three points defined by $f_3$ and swaps two points defined by $q$. Then $H\simeq \fS_3$, 
        generated by 
        \begin{align*} 
        \sigma_8:(x_1,x_2,x_3,x_4,x_5)&\mapsto (x_1,x_2,x_3,\zeta_3 x_4, \zeta^2_3 x_5)\\
        \sigma_6:(x_1,x_2,x_3,x_4,x_5)&\mapsto (x_1,x_2,x_3,x_5, x_4),       
        \end{align*}
        and 
        \begin{equation} \label{eqn:prev}
        l_1=x_4+x_5, \quad l_2=\zeta_3 x_4+\zeta_3^2x_5, \quad l_3=\zeta^2_3x_4+\zeta_3 x_5.
        \end{equation}
         \item $H$ permutes three points defined by $f_3$ and fixes two points defined by $q$, thus $H\simeq C_3$, and  $l_1,l_2,l_3$ are
     as in \eqref{eqn:prev}. However, 
         we know that $X$ admits an additional symmetry swapping two points in $q$ as in the case above.
        \item $H$ fixes the point defined by $l_1$. Then $H$ swaps two points defined by $q$ and two points defined by $l_2$ and $l_3$ because otherwise $H$ is trivial. In this case $H\simeq C_2$, with 
        $$l_1=x_4+x_5, \quad l_2=ax_4+bx_5, \quad l_3=bx_5+ax_4,
        $$ for some $a,b\ne 0$, and $\left(\frac ab\right)^3\ne1$. Similarly, $H$ is generated by the involution swapping $x_4$ and $x_5$.
    \end{itemize}
\end{itemize}
   
\end{proof}

\subsection*{Del Pezzo fibration}
We have an $\mathrm{Aut}(X)$-equivariant commutative diagram:
$$
\xymatrix{
\widetilde{X}\ar@{->}[d]_{\pi}\ar@{-->}[rr]^{\varrho}&&Y\ar@{->}[d]^{\phi}\\%
X\ar@{-->}[rr]_{\rho} &&\mathbb{P}^1}
$$
where $\pi$ is a blow up of the nodes of $X$, 
$\varrho$ is a composition of flops in the strict transforms of the lines 
$$
\{x_1=x_4=x_5=0\}, \quad \{x_2=x_4=x_5=0\},
\quad \{x_3=x_4=x_5=0\},
$$
$\phi$ is a fibration into Del Pezzo surfaces of degree~$6$,
and $\rho$ is the projection given by $$
(x_1,x_2,x_3,x_4,x_5)\mapsto (x_4,x_5).
$$
The anticanonical model of $\widetilde{X}$ is a singular Fano threefold of degree $18$
that has $3$ nodes, which can be smoothed to a smooth Fano threefold of the same degree with Picard rank $1$.


\subsection*{Fixed point obstruction} Among actions in Proposition~\ref{prop:3nodclassification}, we find one example where the linearizability is obstructed by the absence of fixed points upon restriction to abelian subgroups.
\begin{exam}\label{exam:3nodabelian}
    Consider the 3-nodal $X$ in Case (7), Proposition~\ref{prop:3nodclassification}, and the $G=C_3^2=\langle\sigma_4,\sigma_8\rangle$ action on it. The $G$-action does not have a fixed point on $X$ and $\widetilde X^G=\emptyset$. By Lemma~\ref{lemm:fixedptabelian}, the $G$-action on $X$ is not linearizable.  
\end{exam}
\subsection*{Specialization}
Here we exhibit specialization to the 9-nodal cubic with $C_3$-action  giving an {\bf (H1)}-obstruction to stable linearizability. 

\begin{prop}\label{prop:spec3-9}
Let $\cX\to\bA^1_k$ be a family of cubic threefolds $X_b:=\cX_b$  given by 
$$
f_b:=x_1x_2x_3+(x_1+x_2+x_3)x_4x_5+(x_4+x_5)(x_4+bx_5)(bx_4+x_5)=0
$$
for $b\in k$. Consider the $G=C_3$ action on $X_b$ generated by 
$$
(x_1,x_2,x_3,x_4,x_5)\mapsto(x_2,x_3,x_1,x_4,x_5).
$$
Then a very general element in $\cX$ is not $G$-stably linearizable.
\end{prop}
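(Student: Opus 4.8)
The plan is to realize the statement as an instance of the specialization criterion of Proposition~\ref{prop:flat}, degenerating $X_b$ to the special fiber $X_0$ at $b_0=0$. First I would identify $X_0$: setting $b=0$ collapses the cubic form $(x_4+x_5)(x_4+bx_5)(bx_4+x_5)$ to $x_4x_5(x_4+x_5)$, so that $f_0 = x_1x_2x_3 + x_4x_5(x_1+x_2+x_3+x_4+x_5)$; substituting $x_6=-(x_1+\cdots+x_5)$ identifies $X_0$ with the $9$-nodal cubic $\{x_1x_2x_3 = x_4x_5x_6\}\cap\{\sum_{i=1}^6 x_i=0\}$ of Theorem~\ref{theo:Avilov}(2). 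For very general $b$ the fiber $X_b$ is exactly $3$-nodal (Case (6) of Proposition~\ref{prop:3nodclassification}, with $a=1$), with nodes $p_1,p_2,p_3$ that the generator of $G=C_3$ cyclically permutes.

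Next I would record the combinatorics of $X_0$, since this governs both the $BG$-rationality of the extra singularities and the cohomological obstruction. The $9$ nodes $n_{ik}$ and the $9$ planes $\Pi_{ik}=\{x_i=x_k=0\}$ (for $i\in\{1,2,3\}$, $k\in\{4,5,6\}$) form a $3\times 3$ grid, with $n_{i'k'}\in\Pi_{ik}$ iff $i'\ne i$ and $k'\ne k$; the $C_3$-action cyclically permutes the row index $i$ and fixes the column index $k$, so on both nodes and planes it has three free orbits, namely the three columns. The three persisting nodes $p_1,p_2,p_3$ form one column (one free orbit), and the $6$ new nodes are the other two columns, i.e.\ two free $C_3$-orbits with trivial stabilizer; by Example~\ref{exam:BG} these are $BG$-rational singularities.

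The heart of the argument is to show the $C_3$-action on $X_0$ carries an \textbf{(H1)}-obstruction, hence is not stably linearizable. I would compute $\mathrm{Cl}(X_0)$ as the quotient of the permutation module $\bigoplus_{i,k}\bZ\,\Pi_{ik}\cong \bZ[C_3]^{\oplus 3}$ by the relation lattice $R$ generated by ``all row sums equal'' and ``all column sums equal''. A short check shows $R$ has rank $4$ and splits $C_3$-equivariantly as $R\cong I_{C_3}\oplus \bZ^2$, where $I_{C_3}=\ker(\bZ[C_3]\to\bZ)\cong\bZ[\zeta_3]$ is the rank-$2$ augmentation ideal spanned by the differences of row-sums, and $\bZ^2$ is the trivial module spanned by the ($\sigma$-fixed) differences of column-sums. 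Since $\bZ[C_3]^{\oplus 3}$ is cohomologically trivial, the long exact sequence gives $\rH^1(C_3,\mathrm{Cl}(X_0))\cong \rH^2(C_3,R)\cong\rH^2(C_3,\bZ^2)\cong(\bZ/3)^2$, the augmentation-ideal summand contributing nothing in degree $2$. Finally, the exact sequence $0\to\bigoplus_{i,k}\bZ[E_{ik}]\to \Pic(\tilde X_0)\to \mathrm{Cl}(X_0)\to 0$ from the nine exceptional divisors has cohomologically trivial kernel $\bZ[C_3]^{\oplus 3}$, so $\rH^1(C_3,\Pic(\tilde X_0))\cong \rH^1(C_3,\mathrm{Cl}(X_0))\ne 0$. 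Thus $\Pic(\tilde X_0)$ is not stably permutation and $X_0$ is not $C_3$-stably linearizable.

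To conclude, I would resolve the generic fiber: the nodes $p_1,p_2,p_3$ are constant sections of $\cX\to\bA^1_k$, and blowing them up equivariantly produces a family with smooth generic fiber whose special fiber is $X_0$ with these three nodes resolved, retaining only the $BG$-rational singularities at the six new nodes. We are then in the setting of Proposition~\ref{prop:flat}, and the specialization of the stable obstruction (exactly as in Example~\ref{exam:2-spec}) yields that the $G$-action on a very general fiber is not stably linearizable. I expect the main obstacle to be the cohomological step: justifying that the plane classes generate $\mathrm{Cl}(X_0)$ integrally with precisely the row/column relations, pinning down the $C_3$-splitting $R\cong I_{C_3}\oplus\bZ^2$ as lattices, and confirming that the stable (rather than merely unstable) obstruction persists under the specialization homomorphism $\rho_\pi^{C_3}$ of \cite{BnG}.
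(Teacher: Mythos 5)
Your proposal follows the same route as the paper's proof: degenerate at $b=0$ to the $9$-nodal cubic $\{x_1x_2x_3=x_4x_5x_6\}\cap\{\sum_{i=1}^6 x_i=0\}$, check that the six extra nodes form two free $C_3$-orbits (hence $BG$-rational singularities by Example~\ref{exam:BG}), establish an \textbf{(H1)}-obstruction for the $C_3$-action on the special fiber, and conclude via Proposition~\ref{prop:flat}. The paper simply quotes the cohomology computation from Section~\ref{sect:nine}, whereas you carry it out by hand; this is exactly where your argument has a concrete error.

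The relation lattice $R$ you use is too small. In $\mathrm{Cl}(X_0)$ the three row sums $r_i=\sum_k \Pi_{ik}$ and the three column sums $c_k=\sum_i\Pi_{ik}$ are not merely ``equal within rows'' and ``equal within columns'': each of them equals the hyperplane class $H$, since $\{x_i=0\}$ cuts out the $i$-th row of planes and $\{x_k=0\}$ the $k$-th column. Hence $R$ also contains the mixed differences $r_i-c_k$. One checks that $3(r_1-c_4)\in R_0:=\langle r_i-r_j\rangle\oplus\langle c_k-c_l\rangle$ but $r_1-c_4\notin R_0$, so $R\supsetneq R_0\cong I_{C_3}\oplus\bZ^2$ with index $3$; in particular your quotient $\bZ^9/R_0$ has $\bZ/3$-torsion and is not $\mathrm{Cl}(X_0)$. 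Redoing the computation with the correct $R$: one has $R^{C_3}=\langle c_4-c_5,\,c_5-c_6\rangle\cong\bZ^2$, while $N(R)$ is generated by $3(c_4-c_5)$, $3(c_5-c_6)$ and $N(r_1-c_4)=c_5+c_6-2c_4$, a sublattice of index $3$; hence
$$
\rH^1\big(C_3,\Pic(\tilde X_0)\big)\cong\rH^2(C_3,R)\cong\bZ/3,
$$
matching Section~\ref{sect:nine}, and not $(\bZ/3)^2$ as you claim. Since the corrected group is still nonzero, your overall argument survives: this is a fixable computational slip rather than a fatal gap, but it sits precisely at the step you flagged as the main obstacle (pinning down the integral relations among the plane classes). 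A minor further correction: for very general $b$ the fiber $X_b$ is Case (5), not Case (6), of Proposition~\ref{prop:3nodclassification}.
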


\begin{proof}
Arguing as in Example~\ref{exam:2-spec}, let $\cX\to \bA^1_k$ be the family given by $f_b$. For a very general $b$, $\cX_b:=X_b$ is a 3-nodal cubic described as Case (5) in Proposition~\ref{prop:3nodclassification}.
The special fiber $X_0:=\cX_0$ is a 9-nodal cubic, and the $G$-action fixes a smooth genus 1 curve on $X_0$.  From computations in Section~\ref{sect:nine}, there exists an {\bf (H1)}-obstruction to stable linearizability of the $G$-action on $X_0$.
The six additional nodes form two $G$-orbits with trivial stabilizer. By Proposition~\ref{prop:flat}, a very general element in the family $\cX$ is not $G$-stably linearizable.
\end{proof}

\section{Four nodes}
\label{sect:four}

\subsection*{Factorial cubics}
We first consider the case when the four nodes are in general linear position, forming a ``tetrahedron''. 
This is case (J4) in \cite{Finkelnbergcubic}.
We may assume that the nodes of $X$ are contained in the hyperplane $x_5=0$, and are the points 
$$
[1:0:0:0:0], \,\,[0:1:0:0:0],\,\, [0:0:1:0:0],\,\, [0:0:0:1:0].
$$
The intersection $X\cap\{x_5=0\}$ is the unique cubic surface with $4$ nodes, the Cayley cubic surface. 
Using this, we see that $X$ can be given by 
\begin{multline}
\label{eqn:form4-simplified} 
x_1x_2x_3+x_1x_2x_4+x_1x_3x_4+x_2x_3x_4+\\
+ax_5^3+x_5^2\big(b_1x_1+b_2x_2+b_3x_3+b_4x_4\big)+\\
+x_5\big(a_1(x_1x_2+x_3x_4)+a_2(x_1x_3+x_2x_4)+a_3(x_1x_4+x_2x_3)\big)=0
\end{multline} 
for some $a,b_1,b_2,b_3,b_4,a_1,a_2,a_3\in k$. 

\begin{theo}
\label{theo:Aut-4-nodes}
Suppose that $X\subset \bP^4$ is a 4-nodal cubic threefold and $\mathrm{Aut}(X)$ does not fix any node of $X$.
Then, up to a change of coordinates, one of the following holds:
\begin{enumerate}
\item[($C_2$)] $b_1=b_2$ and $b_3=b_4$ in \eqref{eqn:form4-simplified},
and $\mathrm{Aut}(X)\simeq C_2$, generated by
$$
\sigma_{1}\colon(x_1,x_2,x_3,x_4,x_5)\mapsto(x_2,x_1,x_4,x_3,x_5).
$$

\item[($C_4$)] $a=0$, $a_1=a_2=a_3=0$, $b_1=-b_2$, $b_3=-b_4$  in \eqref{eqn:form4-simplified}, 
and $\mathrm{Aut}(X)\simeq C_4$, generated by
$$
\sigma_{1}^\prime\colon(x_1,x_2,x_3,x_4,x_5)\mapsto(x_2,x_1,x_4,x_3,ix_5),\quad i=e^{\frac{2\pi i}{4}}.
$$

\item[($C_2^2$)] $b_1=b_2=b_3=b_4$ in \eqref{eqn:form4-simplified}, and $\mathrm{Aut}(X)\simeq C_2^2$, generated by $\sigma_1$ and
$$
\sigma_{2}\colon(x_1,x_2,x_3,x_4,x_5)\mapsto(x_3,x_4,x_1,x_2,x_5).
$$

\item[($C_8$)]  $a=0$, $a_1=a_2=a_3=0$, $b_1=1$, $b_2=-\zeta_8^2$, $b_3=-1$, $b_4=\zeta_8^2$ in \eqref{eqn:form4-simplified}, 
and $\mathrm{Aut}(X)\simeq C_8$, generated by 
$$
\sigma_3^\prime\colon(x_1,x_2,x_3,x_4,x_5)\mapsto(x_4,x_1,x_2,x_3,\zeta_8x_5),
\quad \zeta_8=e^{\frac{2\pi i }{8}}.
$$

\item[($\mathfrak{D}_4$)]  $a_1=a_3=0$, $a_2=1$, $b_1=b_2=b_3=b_4$ in \eqref{eqn:form4-simplified}, and $\mathrm{Aut}(X)\simeq \mathfrak{D}_4$, generated by $\sigma_1$, $\sigma_2$ and
$$
\sigma_{3}\colon(x_1,x_2,x_3,x_4,x_5)\mapsto(x_4,x_1,x_2,x_3,x_5).
$$


\item[($\mathfrak{S}_4$)] $a\ne 0$, $a_1=a_2=a_3=0$, $b_1=b_2=b_3=b_4=1$ in \eqref{eqn:form4-simplified}, and $\mathrm{Aut}(X)\simeq \mathfrak{S}_4$, generated by $\sigma_1$, $\sigma_2$, $\sigma_3$ and
$$
\sigma_{4}\colon(x_1,x_2,x_3,x_4,x_5)\mapsto(x_2,x_3,x_1,x_4,x_5).
$$
\end{enumerate}
\end{theo}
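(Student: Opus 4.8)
The plan is to exploit the distinguished hyperplane. Since the four nodes are in general linear position, they span a unique hyperplane $\Pi=\{x_5=0\}$, and $S:=X\cap\Pi$ is the Cayley cubic surface, cut out inside $\Pi\cong\mathbb{P}^3$ by the $\mathfrak{S}_4$-symmetric form $e_3(x_1,x_2,x_3,x_4)=x_1x_2x_3+x_1x_2x_4+x_1x_3x_4+x_2x_3x_4$, which is precisely the leading term of \eqref{eqn:form4-simplified}. Every $g\in\mathrm{Aut}(X)$ permutes the node set and hence preserves $\Pi$, so $g^*x_5=\lambda x_5$ for a scalar $\lambda$, and $g$ induces a permutation of the four nodes. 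First I would record the resulting homomorphism $\mathrm{Aut}(X)\to\mathfrak{S}_4$ (realized geometrically as node permutations of the Cayley cubic), noting that by hypothesis its image $\bar G$ has \emph{no} fixed point on $\{1,2,3,4\}$, and that, the nodes being the coordinate vertices of $\Pi$, the action of $g$ on $\langle x_1,x_2,x_3,x_4\rangle$ is necessarily \emph{monomial} (a coordinate permutation composed with a diagonal torus element, up to a shear $g^*x_i=c_ix_{\pi^{-1}(i)}+\mu_ix_5$).

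Next I would normalize each automorphism to an honest coordinate permutation of $x_1,\dots,x_4$ together with a scaling $x_5\mapsto\lambda x_5$. Two reductions are needed: the shear terms $\mu_ix_5$ are removed by a preliminary $\mathfrak{S}_4$-equivariant adjustment of the $x_5$-coordinate, and the diagonal torus scalars on $x_1,\dots,x_4$ are pinned down by invariance of the leading term $e_3$ together with the normal form \eqref{eqn:form4-simplified}, so that they reduce to roots of unity which can be absorbed. The kernel of $\mathrm{Aut}(X)\to\mathfrak{S}_4$ then consists of automorphisms fixing all four nodes, i.e.\ of scalings $x_5\mapsto\lambda x_5$ preserving \eqref{eqn:form4-simplified}; this kernel is either trivial or the central involution $x_5\mapsto -x_5$, and it is exactly this central $C_2$ that accounts for the passage from $C_2$ to $C_4$ and from the $C_4$-image to $C_8$ in the final list.

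For the case analysis, for each fixed-point-free subgroup $\bar G\le\mathfrak{S}_4$ I would write the generating monomial transformations, impose that they (with an undetermined scaling $\lambda$ on $x_5$) preserve \eqref{eqn:form4-simplified}, and solve the resulting linear constraints on $(a,b_1,\dots,b_4,a_1,a_2,a_3)$ and on $\lambda$. Because $e_3$ is $\mathfrak{S}_4$-symmetric the overall multiplier is $1$, and the three blocks of $x_5$-terms decouple: the $x_5^2\sum b_ix_i$ block constrains the $b_i$ through the permutation twisted by $\lambda^2$, the $x_5$-quadric block constrains $a_1,a_2,a_3$ through $\lambda$, and the $ax_5^3$ term forces $a=0$ unless $\lambda^3=1$. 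Running this through the fixed-point-free cyclic $C_2=\langle(12)(34)\rangle$, the transitive groups $V_4=\langle(12)(34),(13)(24)\rangle$, $\mathfrak{D}_4$, the transitive $C_4$, and $\mathfrak{S}_4$, I expect to recover exactly the six families listed, with the abstract group type ($C_2$ versus $C_4$, and $C_2^2$ versus $C_8$ versus $\mathfrak{D}_4$) determined by whether the admissible $\lambda$ is $1$, a primitive $4$th root, or a primitive $8$th root of unity.

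The main obstacle, and the most delicate bookkeeping, is matching each node-permutation image to the correct abstract automorphism group through the scaling $\lambda$, since one permutation image can lift to several non-isomorphic groups; and ruling out the remaining fixed-point-free candidates, namely the non-normal Klein four $\langle(12),(34)\rangle$ and $\mathfrak{A}_4$. For $\mathfrak{A}_4$ I would argue that transitivity forces all $b_i$ equal and the $a_j$ to coincide, so that any $\mathfrak{A}_4$-invariant member automatically acquires an odd-permutation symmetry and is in fact $\mathfrak{S}_4$-symmetric, hence never produces $\mathfrak{A}_4$ as the full group. For $\langle(12),(34)\rangle$ I would show that the simultaneous transposition constraints either force additional nodes, contradicting $4$-nodality, or collapse the configuration into one of the listed cases. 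Once the equivariant normalization of the shears and torus scalars is in place, the coefficient computations reduce to routine linear algebra on \eqref{eqn:form4-simplified}, which I would carry out case by case.
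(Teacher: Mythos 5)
Your proposal follows the same overall strategy as the paper's proof: both use the homomorphism $\mathrm{Aut}(X)\to\mathfrak{S}_4$ given by the action on the nodes, reduce automorphisms to monomial form using invariance of the Cayley term, impose invariance of \eqref{eqn:form4-simplified} to get linear constraints on $(a,b_i,a_j)$ and the scaling $\lambda$, and enumerate the fixed-point-free subgroups, discarding $\mathfrak{A}_4$ and the non-normal Klein four. The genuine difference is in how the shear terms are handled: the paper never removes them in advance, but writes a single element mapping to $(12)(34)$ or $(1234)$ with shear parameter $s$ and scaling $t$, and extracts $s=0$ (or reduces to it after normalizing $a_1=a_2=a_3=0$) directly from the invariance equations, assembling the group structure afterwards; you instead kill all shears once and for all before any case analysis, which makes the subsequent computation pure permutation-plus-scaling linear algebra and makes the kernel of $\mathrm{Aut}(X)\to\mathfrak{S}_4$ (trivial or the central $x_5\mapsto -x_5$, explaining $C_2\leadsto C_4$ and $C_4$-image $\leadsto C_8$) completely transparent — a bookkeeping layer the paper leaves implicit. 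Your normalization step is where the real content sits, and it needs an argument you only gesture at: the mechanism is that the finite group $\mathrm{Aut}(X)$ preserves the hyperplane spanned by the nodes, hence by complete reducibility fixes a point off it, and moving that point to $[0:0:0:0:1]$ (an adjustment of $x_1,\dots,x_4$ by multiples of $x_5$, not of $x_5$ itself) kills the shears; but this can push the coefficient-of-$x_5$ quadric out of the symmetric span $\langle x_1x_2+x_3x_4,\,x_1x_3+x_2x_4,\,x_1x_4+x_2x_3\rangle$, so one must also check that the unique shear restoring the normal form is automatically equivariant (it is, precisely because it is unique and both conditions transform by matching characters), so the fixed point is not lost. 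Two further completeness points: in the $\mathfrak{A}_4$ exclusion, besides "all $a_j$ coincide" there is the twisted solution $a_1{:}a_2{:}a_3=1{:}\zeta_3{:}\zeta_3^2$ with all $b_i=0$, which must be discarded because its singularities fail to be nodes (the paper rules it out explicitly); and throughout, several coefficient solutions satisfying the constraints (e.g.\ $a=a_j=0$, $b=(1,-1,1,-1)$ with a $4$-cycle and $\lambda=i$) acquire extra singular points, so the $4$-nodality check you invoke for the non-normal Klein four is in fact needed in the transitive cases too.
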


\begin{proof}
Let $\phi\colon\mathrm{Aut}(X)\to\mathfrak{S}_4$ be the homomorphism given by the action on the nodes of $X$. Since $\mathrm{Aut}(X)$ does not fix nodes, we may assume that there is a $\iota\in\mathrm{Aut}(X)$ such that  $\phi(\iota)=(12)(34)$ or $\phi(\iota)=(1234)$. 

Suppose that  $\phi(\iota)=(12)(34)$. Then $\iota$ is given by 
$$
(x_1,x_2,x_3,x_4,x_5)\mapsto(x_2+sx_5,x_1+sx_5,x_3+sx_5,x_4+sx_5,tx_5)
$$
for some $s,t\in k$ such that $t\ne 0$. 
Considering how $\iota$ acts on \eqref{eqn:form4-simplified}, we see that $s=0$ or $a_1=a_2=a_3$. In the former case, we have $b_1=b_2$ and $b_3=b_4$, which implies $t=1$, 
because otherwise $t=-1$ and $a=a_1=a_2=a_3=0$, which implies that $X$ is not 4-nodal. 
Thus, if $(s,t)=(0,1)$ and $\mathrm{im}(\phi)\simeq C_2$, then we are in the case ($C_2$).

If $a_1=a_2=a_3$, then, after a coordinate change, we may assume that $a_1=a_2=a_3=0$.
In this case, we get 
$$
0=s=a(1-t^3)=b_3-b_4t^2=b_4-b_3t^2=b_2-b_1t^2=b_1-b_2t^2.
$$
Since $X$ is 4-nodal, this gives $a=0$, $b_1=-b_2$, $b_3=-b_4$ and $t=\pm i$.
Hence, if $\mathrm{im}(\phi)\simeq C_2$, then  we are in the case ($C_4$).

Now, we suppose that  $\phi(\iota)=(1234)$. Then $\iota$ is given by 
$$
(x_1,x_2,x_3,x_4,x_5)\mapsto(x_4+sx_5,x_1+sx_5,x_2+sx_5,x_3+sx_5,tx_5)
$$
for some $s,t\in k$ such that $t\ne 0$. 
Then 
\begin{itemize}
\item $a=2a_2b_4-a_2^3$, $a_1=2a_2-a_3$, $b_1=b_2=b_3=b_4$, or
\item $a_1=a_3$, $b_1=b_2=b_3=b_4$, or 
\item $a_1=a_2=a_3$.
\end{itemize}
In the former case, $X$ is not 4-nodal.
If $a_1=a_2=a_3$, then after a coordinate change, we may assume that $a_1=a_2=a_3=0$, which gives 
$$
0=s=a(1-t^3)=b_2-b_3t^2=b_3-b_4t^2=b_4-b_1t^2=b_1-b_2t^2,
$$
so, after an appropriate scaling of $x_5$, we see that
\begin{itemize}
\item $a\ne 1$, $b_1=b_2=b_3=b_4=1$, or
\item $a=0$, $b_1=1$, $b_2=-1$, $b_3=1$, $b_4 =-1$, $t=i$, or
\item $a=0$, $b_1=1$, $b_2=-\zeta_8^2$, $b_3=-1$, $b_4=\zeta_8^2$, $t=\zeta_8$,
\end{itemize}
which implies that we are in cases ($\mathfrak{S}_4$), ($C_4$), ($C_8$), respectively. 

If $a_1=a_3$ and $b_1=b_2=b_3=b_4$, then, after a coordinate change,
we may assume that $a_1=a_3=0$. If $a_2=0$, then we are in the case ($\mathfrak{S}_4$).
Finally, if $a_2\ne 0$, then, scaling $x_5$, we may further assume that $a_1=1$, so $X$ is given by 
\begin{multline*} 
x_1x_2x_3+x_1x_2x_4+x_1x_3x_4+x_2x_3x_4+\\
+ax_5^3+b_1x_5^2\big(x_1+x_2+x_3+x_4\big)+x_5\big(x_1x_3+x_2x_4\big)=0,
\end{multline*}
which gives $\mathrm{Aut}(X)=\langle\sigma_1,\sigma_2,\sigma_3\rangle\cong\mathfrak{D}_4$,
so we are in the case ($\mathfrak{D}_4$).

To proceed, we may assume that $\mathrm{im}(\phi)\not\simeq C_2$ and $\mathrm{im}(\phi)\not\simeq C_4$. 
Then, up to a coordinate change, one of the following four cases holds:
\begin{itemize}
\item $\mathrm{im}(\phi)=\langle(12)(34),(14)(23)\rangle\simeq C_2^2$,
\item $\mathrm{im}(\phi)=\langle(12)(34),(1234)\rangle\simeq \mathfrak{D}_4$,
\item $\mathrm{im}(\phi)=\langle(12)(34),(14)(23),(123)\rangle\simeq\mathfrak{A}_4$,
\item $\mathrm{im}(\phi)=\langle(12)(34),(14)(23),(1234),(123)\rangle\simeq \mathfrak{S}_4$.
\end{itemize}
Since $\mathrm{im}(\phi)$ contains $(12)(34)$ or $(1234)$,
the cubic $X$ must be given by one of the equations explicitly described above.
Using additional symmetries of $X$, we conclude that we are in one of the cases ($C_2^2$), ($\mathfrak{D}_4$), ($\mathfrak{S}_4$), or the cubic $X$ is given by \eqref{eqn:form4-simplified} with 
$$
a\ne 0,\quad a_1=a_2=a_3=0,\quad b_1=b_3=1,\quad b_2=b_4=-1,
$$
or
$$
a_1=1, \quad a_2=\zeta_3,\quad a_3=\zeta_3^2,\quad b_1=b_2=b_3=b_4=0.
$$
In the first of the latter two cases, $X$ has $8$ nodes, and in the last case, the singularities of $X$ are not nodes. This completes the proof of the theorem.  
\end{proof}

\subsection*{Birational model}
Let $\pi\colon\widetilde{X}\to X$ be the blow up of the nodes of $X$.
Then there exists an $\mathrm{Aut}(X)$-equivariant diagram:
$$
\xymatrix{
\widetilde{X}\ar@{-->}[rr]^{\rho}\ar@{->}[d]_{\pi}&&\widehat{X}\ar@{->}[d]^{\phi}\\%
X&&Y}
$$
where $\rho$ is a composition of flops in the strict transform of the lines passing 
through pair of nodes, $\phi$ is a contraction of the strict transform of 
the hyperplane section containing $4$ nodes (the surface $X\cap\{x_5=0\}$) to a smooth point of the threefold $Y$,
and $Y$ is a smooth divisor in $(\mathbb{P}^1)^4$ of degree $(1,1,1,1)$.
Implicitly, the birational map $X\dasharrow Y$ has been constructed in the proof of \cite[Proposition~4.5]{KuznetsovProkhorov2022}. Note that the anticanonical model of $\widetilde{X}$ is a singular Fano threefold with $6$ nodes of degree $16$,
which can be smoothed to a smooth Fano threefold of degree $16$ and Picard rank $1$.

\subsection*{Burnside formalism}
We realize the situation of Proposition~\ref{prop:burnform} in some of the $4$-nodal cases.
\begin{exam}\label{exam:4nodBurn}
Let $X$ be the cubic threefold given in the Case $(\fD_4)$ or $(\fS_4)$ in Theorem~\ref{theo:Aut-4-nodes}. Consider the group $G\subset\Aut(X)$ where $G=\langle\sigma_2,\sigma_1\sigma_3\rangle=C_2^2$. Then we are in the situation of Proposition~\ref{prop:burnform}, and the $G$-action is not linearizable. In particular, $\sigma_1\sigma_3$ fixes a cubic surface receiving a residual $\sigma_2$-action with a $G$-fixed elliptic curve on it. 
\end{exam}
\subsection*{Specialization}


One can equivariantly specialize 4-nodal cubic threefolds to an 8-nodal one: 

\begin{prop}
   \label{prop:special4to8}
    Let $X_b$ be the $4$-nodal cubic threefold defined by 
    \begin{multline*} 
f_b=x_1x_2x_3+x_1x_2x_4+x_1x_3x_4+x_2x_3x_4
+x_5^2\big(x_1+x_2+b(x_3+x_4)\big)=0.
\end{multline*} 
    For all $b\in k$, $X_b$ carries a $G=C_2$-action generated by 
    $$
    \sigma_1: (x_1,x_2,x_3,x_4,x_5)\mapsto(x_2,x_1,x_4,x_3,x_5).
    $$
   Then $X_b$ is not $G$-stably linearizable for a very general $b.$
\end{prop}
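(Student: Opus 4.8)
The plan is to follow the specialization template established in Example~\ref{exam:2-spec} and Proposition~\ref{prop:spec3-9}: view $\{X_b\}$ as the fibers of the $G$-equivariant family $\cX\to\bA^1_k$ cut out by $f_b$, degenerate to a special fiber $X_{b_0}$ carrying extra nodes and a non-stably-linearizable $G$-action, and transport nonlinearizability back to the very general fiber by Proposition~\ref{prop:flat}. First I would record that the whole family is $G$-equivariant, since $\sigma_1^*f_b=f_b$ for every $b$, and that for very general $b$ the threefold $X_b$ is the $4$-nodal cubic of case $(C_2)$ in Theorem~\ref{theo:Aut-4-nodes}, with $\sigma_1$ acting on $\mathrm{Sing}(X_b)$ as the permutation $(12)(34)$. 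In particular $G=\langle\sigma_1\rangle\simeq C_2$ fixes no node, so the hypothesis ``$G$ does not fix a node'' is in force along the generic fiber.

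Next I would locate the special value $b_0$ at which $X_{b_0}$ acquires four further ordinary double points and becomes an $8$-nodal cubic, i.e.\ a member of the configuration $s=8$, $d=3$, $p=5$. This is a direct Jacobian computation: besides the four coordinate vertices, solving $\partial_1 f=\cdots=\partial_5 f=0$ produces singular points on the locus $x_1+x_2=x_3+x_4=0$, and I would read off the value $b_0$ forcing these four points onto $X_{b_0}$ and confirm by a Hessian computation that they are nodes. The structurally decisive check is their behaviour under $G$: I would verify that $\sigma_1$ permutes the four new nodes without fixed points, splitting them into two orbits of length $2$ with \emph{trivial} stabilizer. By the first bullet of Example~\ref{exam:BG}, $G$-orbits of ordinary double points with trivial stabilizer are $BG$-rational singularities, so $X_{b_0}$ has $BG$-rational singularities and the specialization homomorphism $\rho_\pi^G$ is available.

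It then remains to show that the $G=C_2$-action on $X_{b_0}$ is not stably linearizable, and this is the step I expect to be the main obstacle. Here I would appeal to the cohomological analysis of the $8$-nodal case carried out in Section~\ref{sect:8}: passing to the standard desingularization $\widetilde{X_{b_0}}$, one computes the $G$-module $\Pic(\widetilde{X_{b_0}})$ from the incidence of the $8$ nodes with the $5$ planes, and exhibits a subgroup $G'\subseteq G$ with $\rH^1\big(G',\Pic(\widetilde{X_{b_0}})\big)\neq 0$, yielding an \textbf{(H1)}-obstruction to \emph{stable} linearizability. Granting this, I would equivariantly resolve the four nodes of the generic fiber, exactly as in Example~\ref{exam:2-spec}, to place $\cX$ in the hypotheses of Proposition~\ref{prop:flat}: the special fiber $\cX_{b_0}$ is irreducible with $BG$-rational singularities and a non-stably-linearizable $G$-action, while $G$ acts trivially on the base and generically freely on the fibers. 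Proposition~\ref{prop:flat} then gives that the $G$-action on $X_b$ is not stably linearizable for very general $b$, as claimed. The delicate inputs are the precise orbit structure of the four new nodes, needed for $BG$-rationality, and the nonvanishing of the relevant $\rH^1$, which rests on the permutation action of $\sigma_1$ on the nodes and planes of the $8$-nodal fiber established in Section~\ref{sect:8}.
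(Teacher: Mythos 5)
Your outline reproduces the paper's own proof step for step: the paper likewise treats $\{X_b\}$ as a $G$-equivariant family over $\bA^1_k$, identifies the generic fiber with case $(C_2)$ of Theorem~\ref{theo:Aut-4-nodes}, degenerates to an $8$-nodal special fiber (the paper takes $b_0=9/4$), gets the \textbf{(H1)}-obstruction there from the Section~\ref{sect:8} analysis (Corollary~\ref{coro:8-nodalsummary}), notes that the extra nodes have trivial stabilizer and hence are $BG$-rational singularities (Example~\ref{exam:BG}), and concludes by Proposition~\ref{prop:flat}. So there is no difference in strategy, and the specialization machinery is invoked correctly.

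However, the verification you promise in your second paragraph --- solving $\partial_1f=\cdots=\partial_5f=0$ and ``reading off'' a special value $b_0$ --- would not produce such a value, and this is a genuine gap (one inherited from the paper's statement, not introduced by you). Writing
$$
f_b=(x_3+x_4)\bigl(x_1x_2+bx_5^2\bigr)+(x_1+x_2)\bigl(x_3x_4+x_5^2\bigr),
$$
one sees that the plane $\Pi=\{x_1+x_2=x_3+x_4=0\}$ lies on $X_b$ for \emph{every} $b$; a point of $\Pi$ is singular on $X_b$ precisely where both quadrics vanish, which happens at the four points $[\varepsilon_1\sqrt{b}:-\varepsilon_1\sqrt{b}:\varepsilon_2:-\varepsilon_2:1]$, $\varepsilon_1,\varepsilon_2\in\{\pm1\}$, for every $b\ne 0$. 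Thus every fiber of the stated family already has (at least) eight singular points and contains five planes --- it is a cubic of configuration (J13), never a $4$-nodal cubic of type $(C_2)$ --- so there is no parameter at which the four extra nodes ``appear,'' and nothing is special about $b=9/4$. Ironically, the conclusion still holds for the stated equation, but for a trivial reason: $\sigma_1$ permutes all eight nodes of each fiber without fixed points, so Corollary~\ref{coro:8-nodalsummary} applies directly to every $X_b$ with $b\ne 0,1$, with no specialization argument needed. To obtain what the proposition is actually after --- nonlinearizable actions on honestly $4$-nodal cubics --- the equation must be amended (e.g.\ by keeping a nonzero $x_5^3$- or $x_5$-coefficient permitted in case $(C_2)$ of Theorem~\ref{theo:Aut-4-nodes} and degenerating it), after which your outline and the paper's proof run in parallel as you describe.
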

\begin{proof}
    Let $\cX\to \bA^1_k$ be the family given by $f_b$. The generic fiber $X_b$ is a 4-nodal cubic of the type $(C_2)$ in Theorem~\ref{theo:Aut-4-nodes}.
The special fiber $X_{9/4}:=\cX_{9/4}$ is an 8-nodal cubic, 
with an {\bf (H1)}-obstruction to stable linearizability of the $G$-action by Corollary~\ref{coro:8-nodalsummary}. The additional 4 nodes have trivial stabilizer and thus are $BG$-rational singularities.
Applying Proposition~\ref{prop:flat} and 
Example~\ref{exam:BG},
one concludes that a very general member in the family $\cX$ is not $G$-stably linearizable.
\end{proof}
One can also specialize to the Segre cubic threefold: 

\begin{prop}
\label{prop:special4to10}
Let $X_a$ be a cubic of type $(\fS_4)$ in Theorem~\ref{theo:Aut-4-nodes}, i.e. $X_a$ is given by 
$$
ax_5^3+x_1x_2x_3+x_1x_2x_4+x_1x_3x_4+x_2x_3x_4+x_5^2(x_1+x_2+x_3+x_4)=0.
$$
Consider the subgroup $G=\langle\sigma_1,\sigma_2\rangle\simeq C_2^2\subset\Aut(X_a)$. Then, for a very general $a\in k$, the $G$-action on  $X_{a}$ is not stably linearizable. 
\end{prop}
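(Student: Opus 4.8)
The plan is to realize this as a specialization in the sense of Proposition~\ref{prop:flat}, degenerating the family of $(\fS_4)$-cubics to the Segre cubic. First I would let $\cX\to\bA^1_k$ be the family with fiber $X_a$ defined by the displayed equation $f_a$, and observe that the generators $\sigma_1,\sigma_2$ of $G\simeq C_2^2$, with $\sigma_1=(12)(34)$ and $\sigma_2=(13)(24)$ permuting the first four coordinates and fixing $x_5$, act on $\cX$ fiberwise for every $a$. For very general $a$ the fiber is the $4$-nodal cubic of type $(\fS_4)$, with its four nodes at the coordinate points $[1:0:0:0:0],\dots,[0:0:0:1:0]$; these form a single $G$-orbit of length $4$ with trivial stabilizer, and they are a constant section of the family.

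Next I would identify the special fiber. Writing $f_a=ax_5^3+e_3(x_1,\dots,x_4)+x_5^2(x_1+x_2+x_3+x_4)$ and computing the Jacobian ideal, one finds that away from the coordinate points the singular locus is governed by the relations $(x_i-x_j)(x_k+x_l)=0$ for $\{i,j,k,l\}=\{1,2,3,4\}$, together with $3ax_5+2(x_1+x_2+x_3+x_4)=0$. I expect that extra nodes appear exactly at $a=0$, namely the six points $[1:1:-1:-1:\pm1]$, $[1:-1:1:-1:\pm1]$, $[1:-1:-1:1:\pm1]$. Thus $X_0$ has $10$ nodes and, being the unique $10$-nodal cubic threefold, is the Segre cubic.

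I would then analyze the $G$-orbits of $\mathrm{Sing}(X_0)$. Beyond the length-$4$ orbit of coordinate nodes (resolved in the generic fiber), the six additional nodes split into three orbits of length $2$, each with stabilizer a $C_2\subset G$ fixing \emph{both} points of the orbit: for instance $\langle\sigma_1\rangle$ fixes $[1:1:-1:-1:\pm1]$ while $\sigma_2$ and $\sigma_1\sigma_2$ interchange them, and similarly $\langle\sigma_2\rangle$, $\langle\sigma_1\sigma_2\rangle$ stabilize the other two orbits. Hence at each additional node the decomposition group is a $C_2$ fixing an ordinary double point, so by Example~\ref{exam:BG} these are $BG$-rational singularities. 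Blowing up the four coordinate nodes equivariantly in the generic fiber then places us in the setting of Proposition~\ref{prop:flat}.

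The remaining and most delicate point is to establish that the $G$-action on the Segre cubic $X_0$ is not stably linearizable. I would deduce this from \cite{CTZ}: the node-orbit pattern found here (one orbit of length $4$ and three of length $2$, with $G$ fixing no node) is not the configuration of the unique stably linearizable $C_2^2$-action on the Segre cubic (three $G$-invariant planes and $\mathrm{Sing}$ a union of five orbits of length $2$), and in fact this $C_2^2$ carries an \textbf{(H1)}-obstruction, coming from the nonvanishing of $\rH^1(G',\Pic(\widetilde{X}_0))$ for a suitable $G'\subseteq G$. Granting this, Proposition~\ref{prop:flat} and Example~\ref{exam:BG} yield that $X_a$ is not $G$-stably linearizable for very general $a$. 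The main obstacle is precisely this last input: on the new-verification side, correctly matching our $C_2^2$ to a nonlinearizable conjugacy class in $\fS_6$ and discharging the $BG$-rational condition in the $C_2$-fixing-a-node case of Example~\ref{exam:BG}; the underlying cohomology computation on the resolved Segre cubic is inherited from \cite{CTZ}.
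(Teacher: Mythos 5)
Your proposal is correct and follows essentially the same route as the paper's proof: specialize the family at $a=0$ to the $10$-nodal Segre cubic, invoke the \textbf{(H1)}-obstruction for this $C_2^2$-action from \cite{CTZ}, note that the six extra nodes have $C_2$-stabilizers fixing them so that Example~\ref{exam:BG} gives $BG$-rational singularities, and conclude via Proposition~\ref{prop:flat} after resolving the four nodes in the generic fiber. The only difference is that you make explicit what the paper leaves implicit (the coordinates of the six additional nodes, the orbit pattern $4+2+2+2$, and the pointwise-fixing stabilizers), and these verifications are accurate.
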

\begin{proof}
Let $\cX\to \bA^1_k$ be the family consisting of $X_a$. 
The special fiber $X_0:=\cX_0$ is a 10-nodal cubic, 
with an {\bf (H1)}-obstruction to stable linearizability of the $G$-action, from computations in \cite{CTZ}. The additional 6 nodes have $C_2$-stabilizers. They are $BG$-rational singularities, by Example~\ref{exam:BG}. 
Applying Proposition~\ref{prop:flat} and 
Example~\ref{exam:BG},
one concludes that a very general fiber is not $G$-stably linearizable.
\end{proof}

\begin{rema}
    We note that the degeneration of cubics in Proposition~\ref{prop:special4to10} is equivalent to the degeneration of divisors in $(\bP^1)^4$ of degree $(1,1,1,1)$, which was studied in \cite[Section 7]{KuznetsovProkhorov2022} and \cite{CFKK}. In particular, the product of projections from four planes in the tetrahedron formed by the four nodes of the cubics gives an $G$-equivariant birational map from the cubics to divisors in $(\bP^1)^4$ of degree $(1,1,1,1)$.
\end{rema}

\subsection*{Cubics with a plane}
Now we treat the case when the four nodes are contained in a distinguished, $G$-stable plane $\Pi$. 
This is case (J6) in \cite{Finkelnbergcubic}. 
Unprojecting from $\Pi$, we have a $G$-equivariant birational map
$$
\phi: X\dashrightarrow X_{2,2},
$$
where $X_{2,2}$ is a smooth complete intersection in $\bP^5$ of two quadrics with a $G$-fixed point $P\in X_{2,2}$, 
and the map $\phi^{-1}$ is a projection from $P$. Linearizability of actions on smooth $X_{2,2}$ is determined by existence of invariant lines \cite[Theorem 24]{HT-intersect}. In particular, we have

\begin{prop}
\label{prop:gfour}
The $G$-action on $X$ is not linearizable 
if and only if no singular points of $X$ are fixed by $G$,
and $X$ does not contain $G$-stable lines that are disjoint from $\Pi$.
\end{prop}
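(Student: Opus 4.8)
The plan is to transport the linearizability question across the $G$-equivariant birational map $\phi\colon X\dashrightarrow X_{2,2}$ to the smooth complete intersection of two quadrics, where the criterion of \cite[Theorem 24]{HT-intersect} governs linearizability in terms of $G$-invariant lines. Since $\phi$ is $G$-equivariant and $\phi^{-1}$ is projection from the $G$-fixed point $P\in X_{2,2}$, the $G$-action on $X$ is linearizable if and only if the $G$-action on $X_{2,2}$ is. So the entire argument reduces to translating the two hypotheses in the statement --- no $G$-fixed node of $X$, and no $G$-stable line in $X$ disjoint from $\Pi$ --- into the condition that $X_{2,2}$ carries no $G$-invariant line, and then invoking \cite[Theorem 24]{HT-intersect}.

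First I would set up the geometry of the unprojection precisely: the plane $\Pi\subset X$ is blown up (or contracted, on the relevant model), and $X_{2,2}\subset\bP^5$ acquires the distinguished $G$-fixed point $P$ whose projection recovers $X$. The key dictionary is between lines on $X_{2,2}$ and curves on $X$. I expect that a $G$-invariant line $\ell\subset X_{2,2}$ falls into exactly one of a few types under $\phi^{-1}$: either $\ell$ passes through $P$, in which case it projects to a point of $X$ --- and tracing through the unprojection this point should be forced to be a node of $X$ (the image of the contracted locus), giving a $G$-fixed node; or $\ell$ avoids $P$, in which case it projects isomorphically to a $G$-stable line in $X$, which must be disjoint from $\Pi$ since $\ell$ does not meet the exceptional data over $\Pi$. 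Conversely, a $G$-fixed node of $X$ or a $G$-stable line in $X$ disjoint from $\Pi$ should produce a $G$-invariant line on $X_{2,2}$. Establishing this correspondence in both directions is the heart of the proof.

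The main obstacle will be the careful bookkeeping at the point $P$ and over the plane $\Pi$: one must check that lines through $P$ on $X_{2,2}$ correspond \emph{exactly} to $G$-fixed nodes of $X$ (and not, say, to other special curves or to lines meeting $\Pi$), and that no $G$-invariant line on $X_{2,2}$ is overlooked or double-counted. This requires knowing how the unprojection resolves the indeterminacy of $\phi$ along $\Pi$ and what the exceptional configuration looks like; the singular points of $X$ lying on $\Pi$ versus off $\Pi$ must be handled separately, since nodes on $\Pi$ behave differently under projection than the disjoint line locus. Once the correspondence ``$G$-invariant line on $X_{2,2}$'' $\Longleftrightarrow$ ``$G$-fixed node of $X$ or $G$-stable line in $X$ disjoint from $\Pi$'' is secured, the proposition follows immediately: $X_{2,2}$ has no $G$-invariant line precisely when neither of the two excluded situations occurs, and by \cite[Theorem 24]{HT-intersect} this is equivalent to nonlinearizability of the $G$-action.
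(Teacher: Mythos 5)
Your strategy is the same as the paper's: transport the problem to $X_{2,2}$ through the unprojection and apply \cite[Theorem 24]{HT-intersect}. However, your key dictionary contains a genuine gap. You assert that a $G$-invariant line $\ell\subset X_{2,2}$ avoiding $P$ ``projects isomorphically to a $G$-stable line in $X$, which must be disjoint from $\Pi$ since $\ell$ does not meet the exceptional data over $\Pi$.'' This is false as stated: avoiding $P$ is not sufficient. The map $\phi^{-1}$ is resolved by the blowup $W=\Bl_P X_{2,2}\to X_{2,2}$, and the induced morphism $W\to X$ contracts the strict transforms of the four lines of $X_{2,2}$ passing through $P$ to the four nodes of $X$ --- all of which lie on $\Pi$ (this is configuration (J6), so your proposed case distinction ``nodes on $\Pi$ versus off $\Pi$'' is empty). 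Consequently, if $\ell$ misses $P$ but meets one of these four lines, its image is a line in $X$ passing through the corresponding node, hence meeting $\Pi$. Your dichotomy (through $P$, or image disjoint from $\Pi$) omits exactly this intermediate case, and in that case the conclusion you need fails.

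The missing step --- which is precisely what the paper supplies with its remark that $\ell$ ``must be disjoint from the four lines in $X_{2,2}$ containing $P$'' --- uses the standing hypothesis a second time. After reducing to the case where no node is $G$-fixed, suppose the $G$-invariant line $\ell$ met one of the four lines through $P$. Then its image would be a $G$-stable line in $X$, not contained in $\Pi$, so it meets the $G$-stable plane $\Pi$ in exactly one point; that point is a node, and being the unique intersection point of two $G$-stable subvarieties it would be $G$-fixed, a contradiction. Only after excluding both $P\in\ell$ and incidence with the four lines through $P$ can one conclude that $\phi^{-1}(\ell)$ is a $G$-stable line disjoint from $\Pi$. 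The rest of your outline is sound: the converse direction (a $G$-fixed node or a $G$-stable line in $X$ disjoint from $\Pi$ yields a $G$-invariant line on $X_{2,2}$) works, and is equivalent to the paper's use of Lemma~\ref{lemm:line} for that implication.
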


\begin{proof}
We may assume that no singular points of $X$ is $G$-fixed.
If $X$ contains a $G$-stable line that is disjoint from $\Pi$,
then the $G$-action on $X_{2,2}$ is linearizable, by Lemma~\ref{lemm:line}.
Conversely, if the $G$-action on $X_{2,2}$ is linearizable,
then it follows from \cite[Theorem 24]{HT-intersect}
that $X_{2,2}$ contains a $G$-stable line $\ell$.
And $P\not\in\ell$, because otherwise the preimage of $\ell$ on $X$ would be a $G$-fixed singular point.
Similarly, we see that $\ell$ must be disjoint from the four lines in $X_{2,2}$ containing $P$.
Then $\ell$ is mapped by $\phi^{-1}$ to a $G$-stable line in $X$ that is disjoint from the plane $\Pi$. 
\end{proof}

Examples of nonlinearizable actions, based on the Burnside formalism \cite{BnG} or the adaptation to the equivariant context of the {\em torsors over intermediate Jacobians} formalism from \cite{HT-torsor}, \cite{BW-tor}, can be found in \cite[Sections 8.3 and 8.4]{HT-intersect}. As a special case, we have:

\begin{exam}
\label{exam:quad}
We may assume that $X_{2,2}$ is given by 
$$
\sum_{i=1}^6 a_ix_i^2 = \sum_{i=1}^6 x_i^2 =0. 
$$
Let $G=\langle \sigma\rangle$, with $\sigma$ acting diagonally by $(1,1,1,1,-1,-1)$. Then $G$ does not leave invariant any line on $X_{2,2}$ and the action is not linearizable. On the other hand, there is a genus 1 curve $C$ fixed by $G$, obtained by intersecting $X_{2,2}$ with $x_5=x_6=0$. Projecting from any of the  points on $C$, we obtain a singular cubic threefold, generically with four nodes. 
\end{exam}

\begin{exam}
\label{exam:cubic-four}
   Let $X\subset\bP^4_{y_1,\ldots,y_5}$ be the 4-nodal cubic given by 
    $$
    (y_1-y_3)y_2y_4  + (y_2-y_3)y_1y_5  + 
    (y_4-y_5)y_4y_5 - y_4^3 - y_5^3.
    $$
    The four nodes lie on the unique plane $y_4=y_5=0$. 
    The automorphism group $\Aut(X)$ contains $G=C_2^3$ generated by  
    $$
    \iota_1:(y_1,y_2,y_3,y_4,y_5)\mapsto(-y_1,-y_1+y_3,-y_1+y_2,y_4,y_5)
    $$
    $$
    \iota_2: (y_1,y_2,y_3,y_4,y_5)\mapsto(y_2-y_3,y_1-y_3,-y_3,y_4,y_5).
    $$
    and 
    $$
    \iota_3:(y_1,y_2,y_3,y_4,y_5)\mapsto(y_1,y_2,y_3,-y_4,-y_5).
    $$

    Unprojecting $X$ from the unique plane under the map 
    $$
    (y_1,\ldots,y_5)\mapsto (y_1y_5, y_2y_5, y_3y_5, y_4y_5, y_5^2, y_1y_2 - y_2y_3 - y_4^2),
    $$
    one sees that $X$ is $G$-equivariantly birational to a smooth intersection of two quadrics $X_{2,2}\subset\bP^5_{x_1,\ldots,x_6}$ given by 
    $$
    x_1x_2 - x_2x_3- x_4^2 - x_5x_6=x_1x_2 - x_1x_3 + x_4^2 - x_4x_5 - x_5^2 + x_4x_6=0.
    $$
    The $G=C_2^3$ action on the first five coordinates is the same as that on $\bP^4$, $\iota_1$ and $\iota_2$ acts trivially on $x_6$ and $\iota_3$ changes the sign of $x_6$. For any subgroup $G'\subset G$, there is a $G'$-stable line in $X_{2,2}$ if and only if $G'=C_2$ and the character of the $G'$-representation of the ambient $\bA^6_{x_1,\ldots,x_6}$ is
    $$
    (6,0) \quad\text{ or }\quad (6,4).
    $$
    In the first case, $G'$ fixes a singular point of $X$ and thus is linearizable. In the latter case, $G'$ pointwise fixes a smooth intersection of two quadrics in dimension $2$, i.e., a quartic Del Pezzo surface, which contains $16$ lines. The other $C_2$ subgroups have character $(6,2)$. They fix an elliptic curve but do not leave any line invariant in $X_{2,2}$. Any of the other subgroups of $G$ will contain one of the nonlinearizable $C_2$.
\end{exam}




\section{Five nodes}
\label{sect:5}

Now, we suppose that $X$ has $5$ nodes. 

\subsection*{Birational model}
If the nodes are not in general linear position, then there is a distinguished $G$-fixed node, 
and the $G$-action on $X$ is linearizable. 
Hence, we may assume that the nodes of $X$ are 
$$
p_1=[1:0:0:0:0], \quad p_2=[0:1:0:0:0], \quad p_3=[0:0:1:0:0],
$$
$$
p_4=[0:0:0:1:0], \quad p_5=[0:0:0:0:1].
$$
Then $G\subseteq \fS_5$ acts via permutation of coordinates.  We may also assume that $G$ does not fix any of the nodes,
since otherwise the $G$-action is clearly linearizable.

\subsection*{Linearizability}

Using the standard Cremona involution 
$$
\iota: \mathbb{P}^4\dashrightarrow \bP^4,
$$
we obtain a $G$-birational map $\chi\colon X\dasharrow Q$,
where $Q\subset \bP^4$ is a smooth quadric. For more details of this map, see the proof of Theorem~\ref{theo:Avilov}.

\begin{lemm}
Suppose that $G$ does not act transitively on $\mathrm{Sing}(X)$. Then the $G$-action on $X$ is linearizable. 
\end{lemm}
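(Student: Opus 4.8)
The plan is to pin down the orbit structure first. Since $G$ fixes no node and (by hypothesis) is not transitive on the five nodes, the induced partition of $\mathrm{Sing}(X)$ into $G$-orbits has all parts of size $\geq 2$ and is not a single part of size $5$. The only partition of $5$ into parts $\geq 2$ other than $\{5\}$ is $\{2,3\}$, so the orbit structure is forced: after relabelling coordinates I may assume the orbits are $\{p_1,p_2\}$ and $\{p_3,p_4,p_5\}$, and hence $G\subseteq\fS_{\{1,2\}}\times\fS_{\{3,4,5\}}\subseteq\fS_5$.

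Next I would pass to the quadric model. By the construction recalled above, the standard Cremona involution gives a $G$-birational map $\chi\colon X\dashrightarrow Q$ onto a smooth quadric threefold $Q\subset\bP^4$; since the Cremona involution commutes with coordinate permutations, $G$ acts on $Q$ by permuting the same coordinates $x_1,\dots,x_5$. Now the vector $v=(a,a,b,b,b)\in k^5$ is genuinely fixed by every element of $\fS_{\{1,2\}}\times\fS_{\{3,4,5\}}$, and in particular by $G$; thus the line
$$
\ell=\bP\big(\langle e_1+e_2,\ e_3+e_4+e_5\rangle\big)\subset\bP^4
$$
is pointwise $G$-fixed. Restricting the quadratic form defining $Q$ to $\ell\cong\bP^1$ yields a binary quadratic form, which over the algebraically closed field $k$ either vanishes identically (so $\ell\subset Q$) or has a root; in either case $\ell\cap Q\neq\emptyset$. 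I pick $q\in\ell\cap Q$; then $q=[v]$ for a genuinely $G$-fixed $v$, so the $1$-dimensional subspace $\langle v\rangle\subset k^5$ is $G$-stable.

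Finally I would linearize by projecting from $q$. The quotient $W=k^5/\langle v\rangle$ is a $4$-dimensional linear $G$-representation, and projection from $q$ is the $G$-equivariant rational map $\bP^4\dashrightarrow\bP(W)=\bP^3$ induced by $k^5\to W$; its restriction $Q\dashrightarrow\bP^3$ is birational, since projection of a smooth quadric threefold from a point of itself is birational onto $\bP^3$. As $\bP(W)$ carries a genuinely linear $G$-action, this exhibits the $G$-action on $Q$, and hence on $X$ via $\chi$, as linearizable. The one point requiring care is that the fixed point $q$ must produce a \emph{linear} (not merely projectively linear) action on $\bP^3$; this is exactly what the choice of the genuinely $G$-fixed vector $v$ constant on the two orbits guarantees, by making $\langle v\rangle$ a $G$-stable line rather than merely a $G$-fixed projective point.
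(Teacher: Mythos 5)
Your proof is correct and follows essentially the same route as the paper: the forced orbit partition $\{2,3\}$, the pointwise $G$-fixed line of points $[a:a:b:b:b]$, a $G$-fixed point in its intersection with the smooth quadric $Q$, and projection from that point to $\bP^3$. The only cosmetic difference is that the paper transports the fixed line through the birational map by observing $\iota(l)=l$ for the standard Cremona involution, whereas you invoke $\fS_5$-equivariance of the Cremona map to place the same line directly in the ambient space of $Q$; the mechanism is identical.
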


\begin{proof}
Since $G$ does not fix any of the nodes, 
either $G\simeq C_2\times \fS_3$ or $G\simeq C_2\times C_3$.
In both cases, we may assume that $G$ preserves the subset $\{p_1,p_2\}$ and $\{p_3,p_4,p_5\}$.
Then $G$ pointwise fixes the line $l\subset\mathbb{P}^4$ that passes through the points $[1:1:0:0:0]$ and $[0:0:1:1:1]$.
Observe that $\iota(l)=l$, so that the intersection $l\cap Q$ contains $G$-fixed points, which implies the assertion.  
\end{proof}

Thus, we may assume that $G$ acts transitively on the nodes of $X$,
and $G$ contains the 5-cycle $(1,2,3,4,5)$. Then $X$ is defined by  
\begin{multline*}
     x_1x_2x_3  + x_2x_3x_4 + x_1x_2x_5 + x_1x_4x_5  + x_3x_4x_5+\\
    + a(x_1x_2x_4 + x_1x_3x_4+ x_1x_3x_5 + 
    x_2x_3x_5 + x_2x_4x_5)=0,
\end{multline*}
for some $a$. And $Q$ is defined by
\begin{align}\label{eq:Quadric5}
    x_1x_2+x_2x_3 +\cdots +x_5x_1 +a(x_1x_3+x_2x_4 +\cdots +x_5x_2) = 0.
\end{align}
Note that $a\ne -1$, since otherwise $X$ would be $6$-nodal. Then $Q$ is smooth.
For the group $G$, we have the following possibilities: 
\begin{itemize}
\item[(1)] $G\simeq C_5$, 
\item[(2)] $G\simeq \fD_5$,
\item[(3)] $G\simeq C_4\rtimes C_5$ and $a=1$, 
\item[(4)] $G\simeq \mathfrak{A}_5$ and $a=1$, 
\item[(5)] $G\simeq \mathfrak{S}_5$ and $a=1$.
\end{itemize}
In the first case, $G=C_5$, the group $G$ fixes a point in $Q$, and the $G$-action on $X$ is linearizable.
In the second case, the action is necessarily of the form in the following lemma:

\begin{lemm} 
Suppose that $G\simeq \fD_5$ acting on $\bP^4=\bP(\rI\oplus V_2\oplus V_2')$, where $V_2$ and $V_2'$ are two nonisomorphic 2-dimensional irreducible representations of $\fD_5$. Then the $G$-action on every $G$-invariant smooth quadric in $\bP^4$ is linearizable.
\end{lemm}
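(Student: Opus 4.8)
The plan is to exploit the decomposition $V_4|_{\fD_5}=V_2\oplus V_2'$, which is exactly the feature distinguishing this case from the irreducible $\fA_5$-situation. First I would decompose the space of $G$-invariant quadratic forms on $W=\rI\oplus V_2\oplus V_2'$. Since $\Sym^2 V_2=\rI\oplus V_2'$, $\Sym^2 V_2'=\rI\oplus V_2$, and $V_2\otimes V_2'$ contains no trivial summand, this space is spanned by $x_0^2$, the invariant form $q$ on $V_2$, and the invariant form $q'$ on $V_2'$. Hence every invariant quadric has the shape $Q=\{\alpha x_0^2+\beta q+\gamma q'=0\}$, and the associated symmetric matrix shows that $Q$ is smooth precisely when $\alpha\beta\gamma\neq 0$. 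In particular the unique $G$-fixed point $p_0=\bP(\rI)$ of $\bP^4$ never lies on a smooth $Q$, so $Q^G=\varnothing$ and a direct projection from a fixed point is unavailable; any linearization must use the splitting.

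That splitting nonetheless supplies rich $G$-invariant geometry. The two invariant planes $\Pi=\bP(\rI\oplus V_2)$ and $\Pi'=\bP(\rI\oplus V_2')$ cut out two conics $C=Q\cap\Pi$ and $C'=Q\cap\Pi'$ that are smooth (as $\alpha,\beta,\gamma\neq 0$) and disjoint (as $\Pi\cap\Pi'=p_0\notin Q$). Equivalently, projection from $p_0$ realizes $Q$ as a $G$-equivariant double cover of $\bP(V_2\oplus V_2')$, under which $C$ and $C'$ map to the two \emph{skew} $G$-invariant lines $\bP(V_2)$ and $\bP(V_2')$. I would then blow up $C$ and $C'$ and run the construction already used in the proof of Theorem~\ref{theo:Avilov}: the resulting threefold carries a $G$-equivariant conic bundle over $\bP^1\times\bP^1$, the two factors being the pencils of hyperplanes through $\bP(V_2)$ and through $\bP(V_2')$, namely $\bP((V_2')^\vee)\times\bP(V_2^\vee)$. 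Because $V_2\not\simeq V_2'$, the group $G$ cannot interchange the two factors and therefore preserves each ruling of the base, while a direct computation shows the bundle has small discriminant (a union of a few fibres).

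To conclude I would trivialize this conic bundle $G$-equivariantly. A $G$-invariant section would exhibit it, via the no-name method of Bogomolov and Katsylo over the $G$-rational base $\bP^1\times\bP^1$, as $G$-birational to $(\bP^1\times\bP^1)\times\bP^1$ with trivial action on the last factor, hence to a projective space with linear $G$-action, yielding linearizability. The two most evident sections are the constant ones supported at the points $\bP(V_2)\cap Q$ and at $\bP(V_2')\cap Q$; each is fixed by the cyclic subgroup $C_5$, but the reflections in $\fD_5$ interchange the two members of each pair. Thus the hard part will be to handle this residual $C_2$: one must either promote the $C_5$-invariant orbit of sections to a genuine $G$-invariant rational section — here the identity $\Sym^2 V_2'=\rI\oplus V_2$ makes $V_2$-valued functions available on the base in degree two, which is the natural place to search — or argue directly that a conic bundle over $\bP^1\times\bP^1$ with so few degenerate fibres is $G$-birationally trivial. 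I expect this final \emph{equivariant} trivialization, as opposed to mere rationality (which is classical), to be the crux of the argument.
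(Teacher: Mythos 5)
Your setup is sound and matches the paper's framework: the space of invariant quadratic forms is indeed spanned by $x_0^2$, $q$, $q'$ (in the paper's coordinates, $x_5^2$, $x_1x_4$, $x_2x_3$), smoothness is $\alpha\beta\gamma\neq 0$, all smooth members are equivariantly isomorphic after scaling, and the two conics $C=Q\cap\bP(\rI\oplus V_2)$, $C'=Q\cap\bP(\rI\oplus V_2')$ are $G$-invariant and disjoint. Blowing them up does give a $G$-equivariant conic bundle over $\bP(({V_2'})^\vee)\times\bP(V_2^\vee)$ — this is exactly the construction used in the proof of Theorem~\ref{theo:Avilov}, but note that there it only establishes failure of $G$-solidity, not linearizability. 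Your proof stops precisely where the real work begins, and the step you defer is not just technically hard but is misidentified: the objects you call ``the two most evident sections,'' namely the point pairs $\bP(V_2)\cap Q$ and $\bP(V_2')\cap Q$, are not sections of anything. Those four points lie on $C$ and $C'$ (e.g. $\bP(V_2)\cap Q\subset \bP(\rI\oplus V_2)\cap Q=C$), so on the blowup they are replaced by curves inside the exceptional divisors; moreover each exceptional divisor is a $2$-section, not a section, of the conic bundle (a general fiber conic $H_1\cap H_2\cap Q$ meets each of $C$, $C'$ in two points), and the fibers through your special points lie over the discriminant, which consists of four rulings (two in each family). So no candidate invariant section is actually exhibited. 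Finally, even granting an invariant section and the no-name reduction to $\bP^1\times\bP^1\times\bP^1$ with trivial action on the last factor, linearizability of that model still requires an argument: $(\bP^1\times\bP^1)^{\fD_5}=\varnothing$ here (the $C_5$-fixed points are swapped in pairs by the reflections), so one cannot conclude by projecting from a fixed point.

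The paper's proof avoids all of this by choosing a different second invariant curve: instead of $C'$, it takes the $G$-invariant twisted cubic $R=\{x_5=x_1x_3-x_2^2=x_2x_4-x_3^2=0\}\cap Q_{1,1}$ lying in the invariant hyperplane section, and then the linear system of quadric hypersurfaces through $C\cup R$ induces a $G$-equivariant birational map $Q_{1,1}\dasharrow\bP^3$ in one step (a standard Sarkisov link, for which the paper cites Cheltsov's reference). If you want to salvage your route, the missing content is exactly an equivariant trivialization of your conic bundle — e.g. producing a genuine $G$-invariant rational $2$-dimensional multisection of odd degree or computing the relevant Brauer class on the quotient — and then a separate linearization of the resulting product action; as it stands, the argument has a genuine gap at its decisive step, which you yourself flag as ``the crux.''
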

\begin{proof}
We may assume the $G$ action is generated by 
\begin{align*}
(x_1,\ldots,x_5)& \mapsto(x_4,x_3,x_2,x_1,x_5),
\\
(x_1,\ldots,x_5)& \mapsto(\zeta x_1,\zeta^2x_2,\zeta^3x_3,\zeta^4x_4,x_5),
\end{align*}
where $\zeta=e^{\frac{2\pi i}{5}}$. Smooth $G$-invariant quadrics $Q_{a,b}$ are given by
$$
ax_1x_4-bx_2x_3+x_5^2=0
$$
for $a,b\ne 0$. Notice that each $Q_{a,b}$ is $\fD_5$-isomorphic to $Q_{1,1}$ with the same $\fD_5$-action under a change of variables
$$
x_1'={\sqrt{a}}x_1,x_2'={\sqrt{b}}x_2,x_3'={\sqrt{b}}x_3,x_4'={\sqrt{a}}x_4,x_5'= x_5.
$$
Consider a $G$-invariant conic 
$$
C=\{x_2=x_3=0\}\cap Q_{1,1}
$$
and a $G$-invariant twisted cubic curve
$$
R=\{x_5=x_1x_3-x_2^2=x_2x_4-x_3^2=0\}\cap Q_{1,1}.
$$
The system of quadric hypersurfaces on $\bP^4$ containing both $C$ and $R$ induces a $G$-equivariant birational map $Q_{1,1}\dashrightarrow \bP^3$, see e.g., \cite[Section 5.10]{Chelcalabi}.
\end{proof}


\begin{lemm} 
Suppose that $G\simeq C_4\rtimes C_5$ and $a=1$. 
Then $Q$ from \eqref{eq:Quadric5} contains a $G$-invariant smooth quintic elliptic curve $E$,
and we have the following $G$-Sarkisov link:
$$
\xymatrix{
&\widetilde{Q}\ar@{->}[dl]_{\alpha}\ar@{->}[dr]^{\beta}\\%
Q&& \mathbb{P}^3}
$$
where $\alpha$ is a~blow up of the curve $E$,
and $\beta$ is a blow up of a smooth quintic elliptic curve isomorphic to $E$.
\end{lemm}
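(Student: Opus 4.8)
The plan is to exhibit a $G$-invariant smooth elliptic normal quintic $E\subset Q$ and then run the two-ray game on its blow-up, identifying the second contraction with a blow-down to $\mathbb{P}^3$. Throughout I use that for $a=1$ the quadric is the symmetric one, $Q=\{\sum_{i<j}x_ix_j=0\}$, that $G\simeq C_5\rtimes C_4$ is the Frobenius group of order $20$ acting on $\mathbb{P}^4=\mathbb{P}(W)$ by permuting coordinates, and that $W=\mathbf{1}\oplus\rho$ with $\rho$ the unique $4$-dimensional irreducible representation of $G$. The target $\mathbb{P}^3=\mathbb{P}(\rho)$ then carries the induced \emph{linear} $G$-action, so producing the link simultaneously proves linearizability of the $G$-action on $Q$.

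First I would construct $E$. A character computation gives $H^0(\mathbb{P}^4,\mathcal{O}(2))=2\chi_0\oplus\chi_2\oplus 3\rho$ as a $G$-module, and the invariant quadric $Q$ spans one of the two trivial summands. I take the $5$-dimensional $G$-subspace $V=\langle Q\rangle\oplus\rho'$ for a suitable copy $\rho'\subset H^0(\mathcal{O}(2))$ of the $4$-dimensional irreducible, and set $E=\mathrm{Zero}(V)$. Using the Buchsbaum--Eisenbud structure theorem I would present $E$ as the locus of $4\times 4$ sub-Pfaffians of a $G$-equivariant $5\times 5$ skew-symmetric matrix of linear forms, and then verify---on one explicit choice of $\rho'$, e.g. with \texttt{Magma}---that $E$ is a smooth irreducible curve of degree $5$ and genus $1$ with $H^0(\mathcal{I}_E(2))=V$; in particular $E\subset Q$. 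Since the copies of $\rho$ inside $H^0(\mathcal{O}(2))$ form a $\mathbb{P}^2$ and smoothness is an open nonempty condition, the content is just to check it holds. This existence-and-smoothness step is the main obstacle.

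Next I set up the link. Let $\alpha\colon\widetilde Q\to Q$ be the blow-up of $E$ with exceptional divisor $\mathcal{E}$; as $E$ and $Q$ are smooth and $E$ is $G$-invariant, $\widetilde Q$ is a smooth $G$-threefold of Picard rank $2$. From $H^3=2$, $H^2\mathcal{E}=0$, $H\mathcal{E}^2=-(H\cdot E)=-5$ and $\mathcal{E}^3=-\deg N_{E/Q}=-\big((-K_Q)\cdot E+2g-2\big)=-15$ one gets $(-K_{\widetilde Q})^3=(3H-\mathcal{E})^3=24$. Consider $L=2H-\mathcal{E}$, the proper transform of the system of quadrics through $E$. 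Because the five quadrics through $E$ are exactly $\langle Q\rangle\oplus\rho'$ and $Q$ restricts to zero on $Q$, the system $|L|$ has projective dimension $3$; as elliptic normal quintics are scheme-theoretically cut out by their quadrics, $|L|$ is base-point-free and defines a $G$-equivariant morphism $\beta\colon\widetilde Q\to\mathbb{P}(H^0(L)^\vee)=\mathbb{P}(\rho)=\mathbb{P}^3$. One checks that $L$ is nef with $L^3=1$ and $L\cdot f=1$ on the fibers $f$ of $\mathcal{E}\to E$, so $\beta$ is birational and does not contract the $\alpha$-ray; moreover $-K_{\widetilde Q}=3H-\mathcal{E}$ takes the value $1$ on both $\alpha$- and $\beta$-fibers, hence is ample, so $\widetilde Q$ is Fano, $\overline{NE}(\widetilde Q)$ has exactly two rays, and both $\alpha,\beta$ are morphisms with no intervening flop. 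By Mori's classification of divisorial extremal contractions of smooth threefolds, $\beta$ is the blow-up of a smooth curve $E'\subset\mathbb{P}^3$; writing $-K_{\widetilde Q}=4L-\mathcal{E}_\beta$ forces $\mathcal{E}_\beta=5H-3\mathcal{E}$, the change of basis $(H,\mathcal{E})\mapsto(L,\mathcal{E}_\beta)$ has determinant $-1$, and comparing $(-K_{\widetilde Q})^3=24$ with the blow-up formula on $\mathbb{P}^3$ gives $4\deg E'-g(E')=19$, i.e. $g(E')=1$ and $\deg E'=5$.

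Finally I would identify $E'$ with $E$ and conclude. Inverting the basis change yields $H=3L-\mathcal{E}_\beta$, so $\alpha$ is recovered on $\mathrm{Bl}_{E'}\mathbb{P}^3$ by the cubics through $E'$ (a $5=1+4$-dimensional $G$-space mapping onto $Q$); the construction is thus symmetric. The isomorphism $E'\cong E$ I expect to extract by tracking the two conic-bundle structures: the exceptional divisors $\mathcal{E}=\mathbb{P}(N^\vee_{E/Q})$ and $\mathcal{E}_\beta=\mathbb{P}(N^\vee_{E'/\mathbb{P}^3})$ are geometrically ruled surfaces over the genus-one curves $E$ and $E'$ and carry unique rulings; relating them through the surfaces $\alpha(\mathcal{E}_\beta)\subset Q$ and $\beta(\mathcal{E})\subset\mathbb{P}^3$ recovers each of $E,E'$ as the base of the ruling of a single ruled surface, giving $E\cong E'$ (this matches the classical description of the link). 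Since every step is canonical in the $G$-invariant data, the resulting diagram is a $G$-Sarkisov link, proving the lemma and showing that the $G$-action on $Q$ is $G$-birational to the linear action on $\mathbb{P}(\rho)$. The points I expect to dwell on are the explicit smooth $E$ in the second paragraph and the clean $E'\cong E$ identification here.
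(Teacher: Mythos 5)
Your outline coincides with the paper's own proof in its architecture: exhibit a $G$-invariant elliptic normal quintic $E\subset Q$ and blow it up. The paper's proof is much terser --- it writes the curve down explicitly as the common zero locus of the cyclic $G$-orbit of the quadric $x_1^2+i(x_3x_4+x_2x_5)$ (this orbit spans exactly a five-dimensional $G$-space $\mathbf{1}\oplus\rho$ of quadrics containing $Q$, so your representation-theoretic construction lands on the same curve), and then cites \cite{CheltsovPokora} for the fact that blowing up yields the stated link. Your reconstruction of the link via the two-ray game is correct where it is precise: $(-K_{\widetilde Q})^3=24$, $L^3=1$, $L\cdot f=1$, base-point-freeness of $L=2H-\cE$ (elliptic normal quintics are cut out by their Pfaffian quadrics), ampleness of $-K_{\widetilde Q}=\alpha^*H+L$, and the appeal to Mori's classification to conclude that $\beta$ is the blow-up of a smooth curve in $\mathbb{P}^3$ all check out, and this makes your write-up more self-contained than the paper's.

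There are two gaps. A minor one: the single equation $4\deg E'-g(E')=19$ does not force $(\deg E',g(E'))=(5,1)$ (for instance $(6,5)$ also solves it); you need one more intersection number, e.g.\ $\deg E'=-L\cdot\cE_\beta^2=-(2H-\cE)\cdot(5H-3\cE)^2=5$, which is available from your change of basis and then gives $g(E')=1$. The genuine one: the identification $E'\cong E$ is only gestured at (``I expect to extract \dots''), and the ruled-surface argument as described does not prove it. The clean repair is the blow-up formula for intermediate Jacobians, a tool this paper itself relies on elsewhere: since $\IJ(Q)=\IJ(\mathbb{P}^3)=0$, one has $\JJ(E)\cong\IJ(\widetilde Q)\cong\JJ(E')$ as principally polarized abelian varieties, and an elliptic curve is isomorphic to its Jacobian, so $E\cong E'$. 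With these two repairs your argument is a complete proof of the lemma.
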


\begin{proof}
It is easy to see that $\mathrm{Aut}(Q)$ contains a unique subgroup isomorphic to $C_4\rtimes C_5$.
Thus, we may change coordinates on $\mathbb{P}^4$ as we need 
and, in particular,  assume that $Q$ is given by
$$
\sum_{i=1}^5x_i^2+i\sum_{1\leqslant i<j\leqslant 5}x_ix_j=0, 
$$
and that the action of $G$ on $Q$ is given by 
\begin{align*}
(x_1,x_2,x_3,x_4,x_5)&\mapsto(x_2,x_3,x_4,x_5,x_1),\\
(x_1,x_2,x_3,x_4,x_5)&\mapsto(x_1,x_3,x_5,x_2,x_4).
\end{align*}
Then $Q$ contains the following smooth quintic elliptic curve:
$$
\left\{\aligned
&x_1^2+i(x_3x_4+x_2x_5)=0,\\
&x_2^2+i(x_4x_5+x_3x_1)=0,\\
&x_3^2+i(x_5x_1+x_4x_2)=0,\\
&x_4^2+i(x_1x_2+x_5x_3)=0,\\
&x_5^2+i(x_2x_3+x_1x_4)=0.
\endaligned
\right.
$$
Blowing up $Q$ along this curve, we obtain the claim, cf. \cite{CheltsovPokora}.
\end{proof}

If $G\simeq \mathfrak{S}_5$ and $a=1$, then it follows from \cite{VAZ} that $X$ is $G$-solid,
and the only $G$-Mori fiber spaces $G$-birational to $X$ are $X$ and $Q$. In particular, the $G$-action is not linearizable. If $G\simeq\mathfrak{A}_5$ and $a=1$, we also expect that  $X$ and $Q$ are the only $G$-Mori fiber spaces $G$-birational to $X$, which would imply that the $G$-action is not linearizable.

\section{Six nodes}
\label{sect:six}

\subsection*{Cubics without planes}

Let $X$ be the 6-nodal cubic threefold such that the nodes are in general linear position. 
Then $\mathrm{rk}\,\mathrm{Cl}(X)=2$, so the defect of $X$ is $1$.
This is case (J9) in \cite{Finkelnbergcubic}. Note that $X$ does not contain planes, but it contains two families of cubic scrolls (see Remark~\ref{rema:scrolls} below).
Moreover, by \cite[Section 3]{HT-determinant}, $X$ can be given by 
$$
\mathrm{det}(M)=0
$$
for a $3\times 3$ matrix $M$ whose entries are linear forms. 
Thus, one can define a rational map $X\dasharrow\mathbb{P}^2$ that maps $p\mapsto (a,b,c)$,
where $(a,b,c)$ is a non-zero solution of the equation
$$
M\begin{pmatrix}
a\\
b\\
c
\end{pmatrix}=0.
$$
This map is dominant, it is undefined at the nodes of $X$, 
and its general fiber is a line in $X$.
Similarly, we can define another rational map $X\dasharrow\mathbb{P}^2$ using the transpose of the matrix $M$.
Taking resolution of singularities $X$, we resolve indeterminacy of both of these rational maps,
which yields the following commutative diagram:
\begin{equation}
\label{eq:6-nodes-link}
\xymatrix{
&&\widetilde{X}\ar@{->}[dll]_{h^+}\ar@{->}[drr]^{h^-}\ar@{->}[dd]_{f}&&\\
X^+\ar@{->}[d]_{p^+}\ar@{->}[drr]^{q^+}&&&&X^-\ar@{->}[d]^{p^-}\ar@{->}[dll]_{q^-}\\%
\mathbb{P}^2&&X\ar@{-->}[ll]\ar@{-->}[rr]&&\mathbb{P}^2}
\end{equation}
where $f$ is the standard resolution, $q^+$ and $q^-$ 
are small resolutions,
$h^+$ and $h^-$ are birational morphisms such that $h^-\circ (h^+)^{-1}$ is a composition of six Atiyah flops,
both $p^+$ and $p^-$ are $\mathbb{P}^1$-bundles.
The diagram \eqref{eq:6-nodes-link} is implicitly contained in \cite[\S~7.5]{JahnkePeternellRadloff2011}, as an illustration of the first row in the table there.
Taking a product of morphisms $p^+\circ h^+$ and $p^-\circ h^-$,
we obtain a morphism $\widetilde{X}\to\mathbb{P}^2\times\mathbb{P}^2$ that is birational onto its image (a divisor of degree $(2,2)$ with $15$ nodes).

\begin{rema}
\label{rema:scrolls}
Let $l$ be a general line in $\mathbb{P}^2$.
Set 
$$
\overline{S}=(q^-)_*(p^{-})^*(l)\quad \text{ and } \quad \overline{S}'=(q^+)_*(p^{+})^*(l).
$$
Then $\overline{S}$ and $\overline{S}'$ are smooth cubic scrolls in $X$ that freely generate the class group $\mathrm{Cl}(X)$.
\end{rema}

\begin{rema}
Let $G\subseteq\mathrm{Aut}(X)$. Then the commutative diagram \eqref{eq:6-nodes-link} is $G$-equivariant 
if and only if $\mathrm{rk}\,\mathrm{Cl}^G(X)\ne 1$.  
\end{rema}

To describe possibilities for $\mathrm{Aut}(X)$, we can assume that the nodes of $X$ are the points 
$$
p_1=[1:0:0:0:0],\quad p_2=[1:1:1:1:1], \quad p_3=[0:0:0:0:1].
$$
$$
p_4=[0:0:0:1:0],\quad p_5=[0:0:1:0:0], \quad p_6=[0:1:0:0:0].
$$
Fix the $\fS_6$-action on $\mathbb{P}^4$ generated by
\begin{align}\label{eq:6nodS6}
\tau_{(12)}:(x_1,\ldots,x_5)&\mapsto (-x_1, -x_1+x_2,-x_1+x_3,-x_1+x_4,-x_1+x_5),
\\
\tau_{(1\cdots6)}: (x_1,\ldots,x_5)&\mapsto (-x_1+x_2, -x_1+x_3,-x_1+x_4,-x_1+x_5,-x_1),\nonumber
\end{align}
where the indices corresponds to the permutation of 6 nodes. Then $\mathrm{Sing}(X)$ forms an $\fS_6$-orbit, but $X$ is not $\fS_6$-invariant.
Moreover, it follows from a classical construction \cite{CorayCo} that there exists the following $4$-dimensional 
$\fS_6$-Sarkisov link:
$$
\xymatrix{
U\ar@{->}[d]_{\alpha}\ar@{-->}[rr]^{\beta}&&Y\ar@{->}[d]^{\gamma}\\%
\mathbb{P}^4\ar@{-->}[rr]_{\chi}&&V}
$$
where $V$ is the 10-nodal Segre cubic threefold in $\mathbb{P}^4$,
$\chi$ is given by the linear system of cubic hypersurfaces singular at the points $p_1,\ldots,p_6$,
$\alpha$ is the blowup of $p_1,\ldots,p_6$, 
$\beta$ is a composition of antiflips in 
the strict transforms of the $15$ lines that contain $2$ points among $p_1,\ldots,p_6$,
and $\gamma$ is a $\mathbb{P}^1$-bundle.

Observe that $\Aut(X)\subseteq \fS_6$. 
Restricting the above $\fS_6$-Sarkisov link to $X$, 
we obtain the following $\mathrm{Aut}(X)$-equivariant  diagram:
\begin{equation}
\label{eq:6-nodes-P1-bundle}
\xymatrix{
\widetilde{X}\ar@{->}[d]_{f}\ar@{-->}[rr]&&\widehat{X}\ar@{->}[d]^{\pi}\\%
X &&S}
\end{equation}
where $S$ is a smooth hyperplane section of the Segre cubic $V$, 
$\widetilde{X}\dasharrow\widehat{X}$ is a composition of Atiyah flops in 
the strict transforms of the $15$ lines in $X$ that contains $2$ nodes among $p_1,\ldots,p_6$,
$\pi$ is a $\mathbb{P}^1$-bundle. For more details, see \cite{HT-determinant}.

Our cubic $X$ is given by 
\begin{align}
    \label{eq:6-nodalgeneralpositionform}
    a_1f_1+a_2f_2+a_3f_3+a_4f_4+a_5f_5=0
\end{align}
for some $a_1,a_2,a_3,a_4,a_5\in k$, where
 \begin{align*}
 f_1&=x_1x_2x_3 - x_2x_3x_4 - x_2x_3x_5 - x_1x_4x_5 + 
        x_2x_4x_5 + x_3x_4x_5,\\
    f_2&=x_1x_2x_4 - x_2x_3x_4 - x_1x_4x_5 + x_3x_4x_5,\\
    f_3&=x_1x_2x_5 - x_2x_3x_5 - x_1x_4x_5 + x_3x_4x_5,\\
    f_4&=x_1x_3x_4 - x_2x_3x_4 - x_1x_4x_5 + x_2x_4x_5,\\
    f_5&=x_1x_3x_5 - x_2x_3x_5 - x_1x_4x_5 + x_2x_4x_5. 
\end{align*}
Enumerating $G\subseteq \fS_6$ and searching for $G$-invariant cubics singular at $p_1,\ldots,p_6$, we can find all possibilities for $\mathrm{Aut}(X)$. In particular, $\Aut(X)=1$ for general $a_1,\ldots, a_5$. Moreover, one has

\begin{prop}\label{prop:6nodegeneralprop}
Let $X\subset \bP^4$ be a 6-nodal cubic threefold given by \eqref{eq:6-nodalgeneralpositionform}. Assume that none of the nodes of $X$ is fixed by $\Aut(X)$. Then under the $\fS_6$-action specified in~\eqref{eq:6nodS6}, one of the following holds:
    \begin{enumerate}
        \item $a_1+a_2+a_4+a_5=0$, and 
        $$
        \Aut(X)\simeq C_2=\langle (1, 3)(2, 5)(4, 6)\rangle.
        $$
        \item $a_1+a_3=a_2-a_3+a_4+a_5=0$, and 
        $$
        \Aut(X)\simeq \fS_3=\langle (1, 3)(2, 5)(4, 6), (1, 4, 5)(2, 6, 3)\rangle.
        $$
        \item $a_1+a_4=a_2+a_5=a_3-a_4=0$, and 
        $$
        \Aut(X)\simeq \fS_4=\langle (1, 3)(2, 5)(4, 6), (3,4,5,6)\rangle.
        $$
        \item $a_1+a_4=a_2+a_5=a_3+a_4=0$, and 
        $$
        \Aut(X)\simeq \fD_4=\langle(3,5), (1,3,2,5)(4, 6)\rangle.
        $$
        \item $a_1+a_4+2a_5=a_2-a_5=0$, and 
        $$
        \Aut(X)\simeq C_2^2=\langle(1, 2)(3, 5)(4, 6), (1, 3)(2, 5)(4, 6)\rangle.
        $$
        \item $a_1+a_4+2a_5=a_2-a_5=a_3-a_4-2a_5=0$, and 
        $$
        \Aut(X)\simeq \fD_6=\langle (1, 3)(2, 5)(4, 6), (1, 6, 5, 2, 4, 3)\rangle.
        $$
        \item $a_1=a_3=a_5=1, a_2=a_4=-1$, and 
        $$
        \Aut(X)\simeq \fS_3^2\rtimes C_2=\langle(1,3)(2,5)(4,6), (2, 4), (1, 5)(2, 3, 4, 6)\rangle.
        $$
        \item $a_1=a_4=1, a_2=a_3=a_5=-1$, and 
        $$
        \Aut(X)\simeq \fS_5=\langle(1,3)(2,5)(4,6),(1, 2, 5, 6, 4)\rangle.
        $$
    \end{enumerate}
\end{prop}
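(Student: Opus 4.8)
The plan is to convert the classification into a fixed-point computation for the $\fS_6$-action on the five-dimensional space of cubics singular at the six nodes. I would first recall why $\Aut(X)\subseteq\fS_6$, as already observed before the statement: since the six nodes are in general linear position they form a projective frame of $\bP^4$, and as the $\Aut(X)$-action lifts to $\PGL_5$ (Section~\ref{sect:intjac}) and permutes the nodes, each automorphism is uniquely determined by the induced permutation; hence $\Aut(X)\to\fS_6$ is injective and we may regard $\Aut(X)\subseteq\fS_6$ acting through \eqref{eq:6nodS6}. Next I would reformulate invariance linearly: the cubics singular at $p_1,\ldots,p_6$ form the five-dimensional space $W=\langle f_1,\ldots,f_5\rangle$, which carries the induced linear $\fS_6$-representation, and $X$ is encoded by the point $[a_1:\cdots:a_5]\in\bP(W)\cong\bP^4$. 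Because any $g\in\fS_6$ permutes the nodes, $g^{*}$ preserves $W$, and $g(X)=X$ holds exactly when $g$ fixes $[a_1:\cdots:a_5]$. Therefore
\[
\Aut(X)=\mathrm{Stab}_{\fS_6}\big([a_1:\cdots:a_5]\big),
\]
so the classification amounts to stratifying $\bP(W)$ by stabilizer type.

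I would then carry out this stratification. Since the hypothesis forbids a fixed node, I restrict to subgroups $G\subseteq\fS_6$ with no fixed point on $\{1,\ldots,6\}$, i.e.\ those not contained in any point-stabilizer $\fS_5$. For each conjugacy class of such $G$ I compute the fixed subspace $W^{G}$ by solving, in the basis $f_1,\ldots,f_5$, the eigenvalue conditions $g\cdot(a_1,\ldots,a_5)=\lambda_g\,(a_1,\ldots,a_5)$ for $g\in G$; this produces the linear relations on $(a_1,\ldots,a_5)$ cutting out $\bP(W^{G})$, hence the family of cubics admitting $G$ as symmetries. Running over all subgroups is a finite computation, best carried out in \texttt{Magma}, and it yields the complete list of strata, with $\Aut(X)=1$ for a general point.

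The remaining work, and the main obstacle, is matching and maximality. For each of the eight listed groups I would verify that a general member satisfying the stated relations has stabilizer \emph{exactly} $G$ — not a larger subgroup of $\fS_6$ — and that the corresponding cubic is genuinely six-nodal with the nodes in general position. The delicate part is disentangling the stratification: the fixed subspaces $\bP(W^{G})$ overlap, so the stabilizer jumps along their intersections, and one must organize the subgroups so that each group in the statement is realized as a full stabilizer rather than merely as a subgroup of one occurring on a smaller stratum. One must simultaneously discard spurious solutions where the special values of $(a_1,\ldots,a_5)$ cause the cubic to degenerate — acquiring additional nodes or developing non-nodal singularities — which is precisely what the ``six-nodal, nodes in general linear position'' hypothesis excludes.
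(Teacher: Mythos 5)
Your proposal is correct and takes essentially the same route as the paper: the paper's proof likewise enumerates conjugacy classes of subgroups of $\fS_6$ with no fixed point among $p_1,\ldots,p_6$, computes the $G$-invariant cubics singular at the six nodes (a finite linear-algebra computation in the span of $f_1,\ldots,f_5$), and then uses $\Aut(X)\subseteq\fS_6$ to identify the full automorphism group on each stratum. Your reformulation as a stabilizer stratification of $\bP(W)$ via eigenvalue conditions, together with the explicit attention to maximality of the stabilizer and to discarding degenerate (non-6-nodal) parameter values, is just a more detailed account of the same computation that the paper leaves implicit.
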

\begin{proof}
  Enumerating all (conjugacy classes of) subgroups $G$ of $\fS_6$ which do not fix any point among $p_1,\ldots, p_6$, and computing all $G$-invariant cubics singular at $p_1,\ldots,p_6$, we obtain the list of (families of) 6-nodal cubics whose automorphism groups do not fix any of the nodes. These are the eight families of cubics listed above. Since $\Aut(X)\subset \fS_6$, one can find the full automorphism groups $\Aut(X)$.
\end{proof}
As in \cite{Avilov-note}, we find two maximal subgroups $\fS_5$ and $\fS_3^2\rtimes C_2$ such that (up to conjugation in $\fS_6$) $G$ and $X$ can be described as follows:
\begin{enumerate}
\item $G=\fS_5=\langle (1,3)(2,5)(4,6),(1, 2, 5, 6, 4)\rangle$ and $X$ is given by
\begin{multline}\label{eq:S56nodal}
     x_1x_2x_3 - x_1x_2x_4 + x_1x_3x_4 - x_2x_3x_4 - 
            x_1x_2x_5 - x_1x_3x_5 +\\ +x_2x_3x_5 + x_1x_4x_5 + 
            x_2x_4x_5 - x_3x_4x_5=0,
\end{multline}
\item $G=\fS_3^2\rtimes C_2=\langle(1,3)(2,5)(4,6), (2, 4), (1, 5)(2, 3, 4, 6)\rangle$ and $X$ is given by 
\begin{multline}
 x_1x_2x_3 - x_1x_2x_4 - x_1x_3x_4 + x_2x_3x_4 + 
            x_1x_2x_5 + x_1x_3x_5 +\\ +x_2x_4x_5 -3x_2x_3x_5 - x_1x_4x_5 + 
             x_3x_4x_5=0.
\end{multline}
\end{enumerate}
In the first case, $\mathrm{Aut}(X)\simeq \fS_5$, $\mathrm{rk}\,\mathrm{Cl}^{\fS_5}(X)=1$, 
and \eqref{eq:6-nodes-P1-bundle} is a $\fS_5$-Sarkisov link such that $S$ is the Clebsch diagonal cubic surface.
In the second case, $\mathrm{Aut}(X)\simeq \fS_3^2\rtimes C_2$, $\mathrm{rk}\,\mathrm{Cl}^{\fS_3^2\rtimes C_2}(X)=1$, 
and \eqref{eq:6-nodes-P1-bundle} is a $\fS_3^2\rtimes C_2$-Sarkisov link such that $S$ is the Fermat cubic surface. 

\begin{lemm}
    \label{lemm:6nodescroll}
    Let $X\subset \bP^4$ be a 6-nodal cubic threefold such that the nodes are in general linear position. If $\mathrm{Aut}(X)$ contains an involution $\sigma$ not fixing any node, then $\mathrm{rk}\, \mathrm{Cl}^{\langle\sigma\rangle}(X)=1$.
\end{lemm}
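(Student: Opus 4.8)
The plan is to translate the rank condition into a statement about the two families of cubic scrolls, and then to obstruct the ``family-preserving'' alternative by producing a genus-one curve in the fixed locus of $\sigma$. First I would record the reduction. By Remark~\ref{rema:scrolls} we have $\mathrm{Cl}(X)=\bZ\overline{S}\oplus\bZ\overline{S}'$, where $\overline{S},\overline{S}'$ are the two families of cubic scrolls; since the scrolls within one family are linearly equivalent and there are exactly two families, $\Aut(X)$ acts on the pair $\{[\overline{S}],[\overline{S}']\}$, so $\sigma^{*}$ either fixes both classes or interchanges them. In the first case $\mathrm{rk}\,\mathrm{Cl}^{\langle\sigma\rangle}(X)=2$, while in the second the invariants are $\bZ([\overline{S}]+[\overline{S}'])$, of rank $1$. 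Thus it suffices to show that $\sigma$ \emph{swaps} the two families; equivalently, by the remark preceding this lemma, that the diagram~\eqref{eq:6-nodes-link} fails to be $\langle\sigma\rangle$-equivariant, i.e. the $\bP^1$-bundle $p^{+}\colon X^{+}\to\bP^2$ does not descend to a $\sigma$-equivariant fibration.

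Next I would analyze the fixed locus of $\sigma$ on $\bP^4$. Lifting $\sigma$ to $g\in\GL_5$ with $g^2=\mathrm{id}$ gives an eigenspace decomposition $V=V_{+}\oplus V_{-}$, with fixed locus $\bP(V_{+})\sqcup\bP(V_{-})$. Since $\sigma$ fixes no node, it permutes the six nodes as $(12)(34)(56)$; for each swapped pair the joining line is $\sigma$-invariant and contributes one fixed point to each of $\bP(V_{\pm})$, so the vectors $\widetilde{p}_1-\widetilde{p}_2,\ \widetilde{p}_3-\widetilde{p}_4,\ \widetilde{p}_5-\widetilde{p}_6$ (for suitable lifts) lie in one eigenspace and the three ``sums'' in the other. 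Because the six nodes are in general linear position, the three differences cannot be proportional, which rules out the split $\{4,1\}$ and forces $\{\dim V_{+},\dim V_{-}\}=\{3,2\}$; hence the fixed locus is $\bP^2\sqcup\bP^1$. In particular $\sigma$ fixes \emph{pointwise} the plane cubic $E=X\cap\bP^2$, and the nodes, being non-fixed, lie off $E$, so $E$ lifts isomorphically to $\widetilde{X}$ and to $X^{+}$.

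Finally I would extract the contradiction from the family-preserving alternative. Suppose $\sigma$ fixes both scroll classes; then $p^{+}\colon X^{+}\to\bP^2$ is a $\sigma$-equivariant $\bP^1$-bundle covering an involution of the base whose fixed locus is a line $L$ together with a point. The $\sigma$-fixed locus on $X^{+}$ is $1$-dimensional (it agrees with that on $X$, as $E$ avoids the nodes), so $\sigma$ fixes no divisor; consequently $\sigma$ acts nontrivially on the generic fibre over $L$, and every $1$-dimensional fixed component is then a finite cover of $L$ or of the isolated fixed point, hence \emph{rational}. As $E$ dominates such a component, $E$ would be rational, contradicting that $E$ is a smooth plane cubic of genus $1$. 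Therefore $\sigma$ swaps the two families and $\mathrm{rk}\,\mathrm{Cl}^{\langle\sigma\rangle}(X)=1$.

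I expect the genuine obstacle to be the last point: guaranteeing that the fixed plane section $E=X\cap\bP(V_{+})$ is a \emph{smooth} (hence genus-one) cubic rather than a rational degeneration. This is precisely where the general-position hypothesis on the nodes must do its work, since a singular $E$ would force the fixed plane to be tangent to $X$ along a line, a configuration one has to exclude. As a fallback, because the involutions not fixing a node are classified in Proposition~\ref{prop:6nodegeneralprop}, one may instead verify smoothness of $E$ --- or directly the interchange of $\overline{S}$ and $\overline{S}'$ --- across that finite list.
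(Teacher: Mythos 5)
Your overall strategy is genuinely different from the paper's, and its skeleton is sound: the dichotomy (an automorphism either preserves or swaps the two scroll families, with $\mathrm{rk}\,\mathrm{Cl}^{\langle\sigma\rangle}(X)=1$ exactly in the swapping case) is correct, and your eigenspace count showing that general position of the nodes forces $\mathrm{Fix}(\sigma)\subset\bP^4$ to be $\bP^2\sqcup\bP^1$ is fine. The problem is in your final contradiction, and it is not the gap you flagged. You assert that in the family-preserving case every $1$-dimensional component of $\mathrm{Fix}(\sigma|_{X^{+}})$ is ``a finite cover of $L$ \dots, hence rational.'' That implication is false: a finite cover of a rational curve need not be rational (an elliptic curve admits degree-$2$ maps onto $\bP^1$), and $E$ itself would be exactly such a cover, so as written there is no contradiction at all. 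The step can be repaired, but it requires an extra idea: since $\sigma$ acts on each fibre of $(p^{+})^{-1}(L)\to L$ as a nontrivial involution of $\bP^1$, it has exactly two fixed points in every fibre (characteristic $0$), so the horizontal part of the fixed locus is an \emph{\'etale} double cover of $L\simeq\bP^1$; as $\bP^1$ is simply connected this cover splits into two disjoint sections, each rational. (Equivalently: a degree-$2$ map $E\to L$ from a genus-$1$ curve must have $4$ branch points by Riemann--Hurwitz, and a branch point would be a fibre on which an involution of $\bP^1$ has a single fixed point, which is impossible.) Only after this does the genus of $E$ produce the desired contradiction.

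The gap you did flag --- smoothness of $E=X\cap\bP(V_{+})$ --- must indeed be filled, but it is automatic and needs no case-by-case check: if $E$ were singular (or non-reduced) at a point $q$, then $\bP(V_{+})\subseteq T_qX$; since the nodes avoid $E$, the point $q$ is smooth on $X$, so the $(+1)$-eigenspace of $d\sigma_q$ on $T_qX$ would be at least $2$-dimensional and the (smooth) fixed locus of $\sigma|_{X}$ would contain a surface germ at $q$. But $\mathrm{Fix}(\sigma|_X)\subseteq X\cap\big(\bP(V_{+})\sqcup\bP(V_{-})\big)$ has dimension at most $1$, because $X$ contains no planes. Hence $E$ is a smooth plane cubic of genus $1$. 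For comparison, the paper argues quite differently and more economically: it normalizes $\sigma=(1,3)(2,5)(4,6)$ via Proposition~\ref{prop:6nodegeneralprop}, uses the $\bP^1$-bundle $\widehat{X}\to S$ of \eqref{eq:6-nodes-P1-bundle} to get $\mathrm{rk}\,\mathrm{Cl}^{\langle\sigma\rangle}(X)+3=\mathrm{rk}\,\mathrm{Cl}^{\langle\sigma\rangle}(S)+1$, and computes $\mathrm{rk}\,\mathrm{Cl}^{\langle\sigma\rangle}(S)=3$ on the smooth cubic surface $S$ by the Lefschetz fixed-point formula; your route, once patched as above, avoids both the classification and the trace formula, which is a genuine (if longer) alternative.
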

\begin{proof}
Since $\sigma$ does not fix any node, we may assume that 
$$
\sigma=(1,3)(2,5)(4,6)
$$ 
and $X$ is one of the cases in Proposition~\ref{prop:6nodegeneralprop}. From the diagram~\eqref{eq:6-nodes-P1-bundle}, we know that 
$$
\mathrm{rk}\,\mathrm{Cl}^{\langle\sigma\rangle}(X)+3=\mathrm{rk}\,\mathrm{Cl}^{\langle\sigma\rangle}(\widetilde X)=\mathrm{rk}\,\mathrm{Cl}^{\langle\sigma\rangle}(\widehat X)=\mathrm{rk}\,\mathrm{Cl}^{\langle\sigma\rangle}(S)+1,
$$    
where $S$ is a smooth cubic surface contained in a hyperplane $H\subset\bP^4$. By Lefschetz fixed-point theorem, one has \cite[Section 6]{DI}
$$
\mathrm{rk}\, \mathrm{Cl}^{\langle\sigma\rangle}(S)=\frac12\left(7+\mathrm{Tr}_2\left(\sigma^*\right)\right),
$$
$$
\mathrm{Tr}_2(\sigma^*)=s-2+\sum_i(2-2g_i),
$$
where $\mathrm{Tr}_2(\sigma^*)$ is the trace of $\sigma^*$-action on $\rH^2(S,\bC)$, $s$ is the number of isolated $\sigma$-fixed points on $S$ and $g_i$ are the genera of fixed curves. In our case, we compute that the induced $\sigma$-action on $H\simeq\bP^3$ has weights $(1,1,1,-1)$. The fixed locus $S^\sigma$ consists of one point and a smooth cubic curve. Substituting into the formulae above we obtain 
$$
\mathrm{rk} \,\mathrm{Cl}^{\langle\sigma\rangle}(S)=3,
$$
which implies $\mathrm{rk}\, \mathrm{Cl}^{\langle\sigma\rangle}(X)=1.
$
\end{proof}
\begin{prop}
\label{prop:H16nodescroll}
Let $X\subset \bP^4$ be a 6-nodal cubic threefold such that the nodes are in general linear position
and $\mathrm{Aut}(X)$ contains an involution $\sigma$.
Let $\widetilde{X}$ be the standard resolution of $X$. 
Then the action of $\langle \sigma\rangle\simeq C_2$  on $\Pic(\widetilde{X})$ fails {\bf (H1)}  if and only if
$\sigma$ does not fix any node.
 \end{prop}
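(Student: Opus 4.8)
The plan is to reduce everything to a computation with $\langle\sigma\rangle$-lattices and to treat the two implications separately. Since $\langle\sigma\rangle\simeq C_2$, I will use that every finitely generated $\bZ[C_2]$-lattice is a direct sum of copies of the three indecomposables $\bZ$ (trivial), $\bZ^{-}$ (sign), and $\bZ[C_2]$ (regular); writing $\Pic(\widetilde X)\cong\bZ^{a}\oplus(\bZ^{-})^{b}\oplus\bZ[C_2]^{c}$ one gets $\rH^1(\langle\sigma\rangle,\Pic(\widetilde X))\cong(\bZ/2)^{b}$, and because all three indecomposables are self-dual the same holds for the dual module. Thus the $\langle\sigma\rangle$-action fails \textbf{(H1)} precisely when a sign summand occurs, i.e. when $b\ge 1$, and the proposition becomes the assertion that such a summand is present exactly when $\sigma$ fixes no node.

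For the implication ``$\sigma$ fixes a node $\Rightarrow$ \textbf{(H1)} holds'' I would avoid lattice bookkeeping entirely. If $\sigma$ fixes a node $p$, projection from $p$ is $\langle\sigma\rangle$-equivariant and gives a birational map $X\dasharrow\bP^3$ onto a linear action, exactly the linearization recalled in the introduction. Failure of $\Pic(\widetilde X)$ to be stably permutation is a stable $\langle\sigma\rangle$-birational invariant (the \textbf{(SP)}-obstruction), and linear actions on $\bP^3$ have permutation Picard module; hence $\Pic(\widetilde X)$ is stably permutation, so $b=0$ and both first cohomology groups vanish. This disposes of all transpositions and double transpositions at once.

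For the converse ``$\sigma$ fixes no node $\Rightarrow$ \textbf{(H1)} fails'' I would exploit the $\mathrm{Aut}(X)$-equivariant diagram~\eqref{eq:6-nodes-P1-bundle}, which is $\langle\sigma\rangle$-equivariant because it is the restriction of the $\fS_6$-link (and hence, unlike the two--$\bP^2$ diagram, does not require $\mathrm{rk}\,\mathrm{Cl}^{\langle\sigma\rangle}(X)\neq 1$). The flops induce a $C_2$-isomorphism $\Pic(\widetilde X)\cong\Pic(\widehat X)$, and the $\bP^1$-bundle $\pi\colon\widehat X\to S$ yields a $C_2$-equivariant extension $0\to\pi^\ast\Pic(S)\to\Pic(\widehat X)\to\bZ\to 0$ with trivial quotient. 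Taking cohomology and using $\rH^1(\langle\sigma\rangle,\bZ)=0$ gives a surjection $\rH^1(\langle\sigma\rangle,\Pic(S))\twoheadrightarrow\rH^1(\langle\sigma\rangle,\Pic(\widetilde X))$ whose kernel (the image of the connecting map from $\rH^0(\langle\sigma\rangle,\bZ)=\bZ$) has order at most $2$. It therefore suffices to show that $\rH^1(\langle\sigma\rangle,\Pic(S))$ has $\bF_2$-rank at least $2$, so that it survives this quotient.

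The remaining step, which I expect to be the technical heart, is the cubic-surface computation, and this is where the fixed-node dichotomy enters. Here $\sigma$ is a triple transposition, so by Lemma~\ref{lemm:6nodescroll} it acts on the ambient $\bP^3\supset S$ with weights $(1,1,1,-1)$ and fixed locus a single point together with a smooth plane cubic; thus $\mathrm{rk}\,\Pic^{\langle\sigma\rangle}(S)=3$ and the $(-1)$-eigenspace has rank $4$. Writing $\Pic(S)\cong I^{1,6}$ with $K_S$ invariant, $\sigma$ is the involution of $W(E_6)$ with four-dimensional $(-1)$-eigenspace, i.e. of type $4A_1$; its $(-1)$-eigenlattice is not $A_1^{4}$ but its saturation $D_4$ (a half-sum of the four orthogonal roots is already integral), and since $I^{1,6}$ is unimodular the glue with the orthogonal complement is the full discriminant $\mathrm{disc}(D_4)\cong(\bZ/2)^2$. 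This gives sign multiplicity $b=4-2=2$, so $\rH^1(\langle\sigma\rangle,\Pic(S))\cong(\bZ/2)^2$; precisely this kind of computation is tabulated for involutions of del Pezzo surfaces in \cite[Section~6]{DI}. Combined with the previous paragraph, $\rH^1(\langle\sigma\rangle,\Pic(\widetilde X))\neq 0$ and \textbf{(H1)} fails. The main obstacle is carrying out this integral computation reliably: the presence of the sign summand is invisible to the rational eigenvalue data (both $A_1^4$ and $D_4$ have a four-dimensional $(-1)$-eigenspace) and is governed by the $2$-adic gluing between $D_4$ and its rank-$2$ complement inside $I^{1,6}$, so it must be read off from the discriminant forms rather than from ranks alone; it is exactly the rank-$2$ outcome, as opposed to rank $1$, that makes the reduction through the $\bP^1$-bundle robust.
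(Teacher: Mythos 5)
Your proof is correct, and for the substantive direction it takes a genuinely different route from the paper's. The easy direction (a fixed node implies \textbf{(H1)} holds) is argued the same way in both: projection from the node linearizes, and failure of \textbf{(H1)}/\textbf{(SP)} is invariant under equivariant birational maps between smooth models. For the converse, the paper stays on the threefold: from the exact sequence $0\to\bigoplus_{i=1}^{6}E_i\to\Pic(\widetilde X)\to\mathrm{Cl}(X)\to 0$ and the fact that the $E_i$ form free $\langle\sigma\rangle$-orbits, it gets $\rH^1(\langle\sigma\rangle,\Pic(\widetilde X))\cong\rH^1(\langle\sigma\rangle,\mathrm{Cl}(X))$, and then Lemma~\ref{lemm:6nodescroll} forces $\sigma$ to swap the two cubic scrolls of Remark~\ref{rema:scrolls}, so that $\mathrm{Cl}(X)\cong\bZ\oplus\bZ^{-}$ and the cohomology is exactly $\bZ/2$. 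You instead push the computation through the $\bP^1$-bundle diagram \eqref{eq:6-nodes-P1-bundle} down to the cubic surface $S$, compute $\rH^1(\langle\sigma\rangle,\Pic(S))\cong(\bZ/2)^2$ by integral lattice theory, and observe that the connecting map of the bundle sequence can kill at most one $\bZ/2$. The geometric input is the same in both proofs (the diagram and the Lefschetz fixed-point data on $S$ --- note these facts are established in the \emph{proof} of Lemma~\ref{lemm:6nodescroll}, not in its statement), but the paper's route needs the explicit generators and the relation $2H=S+S'+\sum E_i$ in $\Pic(\widetilde X)$, while yours replaces that input with the finer $A_1^4$-versus-$D_4$ saturation analysis on $S$, and yields only nonvanishing rather than the exact group --- which is all the proposition requires. (The order-$2$ kernel of your surjection is in fact nontrivial, which reconciles your $(\bZ/2)^2$ on $S$ with the paper's $\bZ/2$ on $\widetilde X$; your remark that the rank-$2$ outcome is what makes the reduction survive the connecting map is exactly the right point.)

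The one step you assert without verification is that the $(-1)$-eigenlattice is $D_4$ rather than $A_1^4$. It is true, and can be pinned down in two ways. Explicitly: in the standard basis $h,e_1,\dots,e_6$ of $\Pic(S)\cong I^{1,6}$ one may take the orthogonal roots $\alpha_1=e_1-e_2$, $\alpha_2=e_3-e_4$, $\alpha_3=e_5-e_6$, $\alpha_4=2h-\sum_{i}e_i$, and then $\tfrac{1}{2}(\alpha_1+\alpha_2+\alpha_3+\alpha_4)=h-e_2-e_4-e_6$ is integral. Or structurally: the glue group $(\bZ/2)^c$ embeds into $\mathrm{disc}(L^{+})$, which is generated by at most $\mathrm{rk}\,L^{+}=3$ elements, so $c\le 3$; this rules out $L^{-}=A_1^4$ (which would force $c=4$), and the unique even proper overlattice of $A_1^4$ is $D_4$, giving $c=2$ and sign multiplicity $b=2$ as you claim. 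With that step filled in, your argument is complete.
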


\begin{proof}
 We know that $\Pic(\widetilde{X})$ is generated by the pullback of the hyperplane section $H$, six exceptional divisors  $E_1,\ldots,E_6$, and 
the classes of the strict transforms of two cubic scrolls $S$ and $S'$ (see Remark~\ref{rema:scrolls}), subject to the relation
$$
2H = S+S'+\sum_{i=1}^6E_i. 
$$
There is a short exact sequence of $\Aut(X)$-modules
$$
0\to \bigoplus_{i=1}^6 E_i\to\Pic(\widetilde{X})\to\mathrm{Cl}(X)\to 0,
$$
giving rise to the long exact sequence of cohomology groups
\begin{multline*}
\ldots\to\rH^1(\langle\sigma\rangle,\bigoplus_{i=1}^6E_i)\to \rH^1(\langle \sigma\rangle, \Pic(\widetilde X))\to  \rH^1(\langle \sigma\rangle,\mathrm{Cl}(X))\\
\to\rH^2(\langle \sigma\rangle,\bigoplus_{i=1}^6E_i)\to\ldots.
\end{multline*}
By our assumption, $\sigma$ permutes the $E_i$ without fixing any $E_i$. So
$$
\rH^1(\langle\sigma\rangle,\bigoplus_{i=1}^6E_i)=\rH^2(\langle\sigma\rangle,\bigoplus_{i=1}^6E_i)=0,
$$
and 
$$
 \rH^1(\langle \sigma\rangle, \Pic(\widetilde X))=\rH^1(\langle \sigma\rangle,\mathrm{Cl}(X)).
$$
If $\sigma$ does not fix any node, Lemma~\ref{lemm:6nodescroll} implies that 
 $\mathrm{rk}\, \mathrm{Cl}^{\langle\sigma\rangle}(X)=1$. So $\sigma$ acts on $\mathrm{Cl}(X)$ via
$$
\sigma(H)=H,\quad\sigma(S)=S'=2H-S.
$$
In another basis of $\mathrm{Cl}(X)$, namely $H$ and $H-S$, the action becomes 
$$
\sigma(H)=H,\quad\sigma(H-S)=-H+S.
$$
Then
$$
 \rH^1(\langle \sigma\rangle, \Pic(\widetilde X))=\rH^1(\langle \sigma\rangle,\mathrm{Cl}(X))=\bZ/2.
$$
Conversely, if $\langle\sigma\rangle$ fails {\bf(H1)}, it is not stably linearizable and thus cannot fix any node.
\end{proof}

\begin{exam}\label{exam:6nodinv}
Let $X\subset \bP^4$ be a 6-nodal cubic threefold in one of the 8 cases in Proposition~\ref{prop:6nodegeneralprop}.
Then $\mathrm{Aut}(X)$ contains the involution 
$$
\sigma=(1,3)(2,5)(4,6),
$$ 
satisfying the conditions in Proposition~\ref{prop:H16nodescroll}. 
Therefore, the $\sigma$-action on any 6-nodal cubic is not stably linearizable.
\end{exam}

\subsection*{Cubics with a plane}
This is case (J8) in \cite{Finkelnbergcubic}.
Four of the six nodes of $X$ are contained in a unique, and thus $G$-stable plane $\Pi\subset X$. 
The other two are on a $G$-stable line $\ell$.
Note that  $\ell\cap\Pi=\varnothing$, since otherwise the hyperplane containing $\Pi$ and $\ell$
would intersect $X$ by three planes. So, the action of $\mathrm{Aut}(X)$ on $X$ is linearizable by
Lemma~\ref{lemm:line}.

\subsection*{Cubics with three planes}
Let $X$ be a cubic threefold in $\mathbb{P}^4$ with 6 nodes such that $X$ contains three planes $\Pi_1$, $\Pi_2$, $\Pi_3$.
Then $X$ belongs to a four-parameter family, which is denoted by (J11) in \cite{Finkelnbergcubic}.
It follows from  \cite{Finkelnbergcubic} that $\Pi_1+\Pi_2+\Pi_3$ is cut out by a hyperplane.
Thus, we may assume that this hyperplane is $\{x_1=0\}$, and 
\begin{align*}
\Pi_1&=\big\{x_1=0,x_2=0\big\},\\
\Pi_2&=\big\{x_1=0,x_3=0\big\},\\
\Pi_3&=\big\{x_1=0,x_4=0\big\}.
\end{align*}

Observe the existence of the following diagram:
$$
\xymatrix{
&\widetilde{X}\ar@{->}[dl]_{\pi}\ar@{->}[dr]^{\eta}\\%
X && Y}
$$
where $\pi$ is the standard resolution, $Y$ is the double cover of $(\mathbb{P}^1)^3$ branched over a singular divisor of degree $(2,2,2)$,
and $\eta$ is a birational morphism that contracts the strict transforms of $\Pi_1$, $\Pi_2$, $\Pi_3$.
Note that $\mathrm{Aut}(Y)$ contains a Galois involution of the double cover, 
and this involution acts biregularly on $X$, which follows from: 

\begin{prop}
\label{prop:6-nodes-3-planes}
Up to a change of coordinates, $X$ is given by
\begin{equation}
\label{equation:6-nodes-3-planes}
x_2x_3x_4+ax_1^3+x_1^2(b_1x_2+b_2x_3+b_3x_4)+x_1(x_2^2+x_3^2+x_4^2-x_5^2)=0,
\end{equation}
for some $a,b_1,b_2,b_3$.
\end{prop}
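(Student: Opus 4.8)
The plan is to first use the hypothesis that the three planes $\Pi_1,\Pi_2,\Pi_3$ are cut out on $X$ by the hyperplane $\{x_1=0\}$. Restricting the defining cubic $F$ to this hyperplane, $F(0,x_2,x_3,x_4,x_5)$ is a cubic in $\mathbb{P}^3=\{x_1=0\}$ vanishing on the three coordinate hyperplanes $\{x_2=0\},\{x_3=0\},\{x_4=0\}$, hence divisible by each of $x_2,x_3,x_4$; being a cubic, it equals $c\,x_2x_3x_4$ with $c\neq 0$ (otherwise $x_1\mid F$ and $X$ would contain the hyperplane, contradicting irreducibility). After rescaling we may write $F=x_2x_3x_4+x_1Q$ with $Q$ a quadratic form in $x_1,\dots,x_5$. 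The whole proof then reduces to normalizing $Q$ using only the coordinate changes that preserve the hyperplane $\{x_1=0\}$ together with the unordered triple of planes; these are generated by scalings and permutations of $x_2,x_3,x_4$, the substitutions $x_5\mapsto \beta x_5+\gamma_2x_2+\gamma_3x_3+\gamma_4x_4$, the scaling of $x_1$, and --- crucially --- the shears $x_i\mapsto x_i+\epsilon_i x_1$ for $i\in\{2,3,4\}$, which are the identity on $\{x_1=0\}$ and hence fix each plane.

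Next I would locate the nodes. On $\{x_1=0\}$ the equations $\partial F=0$ force two of $x_2,x_3,x_4$ to vanish together with $Q=0$, so every node in this hyperplane lies on one of the three lines $\ell_{12},\ell_{13},\ell_{23}$ (the pairwise intersections $\Pi_i\cap\Pi_j$, all passing through $p_0=[0{:}0{:}0{:}0{:}1]$), at the two points where the binary quadric $Q|_{\ell_{ij}}$ vanishes. Since $X$ is $6$-nodal of type (J11), its six nodes are distributed two on each line and $p_0$ is not a node; writing $R=Q|_{x_1=0}$ as a quadratic form in $x_2,x_3,x_4,x_5$, this is exactly the statement that the coefficient of $x_5^2$ is nonzero and that $R|_{\ell_{ij}}$ is a nondegenerate binary quadric for each pair. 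Rescaling $x_5$ I may set the $x_5^2$-coefficient of $Q$ equal to $-1$.

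The core of the argument is the reduction of $Q$ to diagonal shape, and the main subtlety is that I may \emph{not} apply a general $\mathsf{GL}_3$ to $(x_2,x_3,x_4)$ --- that would destroy the three planes. I would proceed in three moves. Completing the square in $x_5$ via $x_5\mapsto x_5+\tfrac12(c_{15}x_1+c_{25}x_2+c_{35}x_3+c_{45}x_4)$ (an allowed substitution) removes every cross term involving $x_5$, including $x_1x_5$, leaving $Q=q(x_1,x_2,x_3,x_4)-x_5^2$. The decisive step is that the shear $x_i\mapsto x_i+\epsilon_i x_1$ changes the coefficient of $x_1$ in $F$ from $Q$ to $Q+\epsilon_2x_3x_4+\epsilon_3x_2x_4+\epsilon_4x_2x_3$, since these cross terms arise from expanding $x_2x_3x_4$; thus a suitable choice of $\epsilon_2,\epsilon_3,\epsilon_4$ cancels exactly the off-diagonal terms $x_2x_3,x_2x_4,x_3x_4$ of $q$ without touching the $x_5^2$-term. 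This is the point where the special leading term $x_2x_3x_4$ performs the task that $\mathsf{GL}_3$ is forbidden to do; identifying and exploiting it is the crux of the proof.

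After these moves $Q$ has the form $c_{11}x_1^2+x_1(c_{12}x_2+c_{13}x_3+c_{14}x_4)+c_{22}x_2^2+c_{33}x_3^2+c_{44}x_4^2-x_5^2$, and the $6$-nodal condition forces $c_{22},c_{33},c_{44}\neq 0$. Rescaling $x_2,x_3,x_4$ makes these three coefficients equal to $1$; absorbing the resulting scalar into $x_1$ and into an overall rescaling of $F$ restores the leading coefficient of $x_2x_3x_4$ to $1$ while leaving the diagonal part $x_2^2+x_3^2+x_4^2-x_5^2$ intact and relabelling $c_{11},c_{12},c_{13},c_{14}$ as $a,b_1,b_2,b_3$. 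This yields exactly equation \eqref{equation:6-nodes-3-planes}. I expect the only genuinely delicate bookkeeping to be checking that these successive substitutions do not reintroduce previously cleared terms --- they do not, since the $x_5$-completion and the shear act on disjoint sets of cross terms --- and confirming through the node count that the diagonal coefficients are nonzero so that the final scalings are legitimate.
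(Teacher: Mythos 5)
Your proof is correct and follows essentially the same route as the paper's: write $X$ as $x_2x_3x_4+x_1Q$, normalize $Q$ using only plane-preserving coordinate changes (the shears $x_i\mapsto x_i+\epsilon_i x_1$, $i=2,3,4$, which absorb the cross terms $x_2x_3,x_2x_4,x_3x_4$ through the expansion of $x_2x_3x_4$, together with a substitution in $x_5$ killing all $x_5$-cross terms), and then use the six-node condition to force the diagonal coefficients to be nonzero before rescaling. Your write-up is in fact slightly more careful than the paper's terse version --- performing the $x_5$-completion before the shears avoids reintroducing cleared terms, and you justify explicitly (via the location of the nodes on the lines $\Pi_i\cap\Pi_j$) why the $x_5^2$ and diagonal coefficients are nonzero --- but the underlying argument is identical.
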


\begin{proof}
A priori, the cubic $X$ is given by
\begin{multline*}
x_2x_3x_4+ax_1^3+x_1^2\big(b_1x_2+b_2x_3+b_3x_4+cx_5\big)+\\
+x_1\big(e_1x_2^2+e_2x_3^2+e_3x_4^2+e_4x_2x_3+e_5x_2x_4+e_6x_3x_4\big)+\\
+x_1\big(x_5(d_1x_2+d_2x_3+d_3x_4)-x_5^2\big)=0
\end{multline*}
for some $a,b_1,b_2,b_3,c,e_1,e_2,e_3,e_4,e_5,e_6,d_1,d_2,d_3$.
Changing $x_2$, $x_3$, $x_4$, we may assume that  $e_4=e_5=e_6=0$.
Moreover, up to scaling, there exists a unique such choice of coordinates $x_2$, $x_3$, $x_4$
that preserves the equations of the planes $\Pi_1$, $\Pi_2$, $\Pi_3$.

Similarly, changing the coordinate $x_5$, we may further assume that $c=d_1=d_2=d_3=0$.
As above, we see that there is a unique such choice for $x_5$ up to scaling. 

Finally, using the fact that $X$ has six nodes, we see that $e_1\ne 0$, $e_2\ne 0$, $e_3\ne 0$.
Hence, scaling the coordinates $x_1,x_2,x_3,x_4,x_5$, we may also assume that $e_1=e_2=e_3=1$,
which completes the proof.
\end{proof}

\begin{rema}
\label{rema:6-nodes}
If we permute $b_1,b_2,b_3$ in \eqref{equation:6-nodes-3-planes},
or simultaneously change signs of two of them,
we obtain an isomorphic cubic threefold. 
\end{rema}

From now on, we assume that the cubic threefold $X$ is given by \eqref{equation:6-nodes-3-planes}.
Then the nodes of $X$ are 
\begin{align*}
p_1&=[0:0:0:1:1], &p_2&=[0:0:0:-1:1],\\ 
p_3&=[0:0:1:0:1], &p_4&=[0:0:-1:0:1],\\
p_5&=[0:1:0:0:1], &p_6&=[0:-1:0:0:1]. 
\end{align*}

\begin{rema}
\label{rema:6-nodes-S3}
Let $S_3$ be the cubic surface $\{x_5=0\}\cap X$. Then $S_3$ is smooth. The proof of Proposition~\ref{prop:6-nodes-3-planes} shows that $S_3$ is $\mathrm{Aut}(X)$-invariant.
\end{rema}

\begin{rema}
\label{rema:6-nodes-degeneration}
One can find an explicit condition on $a,b_1,b_2,b_3$ that guarantees that \eqref{equation:6-nodes-3-planes} defines a 6-nodal cubic, but it is too bulky to present here.
However, if the equation \eqref{equation:6-nodes-3-planes} has additional symmetries, 
the condition simplifies a lot. 
For instance, if $b_1=b_2=b_3=b$, then \eqref{equation:6-nodes-3-planes} defines a 6-nodal cubic if and only if 
$$
(4a-b^2+8b+16)(4b^3+a^2-6ab-3b^2+4a)\ne 0.
$$
Moreover, in this very special case, we have the following possibilities:
\begin{enumerate}
\item if $4b^3+a^2-6ab-3b^2+4a=0$ and \\ $(a,b)\not\in\{(1,1),(-4,0),(-8,28)\}$, then \eqref{equation:6-nodes-3-planes} defines a 7-nodal cubic;
\item if $(a,b)=(1,1)$, then \eqref{equation:6-nodes-3-planes} defines a cubic with six nodes and one double non-nodal singularity;
\item if $4a-b^2+8b+16=0$ and \\
$(a,b)\not\in\{(-4,0),(-8,28)\}$, then  \eqref{equation:6-nodes-3-planes} defines a 9-nodal cubic; 
\item if $(a,b)\in\{(-4,0),(-8,28)\}$, then \eqref{equation:6-nodes-3-planes} defines the Segre cubic.
\end{enumerate}
\end{rema}

As we mentioned earlier, $\mathrm{Aut}(X)$ is never trivial, since it contains the involution:
$$
\iota_5: (x_1,x_2,x_3,x_4,x_5)\mapsto(x_1,x_2,x_3,x_4,-x_5).
$$
Moreover, if $b_1,b_2,b_3$ in \eqref{equation:6-nodes-3-planes} are general enough, then $\mathrm{Aut}(X)=\langle\iota_5\rangle$.
In fact, we can say more: 

\begin{prop}
\label{prop:Aut6nod3pl}
Suppose $\mathrm{Aut}(X)\ne\langle\iota_5\rangle$.
Then, up to a permutation of coordinates $x_2,x_3,x_4$ and changing signs of two of them,
one of the following holds:
\begin{enumerate}
\item $b_1\ne b_2,\,\, b_2=b_3,\,\, b_1,b_2\ne 0$, so $X$ is given by 
$$
x_2x_3x_4+ax_1^3+x_1^2(b_1x_2+b_2(x_3+x_4))+x_1(x_2^2+x_3^2+x_4^2-x_5^2)=0,
$$
and $\Aut(X)\simeq C_2^2$, generated by $\iota_5$ and
$$
\sigma_{34}: (x_1,x_2,x_3,x_4,x_5)\mapsto(x_1,x_2,x_4,x_3,x_5);
$$

\item $b_1\ne 0,\,\, b_2=b_3=0$, so $X$ is given 
$$
x_2x_3x_4+ax_1^3+b_1x_1^2x_2+x_1(x_2^2+x_3^2+x_4^2-x_5^2)=0,
$$
and $\Aut(X)\simeq C_2^3$, generated by $\iota_5$, $\sigma_{34}$ and 
$$
\iota_{34}: (x_1,x_2,x_3,x_4,x_5)\mapsto(x_1,x_2,-x_3,-x_4,x_5);
$$

\item $b_1=b_2=b_3\ne 0$, so $X$ is given by
$$
x_2x_3x_4+ax_1^3+b_1x_1^2(x_2+x_3+x_4)+x_1(x_2^2+x_3^2+x_4^2-x_5^2)=0,
$$
and $\Aut(X)\simeq C_2\times\fS_3$, generated by $\iota_5$, $\sigma_{34}$ and 
$$   
\sigma_{234}: (x_1,x_2,x_3,x_4,x_5)\mapsto(x_1,x_3,x_4,x_2,x_5);
$$

\item $b_1=b_2=b_3=0$, so $X$ is given by
$$
x_2x_3x_4+ax_1^3+x_1(x_2^2+x_3^2+x_4^2-x_5^2)=0,
$$
and $\Aut(X)\simeq C_2\times\fS_4$, generated by $\iota_{5},\sigma_{34},\sigma_{234},\iota_{34}$ and 
$$
\iota_{24}: (x_1,x_2,x_3,x_4,x_5)\mapsto(x_1,-x_2,x_3,-x_4,x_5).
$$
\end{enumerate}
\end{prop}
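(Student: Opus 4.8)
The plan is to pin down the linear automorphisms by exploiting the rigidity of the normal form \eqref{equation:6-nodes-3-planes}, and then to read off $\mathrm{Aut}(X)$ from the combinatorics of the parameters $b_1,b_2,b_3$. First I would show that every $g\in\mathrm{Aut}(X)$ is monomial in $x_2,x_3,x_4$ up to admixtures of $x_1$. Indeed, $\mathrm{Aut}(X)$ permutes the three planes $\Pi_1,\Pi_2,\Pi_3$, hence preserves the hyperplane $\{x_1=0\}$ they span, so $g^*(x_1)=\alpha x_1$; by Remark~\ref{rema:6-nodes-S3} it preserves the cubic surface $S_3=\{x_5=0\}\cap X$, and since a smooth cubic surface spans its hyperplane, $g^*(x_5)=\epsilon x_5$. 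Restricting to $\{x_1=0\}\cong\bP^3$ with coordinates $[x_2:x_3:x_4:x_5]$, the transformation preserves the four coordinate hyperplanes $\{x_2=0\},\{x_3=0\},\{x_4=0\}$ (the traces of $\Pi_1,\Pi_2,\Pi_3$) and $\{x_5=0\}$ (the trace of $S_3$), permuting the first three and fixing the last; a projective transformation preserving all coordinate hyperplanes is monomial. Lifting back to $\bP^4$ this gives, for some $\pi\in\mathfrak{S}_{\{2,3,4\}}$ and scalars with $\mu_i,\alpha,\epsilon\ne 0$,
$$
g^*(x_i)=\mu_i x_{\pi(i)}+\nu_i x_1\ (i=2,3,4),\qquad g^*(x_1)=\alpha x_1,\quad g^*(x_5)=\epsilon x_5.
$$

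Next I would substitute this into $F$ and impose $g^*(F)=\kappa F$. Tracking the monomials $x_2x_3x_4$, $x_1x_i^2$ and $x_1x_5^2$, which receive no competing contributions, yields the scalar relations
$$
\mu_2\mu_3\mu_4=\alpha\mu_i^2=\alpha\epsilon^2=\kappa,
$$
so $\mu_i^2=\epsilon^2$ for all $i$; after projective rescaling we may take $\mu_i,\epsilon\in\{\pm1\}$ and $\alpha=\mu_2\mu_3\mu_4$. The vanishing of the cross terms $x_1x_ix_j$ ($i\ne j$) arising from the expansions of $x_2x_3x_4$ and of $x_1(x_2^2+x_3^2+x_4^2)$ forces the admixtures $\nu_i$ to vanish, so $g$ is in fact diagonal-times-permutation, the parameter $a$ then only constraining $\kappa=\alpha$ when $a\ne 0$. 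The $x_1^2x_i$-block gives $b_{\pi^{-1}(j)}\mu_{\pi^{-1}(j)}=\kappa b_j$, which records two phenomena: the permutation $\pi$ must stabilize the pattern of the $b_j$, and an \emph{even} sign change on $\{x_2,x_3,x_4\}$ (the only sign changes preserving $x_2x_3x_4$) is admissible exactly on the indices where $b_j=0$.

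Finally I would enumerate according to the shape of $(b_1,b_2,b_3)$. The involution $\iota_5$ (take $\epsilon=-1$, everything else trivial) always lies in $\mathrm{Aut}(X)$, giving the central $C_2$. The residual symmetries are governed by the stabilizer of $(b_1,b_2,b_3)$ in $\fS_3$ together with the admissible even sign changes, and the four listed cases are precisely the configurations with nontrivial stabilizer: all $b_i$ equal and nonzero gives the full $\fS_3$, hence $C_2\times\fS_3$; exactly one $b_i$ nonzero gives the transposition of the two zero slots together with the sign change on those slots, hence $C_2^3$; two equal and one different, all nonzero, gives a single transposition, hence $C_2^2$; and all $b_i=0$ gives the full $\fS_3$ together with all even sign changes, so that the diagonal part is $C_2^3$ (containing $\iota_5$ as the all-$(-1)$ element) and $\mathrm{Aut}(X)=\langle\iota_5\rangle\times(C_2^2\rtimes\fS_3)\cong C_2\times\fS_4$. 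Sufficiency in each case is checked by directly substituting the listed generators into $F$.

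The main obstacle is the bookkeeping in the middle step: one must verify that the admixtures $\nu_i$ are genuinely forced to zero and that no further non-monomial or parameter-special automorphisms arise, and one must correctly identify the group extension in the most symmetric case, recognizing the even signed-permutation group $C_2^2\rtimes\fS_3$ as $\fS_4$ (the odd sign changes being excluded because they reverse $x_2x_3x_4$), so that case (4) is $C_2\times\fS_4$ rather than a larger signed-permutation group. One should also confirm, via the $6$-nodality constraints in the spirit of Remark~\ref{rema:6-nodes-degeneration}, that the intermediate patterns not matching the normalizations above do not produce additional genuinely $6$-nodal cases beyond those listed.
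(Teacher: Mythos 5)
Your proposal is correct and rests on the same pillars as the paper's proof --- the action on the planes $\Pi_1,\Pi_2,\Pi_3$, the invariance of the hyperplanes $\{x_1=0\}$ and $\{x_5=0\}$ (the latter being Remark~\ref{rema:6-nodes-S3}), and coefficient matching in the normal form \eqref{equation:6-nodes-3-planes} --- but it organizes the computation differently, and more efficiently. The paper introduces the homomorphism $\phi\colon\Aut(X)\to\fS_3$ given by the action on the planes, shows kernel elements are diagonal, and then, for lifts of a transposition and of a $3$-cycle separately, writes a matrix with eleven unknown entries (allowing admixtures of $x_1$ into $x_2,x_3,x_4$ and of all variables into $x_5$) and solves the resulting systems case by case. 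You instead establish monomiality once and for all: $g^*(x_1)=\alpha x_1$, $g^*(x_5)=\epsilon x_5$, monomiality of the restriction to $\{x_1=0\}$, and then a single coefficient computation kills all admixtures --- indeed the coefficient of $x_1x_mx_n$, $m\ne n$, in $g^*F$ is exactly $\mu_i\mu_j\nu_k$, and only the expansion of $x_2x_3x_4$ contributes to it, so your appeal to $x_1(x_2^2+x_3^2+x_4^2)$ at that step is unnecessary. After that, the proposition collapses to the transparent statement $\Aut(X)\simeq\langle\iota_5\rangle\times\mathrm{Stab}(b_1,b_2,b_3)$, the stabilizer taken inside the even signed permutation group $C_2^2\rtimes\fS_3\simeq\fS_4$; this is a cleaner endgame than the paper's case-by-case matrix solving, and your identification of the extension in case (4) is right.

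Two points need more care than you give them. First, odd sign changes are not outright impossible: a signed permutation with $\mu_2\mu_3\mu_4=-1$ does solve the coefficient equations with $\kappa=\alpha=-1$ (subject to the $b$-constraints); the reason you may ignore them is that multiplying by the scalar matrix $-\mathrm{Id}$ converts any such solution into an even one, so even representatives with $\kappa=1$ already exhaust $\Aut(X)\subset\PGL_5$ --- ``they reverse $x_2x_3x_4$'' is not by itself a valid exclusion. Second, your claim that the four listed shapes are \emph{precisely} the configurations with nontrivial stabilizer is not exhaustive: the pattern $(b_1,b_2,b_3)=(0,c,c)$ with $c\ne 0$ also has stabilizer $C_2$, and it is not equivalent, under permutations and even sign changes, to any of the four normal forms (case (1) as stated requires $b_1\ne 0$). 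This imprecision is inherited from the statement of the proposition itself; the paper's own proof does cover this family (its subcase $b_1=0$, $b_2,b_3\ne0$ with $b_2=\pm b_3$), and in your framework it is handled by the same stabilizer computation with the same conclusion $\Aut(X)\simeq C_2^2=\langle\iota_5,\sigma_{34}\rangle$ --- but a complete write-up should list this orbit explicitly.
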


\begin{proof}
Permuting the coordinates $x_2,x_3,x_4$, we may assume that one of the following four cases hold:
\begin{enumerate}
\item $b_1\ne 0$, $b_2\ne 0$, $b_3\ne 0$;  
\item $b_1=0$, $b_2\ne 0$, $b_3\ne 0$; 
\item $b_1=0$, $b_2=0$, $b_3\ne 0$; 
\item $b_1=0$, $b_2=0$, $b_3=0$.
\end{enumerate}
In the first two cases, we may assume $b_2$ and $b_3$ have the same sign by changing the signs of two among three variables $x_2,x_3$ and $x_4$.

There is a natural homomorphism $\phi\colon\mathrm{Aut}(X)\to\mathfrak{S}_3$ given by the action of $\mathrm{Aut}(X)$ on the planes $\Pi_1$, $\Pi_2$, $\Pi_3$. 
Arguing as in the proof of Proposition~\ref{prop:6-nodes-3-planes}, we see that an element in the kernel of $\phi$ is given by
$$
(x_1,x_2,x_3,x_4,x_5)\mapsto(x_1,\lambda_1x_2,\lambda_2x_3,\lambda_3x_4,\lambda_4x_5)
$$
for some non-zero $\lambda_1,\lambda_2,\lambda_3,\lambda_4$.
Using this, we see that the kernel of $\phi$ can be described as follows:
\begin{itemize}
\item if $b_2\ne 0$, $b_3\ne 0$, then $\mathrm{ker}(\phi)=\langle\iota_5\rangle\simeq C_2$,
\item if $b_1=b_2=0$, $b_3\ne 0$, then $\mathrm{ker}(\phi)=\langle\iota_5,\iota_{23}\rangle\simeq C_2^2$,
\item if $b_1=b_2=b_3=0$, then $\mathrm{ker}(\phi)=\langle\iota_5,\iota_{23},\iota_{24}\rangle\simeq C_2^3$.
\end{itemize}

Let $G=\Aut(X)$.  First, assume $(2,3)\in \phi(G)$, i.e., there exists an element $\sigma\in G$ swapping the planes $\Pi_2$ and $\Pi_3$ and leaving $\Pi_1$ invariant. Then $\sigma$ takes the form 
$$
\begin{pmatrix}
1&0&0&0&0\\
s_1&s_9&0&0&0\\
s_2&0&0&s_{10}&0\\
s_3&0&s_{11}&0&0\\
s_4&s_5&s_6&s_7&s_8
\end{pmatrix}
$$ 
for parameters $s_1,\ldots,s_{11}$. Note that $\sigma^2$ is contained in the kernel of $\phi$,
which implies that $s_8=\pm 1$. Moreover, we may assume that $s_8=1$ by replacing $\sigma$ by $\sigma\circ\iota_5$.
Furthermore, the fact that $\sigma$ leaves $X$ invariant imposes relations on the parameters. Solving for the equations, we obtain solutions
\begin{itemize}
\item $b_2=b_3$, $s_1=\ldots=s_7=0$, $s_{10}=s_{11}=s_9=1$,
\item $b_2=-b_3$, $s_1=\ldots=s_7=0$, $s_{10}=s_{11}=-1$, $s_9=1$.
\end{itemize}

Similarly, if $(1,2,3)\in\phi(G)$, i.e., there exists an element $\sigma\in G$ translating three planes. 
As above, we see that $\sigma$ takes the form 
$$
\begin{pmatrix}
1&0&0&0&0\\
s_1&0&0&s_9&0\\
s_2&s_{10}&0&0&0\\
s_3&0&s_{11}&0&0\\
s_4&s_5&s_6&s_7&1
\end{pmatrix}.
$$ 
In this case, we obtain 4 solutions 
\begin{itemize}
\item $b_1=b_2=-b_3$, $s_1=\ldots=s_7=0$, $s_9=s_{11}=-1$, $s_{10}=1$,
\item $b_1=-b_2=-b_3$, $s_1=\ldots=s_7=0$, $s_9=s_{10}=-1$, $s_{11}=1$,
\item $b_1=-b_2=b_3$, $s_1=\ldots=s_7=0$, $s_9=1$, $s_{10}=s_{11}=-1$,
\item $b_1=b_2=b_3$, $s_1=\ldots=s_7=0$, $s_9=s_{10}=s_{11}=1$.
\end{itemize}
Combining these solutions with symmetries, we obtain the result.
\end{proof}

\subsection*{Linearization}
Let $X$ be the 6-nodal cubic given by \eqref{equation:6-nodes-3-planes}. In this subsection we solve the linearizability problem for subgroups in $\mathrm{Aut}(X)$, almost completely. We use notation introduced in the previous subsection, and let $S_3$ be the cubic surface $\{x_5\}\cap X$. Then $S_3$ is smooth by Remark~\ref{rema:6-nodes-S3},
which implies:

\begin{lemm}
\label{lemm:6-nodes-iota-4}
Let $G=\langle\iota_5\rangle$. Then the $G$-action on $X$ is linearizable.
\end{lemm}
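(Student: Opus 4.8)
The plan is to reduce everything to Lemma~\ref{lemm:line}: it suffices to produce a $\langle\iota_5\rangle$-invariant line $l\subset X$ that is disjoint from one of the three planes $\Pi_1,\Pi_2,\Pi_3$. The natural place to look is inside the cubic surface $S_3=\{x_5=0\}\cap X$, which is smooth by Remark~\ref{rema:6-nodes-S3}. Since $\iota_5$ acts trivially on the hyperplane $\{x_5=0\}$, it fixes $S_3$ pointwise; hence every one of the $27$ lines on $S_3$ is automatically $\langle\iota_5\rangle$-invariant, and the only thing left to arrange is disjointness from a plane. Each $\Pi_i$ is $\langle\iota_5\rangle$-invariant as well, since $\Pi_i\subset\{x_1=0\}$ and $\iota_5$ preserves this hyperplane.

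First I would analyze how $S_3$ meets the plane $\mathbb{P}^2_{x_2,x_3,x_4}=\{x_1=x_5=0\}$. Setting $x_1=x_5=0$ in \eqref{equation:6-nodes-3-planes} kills every term except $x_2x_3x_4$, so $S_3\cap\mathbb{P}^2_{x_2,x_3,x_4}$ is the triangle of lines $m_i=\Pi_i\cap\{x_5=0\}=\{x_1=x_5=0,\ x_{i+1}=0\}$, $i=1,2,3$; in particular $m_1,m_2,m_3$ are themselves three of the lines on $S_3$. Now take any line $l\subset S_3$ different from $m_1,m_2,m_3$. Such an $l$ is not contained in $\mathbb{P}^2_{x_2,x_3,x_4}$ (otherwise it would lie in the triangle $\{x_2x_3x_4=0\}$ and hence coincide with some $m_i$), so it meets this plane in a single point $p_l$, which lies on the triangle.

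The key observation is then that, because $\Pi_i\subset\{x_1=0\}$ and $l\subset S_3\subset\{x_5=0\}$, we have $l\cap\Pi_i\subseteq\{x_1=x_5=0\}\cap l=\{p_l\}$, and moreover $p_l\in\Pi_i$ if and only if $x_{i+1}(p_l)=0$, i.e. $p_l$ lies on the edge $m_i$. Since $p_l$ is an honest point of $\mathbb{P}^2_{x_2,x_3,x_4}$, at least one of its coordinates $x_2,x_3,x_4$ is nonzero, so $p_l\notin\{x_{i+1}=0\}$ for some $i$, whence $l\cap\Pi_i=\varnothing$. Thus $l$ is a $\langle\iota_5\rangle$-invariant line in $X$ disjoint from the $\langle\iota_5\rangle$-invariant plane $\Pi_i\subset X$, and Lemma~\ref{lemm:line} yields linearizability.

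I expect the only delicate point to be guaranteeing the existence of a usable line and its disjointness from a plane, but the triangle argument settles this uniformly: discarding the three edges $m_1,m_2,m_3$ leaves $24$ of the $27$ lines of $S_3$, each disjoint from at least one $\Pi_i$, so no case analysis on the parameters $a,b_1,b_2,b_3$ is needed beyond the smoothness of $S_3$ already recorded in Remark~\ref{rema:6-nodes-S3}.
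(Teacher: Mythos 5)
Your proof is correct and follows essentially the same route as the paper's: both reduce to Lemma~\ref{lemm:line} by finding, inside the pointwise-$\iota_5$-fixed smooth cubic surface $S_3$, an invariant line whose intersection with an invariant plane $\Pi_i$ is forced into the line $\Pi_i\cap\{x_5=0\}$, hence empty. The only difference is in one step: where the paper invokes the standard fact that a smooth cubic surface contains a line disjoint from any given line (applied to $\Pi_1\cap S_3$), you produce the disjoint line via the explicit triangle argument on $S_3\cap\{x_1=x_5=0\}=m_1\cup m_2\cup m_3$ together with a pigeonhole observation, a self-contained substitute for that classical fact.
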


\begin{proof}
The surface $S_3$ is pointwise fixed by $\iota_5$, and $\Pi_1\cap S_3$ is a line.
Since $S_3$ is smooth, it contains another line $l$ disjoint from $\Pi_1\cap S_3$.
Hence, $l$ is disjoint from $\Pi_1$.
Since $\Pi_1$ is $G$-invariant, the $G$-action is linearizable by Lemma~\ref{lemm:line}.
\end{proof}

Similarly, we prove  

\begin{lemm}\label{lemm:63linear}
Suppose that $b_2=b_3=0$, so $X$ is given by
$$
x_2x_3x_4+ax_1^3+b_1x_1^2x_2+x_1(x_2^2+x_3^2+x_4^2-x_5^2)=0.
$$
Let $G=\langle\iota_5,\iota_{34}\rangle\simeq C_2^2$.
Then the $G$-action on $X$ is linearizable.
\end{lemm}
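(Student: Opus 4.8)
The plan is to reduce to Lemma~\ref{lemm:line} by exhibiting a $G$-invariant plane in $X$ together with a $G$-invariant line in $X$ disjoint from it. As in the proof of Lemma~\ref{lemm:6-nodes-iota-4}, the key observation is that $\iota_5$ fixes the smooth cubic surface $S_3=\{x_5=0\}\cap X$ pointwise (Remark~\ref{rema:6-nodes-S3}); hence every line contained in $S_3$ is automatically $\langle\iota_5\rangle$-invariant, and the only remaining requirement for $G$-invariance of such a line is that it be preserved by $\iota_{34}$. Moreover all three planes $\Pi_1,\Pi_2,\Pi_3$ are $G$-invariant, since neither $\iota_5$ nor $\iota_{34}$ permutes them. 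So the problem becomes: find an $\iota_{34}$-invariant line $l\subset S_3$ disjoint from $\Pi_i\cap S_3$ for some $i$. Because $l$ and $\Pi_i$ both lie in $\{x_5=0\}$ and $\Pi_i\subset X$, disjointness of $l$ from $\Pi_i\cap S_3$ is equivalent to disjointness of $l$ from $\Pi_i$ in $\bP^4$, and Lemma~\ref{lemm:line} then applies.

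The first thing I would check is that the obvious choice $\Pi_1$ does \emph{not} work. The restriction of $\iota_{34}$ to $S_3$ is an involution whose fixed locus is the line $L_{34}=\{x_1=x_2=0\}=\Pi_1\cap S_3$ together with the three points cut out on $L_{12}=\{x_3=x_4=0\}$ by $x_1(ax_1^2+b_1x_1x_2+x_2^2)$. Any $\iota_{34}$-invariant line of $S_3$ on which $\iota_{34}$ acts nontrivially meets the fixed locus in exactly two points; these cannot both lie on $L_{12}$ (two such points span $L_{12}\not\subset S_3$) and cannot both lie on $L_{34}$ (else the line equals $L_{34}$). Hence every such invariant line meets $\Pi_1\cap S_3$, and the only pointwise-fixed line, namely $L_{34}$ itself, is not disjoint from $\Pi_1$. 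So no invariant line of $S_3$ is disjoint from $\Pi_1$, which forces the use of $\Pi_2$ (or, symmetrically, $\Pi_3$).

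I would then construct the required line explicitly as the join $l=\overline{AB}$ of a fixed point $B=[\alpha:1:0:0]\in L_{12}\cap S_3$ with $\alpha\neq 0$ and a fixed point $A=[0:0:c:d]\in L_{34}$; such a $B$ always exists, since $x_1(ax_1^2+b_1x_1x_2+x_2^2)$ is not a multiple of $x_1^3$. Substituting the parametrization $[s\alpha:s:tc:td]$ into the equation of $S_3$ shows that $\overline{AB}\subset S_3$ precisely when $\alpha(c^2+d^2)+cd=0$; this condition is solvable over $k$, and every solution forces $c\neq 0$. Since $A$ and $B$ are both $\iota_{34}$-fixed, $l$ is $\iota_{34}$-invariant and hence $G$-invariant. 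Finally $\Pi_2\cap S_3=\{x_1=x_3=0\}$, and the inequalities $\alpha\neq 0$ and $c\neq 0$ show that the parametrization of $l$ never meets $\{x_1=x_3=0\}$, so $l$ is disjoint from $\Pi_2$. Lemma~\ref{lemm:line} then gives linearizability. The main obstacle is the middle step: one has to understand the $\iota_{34}$-fixed locus on $S_3$ precisely enough to see why the symmetric plane $\Pi_1$ fails and to locate an invariant line avoiding $\Pi_2$; the explicit join construction is the cleanest way I see to guarantee such a line for all admissible $(a,b_1)$.
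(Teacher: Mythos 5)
Your proof is correct and takes essentially the same route as the paper: both exhibit a $G$-invariant line inside the pointwise $\iota_5$-fixed cubic surface $S_3$ that is disjoint from the $G$-invariant plane $\Pi_2$ (or $\Pi_3$) and conclude via Lemma~\ref{lemm:line}. In fact your lines are exactly the paper's, which presents them as the two components of the reducible conic $\{x_5=x_2-rx_1=rx_3x_4+x_3^2+x_4^2=0\}$ with $r^2+b_1r+a=0$ (i.e., $r=1/\alpha$); that parametrization also handles $a=0$ seamlessly, whereas your normalization $B=[\alpha:1:0:0]$ there needs $b_1\neq 0$ --- which is automatic, since $a=b_1=0$ would make $X$ seven-nodal rather than six-nodal.
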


\begin{proof}
Note that $G$ leaves invariant the planes $\Pi_1$, $\Pi_2$, $\Pi_3$.
As in the proof of Lemma~\ref{lemm:6-nodes-iota-4}, we see that $S_3$ contains a $G$-invariant line that is disjoint from one of these planes. Indeed, if $r$ is a root of $r^2+b_1r+a=0$,
then $S_3$ contains the reducible conic
$$
x_5=x_2-rx_1=rx_3x_4+x_3^2+x_4^2=0,
$$
and its irreducible components are $G$-invariant lines disjoint from $\Pi_2$ and $\Pi_3$. 
Hence, the $G$-action is linearizable by Lemma~\ref{lemm:line}.
\end{proof}

Now, let us discuss nonlinearizable actions. We start with

\begin{lemm}\label{lemm:63Burn}
Suppose that $b_2=b_3$, and let $G=\langle\iota_5,\sigma_{34}\rangle\cong C_2^2$.
Then the $G$-action on $X$ is not linearizable.
\end{lemm}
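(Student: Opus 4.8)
The plan is to apply the Burnside criterion of Proposition~\ref{prop:burnform} with the distinguished involution $\tau=\iota_5$. First I would verify hypothesis (1): since $\iota_5$ acts by $x_5\mapsto-x_5$, it fixes the hyperplane $\{x_5=0\}$ pointwise, hence pointwise fixes the cubic surface $S=S_3=\{x_5=0\}\cap X$, which is smooth by Remark~\ref{rema:6-nodes-S3}. Thus the $\tau$-fixed locus contains the cubic surface $S_3\subset X$, as required.

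Next I would check hypothesis (2). By Remark~\ref{rema:6-nodes-S3} the surface $S_3$ is $\Aut(X)$-invariant, so the subgroup $Y\subseteq G$ preserving $S_3$ is all of $G=\langle\iota_5,\sigma_{34}\rangle$. The residual action $Y/\langle\iota_5\rangle\cong C_2$ is generated by the image of $\sigma_{34}$, which acts nontrivially on $S_3$; hence $Y$ acts generically nontrivially on $S_3$. It remains to exhibit an element of $Y$ fixing a curve of genus $\ge 1$, and I claim $\sigma_{34}$ is such an element. Its fixed locus inside $\{x_5=0\}\cong\bP^3$ is the $(+1)$-eigenplane $\{x_3=x_4\}$ together with the isolated point $[0:0:1:-1:0]$ of the $(-1)$-eigenspace, so the one-dimensional part of its fixed locus on $S_3$ is the plane cubic $C=S_3\cap\{x_3=x_4\}$. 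Setting $x_3=x_4=t$ and using $b_2=b_3$, this is the curve
$$
a x_1^3+b_1x_1^2x_2+2b_2x_1^2t+x_1x_2^2+2x_1t^2+x_2t^2=0
$$
in $\bP^2_{[x_1:x_2:t]}$, which is genuinely a cubic (the monomials $x_1x_2^2$ and $x_2t^2$ occur with nonzero coefficient) and in particular one-dimensional, since the irreducible surface $S_3$ contains no plane.

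The crux is then to show $C$ has genus $\ge 1$, and for this I would invoke that the fixed locus of a finite-order automorphism of a smooth variety in characteristic zero is smooth: as $S_3$ is smooth, the fixed locus of $\sigma_{34}|_{S_3}$ is smooth, and the isolated point $[0:0:1:-1:0]$ is disjoint from $C$ (it has $x_3\ne x_4$), so $C$ itself is a smooth plane cubic, hence an elliptic curve of genus $1$. With hypotheses (1) and (2) verified, Proposition~\ref{prop:burnform} yields that the $G$-action on $X$ is not linearizable. The only point demanding care is this genus computation — concretely, ruling out that $C$ degenerates to a singular, geometric genus $0$ plane cubic — which is handled uniformly by the smoothness of $S_3$ rather than through a case analysis in the parameters $a,b_1,b_2$.
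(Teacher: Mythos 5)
Your proposal is correct and follows the same overall route as the paper: both apply Proposition~\ref{prop:burnform} with $\tau=\iota_5$ pointwise fixing the smooth cubic surface $S_3=\{x_5=0\}\cap X$, and with $\sigma_{34}$ pointwise fixing the plane cubic $C=S_3\cap\{x_3=x_4\}$, whose smoothness (hence genus $1$) is the only real point to check. The single divergence is how that smoothness is established. The paper argues that a singular point of $C$ is automatically a singular point of $X$ (the partials of the defining cubic in the two directions transverse to the plane $\{x_5=0,\,x_3=x_4\}$ vanish identically along that plane, since it lies in the fixed locus of $G$ and the equation is $G$-invariant), and the six nodes of $X$ all have $x_5\ne 0$, so they avoid $S_3$; this simultaneously rules out reduced singular cubics and non-reduced ones. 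You instead invoke the smoothness of fixed loci of finite-order automorphisms of smooth varieties in characteristic zero. This is valid, but note that the theorem must be read scheme-theoretically for your conclusion ``$C$ itself is a smooth plane cubic'' to be complete: near $C$ the fixed-point scheme of $\sigma_{34}|_{S_3}$ coincides with the scheme-theoretic intersection $S_3\cap\{x_3=x_4\}$, so smoothness of the fixed-point scheme gives that $C$ is reduced as well as nonsingular. A purely set-theoretic reading (``the fixed point set is a smooth curve plus isolated points'') would not by itself exclude $C$ being a triple line, whose reduction is smooth but which has genus $0$ as a fixed curve; as it happens, your observation that the equation of $C$ contains $x_1x_2^2$ but no $x_2^3$ also excludes a perfect cube, so the loophole closes either way. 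In terms of trade-offs, the paper's check is elementary and self-contained (one explicit differentiation plus the location of the nodes), while yours is uniform in the parameters $a,b_1,b_2$ and transfers verbatim to the analogous cases in Remark~\ref{rema:63Burn}.
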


\begin{proof}
The involution $\iota_5$ pointwise fixes the $G$-invariant surface $S_3$, while the involution $\sigma_{34}$ pointwise fixes the cubic curve 
$$
C=\{x_3=x_4\}\cap S_3\subset S_3.
$$ 
One can check that a singular point on $C$ is also a singular point of $X$. 
Then $C$ is necessarily smooth since the six nodes on $X$ are away from $S_3$. 
Therefore $C$ is a genus $1$ curve, and by Proposition~\ref{prop:burnform}, the $G$-action is not linearizable.
\end{proof}

\begin{rema}\label{rema:63Burn}
    The same argument shows that  the following two $G$-actions on $X$ (when they act) are not linearizable:
$$
G=\langle\iota_5,\sigma_{34}\iota_{34}\rangle\simeq C_2^2,\quad\text{ and }\quad G=\langle\sigma_{234},\sigma_{34}\rangle\simeq\fS_3.
$$
In each of the two cases, there is a cubic surface in $X$ fixed by a subgroup $C_2\subset G$ and receiving a $G/C_2$-residual action which fixes an elliptic curve. Therefore, Proposition~\ref{prop:burnform} is also applicable to these cases.
\end{rema}
Using Proposition~\ref{prop:Aut6nod3pl} and Lemma~\ref{lemm:6-nodes-iota-4}, we obtain

\begin{coro}
The action of $\mathrm{Aut}(X)$ on $X$ is linearizable if and only if $\mathrm{Aut}(X)=\langle\iota_5\rangle$.
\end{coro}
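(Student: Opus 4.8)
The plan is to prove the two implications separately, leaning entirely on the classification in Proposition~\ref{prop:Aut6nod3pl} together with the linearizability statements already established for individual subgroups, so that the corollary becomes a short assembly argument rather than a new computation.

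For the ``if'' direction I would simply invoke Lemma~\ref{lemm:6-nodes-iota-4}: when $\mathrm{Aut}(X)=\langle\iota_5\rangle$, the action is linearizable because $\iota_5$ pointwise fixes the smooth cubic surface $S_3$, which then contains a line disjoint from the $\mathrm{Aut}(X)$-stable plane $\Pi_1$, so Lemma~\ref{lemm:line} applies. This direction is immediate.

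For the ``only if'' direction I would argue contrapositively. Suppose $\mathrm{Aut}(X)\neq\langle\iota_5\rangle$; then Proposition~\ref{prop:Aut6nod3pl} places $X$ in one of the four cases (1)--(4). The key observation is that in every one of these cases the relation $b_2=b_3$ holds (indeed $b_2=b_3\neq 0$ in case (1), $b_2=b_3=0$ in cases (2) and (4), and $b_1=b_2=b_3\neq 0$ in case (3)), while the reflection $\sigma_{34}$ belongs to $\mathrm{Aut}(X)$ in each case. Hence $\langle\iota_5,\sigma_{34}\rangle\simeq C_2^2$ is a subgroup of $\mathrm{Aut}(X)$ satisfying the hypotheses of Lemma~\ref{lemm:63Burn}, so the $\langle\iota_5,\sigma_{34}\rangle$-action on $X$ is not linearizable.

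The final step is to pass from nonlinearizability of this subgroup to nonlinearizability of the whole group: a $G$-equivariant birational map $X\dashrightarrow\bP^3$ onto a linear $G$-action restricts, for any subgroup $H\subseteq G$, to an $H$-equivariant birational map onto a linear $H$-action, so linearizability is inherited by subgroups; contrapositively, the nonlinearizability of $\langle\iota_5,\sigma_{34}\rangle$ forces the $\mathrm{Aut}(X)$-action to be nonlinearizable whenever $\mathrm{Aut}(X)\neq\langle\iota_5\rangle$. Combining the two directions yields the corollary. I do not anticipate a genuine obstacle: all of the geometric and cohomological content is already packaged in Lemmas~\ref{lemm:6-nodes-iota-4} and~\ref{lemm:63Burn}, and the only point requiring care is the routine inspection of Proposition~\ref{prop:Aut6nod3pl} verifying that the subgroup $\langle\iota_5,\sigma_{34}\rangle$ with $b_2=b_3$ indeed occurs in each of its four cases.
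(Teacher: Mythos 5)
Your proposal is correct and follows essentially the same route as the paper: the ``if'' direction is Lemma~\ref{lemm:6-nodes-iota-4}, and the ``only if'' direction combines the classification in Proposition~\ref{prop:Aut6nod3pl} (noting that every case with $\mathrm{Aut}(X)\neq\langle\iota_5\rangle$ contains $\langle\iota_5,\sigma_{34}\rangle$ with $b_2=b_3$) with the nonlinearizability of that $C_2^2$ from Lemma~\ref{lemm:63Burn} and the fact that linearizability is inherited by subgroups. The paper compresses this into a one-line deduction, so your write-up simply makes explicit the same assembly, including the case-by-case check that $b_2=b_3$ and $\sigma_{34}\in\mathrm{Aut}(X)$ always hold.
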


We proceed to the actions of other subgroups of the full automorphism groups from Proposition~\ref{prop:Aut6nod3pl}. 

\begin{lemm}
\label{lemm:spec63pl-9nod}
Let $X_a\subset\bP^4$ be a 6-nodal cubic threefold given by 
\begin{align}\label{eq:6nodspeceq}
    a x_1^3 + b(x_2 + x_3 + x_4)x_1^2 + x_1(x_2^2 + x_3^2 + x_4^2 - x_5^2) + x_2x_3x_4=0,
\end{align}
for some $b\not\in\{0,28\}$, and let $G\simeq C_3$ be a group acting on $\mathbb{P}^4$ by 
$$
(x_1,x_2,x_3,x_4,x_5)\mapsto(x_1,x_4,x_3,x_2,x_5).
$$
Then $X$ is $G$-invariant, and the $G$-action on $X_a$ is not stably linearizable for a very general $a\in k$.
\end{lemm}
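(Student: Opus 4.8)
The plan is to realize $X_a$ as the generic fibre of a one-parameter degeneration (with $b$ held fixed) whose special fibre is a $9$-nodal cubic carrying the same $C_3$-action, and then to apply the specialization criterion of Proposition~\ref{prop:flat}, following the template of Proposition~\ref{prop:spec3-9} and Proposition~\ref{prop:special4to8}. First I would record the $G$-invariance: since $x_2x_3x_4$, $x_2+x_3+x_4$ and $x_2^2+x_3^2+x_4^2$ are symmetric in $x_2,x_3,x_4$, the defining equation is invariant under the cyclic permutation $\sigma_{234}$ generating $G\simeq C_3$, for every value of $a$. Hence $G$ acts on the total space of the flat family $\cX\to\bA^1_k$ cut out by the given equation, commuting with the projection to the parameter line, and the six nodes $p_1,\dots,p_6$ of the generic fibre are $G$-stable as a set, splitting into the two free $G$-orbits $\{p_1,p_3,p_5\}$ and $\{p_2,p_4,p_6\}$ of length $3$.

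Next I would analyse the special fibre. By Remark~\ref{rema:6-nodes-degeneration}, for fixed $b\notin\{0,28\}$ the cubic $X_a$ is $6$-nodal for all but finitely many $a$, so a very general $a$ gives a $6$-nodal cubic of type~(3) in Proposition~\ref{prop:Aut6nod3pl}; at the value $a_0$ with $4a_0-b^2+8b+16=0$ it acquires three further nodes and becomes a genuine $9$-nodal cubic, the excluded values $b\in\{0,28\}$ being exactly those producing the Segre cubic. Writing $\cX_{a_0}=X_{a_0}$, I would check by direct computation that the three new nodes form a single $\fS_3$-orbit, hence one free $G$-orbit of length $3$ with trivial stabilizer, disjoint from the two orbits above; consequently all nodes acquired in the degeneration are $BG$-rational singularities by the first bullet of Example~\ref{exam:BG}, and $X_{a_0}$ is irreducible.

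It then remains to feed in the computation of Section~\ref{sect:nine}: the $C_3$-action on the standard desingularization of the $9$-nodal cubic $X_{a_0}$ fails the \textbf{(H1)}-criterion on $\Pic(\widetilde{X}_{a_0})$, so $X_{a_0}$ is not $G$-stably linearizable; geometrically $G$ pointwise fixes the smooth genus-$1$ plane cubic $X_{a_0}\cap\{x_2=x_3=x_4\}$, the visible shadow of this obstruction. After blowing up the six node-sections of $\cX$ to reach the hypotheses of Proposition~\ref{prop:flat}, combining that proposition with Example~\ref{exam:BG} shows that a very general fibre $X_a$ is not $G$-stably linearizable.

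The step I expect to be the main obstacle is the middle one: confirming that the three nodes created at $a_0$ form a \emph{single free} $G$-orbit --- a $G$-fixed node among them would fall outside the $BG$-rationality cases of Example~\ref{exam:BG} --- and matching the special $9$-nodal fibre with the precise model whose \textbf{(H1)}-obstruction is computed in Section~\ref{sect:nine}. The remaining ingredients are a routine transcription of the specialization arguments established earlier.
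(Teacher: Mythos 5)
Your overall strategy is the same as the paper's: fix $b\notin\{0,28\}$, degenerate in $a$ to the special fiber at $a_0=\tfrac{b^2}{4}-2b-4$ (the root of $4a-b^2+8b+16=0$ from Remark~\ref{rema:6-nodes-degeneration}), check that the relevant nodes are $BG$-rational (the original six split into two free $C_3$-orbits, the three new ones form one free orbit), and conclude with Proposition~\ref{prop:flat}. The genuine gap sits exactly at the step you flag as the ``main obstacle'' and then leave undone: you never prove that the $C_3$-action on the $9$-nodal special fiber $X_{a_0}$ fails \textbf{(H1)}; you only assert it by appeal to Section~\ref{sect:nine}. This assertion is not a formality, because a $9$-nodal cubic carries conjugacy classes of $C_3$-actions with opposite behavior: the computation in Section~\ref{sect:nine} shows that only the classes $\langle(1,2,3)\rangle$ and $\langle(4,5,6)\rangle$ (permuting one triple of the six coordinates of the standard model) have $\rH^1(C_3,\Pic(\widetilde{X}))\neq 0$, while the diagonal $C_3$ satisfies \textbf{(H1)} and is in fact linearizable. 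So ``feeding in the computation of Section~\ref{sect:nine}'' presupposes the very class identification you postpone, and without it the proof does not close.

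The paper avoids constructing any explicit isomorphism with the standard $9$-nodal model by invoking Remark~\ref{rema:c3cohomo}: a $C_3$-action on a $9$-nodal cubic fails \textbf{(H1)} if and only if there is a $G$-orbit of planes of length $3$ whose sum is a Cartier divisor. It then exhibits the three planes appearing on $X_{a_0}$, namely $\Pi_4=\{x_2-2x_1=bx_1+2x_3+2x_4+2x_5=0\}$ and its two $G$-translates $\Pi_5,\Pi_6$, and notes $\Pi_4+\Pi_5+\Pi_6$ is Cartier. Your remark that $G$ pointwise fixes the plane cubic $X_{a_0}\cap\{x_2=x_3=x_4\}$ could be upgraded into an alternative identification argument --- among the $C_3$-classes in $\Aut(X_{a_0})$ only the two obstructed ones pointwise fix a curve of genus $1$, the diagonal classes fixing only finitely many points --- but as written you neither verify the smoothness and genus of that curve nor show that this property singles out the obstructed classes. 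Until one of these routes (the paper's plane-orbit criterion, or a completed fixed-curve argument) is carried out, the central claim of the lemma remains unproven.
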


\begin{proof}
Fixing $b\not\in\{0,28\}$, consider the family $\cX\to \bA^1_k$ whose fiber over $a\in k$ is the cubic given by~\eqref{eq:6nodspeceq}. From Remark~\ref{rema:6-nodes-degeneration}, we know that the generic fiber $X_a:=\cX_a$ is 6-nodal
if $a$ is very general. 
On the other hand, the special fiber $X_{\epsilon}:=\cX_{\epsilon}$ is 9-nodal when $\epsilon=\frac{b^2}{4}-2b-4$ and the additional 3 nodes have trivial stabilizer.

Now set $\epsilon=\frac{b^2}{4}-2b-4$.
Then the $G$-action on planes in $X_{\epsilon}$ has a $G$-orbit of length 3, consisting of three planes
$$
\Pi_4=\{x_2-2x_1=(bx_1 + 2x_3 + 2x_4 + 2x_5)=0\},
$$
$$
\Pi_5=\{x_3-2x_1=(bx_1 + 2x_4 + 2x_2 + 2x_5)=0\},
$$
$$
\Pi_6=\{x_4-2x_1=(bx_1 + 2x_2 + 2x_3 + 2x_5)=0\}.
$$
Moreover, $\Pi_4+\Pi_5+\Pi_6$ is a Cartier divisor on $X_{\epsilon}$, then by Remark~\ref{rema:c3cohomo}, the $G$-action on $X_{\epsilon}$ is not stably linearizable. Applying Proposition~\ref{prop:flat}, we conclude that the $G$-action on $X_a$ is not stably linearizable for a very general $a\in k$.
\end{proof}

\begin{lemm}\label{lemm:spec63pl-10nod}
Let $X_a\subset\bP^4$ be a cubic threefold given by 
\begin{align}\label{eq:6nodspec2eq}
    a x_1^3 + x_1(x_2^2 + x_3^2 + x_4^2 - x_5^2) + x_2x_3x_4=0.
\end{align}
 Consider a group $G\simeq C_2^2$ acting on $\mathbb{P}^4$ via
$$
\iota_{24}: (x_1,x_2,x_3,x_4,x_5)\mapsto(x_1,-x_2,x_3,-x_4,x_5),
$$
$$
\iota_{34}: (x_1,x_2,x_3,x_4,x_5)\mapsto(x_1,x_2,-x_3,-x_4,x_5).
$$
Then $X_a$ is $G$-invariant, and the $G$-action on $X_a$ is not stably linearizable for a very general $a\in k$.
\end{lemm}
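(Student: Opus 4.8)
The plan is to run a specialization argument in the spirit of Proposition~\ref{prop:special4to10}, degenerating the $6$-nodal three-plane cubic \eqref{eq:6nodspec2eq} to the Segre cubic and transporting the {\bf (H1)}-obstruction of \cite{CTZ} back to the very general fiber. First I would form the family $\cX\to\bA^1_k$ whose fiber over $a$ is \eqref{eq:6nodspec2eq}. As this is the $b_1=b_2=b_3=0$ case of \eqref{equation:6-nodes-3-planes}, it is manifestly invariant under both $\iota_{24}$ and $\iota_{34}$, so the $G\simeq C_2^2$-action spreads out over the whole family. Specializing Remark~\ref{rema:6-nodes-degeneration} to $b=0$, the fiber $X_a$ is $6$-nodal exactly when $a\notin\{0,-4\}$, hence for very general $a$, whereas the special fiber $\cX_{-4}$ is the $10$-nodal Segre cubic.

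Next I would pin down the four extra nodes produced as $a\to-4$ together with the $G$-action on them. Solving the Jacobian system for $\cX_{-4}$, the six original nodes $p_1,\dots,p_6$ persist, and the new part of the singular locus lies in the hyperplane $\{x_5=0\}$; a direct computation identifies it with the four points
\begin{align*}
q_1 &= [1:-2:-2:-2:0], & q_2 &= [1:2:2:-2:0],\\
q_3 &= [1:2:-2:2:0], & q_4 &= [1:-2:2:2:0],
\end{align*}
i.e.\ those of the form $[1:\epsilon_2 2:\epsilon_3 2:\epsilon_4 2:0]$ with $\epsilon_2\epsilon_3\epsilon_4=-1$. One checks that $\iota_{24}$ exchanges $q_1\leftrightarrow q_3$, $q_2\leftrightarrow q_4$ and that $\iota_{34}$ exchanges $q_1\leftrightarrow q_4$, $q_2\leftrightarrow q_3$, so $G$ acts simply transitively on $\{q_1,q_2,q_3,q_4\}$. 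Thus the four new nodes form a single $G$-orbit with trivial stabilizer, and by the first bullet of Example~\ref{exam:BG} they are $BG$-rational singularities.

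The crux is to certify that the $G$-action on the Segre cubic $\cX_{-4}$ is not stably linearizable. Combining the previous step with the action on $p_1,\dots,p_6$ --- under which $\iota_{24}$ fixes $p_3,p_4$ and swaps $p_1\leftrightarrow p_2$, $p_5\leftrightarrow p_6$, while $\iota_{34}$ fixes $p_5,p_6$ and swaps $p_1\leftrightarrow p_2$, $p_3\leftrightarrow p_4$ --- the ten nodes decompose as
$$
\{p_1,p_2\}\sqcup\{p_3,p_4\}\sqcup\{p_5,p_6\}\sqcup\{q_1,q_2,q_3,q_4\},
$$
that is, three $G$-orbits of length $2$ and one of length $4$, matching the configuration in Proposition~\ref{prop:special4to10}. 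This is not the five-orbit pattern required by the linearizability criterion of \cite{CTZ}, so the {\bf (H1)}-obstruction computed there is nonzero and the $G$-action on $\cX_{-4}$ is not stably linearizable. Finally, equivariantly resolving the six nodes of the generic fiber places us in the setting of Proposition~\ref{prop:flat}, which, together with the $BG$-rationality just established, yields that the $G$-action on $X_a$ is not stably linearizable for very general $a$. I expect the one genuinely delicate point to be the {\bf (H1)} computation on the Segre cubic for this particular $C_2^2$: one must produce a non-stably-permutation summand of $\Pic$ of the resolution, which is exactly what the node-orbit decomposition feeds into the calculation of \cite{CTZ}.
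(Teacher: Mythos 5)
Your proposal follows exactly the paper's strategy: form the family, use Remark~\ref{rema:6-nodes-degeneration} to see that the very general fiber is $6$-nodal while the fiber at $a=-4$ is the Segre cubic, observe that the four new nodes form a single free $G$-orbit (hence are $BG$-rational by Example~\ref{exam:BG}), establish a stable obstruction on the special fiber, and conclude via Proposition~\ref{prop:flat}. Your explicit computation of the extra nodes $[1:\pm2:\pm2:\pm2:0]$ with sign product $-1$, and of the $G$-orbit structure on all ten nodes, is correct and in fact more detailed than what the paper records.

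The genuine gap is at the crucial step: non-stable-linearizability of the $G$-action on the Segre fiber $\cX_{-4}$. You argue that because the node-orbit pattern is $4+2+2+2$ rather than the five orbits of length $2$ appearing in the linearizability criterion of \cite{CTZ}, ``the {\bf (H1)}-obstruction computed there is nonzero.'' That inference is invalid: the criterion characterizes \emph{linearizability}, and failing it only shows the action is not linearizable; it does not force $\rH^1(G',\Pic)$ to be nonzero for some subgroup $G'$. Non-linearizable actions can perfectly well satisfy {\bf (H1)}: within this very paper, the $\fS_5$-action on the $5$-nodal cubic is not linearizable (it is $G$-solid by \cite{VAZ}), yet by Proposition~\ref{prop:coho} the Picard module there is a permutation module, so both {\bf (SP)} and {\bf (H1)} hold; similarly the $\fA_5$-action on the Segre cubic is obstructed by rigidity \cite{Avilov}, not by cohomology. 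The paper instead directly invokes the cohomological computation of \cite[Section 6]{CTZ} for this particular $C_2^2$, pinned down inside $\fS_6$ by its three invariant planes $\{x_1=x_i=0\}$, $i=2,3,4$, together with the node-orbit data. Without performing or correctly citing that computation, what you have actually established about $\cX_{-4}$ is only non-linearizability; this both fails to deliver the word ``stably'' in the conclusion (Proposition~\ref{prop:flat} would then propagate only non-linearizability) and leaves the asserted {\bf (H1)}-failure — which you yourself flag as the delicate point — unproved.
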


\begin{proof}
Consider the family $\cX\to \bA^1_a$ whose fiber over $a\in k$ is the cubic given by \eqref{eq:6nodspec2eq}. From Remark~\ref{rema:6-nodes-degeneration}, we know that $X_a$ is 6-nodal
for a very general $a$. 
On the other hand, the special fiber $X_a'$, when $a'=-4$, is the Segre cubic with 10 nodes. The $G$-action on $X_{-4}$ leaves invariant three planes, namely 
$$
\Pi_i=\{x_1=x_i=0\},\quad i=2,3,4.
$$
The action has an orbit of nodes of length 4 and three orbits of nodes of length 2. By \cite[Section 6]{CTZ}, the $G$-action on $X_{-4}$ does not satisfy {\bf (H1)} and is not stably linearizable. Moreover, the four additional nodes are in one $G$-orbit, and are $BG$-rational singularities. 
Applying Proposition~\ref{prop:flat},
one concludes that a very general member in the family $\cX$ is not $G$-stably linearizable.
\end{proof}

\begin{rema}
\label{rema:C2C4spec6nod}
    The same argument shows that for the same family of cubics, the action on $X_a$ of the group 
$$
G=\langle\iota_5,\iota_{23}\sigma_{34}\rangle\simeq C_2\times C_4
$$
for a very general $a$ is not stably linearizable. The action specializes to the unique $C_2\times C_4$-action on the Segre cubic $X_{-4}$. This action on $X_{-4}$ does not satisfy {\bf (SP)}. The four additional nodes have stabilizer $C_2$, and they are $BG$-rational singularities, see Example~\ref{exam:BG}.
\end{rema}
Let us summarize what is left using the notation of Proposition~\ref{prop:Aut6nod3pl}.
\begin{enumerate}
\item When $\Aut(X)\simeq C_2^2$, we are left with $\langle\iota_5\sigma_{34}\rangle\simeq C_2$,
\item When $\Aut(X)\simeq C_2^3$, we are left with $5$ groups: 
\begin{center}
    \begin{tabular}{|c|c|c|c|}
    \hline
Group&Generators&Orbit of nodes&Orbit of planes  \\\hline
    $C_2$ & $\iota_5\sigma_{34}$&2+2+2&1+2\\\hline
     $C_2$ & $\iota_5\sigma_{34}\iota_{34}$&2+2+2&1+2\\\hline
     $C_2^2$&$\iota_5\sigma_{34},\iota_{34}$&2+4&1+2\\\hline
     $C_2^2$&$\iota_5\sigma_{34},\iota_{34}\sigma_{34}$&2+2+2&1+2\\\hline
     $C_2^2$&$\iota_5\iota_{34},\sigma_{34}$&2+2+2&1+2\\
     \hline
     \end{tabular}
\end{center}

\item When $\Aut(X)\simeq C_2\times\mathfrak{S}_3$ and $a,b_1$ are very general, we are left with 
 $\langle \iota_5\sigma_{34} \rangle \simeq C_2$.

\item When $\Aut(X)\simeq C_2\times\mathfrak{S}_4$, for very general $a$, we are left with: 
\begin{center}
    \begin{tabular}{|c|c|c|c|}
    \hline
Group&Generators&Orbit of nodes&Orbit of planes  \\\hline
    $C_2$ & $\iota_5\sigma_{34}$&2+2+2&1+2\\\hline
    $C_3$&$\sigma_{234}$&3+3&3\\\hline
   
     $C_2^2$&$\iota_5\sigma_{34},\iota_{34}$&2+4&1+2\\\hline
     $C_4$&$\iota_{23}\sigma_{34}$&2+4&1+2\\\hline
     $C_2^2$&$\iota_5\sigma_{34},\iota_{34}\sigma_{34}$&2+2+2&1+2\\\hline
     $C_6$&$\iota_5\sigma_{234}$&6&3\\\hline
     $\fS_3$& $\iota_5\sigma_{34},\sigma_{234}$ &3+3&3\\\hline
     $\fD_4$&$\iota_5\iota_{24},  \iota_{23}\sigma_{34}$&2+4&1+2\\\hline
     \multicolumn{4}{|c|}{\small Specialization in 
Lemma~\ref{lemm:spec63pl-9nod} does not apply to the second row.}\\\hline
     \end{tabular}
\end{center}
\end{enumerate}
    We also note that in each of the remaining cases, the construction in Lemma~\ref{lemm:line} does not apply. In particular, every $G$-invariant line in these cases intersects with the $G$-invariant plane (when it exists) at one point.

\section{Eight nodes}
\label{sect:8}

The 8-nodal cubic threefolds form a two-parameter family, which is denoted by (J13) in \cite{Finkelnbergcubic}. 
Let $X$ be one such cubic, and $G=\mathrm{Aut}(X)$. Then $\mathrm{Cl}(X)=\bZ^4$,
and $X$ contains 5 planes $\Pi_{1},\ldots,\Pi_{5}$ that form a very special configuration \cite{Finkelnbergcubic}.
If $p_1,\ldots,p_8$ are the nodes of $X$ then
$$
\Pi_1\supset\{p_1,p_2,p_6,p_8\},
$$
$$
\Pi_2\supset\{p_1,p_2,p_5,p_7\},
$$
$$
\Pi_3\supset\{p_5,p_6,p_7,p_8\},
$$
$$
\Pi_4\supset\{p_3,p_4,p_5,p_6\},
$$
$$
\Pi_5\supset\{p_3,p_4,p_7,p_8\}.
$$
From this configuration, there are two distinguished sets of nodes 
\begin{equation}
\label{eqn:nodes}
\{p_1,p_2,p_3,p_4\} \quad \text{ and  } \quad  \{p_5,p_6,p_7,p_8\}.
\end{equation}
The planes $\Pi_1,\Pi_2, \Pi_3$ form one tetrahedron (without a face), 
and $\Pi_3,\Pi_4, \Pi_5$ form another one. In particular, $\Pi_3$ is distinguished,
and must be $G$-invariant.

Unprojecting from $\Pi_3$, we obtain a $G$-equivariant birational map 
$$
X\dasharrow X_{2,2} =Q\cap Q'\subset \bP^5
$$
to a singular complete intersection of two quadrics $Q$ and $Q'$ in $\bP^5$ that are singular along lines.
The threefold $X_{2,2}$ has 4 nodes: $Q\cap\mathrm{Sing}(Q')$ and $Q'\cap\mathrm{Sing}(Q)$, 
and $G$ fixes a point in $X_{2,2}$ --- the inverse map $X_{2,2}\dasharrow X$ is just a projection from this point. Thus, one could study the geometry of $X$ using $X_{2,2}$ as in Section~\ref{sect:four}. 

\subsection*{Standard form}
We change the coordinates in $\bP^4$ so that 
$$
\Pi_3=\{x_4=x_5=0\}
$$
and
$$
p_1=[0:0:0:1:0],\quad p_3=[0:0:0:0:1];
$$
this is possible since the line through $p_1$ and $p_3$ is disjoint from $\Pi_3$.
Changing the variables $x_1,x_2,x_3$, we may assume that
$$
p_5=[1:1:1:0:0],\quad p_6=[-1:1:1:0:0],
$$
$$
p_7=[1:-1:1:0:0],\quad p_8=[1:1:-1:0:0].
$$
This specifies the equations of the planes:
\begin{align*} 
&\Pi_1=\{x_1+x_3=x_5=0\}, \\
&\Pi_2=\{x_1-x_3=x_5=0\}, \\
&\Pi_3=\{x_4=x_5=0\},  \\
&\Pi_4=\{x_2-x_3=x_4=0\}, \\
&\Pi_5=\{x_2+x_3=x_4=0\}.
\end{align*}
A cubic threefold containing $\Pi_1,\ldots,\Pi_5$ and singular at $p_1,p_3,p_5,\ldots,p_8$ has the form 
$$
(a_{22}x_1+a_{12}x_2+a_6x_3)x_4x_5+a_9(x_3^2-x_1^2)x_4+a_8(x_3^2-x_2^2)x_5=0,
$$
for some $a_6,a_8,a_9,a_{12},a_{22}$. Since $X$ is 8-nodal, we have 
$$
a_{8}, a_{9}, a_{12}, a_{22}\ne 0.
$$
Scaling coordinates, we may assume that $a_{8}=a_{9}=a_{12}=1$, and we let $a_{22}=a$ and $a_{6}=b$. 
Thus, $X=X_{a,b}$ is given by
\begin{equation}
\label{8-nodes-form}    
(ax_1+x_2+bx_3)x_4x_5+x_4(x_3^2-x_1^2)+x_5(x_3^2-x_2^2)=0,
\end{equation}
for parameters $a,b$, where $a\ne 0$. Notice that $a$ and $b$ are defined up to $\pm1$. For very general $a$ and $b$,  \eqref{8-nodes-form} defines an $8$-nodal cubic with 
nodes at $p_1,\ldots,p_8$, where $p_1,p_3,p_5,\ldots,p_8$ are described above, and 
$$
p_2=[0:1:0:1:0],\quad\text{and} \quad p_4=[a:0:0:0:1].
$$
For special parameters $a$ and $b$, \eqref{8-nodes-form} defines a cubic with additional singularities, 
for instance, the Segre cubic, 
when $b=0$ and $a=1$.

\subsection*{Automorphisms}

Recall that $G=\mathrm{Aut}(X)$ and $\Pi_3$ is $G$-invariant. 
Let $l_{12}$ be the line passing through $p_1$ and $p_2$,
and $l_{34}$ the line through $p_3$ and $p_4$. Then the curve $l_{12}+l_{34}$ is $G$-invariant.
On the other hand, we have a group homomorphism 
$$
\phi\colon G\to\PGL_3(k),
$$ 
arising from the action of $G$ on $\Pi_3$. Since $\phi(G)$ permutes the points $p_5,p_6,p_7,p_8$,
we see that $\phi(G)\subseteq \mathfrak{S}_4\subset\PGL_3(k)$,  permuting the coordinates $x_1,x_2,x_3$ and changing signs of these variables.
Moreover, the set $(l_{12}+l_{34})\cap\Pi_3$ is $\phi(G)$-invariant, which implies that $\phi(G)$
is contained in $\mathfrak{D}_4\subset\mathfrak{S}_4$ generated by:
\begin{align*}
(x_1, x_2, x_3)&\mapsto (-x_2,x_1,x_3),\\
(x_1,x_2,x_3)&\mapsto (-x_1,x_2,x_3).
\end{align*}

\begin{lemm}
\label{lemma:8-nodal-kernel}
The kernel $\ker(\phi)$ of $\phi$ is nontrivial if and only if $b=0$.
Moreover, if $b=0$, then $\ker(\phi)\simeq C_2$, generated by 
$$
(x_1,x_2,x_3,x_4,x_5)\mapsto (x_1-ax_5,x_2-x_4,x_3,-x_4,-x_5).
$$
\end{lemm}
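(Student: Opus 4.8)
The plan is to compute $\ker(\phi)$ directly, exploiting a grading by degree in the variables $x_4,x_5$. Recall from Section~\ref{sect:intjac} that the $G$-action lifts to $\PGL_5$, and that $\Pi_3=\{x_4=x_5=0\}$ is $G$-invariant. Let $g\in\ker(\phi)$. Since $g$ preserves $\Pi_3$, the ideal $(x_4,x_5)$ is $g^*$-invariant, so $g^*(x_4),g^*(x_5)\in\langle x_4,x_5\rangle$; and since $\phi(g)=1$, the induced map on $\Pi_3$ is the identity, so after rescaling the lift we may assume
$$g^*(x_i)=x_i+\alpha_ix_4+\beta_ix_5\quad(i=1,2,3),\qquad g^*(x_4)=c_{44}x_4+c_{45}x_5,\qquad g^*(x_5)=c_{54}x_4+c_{55}x_5,$$
for scalars $\alpha_i,\beta_i,c_{jk}$. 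The condition to impose is $g^*(F)=\mu F$ for some $\mu\in k^\times$, where $F$ is the cubic in \eqref{8-nodes-form}.

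First I would grade $F$ and $g^*(F)$ by degree in $(x_4,x_5)$. Since every monomial of $F$ is divisible by $x_4$ or $x_5$ and $g^*$ never lowers this degree, the lowest piece of $g^*(F)$ has degree $1$ and equals $(c_{44}x_4+c_{45}x_5)(x_3^2-x_1^2)+(c_{54}x_4+c_{55}x_5)(x_3^2-x_2^2)$. Matching this against $\mu\big(x_4(x_3^2-x_1^2)+x_5(x_3^2-x_2^2)\big)$ and comparing the coefficients of $x_4x_1^2,\,x_4x_2^2,\,x_5x_1^2,\,x_5x_2^2$ forces the $(x_4,x_5)$-block to be scalar, namely $c_{44}=c_{55}=\mu$ and $c_{45}=c_{54}=0$. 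This is the crucial simplification, and it keeps all remaining computations bounded.

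With $g^*(x_4)=\mu x_4$ and $g^*(x_5)=\mu x_5$, the degree-$2$ piece of $g^*(F)$ must equal $\mu(ax_1+x_2+bx_3)x_4x_5$. Matching the coefficients of $x_4^2$ and $x_5^2$ gives $\alpha_1=\alpha_3=\beta_2=\beta_3=0$, while the $x_4x_5$ coefficient yields the linear relations
$$\mu a-2\beta_1=a,\qquad \mu-2\alpha_2=1,\qquad \mu b=b.$$
The degree-$3$ piece must vanish, giving $\mu\alpha_2(\mu-\alpha_2)=0$ and $\mu\beta_1(\mu a-\beta_1)=0$. Since $\mu\ne0$, there are two possibilities: either $\alpha_2=0$, which forces $\mu=1$ and $\beta_1=0$, hence $g=\mathrm{id}$ for every value of $b$; or $\alpha_2=\mu$, which forces $\mu=-1$, whereupon $\mu b=b$ gives $b=0$ and the remaining relations give $\alpha_2=-1$, $\beta_1=-a$. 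The latter solution is exactly the involution in the statement. A direct check that $g^2=\mathrm{id}$ (equivalently $\mu^2=1$ together with the explicit form) shows it generates a copy of $C_2$.

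Putting these together proves the claim: a nontrivial element of $\ker(\phi)$ exists if and only if $b=0$, and in that case $\ker(\phi)=\langle g\rangle\simeq C_2$. The only genuine work is the coefficient bookkeeping in the degree-$2$ and degree-$3$ pieces, and I expect this to be the main obstacle; the $(x_4,x_5)$-grading mitigates it by isolating the scalar block from the lowest-degree part before the other parameters enter.
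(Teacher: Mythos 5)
Your proof is correct. The $(x_4,x_5)$-grading argument is sound: a kernel element preserves the ideal $(x_4,x_5)$ and, after rescaling the matrix representative, restricts to the identity on $\Pi_3$, so the invariance condition $g^*(F)=\mu F$ decomposes into the three graded pieces you analyze; the coefficient extraction in degrees $1$, $2$, $3$ and the final case analysis ($\alpha_2=0$ forcing $\mu=1$, $\beta_1=0$, hence the identity; $\alpha_2=\mu$ forcing $\mu=-1$, then $b=0$ and $\beta_1=-a$) are all accurate, and the two solutions found are exactly the identity and the involution in the statement.

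The route is genuinely different from the paper's. The paper does not start from the defining polynomial: it uses the geometric structure already in place --- a kernel element acts trivially on $\Pi_3$, hence fixes the four nodes $p_5,\ldots,p_8$ lying on that plane and preserves each of the lines $l_{12}$ and $l_{34}$, and it stabilizes the pairs $\{p_1,p_2\}$ and $\{p_3,p_4\}$ --- to enumerate three possible nontrivial permutations of the off-plane nodes. Solving the linear conditions coming from these node constraints yields three explicit candidate transformations, and only at the very end is invariance of \eqref{8-nodes-form} imposed, which eliminates two candidates and forces $b=0$ for the third. Your argument inputs nothing but invariance of the cubic: it needs no knowledge of where the nodes are, no claim that each line $l_{12}$, $l_{34}$ is preserved separately, and no case enumeration of node permutations, so it is self-contained and automatically complete (the identity solution appears on the same footing rather than being implicitly set aside). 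The price is that all the work lands in polynomial bookkeeping, which your grading keeps manageable. Conversely, the paper's use of the node configuration reduces the polynomial algebra to testing three matrices, at the cost of leaning on implicit geometric facts (e.g., that a kernel element fixes $l_{12}\cap\Pi_3$ and $l_{34}\cap\Pi_3$ and therefore cannot swap the two lines, and that fixing all four off-plane nodes forces the identity).
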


\begin{proof}
An element $\tau\in \ker(\phi)$ preserves $\Pi_3$, the points $p_6,p_7,p_8,p_9$,
and each line $l_{12}$ and $l_{34}$. 
Moreover, since $\tau$ leaves the subsets $\{p_1,p_2\}$ $\{p_3,p_4\}$ invariant, we have the following three possibilities:
\begin{enumerate}
\item $\tau(p_1)=\tau(p_1)$, $\tau(p_2)=\tau(p_2)$, $\tau(p_3)=\tau(p_4)$, $\tau(p_4)=\tau(p_3)$,
\item $\tau(p_1)=\tau(p_2)$, $\tau(p_2)=\tau(p_1)$, $\tau(p_3)=\tau(p_3)$, $\tau(p_4)=\tau(p_4)$,
\item $\tau(p_1)=\tau(p_2)$, $\tau(p_2)=\tau(p_1)$, $\tau(p_3)=\tau(p_4)$, $\tau(p_4)=\tau(p_3)$.
\end{enumerate}
These impose linear conditions on $\tau$. Solving them, we see that $\tau$ is one of the following linear transformations:
\begin{enumerate}
\item $(x_1,x_2,x_3,x_4,x_5)\mapsto(x_1,x_2-x_4,x_3,-x_4,x_5)$,
\item $(x_1,x_2,x_3,x_4,x_5)\mapsto(-ax_5+x_1,x_2,x_3,x_4,-x_5)$
\item $(x_1,x_2,x_3,x_4,x_5)\mapsto(x_1-ax_5,x_2-x_4,x_3,-x_4,-x_5)$.
\end{enumerate}
However, \eqref{8-nodes-form} must be preserved by $\tau$,
which implies that only the third case is possible, and only in the case when $b=0$.
\end{proof}

We are ready to classify all possibilities for $G=\mathrm{Aut}(X)$.

\begin{prop}
\label{prop:8-nodes-Aut}
Let $X\subset \bP^4$ be an 8-nodal cubic threefold given by 
$$
(ax_1+x_2+bx_3)x_4x_5+x_4(x_3^2-x_1^2)+x_5(x_3^2-x_2^2)=0
$$
and $G=\Aut(X)$. Then one of the following holds:
\begin{itemize}
\item $b\ne 0$ and $a\ne \pm 1$, $G\simeq C_2^2=\langle\iota_1,\iota_2\rangle$, generated by 
\begin{align*}
    \iota_1: (x_1,x_2,x_3,x_4,x_5)&\mapsto (a x_5-x_1,x_2,x_3,x_4,x_5),\\
\iota_2: (x_1,x_2,x_3,x_4,x_5)&\mapsto (x_1,x_4-x_2,x_3,x_4,x_5);
\end{align*}
\item $b\ne 0$ and $a=\pm 1$, $G\simeq \mathfrak{D}_4$, generated by $\iota_2$ and 
$$
\sigma_1:(x_1,x_2,x_3,x_4,x_5)\mapsto(\pm x_2,x_5\mp x_1,x_3,x_5,x_4);
$$

\item $b=0$, $a\ne \pm i$, $G\simeq C_2^3$, generated by $\iota_1$, $\iota_2$ and 
$$
\tau: (x_1,x_2,x_3,x_4,x_5)\mapsto (x_1-ax_5,x_2-x_4,x_3,-x_4,-x_5);
$$

\item $b=0$, $a=\pm i$, $G\simeq C_2.\fD_4\simeq C_2^2\rtimes C_4$, generated by $\tau,\iota_2$ and
\begin{align*}
\sigma_2:(x_1,x_2,x_3,x_4,x_5)\mapsto (\mp x_2,\pm x_1,x_3,ix_5,ix_4).
\end{align*}
\end{itemize}
\end{prop}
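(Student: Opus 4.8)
The plan is to analyze the exact sequence
\[
1\to\ker(\phi)\to G\xrightarrow{\ \phi\ }\phi(G)\to 1,
\]
where $\phi\colon G\to\PGL_3(k)$ is the action on $\Pi_3$ introduced above. By Lemma~\ref{lemma:8-nodal-kernel} we already know $\ker(\phi)=1$ when $b\ne 0$ and $\ker(\phi)=\langle\tau\rangle\simeq C_2$ when $b=0$, and by the discussion preceding the proposition $\phi(G)\subseteq\mathfrak{D}_4$. So the two tasks are to pin down the image $\phi(G)$ as a function of $a,b$, and then to reconstruct $G$ from $\ker(\phi)$ and $\phi(G)$.

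First I would verify directly that $\iota_1$ and $\iota_2$ preserve \eqref{8-nodes-form} for \emph{all} $a,b$: substituting $x_1\mapsto ax_5-x_1$ into the cubic, the spurious $a^2x_4x_5^2$ terms cancel and one recovers the original equation, and similarly for $\iota_2$. Their images under $\phi$ are the two sign changes $[x_1:x_2:x_3]\mapsto[-x_1:x_2:x_3]$ and $[x_1:x_2:x_3]\mapsto[x_1:-x_2:x_3]$, which generate the Klein four-group $V\subset\mathfrak{D}_4$ fixing both points of $(l_{12}+l_{34})\cap\Pi_3$. Since $V$ is a maximal subgroup of $\mathfrak{D}_4$, we obtain $V\subseteq\phi(G)\in\{V,\mathfrak{D}_4\}$; that is, $\phi(G)=V\simeq C_2^2$ unless $G$ contains an element whose image \emph{swaps} the two points, and in that case $\phi(G)=\mathfrak{D}_4$.

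The crux is to determine exactly when such a swapping element exists. An element $\sigma$ with $\phi(\sigma)$ swapping the two distinguished points of $\Pi_3$ must interchange $l_{12}\leftrightarrow l_{34}$, hence $\{p_1,p_2\}\leftrightarrow\{p_3,p_4\}$ and the two tetrahedra $\{\Pi_1,\Pi_2,\Pi_3\}\leftrightarrow\{\Pi_3,\Pi_4,\Pi_5\}$; in particular it interchanges $x_4\leftrightarrow x_5$ up to scalars and acts on $\Pi_3$ by a rotation composed with sign changes. I would write the most general linear map of this prescribed shape, impose that it preserves \eqref{8-nodes-form}, and solve the resulting polynomial system in the matrix entries together with $a,b$. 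I expect this to force $a=\pm 1$ in the case $b\ne 0$ and $a=\pm i$ in the case $b=0$, yielding the explicit lifts $\sigma_1$ and $\sigma_2$ of the proposition. This computation, parallel to but heavier than the kernel computation in Lemma~\ref{lemma:8-nodal-kernel}, is the main obstacle, since it requires organizing the invariance constraints so that the dependence on $a$ and $b$ becomes transparent.

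Finally I would assemble $G$ in each of the four cases. When $b\ne 0$ we have $\ker(\phi)=1$, so $G\simeq\phi(G)$: this gives $G\simeq C_2^2=\langle\iota_1,\iota_2\rangle$ when $a\ne\pm 1$, and $G\simeq\mathfrak{D}_4=\langle\iota_2,\sigma_1\rangle$ when $a=\pm 1$ (using that $\phi(\iota_2)$ and $\phi(\sigma_1)$, a reflection and an order-$4$ rotation, generate $\mathfrak{D}_4$). When $b=0$ we have $\ker(\phi)=\langle\tau\rangle\simeq C_2$, so $|G|=2\,|\phi(G)|$; for $a\ne\pm i$ one checks that $\tau$ commutes with $\iota_1$ and $\iota_2$, so the extension is abelian and $G\simeq C_2^3=\langle\iota_1,\iota_2,\tau\rangle$, whereas for $a=\pm i$ one has $|G|=16$ and verifying the relations among $\tau,\iota_2,\sigma_2$ identifies $G\simeq C_2.\mathfrak{D}_4\simeq C_2^2\rtimes C_4$. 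Throughout, the ambiguity $(a,b)\sim(-a,-b)$ coming from rescaling $x_4,x_5$ is harmless, as it preserves each of the conditions $a=\pm 1$ and $a=\pm i$.
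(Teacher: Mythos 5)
Your proposal is correct and follows essentially the same route as the paper's proof: both use Lemma~\ref{lemma:8-nodal-kernel} for the kernel, observe that $\iota_1,\iota_2$ force $\phi(G)$ to contain the sign-change group $C_2^2$ (so that $\phi(G)$ is either $C_2^2$ or $\mathfrak{D}_4$), then write down the general linear lift of an order-four/swapping element of $\mathfrak{D}_4$, impose invariance of \eqref{8-nodes-form} to force $a=\pm 1$ when $b\ne 0$ and $a=\pm i$ when $b=0$, and finally assemble $G$ from $\ker(\phi)$ and $\phi(G)$. Your explicit maximality argument for the sign-change subgroup and the observation that a swapping element must satisfy $\sigma^*x_4\propto x_5$, $\sigma^*x_5\propto x_4$ are just sharper formulations of constraints the paper uses implicitly in its matrix ansatz.
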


\begin{proof}
Observe that $G$ always contains $\iota_1$ and $\iota_2$, and $\langle\iota_1,\iota_2\rangle\cong C_2^2$,
which shows that $\phi(G)$ is at least $C_2^2$.
Moreover, if $b=0$, then it follows from Lemma~\ref{lemma:8-nodal-kernel}
that $G$ also contains the involution $\tau$, 
so together the involutions $\iota_1$, $\iota_2$, $\tau$ generate a subgroup $C_2^3$ in this case.
If $\phi(G)\simeq C_2^2$, this gives us all possibilities for $G$.
To complete the proof, we have to find all $a$ and $b$ such that $\phi(G)\simeq\mathfrak{D}_4$,
and describe $G$ in these cases. This can be done explicitly.

If $\phi(G)\simeq\mathfrak{D}_4$, then $G$ contains a $\sigma$ such that $\phi(\sigma)$ is given by
$$
(x_1,x_2,x_3)\mapsto (x_2,-x_1,x_3),
$$
which implies that $\sigma$ is given by the matrix
$$
\begin{pmatrix}
0& -1& 0& 0 & 0\\
1 & 0 & 0 & 0 & s_{25}\\
0 & 0& 1& 0& 0\\
s_{14} & 0& 0& 0& s_{54}\\
0& s_{25}& 0& s_{45}& 0
\end{pmatrix}
$$
for some $s_{14}$, $s_{25}$, $s_{45}$, $s_{54}$. Since $\sigma$ preserves \eqref{8-nodes-form},
we obtain constraints on these entries, which result in the following possibilities:
\begin{enumerate}
\item $a=1$, $s_{14}=0$, $s_{25}=1$, $s_{45}=1$, $s_{54}=1$;
\item $a=-1$, $s_{14}=-1$, $s_{25}=0$, $s_{45}=1$, $s_{54}=1$; 
\item $b=0$, $a=-i$, $s_{14}=0$, $s_{25}=0$, $s_{45}=i$, $s_{54}=i$;
\item $b=0$, $a=i$, $s_{14}=-1$, $s_{25}=i$, $s_{45}=i$, $s_{54}=i$. 
\end{enumerate} 
Using them, we obtain all possibilities for $G$ listed above.
\end{proof}

\subsection*{Cohomology}
Let $\widetilde{X}\to X$ be the standard resolution,
let $E_1,\ldots,E_{8}$ be exceptional divisors over $p_1,\ldots, p_8$, and let $\widetilde{\Pi}_1,\ldots,\widetilde{\Pi}_5$ be the strict transforms of the planes $\Pi_1,\ldots,\Pi_5$ on  $\widetilde{X}$, respectively.
Then $\Pic(\widetilde{X})$ is generated by $E_1,\ldots,E_{8},\widetilde{\Pi}_1,\ldots,\widetilde{\Pi}_5$.
These are subject to the relation
$$
\widetilde{\Pi}_1+\widetilde{\Pi}_2-\widetilde{\Pi}_4-\widetilde{\Pi}_5=E_1+E_2-E_3-E_4.
$$
Notice that this presentation of the lattice $\Pic(\widetilde X)$ is independent of the equation of the cubic threefold $X$. To compute the {\bf (H1)}-obstruction on $\Pic(\widetilde X)$, for generality, we may work with the maximal symmetry group appearing in Proposition~\ref{prop:8-nodes-Aut}. Let $G=C_2^2\rtimes C_4$, as defined above. Then $G$ acts on nodes via permutation of indices 
\begin{align*} 
    &\tau:(1, 2)(3, 4),
    \\
    &\iota_1:(3, 4)(5, 6)(7, 8),\\
    &\iota_2:(1, 2)(5, 7)(6, 8),\\
    &\sigma_2:(1, 3)(2, 4)(5, 7, 8, 6).
\end{align*}
There is a unique (conjugacy class of) $C_2=\langle\iota_1\iota_2\rangle$ contributing to 
$$
\rH^1(C_2,\Pic(\widetilde X))=\bZ/2.
$$
   Indeed, this $C_2$ acts on nodes via the permutation of indices
   $$
  (1,2)(3,4)(5,8)(6,7).
   $$
   Under the basis 
   $$
   \widetilde{\Pi}_1-\widetilde{\Pi}_5-E_2+E_4,\, \widetilde{\Pi}_1,\ldots,\widetilde{\Pi}_5, E_3,\ldots,E_8,
   $$
   one can see that 
   $C_2$ acts on $\widetilde{\Pi}_1-\widetilde{\Pi}_5-E_2+E_4$ by $-1$, and on the rest as a permutation module. 

Note that this $C_2$ is contained in $\Aut(X)=C_2^2$ for generic $X$ in Proposition~\ref{prop:8-nodes-Aut}, i.e., when $b\ne 0$ and $a\ne \pm1$.

\subsection*{Linearization}
Let $X$ be an 8-nodal cubic threefold. 
The classification in Proposition~\ref{prop:8-nodes-Aut} implies:
\begin{itemize}
    \item $b\ne 0$ and $a\ne \pm1$: a subgroup of $\Aut(X)\simeq C_2^2$ is linearizable if and only if it fixes a singular point; otherwise, it fails {\bf (H1)}. 
    \item $b\ne 0$ and $a=\pm1$: a subgroup of $\Aut(X)\simeq \fD_4$ is linearizable if and only if it fixes a singular point; otherwise, it fails {\bf (H1)}. 
    \item $b=0$ and $a\ne\pm i$:  Excluding subgroups failing {\bf(H1)} or with a fixed singular point, we are left with the following (classes of) subgroups
   \begin{enumerate}
       \item $C_2^2=\langle(3, 4)(5, 7)(6, 8),
    (1, 2)(3, 4)\rangle$, acting via
    \begin{align*}
         (x_1,x_2,x_3,x_4,x_5)&\mapsto (x_1+i x_5,-x_2,x_3,-x_4,-x_5),\\(x_1,x_2,x_3,x_4,x_5)&\mapsto (x_1+i x_5,x_2-x_4,x_3,-x_4,-x_5).
    \end{align*}
    \item $C_2^2=\langle\iota_1,\tau\rangle = \langle(3, 4)(5, 6)(7, 8),
    (1, 2)(3, 4)\rangle$, acting via
    \begin{align*}
           (x_1,x_2,x_3,x_4,x_5)&\mapsto (-x_1-i x_5,x_2,x_3,x_4,x_5),\\
     (x_1,x_2,x_3,x_4,x_5)&\mapsto (x_1+i x_5,x_2-x_4,x_3,-x_4,-x_5).
    \end{align*}  
   \end{enumerate}
    
     \item $b= 0$ and $a=\pm i$: Excluding groups with an {\bf(H1)}-obstruction or with a fixed singular point, we are left with 
      \begin{enumerate}
  
       \item $C_2^2=\langle    (3, 4)(5, 6)(7, 8),(1, 2)(3, 4)\rangle$, acting via 
       \begin{align*}
       (x_1,x_2,x_3,x_4,x_5)&\mapsto (-x_1-i x_5,x_2,x_3,x_4,x_5), \\
       (x_1,x_2,x_3,x_4,x_5)&\mapsto (x_1+i x_5, x_2-x_4,x_3,-x_4,-x_5),
       \end{align*}
       \item  $C_4=\langle  (1, 3)(2, 4)(5, 7, 8, 6)\rangle$, acting via 
       \begin{align*}
   (x_1,x_2,x_3,x_4,x_5)&\mapsto (x_2,-x_1,x_3,i x_5,i x_4).   
   \end{align*}
    \end{enumerate}
\end{itemize}

We turn to linearization constructions for subgroups unobstructed by cohomology and not fixing singular points. Consider the maximal symmetry group, in the case $b=0$ and $a=\pm i$. We have two unobstructed cases:
\begin{itemize}
    \item $G=C_2^2=\langle(3, 4)(5, 6)(7, 8),(1, 2)(3, 4)\rangle$. The group $G$ swaps the planes $\Pi_1$ and $\Pi_2$ and preserves $\Pi_4$ and $\Pi_5$. The line passing through $p_1$ and $p_2$ is a $G$-invariant line disjoint from $\Pi_4.$ Then the $G$-action on $X$ is linearizable by Lemma~\ref{lemm:line}.

    \item $G=C_4=\langle  (1, 3)(2, 4)(5, 7, 8, 6)\rangle$. In this case, $G$ swaps the planes $\Pi_1,\Pi_2$ and swaps $\Pi_4,\Pi_5$. But $\Pi_3$ is $G$-invariant. The line passing through $p_1$ and $p_3$ is a $G$-invariant line disjoint from $\Pi_3$. The $G$-action on $X$ is linearizable by Lemma~\ref{lemm:line}.
    
\end{itemize}
Note that the constructions above only depend on the group actions on singular points and planes. One can also establish the same linearization results for the two unobstructed $C_2^2$ in the case when $b=0$ and $a\ne \pm i$. We summarize this section by:
\begin{coro}\label{coro:8-nodalsummary}
    Let $X$ be an $8$-nodal cubic threefold and $G\subseteq\Aut(X)$. The $G$-action on $X$ is linearizable if and only if it satisfies {\bf (H1)}, if and only if $G$ does not contain a subgroup isomorphic to $C_2$ which does not fix any nodes of $X$; in particular, if it is not linearizable then it is not stably linearizable.   
\end{coro}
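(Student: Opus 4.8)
The plan is to prove the triple equivalence by closing a short cycle of implications, isolating the one genuinely new ingredient (the identification of the unique obstructing subgroup) and treating the rest as assembly of results already in this section. Abbreviate the assertions as (L) ``the $G$-action is linearizable'', (SL) ``stably linearizable'', (H1) ``the $G$-action satisfies \textbf{(H1)}'', and (N) ``$G$ contains no involution acting without fixed points on $\mathrm{Sing}(X)$''. I would establish $(\mathrm{L})\Rightarrow(\mathrm{SL})\Rightarrow(\mathrm{H1})\Rightarrow(\mathrm{N})\Rightarrow(\mathrm{L})$; the final ``in particular'' clause then falls out, since any non-linearizable action must already fail (SL).

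The first two implications are formal. $(\mathrm{L})\Rightarrow(\mathrm{SL})$ is trivial, and $(\mathrm{SL})\Rightarrow(\mathrm{H1})$ is the standard fact recalled in Section~\ref{sect:intjac}: failure of \textbf{(H1)} forces $\Pic(\widetilde{X})$ to fail the stably-permutation property \textbf{(SP)}, which obstructs stable linearizability. Reading these contrapositively gives ``not (H1) $\Rightarrow$ not (SL) $\Rightarrow$ not (L)'', which simultaneously supplies the last sentence of the corollary.

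The equivalence $(\mathrm{H1})\Leftrightarrow(\mathrm{N})$ is the cohomological core. Since the presentation of $\Pic(\widetilde{X})$ recorded above is independent of the defining equation, and every group in Proposition~\ref{prop:8-nodes-Aut} embeds in the maximal group $C_2^2\rtimes C_4$, I would compute $\rH^1(G',\Pic(\widetilde{X}))$ and $\rH^1(G',\Pic(\widetilde{X})^\vee)$ for every conjugacy class of subgroups $G'$ of this maximal group. The cohomology subsection already exhibits the class $\langle\iota_1\iota_2\rangle\simeq C_2$, acting on the nodes by $(1,2)(3,4)(5,8)(6,7)$, with $\rH^1=\bZ/2$; the remaining work is to check that this is the \emph{only} contributing class and that an involution fixes a node exactly when it avoids this class. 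Concretely: an involution fixing a node fixes the corresponding exceptional divisor, and one verifies directly that $\Pic(\widetilde{X})$ is then a permutation module for it, so it cannot obstruct; conversely every fixed-point-free involution in $C_2^2\rtimes C_4$ is conjugate to $\iota_1\iota_2$. This pins down the ``bad'' $C_2$ as the unique source of \textbf{(H1)}-failure, giving $(\mathrm{H1})\Leftrightarrow(\mathrm{N})$.

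The implication $(\mathrm{N})\Rightarrow(\mathrm{L})$ is the constructive direction, and I expect it to be the main obstacle. Running through Proposition~\ref{prop:8-nodes-Aut}, any $G$ fixing a node projects from that node to a linear $\bP^3$-action, hence is linearizable. For the subgroups satisfying (N) but fixing no node --- these are only the two $C_2^2$'s arising when $b=0,\,a\ne\pm i$, together with the $C_2^2$ and the $C_4$ arising when $b=0,\,a=\pm i$ --- I would, exactly as in the Linearization subsection, single out a $G$-stable plane among $\Pi_1,\ldots,\Pi_5$ and a $G$-invariant line through a pair of nodes disjoint from it, and invoke Lemma~\ref{lemm:line}. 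The difficulty is precisely to certify, for each surviving subgroup, that such a disjoint $G$-invariant line genuinely exists, and to argue that these constructions depend only on the combinatorial action on nodes and planes, so that they transfer uniformly across the relevant parameter strata; the cohomological bookkeeping above is, by comparison, mechanical once the lattice presentation and the node permutations of Proposition~\ref{prop:8-nodes-Aut} are fixed.
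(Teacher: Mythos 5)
Your proposal is correct and follows essentially the same route as the paper: the formal chain (L)$\Rightarrow$(SL)$\Rightarrow$(H1), the lattice computation on $\Pic(\widetilde{X})$ (equation-independent, carried out inside the maximal group $C_2^2\rtimes C_4$) identifying $\langle\iota_1\iota_2\rangle$ as the unique obstructing class and matching it with the fixed-point-free involutions, and the constructive step for the surviving subgroups — node projection when a node is fixed, and otherwise Lemma~\ref{lemm:line} applied to an invariant line through a pair of nodes disjoint from an invariant plane — are exactly the ingredients the paper assembles in its Cohomology and Linearization subsections of Section~\ref{sect:8}. The only difference is organizational: you package the argument as a cycle of implications, whereas the paper presents it as a case-by-case summary; the lines you would need to certify are precisely those the paper exhibits (through $p_1,p_2$ disjoint from $\Pi_4$, and through $p_1,p_3$ disjoint from $\Pi_3$), together with its remark that these constructions depend only on the action on nodes and planes and hence transfer across the parameter strata.
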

\section{Nine nodes}
\label{sect:nine}

\subsection*{Standard form}

We follow \cite{ShB}: 9-nodal
cubic threefolds $X_a$ are given 
in $\bP^5$
by equations 
$$
x_1 x_2 x_3-x_4x_5x_6 =a(x_1+x_2+x_3)+x_4+x_5+x_6=0,\quad a^3\ne0, -1. 
$$
Their automorphisms depend on the parameter $a$ as follows:
$$
\Aut(X_a)=\begin{cases}
    \fS_3^2& \text{when } a^3\ne1 ,\\
     \fS_3^2\rtimes C_2& \text{otherwise}.
\end{cases}
$$
These groups act via $\fS_3$-permutations of two sets of coordinates: $x_1, x_2, x_3$, and $x_4,x_5,x_6$. When $a=1$, the additional $C_2$ switches 
$
x_{i}\leftrightarrow x_{3+i},\, i=1,2,3.
$
In both cases, the $9$ nodes are given by 
$$
\{x_{i_1}=x_{i_2}=x_{j_1}=x_{j_2}=0,\quad x_{j_3}+ax_{i_3}=0\},
$$
where 
$$
i_1\ne i_2\ne i_3\in\{1,2,3\},\quad
 j_1\ne j_2\ne j_3\in\{4,5,6\}.
$$
There are also $9$ distinguished planes, given by 
$$
\Pi_{i,j}=\{x_i=x_{3+j}=0\}\cap X, \quad i,j\in\{1,2,3\}.
$$ 
The $G$-action on $X_a$ fixes a singular point if and only if $G$ is a $2$-group. 
\subsection*{Fixed point obstruction}
Let $G=C_3^2$ be the group generated by 
$$
(x_1,x_2,x_3,x_4,x_5,x_6)\mapsto(x_3,x_1,x_2,x_4,x_5,x_6), 
$$
$$
(x_1,x_2,x_3,x_4,x_5,x_6)\mapsto(x_1,x_2,x_3,x_6,x_4,x_5).
$$
Then $X_a^G=\emptyset$, for all $a$ such that $a^3\ne 0,  -1$. By Lemma~\ref{lemm:fixedptabelian}, the $G$-action on $X$ is not linearizable. 
\subsection*{Cohomology}
Let $\tilde X_a\to X_a$ be the blowup of $X_a$ at $9$ nodes. Then $\Pic(\tilde X_a)$ is generated by  $E_i, i=1,\ldots, 9$, the exceptional divisors over the $9$ nodes, the pullbacks
$\widetilde{\Pi}_{i,j}$
of $\Pi_{i,j}$, and $H$, the pullback of the hyperplane section. They are subject to relations
\begin{align*}
H= \widetilde{\Pi}_{1,1}+\widetilde{\Pi}_{1,2}+\widetilde{\Pi}_{1,3}+E_2+E_3+E_5+E_6+E_8+E_9,\\
H= \widetilde{\Pi}_{1,1}+\widetilde{\Pi}_{2,1}+\widetilde{\Pi}_{3,1}+E_1+E_2+E_3+E_4+E_5+E_6,\\
H= \widetilde{\Pi}_{1,2}+\widetilde{\Pi}_{2,2}+\widetilde{\Pi}_{3,2}+E_1+E_2+E_3+E_7+E_8+E_9,\\
H= \widetilde{\Pi}_{1,3}+\widetilde{\Pi}_{2,3}+\widetilde{\Pi}_{3,3}+E_4+E_5+E_6+E_7+E_8+E_9,\\
H= \widetilde{\Pi}_{2,1}+\widetilde{\Pi}_{2,2}+\widetilde{\Pi}_{2,3}+E_1+E_3+E_4+E_6+E_7+E_9,\\
H= \widetilde{\Pi}_{3,1}+\widetilde{\Pi}_{3,2}+\widetilde{\Pi}_{3,3}+E_1+E_2+E_4+E_5+E_7+E_8. 
\end{align*}
When $a^3\ne 1$, computation yields two minimal classes of groups contributing to nonvanishing cohomology:
$$
\rH^1(G',\Pic(\tilde X_a))=\bZ/3,
$$
for $G'=C_3=\langle(1,2,3)\rangle$ or $\langle(4,5,6)\rangle$, realized as permutations of indices of the coordinates. When $a=1$, these two classes of $C_3$ are conjugate in $\Aut(X_1)$, and thus we found a unique class of groups contributing to nonvanishing cohomology:
$$
\rH^1(G',\Pic(\tilde X_a))=\bZ/3,
$$
for $G'=C_3=\langle(1,2,3)\rangle$. Any subgroup of $\Aut(X_a)$ containing those classes has {\bf (H1)}-obstructions to stable linearizability.

\begin{rema}\label{rema:c3cohomo}
    One can characterize geometrically the $C_3$-action contributing to {\bf (H1)}-obstructions as follows: let $X_a$ be a 9-nodal cubic threefold, and $G=C_3\subseteq \Aut(X_a)$. Then the $G$-action on $X_a$ does not satisfy {\bf(H1)} if and only if there exists a $G$-orbit of planes of length 3, which forms a Cartier divisor.
\end{rema}

Excluding $G\subseteq \Aut(X_a)$ with {\bf (H1)}-obstruction or with $G$-fixed singular points, one is left with
\begin{itemize}
    \item When $a^3= 1$, the unobstructed groups are
    \begin{align}\label{eq:9nodalremain}
        \fD_6, \fS_3, \fS_3', C_6, C_3,
    \end{align}
\noindent
where $\fD_6$ acts on $\{x_1, x_2, x_3\}$ and $\{x_4, x_5, x_6\}$ via diagonal $\fS_3$ permutations and $C_2$ swapping them, i.e., $x_i\leftrightarrow x_{3+i}, i=1,2,3$. The other groups are all subgroups of $\fD_6$. 
\item 
When $a^3\ne 1$, we are left with 
$$
\fS_3\quad\text{ and }\quad C_3,
$$
where $\fS_3$ is the diagonal permutation and $C_3$ its subgroup. 
\end{itemize}
Next, we show that the actions of these unobstructed groups on $X_a$ are equivariantly birational to actions on a smooth quadric threefold. In particular, the actions of cyclic groups $C_3$ and $C_6$ are linearizable.

\subsection*{Linearization}
Consider the family of degree $(1,1)$ divisors in $(\bP^2)^2$ 
$$
W_{b}\subset \bP^2_{t_1,t_2,t_3}\times\bP^2_{z_1,z_2,z_3},\quad b\in\bC\setminus\{0,-1,\zeta_3, \zeta_3^2\},
$$
given by 
$$
(-t_1z_2+t_2z_1)+b(-t_1z_2+t_3z_3)=0,
$$
with a $G=\fS_3$-action generated by 
$$
\iota:t_1\leftrightarrow z_2, \quad t_2\leftrightarrow z_1, \quad t_3\leftrightarrow z_3
$$
and
$$
\sigma: t_1\mapsto \zeta t_1, \quad t_2\mapsto \zeta^2t_2, \quad z_1\mapsto \zeta z_1, \quad z_2\mapsto \zeta^2z_2.
$$
Let $p_1,p_2,p_3\in W_b$ be the points
$$
[1:1:1]\times[1:1:1],\quad [\zeta:\zeta^2:1]\times[\zeta:\zeta^2:1], \quad [\zeta^2:\zeta:1]\times[\zeta^2:\zeta:1],
$$
where $\zeta=e^{\frac{2\pi i}{3}}$. Note that $\{p_1, p_2, p_3\}$ forms one $G$-orbit. The linear system 
$$
|H-p_1-p_2-p_3|
$$
consisting of hyperplanes on $\bP^4$ containing points $p_1, p_2$ and $p_3$
has projective dimension $4$. Under a chosen basis, it gives a birational map to a 9-nodal cubic hypersurface $Y_{b}\subset \bP^4$, with equation
\begin{multline*}
y_1y_2y_3 + y_1y_5^2 - y_2^2y_4  + y_2y_4^2 - 
    y_1^2y_5-\frac{b}{b + 1}y_1y_3y_4 +
    \\+\frac{b}{(b+1)^2}y_3^3- \frac{b}{b + 1}y_2y_3y_5 -\frac{1}{b + 1}y_3y_4y_5 =0    
\end{multline*}
Up to a change of variables by
$$
\begin{pmatrix}
    y_1&y_2&y_3&y_4&y_5
\end{pmatrix}\cdot\begin{pmatrix}
1&   \zeta&   \zeta^2&   \frac{-\zeta b +\zeta^2}{b-\zeta^2}&   \frac{-\zeta^2 b + 1}{b -\zeta^2}\\
1&   \zeta^2&   \zeta&   \frac{-b + \zeta}{b -\zeta^2}&   \frac{-\zeta^2 b + 1}{b - \zeta^2}\\
\frac{\zeta b-\zeta^2}{b + 1}&   \frac{\zeta b -\zeta^2}{b + 1}&   \frac{\zeta b -\zeta^2}{b + 1}&   \frac{-\zeta b+\zeta^2}{b + 1}&   \frac{-\zeta b +\zeta^2}{b + 1}\\
\zeta&   1&   \zeta^2&   \frac{-\zeta^2 b + 1}{b-\zeta^2}&   \frac{-\zeta b +\zeta^2}{b - \zeta^2}\\
\zeta&   \zeta^2&   1&   \frac{-b + \zeta}{b -\zeta^2}&   \frac{-\zeta b+\zeta^2}{b-\zeta^2}
\end{pmatrix},
$$
$Y_{b}$ is $G$-isomorphic to  
$$
\{y_1y_2y_3 + \lambda_b y_4y_5(y_1+y_2+y_3 +y_4 + y_5)=0\}\subset\bP^4,\quad\lambda_b=-\left(\frac{b-\zeta^2}{b-\zeta}\right)^3,
$$
i.e.,
$$
X_{a}=\{x_1x_2x_3-x_4x_5x_6=a(x_1+x_2+x_3)+x_4+x_5+x_6=0\}\subset\bP^5,
$$
where 
$$
a=-\frac{b-\zeta^2}{b-\zeta}.
$$
The $G$-action on $X_{a}$ is given by the diagonal permutation of coordinates 
$x_1,x_2,x_3$ and $x_4,x_5,x_6$. When $b\ne 0, -1,\zeta, \zeta^2$, i.e., $a^3\ne-1,0$, one sees that $W_{b}$ (and thus $X_a$) is $G$-equivariantly birational to
\begin{align}\label{eqn:9nodquadric}
    Q_{b}=\{(b+1)t_1z_2-t_2z_1+bz^2=0\}\subset\bP^4_{t_1,t_2,z_1,z_2,z},
\end{align}
realized as the equivariant compactification of the affine chart of $W_b$ given by 
$$
\{t_3\ne0, z_3\ne 0\}\subset W_b,
$$
with the natural action of $\iota$ and $\sigma$ (acting trivially on $z$).

When $a^3=1$, i.e. $b=1,-2$ or $-\frac12$, there is extra symmetry on $W_{b}$ and $Q_{b}$. For example, when $b=-\frac12$, $W_{b}$ and $Q_{b}$ are invariant under the additional involution 
$$
\tau: t_1\leftrightarrow t_2,\quad z_1\leftrightarrow z_2.
$$
The group $G'=\langle \iota,\sigma,\tau\rangle$ is isomorphic to $\fD_6$. The corresponding $G'$-action on $X_{1}$ is generated by the diagonal 
$\fS_3$-permutation and by swapping two sets of coordinates 
$
\{x_1,x_2,x_3\}
$
and
$
\{x_4,x_5,x_6\}.
$ 
We do not know whether or not this action is linearizable.
\begin{coro}
    Let $X_a$ be a 9-nodal cubic threefold as above. The $C_3$-action on $X_a$ via permutation of coordinates 
    $$
(x_1,x_2,x_3,x_4,x_5,x_6)\mapsto(x_3,x_1,x_2,x_6,x_4,x_5)
    $$
    is linearizable for all $a^3\ne 0,1$. When $a=1$, the $C_6$-action on $X_1$ via 
    $$
(x_1,x_2,x_3,x_4,x_5,x_6)\mapsto(x_6,x_4,x_5,x_3,x_1,x_2).
    $$
    is linearizable.
\end{coro}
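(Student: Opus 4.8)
The plan is to deduce both statements from the $G$-equivariant birational map $X_a\dashrightarrow Q_b$ constructed above, reducing everything to one clean fact: a cyclic group acting linearly on $\bP^4$ and preserving a smooth quadric threefold always fixes a point of that quadric, and projecting from such a point linearizes the action. First I would record the reduction. Since $X_a$ is $9$-nodal we have $a^3\ne 0,-1$, and the hypothesis $a^3\ne 1$ then places $b$ away from $\{0,-1,\zeta,\zeta^2\}$, so $Q_b$ is a smooth quadric. The $C_3$ of the statement is the diagonal $3$-cycle, i.e.\ the subgroup $\langle\sigma\rangle$ of the $\fS_3$ for which the birational map to $Q_b$ is equivariant; hence the $C_3$-action on $X_a$ is equivariantly birational to the $\langle\sigma\rangle$-action on $Q_b$. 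For $a=1$ (so $b=-\tfrac12$), the element $\mu\colon(x_1,\dots,x_6)\mapsto(x_6,x_4,x_5,x_3,x_1,x_2)$ satisfies $\mu^2\in\langle\sigma\rangle$ and $\mu^3$ equal to the coordinate swap, so $\langle\mu\rangle\simeq C_6$ is a cyclic subgroup of $\fD_6=\langle\iota,\sigma,\tau\rangle$, the group for which $X_1$ is equivariantly birational to $Q_{-1/2}$ (the orbit $\{p_1,p_2,p_3\}$ is $\fD_6$-invariant, so the defining linear system is). Thus the $C_6$-action on $X_1$ is equivariantly birational to the $\langle\mu\rangle$-action on $Q_{-1/2}$, and it suffices to linearize a cyclic action on a smooth quadric threefold.

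Next I would prove the fixed-point lemma. Write the cyclic group as $\langle g\rangle$ and decompose the ambient space $V=k^5$ into eigenspaces $V=\bigoplus_\lambda V_\lambda$, so that the fixed locus of $g$ in $\bP^4$ is $\bigsqcup_\lambda\bP(V_\lambda)$. If some $V_\lambda$ has dimension $\ge 2$, then $Q\cap\bP(V_\lambda)$ is the zero locus of a quadratic form on a space of dimension $\ge 2$ over the algebraically closed field $k$, hence nonempty. If instead all $V_\lambda$ are lines, then $g$ has five distinct eigenvalues $\lambda_1,\dots,\lambda_5$; semi-invariance $g^{*}q=cq$ of the defining form forces every nonzero coefficient $a_{ij}$ to satisfy $\lambda_i\lambda_j=c^{-1}$, so if all diagonal coefficients $a_{ii}$ were nonzero we would get $\lambda_i^2=c^{-1}$ for every $i$ and thus at most two distinct eigenvalues, a contradiction. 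Hence some $a_{ii}=0$, and the corresponding eigenpoint $[e_i]$ lies on $Q$. Either way $g$ fixes a point of $Q$. For $\sigma$ (order $3$ on a $5$-dimensional space) the first case always applies by pigeonhole, and the fixed point is explicit: the eigenline $\{t_2=z_2=z=0\}$ is pointwise fixed and lies entirely on $Q_b$.

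Finally, given a $\langle g\rangle$-fixed point $p\in Q$, projection from $p$ gives a $\langle g\rangle$-equivariant birational map $Q\dashrightarrow\bP(V/\langle p\rangle)\simeq\bP^3$ whose target carries the induced linear action. Hence the cyclic action on $Q$ is linearizable, and so, via the birational identifications of the first paragraph, the $C_3$-action on $X_a$ and the $C_6$-action on $X_1$ are linearizable as well.

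The hard part is the fixed-point lemma in the degenerate regime of five distinct eigenvalues, since that is exactly where no eigenspace is a priori large enough to meet $Q$ for dimension reasons; this case can genuinely occur for $\mu$ of order $6$, and it is resolved only by exploiting the nondegeneracy of the quadratic form cutting out the smooth quadric, as above. For $\sigma$ no such issue arises, and one could even bypass the lemma by writing down the invariant line on $Q_b$ directly.
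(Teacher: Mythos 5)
Your proposal is correct and takes essentially the same route as the paper: reduce via the equivariant birational map to the smooth quadric $Q_b$, exhibit a fixed point of the cyclic action there, and project from that point to $\bP^3$. The paper's proof simply asserts that the induced actions on $Q_b$ come \emph{necessarily} with fixed points, so your eigenvalue lemma (including the five-distinct-eigenvalues case, which indeed occurs for the order-$6$ element) correctly supplies the justification the paper leaves implicit; the only quibble is that $b\notin\{0,-1,\zeta,\zeta^2\}$ follows from $9$-nodality (i.e.\ $a^3\neq 0,-1$), not from the hypothesis $a^3\neq 1$.
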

\begin{proof}
    By constructions above, these actions are equivariantly birational to actions on the corresponding smooth quadric $Q_{b}$, necessarily with fixed points. Projection from a fixed point on $Q_{b}$ gives linearizations.  
\end{proof}

\subsection*{Birational rigidity}
Let $X$ be the 9-nodal cubic threefold in $\bP^4\subset \bP^5$ given by
$$
x_1 x_2 x_3-x_4x_5x_6 = x_1+x_2+x_3+x_4+x_5+x_6=0, 
$$
and let $G=\mathrm{Aut}(X)=\fS_3^2\rtimes C_2$. We claim that $X$ is $G$-birationally super-rigid. 
We start with several preliminary results.

\begin{lemm}
\label{lem:orbits}
If $\Sigma$ is a $G$-orbit in $X$ of length $<12$, then $|\Sigma|\in\{6,9\}$.
\end{lemm}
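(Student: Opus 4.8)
The plan is to argue entirely through orbit--stabilizer together with the $3$-local structure of $G$. Since $G=\fS_3^2\rtimes C_2$ has order $72=2^3\cdot 3^2$, the length $|\Sigma|$ of any orbit divides $72$, and the divisors of $72$ that are strictly less than $12$ are
$$
1,\ 2,\ 3,\ 4,\ 6,\ 8,\ 9.
$$
Thus it suffices to exclude the lengths $1,2,4,8$ and $3$; the values $6$ and $9$ are then the only ones that can occur. I will treat $\{1,2,4,8\}$ by a single fixed-point argument and $3$ by a purely group-theoretic one.

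For the first family, observe that the Sylow $3$-subgroup $P=\fA_3\times\fA_3\cong C_3^2$ is normal in $G$ (each $\fA_3$ is normal in its $\fS_3$, and the swap $C_2$ preserves the pair), hence it is the unique subgroup of order $9$. If $p\in X$ had an orbit of length $\ell\in\{1,2,4,8\}$, its stabilizer would have order $72/\ell\in\{72,36,18,9\}$, each divisible by $9$, and would therefore contain $P$; so it is enough to show $X^{P}=\varnothing$. First I would compute the fixed locus of $P$ in $\bP^5$: since $P$ is abelian, $\bC^6$ splits into $P$-eigenlines, the character $(1,1)$ occurring with multiplicity two (spanned by $e_1+e_2+e_3$ and $e_4+e_5+e_6$) and the four characters $(\omega^{\pm1},1),(1,\omega^{\pm1})$ with multiplicity one. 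Hence the $P$-fixed locus is the line $\{[s:s:s:t:t:t]\}$ together with the four points $[1:\omega:\omega^2:0:0:0]$, $[1:\omega^2:\omega:0:0:0]$ and their analogues in the second block. One then checks directly that none lie on $X$: on the line the hyperplane $\sum x_i=0$ forces $t=-s$, and the cubic then reads $s^3-t^3=2s^3$, vanishing only at the excluded origin; at each of the four isolated points one of the two triple products equals $\omega^3=1$ while the other vanishes, so $x_1x_2x_3-x_4x_5x_6=\pm1\neq 0$. Thus $X^{P}=\varnothing$, and the lengths $1,2,4,8$ are impossible.

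For $\ell=3$, I would instead show that $G$ has no subgroup of index $3$ at all, independently of $X$. Such a subgroup would give a homomorphism $G\to\fS_3$ with transitive image, hence a surjection onto $C_3$ or onto $\fS_3$. There is no surjection onto $C_3$ because $G^{\mathrm{ab}}=C_2^2$: indeed $[G,G]\supseteq[N,N]=P$ for $N=\fS_3\times\fS_3$, so $G^{\mathrm{ab}}=(G/P)^{\mathrm{ab}}$, and $G/P\cong C_2\wr C_2\cong\fD_4$ has abelianization $C_2^2$. A surjection onto $\fS_3$ would have kernel $K\trianglelefteq G$ of order $12$; but $K\cap P$ is a $G$-invariant subgroup of $P\cong\bF_3^2$, and the image of $G$ in $\GL_2(\bF_3)$ (generated by the two sign involutions and the coordinate swap) stabilizes no line, so $K\cap P\in\{1,P\}$. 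Since $9\nmid 12$ we get $K\cap P=1$, whence $K$ embeds into $G/P\cong\fD_4$ of order $8$, contradicting $|K|=12$. Hence no index-$3$ subgroup exists and $\ell=3$ cannot occur.

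I expect the only genuinely computational point to be the verification that $X^{P}=\varnothing$; everything else is bookkeeping with the $3$-local and abelianized structure of $G$. The mild subtlety worth double-checking is that the relevant $P$-eigenvectors are exactly the coordinate patterns above, so that one triple product is forced to land on $\omega^3=1$ while the other vanishes --- this is precisely what makes the four isolated fixed points miss the cubic.
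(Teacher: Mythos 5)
Your proof is correct, and in fact the paper offers nothing to compare it with: its proof of this lemma is literally ``Left to the reader.'' Your argument is a complete and valid substitute. The two pillars both check out: (i) the Sylow $3$-subgroup $P=\fA_3\times\fA_3\simeq C_3^2$ is indeed normal, hence unique, so any stabilizer of order $72,36,18$ or $9$ contains $P$, and your computation of the $P$-fixed locus in $\bP^5$ (the line $[s:s:s:t:t:t]$ plus four isolated points with coefficients $1,\omega,\omega^2$ in one block) is right --- the line meets the hyperplane only where $t=-s$, forcing $2s^3=0$ in the cubic, and the four isolated points give $x_1x_2x_3-x_4x_5x_6=\pm1$; (ii) the exclusion of index $3$ is also sound: $G^{\mathrm{ab}}=(G/P)^{\mathrm{ab}}=\fD_4^{\mathrm{ab}}=C_2^2$ kills surjections onto $C_3$, and for a surjection onto $\fS_3$ with kernel $K$ of order $12$, the image of $G$ in $\GL_2(\bF_3)$ (two sign involutions and the swap) stabilizes no line of $\bF_3^2$, so $K\cap P\in\{1,P\}$, and both branches are contradictory. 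Worth noting: your fixed-point computation reproves a fact the paper already records in the ``Fixed point obstruction'' subsection of Section 9, namely $X_a^{C_3^2}=\emptyset$ for all $a^3\neq 0,-1$ (the case $a=1$ is the threefold at hand), so you could simply cite that and shorten the first half; the index-$3$ part, however, is genuinely needed and is the kind of group-theoretic bookkeeping the authors presumably had in mind when delegating the proof to the reader.
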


\begin{proof}
Left to the reader.
\end{proof}

Set 
$$
S=\{x_1+x_2+x_3-x_4-x_5-x_6\}\cap X.
$$
Then $S$ is the unique $G$-invariant hyperplane section of $X$.
Moreover, the cubic surface $S$ is smooth, and $G$ acts faithfully on it.
This implies that $S$ is isomorphic to the Fermat cubic surface \cite{DolgachevBook,DI}.
Consider
$$
\alpha_G(S)=\mathrm{sup}\left\{\lambda\in\mathbb{Q}\ \left|\ \aligned
&\text{the pair}\ \left(S, \lambda D\right)\ \text{is log canonical for every}\\
&\text{effective $G$-invariant $\mathbb{Q}$-divisor}\ D\sim_{\mathbb{Q}} -K_{S}\\
\endaligned\right.\right\}.
$$

\begin{lemm}[{cf. \cite{CheltsovGAFA,CheltsovWilson}}]
\label{lem:alpha}
One has $\alpha_G(S)=2$.
\end{lemm}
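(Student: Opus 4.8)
The plan is to establish the two inequalities $\alpha_G(S)\le 2$ and $\alpha_G(S)\ge 2$ separately, using throughout that $\alpha_G(S)=\inf_D \mathrm{lct}(S,D)$, the infimum taken over effective $G$-invariant $\mathbb{Q}$-divisors $D\sim_{\mathbb{Q}}-K_S$.

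For the upper bound I would exhibit a single witness. Since $G$ is finite it preserves a quadratic form on the $4$-dimensional linear span of $S$, hence a $G$-invariant quadric $Q\subset\mathbb{P}^3$; set $C=Q\cap S\in|-2K_S|$ and $D_0=\tfrac12 C\sim_{\mathbb{Q}}-K_S$, which is $G$-invariant. Because $C$ is a nonzero effective divisor we have $\mathrm{lct}(S,C)\le 1$, so by homogeneity of the log canonical threshold $\mathrm{lct}(S,D_0)=2\,\mathrm{lct}(S,C)\le 2$. This gives $\alpha_G(S)\le 2$ with essentially no geometric input.

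For the lower bound, suppose toward a contradiction that $\alpha_G(S)=\lambda_0<2$, witnessed by a $G$-invariant $D\sim_{\mathbb{Q}}-K_S$ with $(S,\lambda_0 D)$ log canonical but not klt. The collection of minimal log canonical centers is $G$-invariant; fix one, $Z$. If $\dim Z=1$, then every irreducible component of $Z$ (these form one $G$-orbit) occurs in $D$ with multiplicity $\ge 1/\lambda_0>1/2$, so $3=D\cdot(-K_S)\ge \tfrac{1}{\lambda_0}(-K_S\cdot Z)>\tfrac12(-K_S\cdot Z)$, forcing $-K_S\cdot Z\le 5$. I would rule this out by the orbit geometry of the Fermat cubic: using the explicit $G$-action one checks that $G$ permutes the $27$ lines (and likewise the conics and other low-degree rational curves) in orbits whose total anticanonical degree is at least $6$, so no $G$-invariant curve of degree $\le 5$ exists. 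This contradiction eliminates the curve case and is also consistent with $D_0$ above, whose minimal center at $\lambda=2$ is a sextic curve $C\in|-2K_S|$.

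If instead $\dim Z=0$, then $Z=\Sigma$ is a $G$-orbit of points, and at each $p\in\Sigma$ the standard surface inequalities give $1/\lambda_0\le \mathrm{mult}_p D\le 2/\lambda_0$. When $\lambda_0\le 1$ I would invoke Koll\'ar--Shokurov connectedness: $-(K_S+\lambda_0 D)\equiv(1-\lambda_0)(-K_S)$ is nef, so the non-klt locus is connected, whence $|\Sigma|=1$, i.e. a $G$-fixed point; but $S^G=\varnothing$, since the constraints $x_1+x_2+x_3=x_4+x_5+x_6=0$ leave no $G$-fixed point on $S$. The delicate range is $1<\lambda_0<2$, where connectedness is unavailable and the crude bound $D\cdot(-K_S)=3$ is too weak to conclude; here I expect to follow the local technique of \cite{CheltsovGAFA,CheltsovWilson}, blowing up the orbit $\Sigma$, tracking the discrepancies and the strict transform of $D$, and arguing that any persisting zero-dimensional center is again $G$-invariant and eventually yields either a $G$-invariant curve of degree $\le 5$ or a short point orbit, both already excluded. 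I expect this point case --- pinning down the sharp local multiplicity estimate and combining it with the absence of small $G$-orbits on $S$ --- to be the main obstacle, the curve case and the upper bound being comparatively routine.
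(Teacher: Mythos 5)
Your proposal has a genuine gap, and it sits exactly at the heart of the lemma. The lower bound $\alpha_G(S)\ge 2$ hinges on ruling out zero-dimensional non-lc centers in the range $1<\lambda<2$, and you explicitly leave this step unproven ("I expect to follow the local technique of \cite{CheltsovGAFA,CheltsovWilson}\dots I expect this point case to be the main obstacle"). That expectation is not a proof: the crude bounds you have available ($\mathrm{mult}_P D>1/\lambda$ at each point of an orbit, $D\cdot(-K_S)=3$) do not yield a contradiction for $\lambda$ close to $2$, and Koll\'ar--Shokurov connectedness is unavailable there, as you note. This missing step is precisely what the paper outsources: its proof feeds the orbit-length bound of Lemma~\ref{lem:orbits} (every $G$-orbit has length at least $6$) into the local inequality of \cite[Lemma 5.1]{CheltsovGAFA}. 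Without identifying and applying that (or an equivalent) local result, your argument establishes only $\alpha_G(S)\le 2$ plus a partial lower bound, not the equality.

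There are two secondary problems. First, your curve case is not sound as stated: a $G$-invariant curve of degree $\le 5$ on a cubic surface need not be a union of lines, conics, or rational curves (anticanonical hyperplane sections are elliptic cubics, and elliptic quartics exist as well), so an orbit count of low-degree rational curves cannot exclude it. The clean argument --- the one the paper uses --- is that $\mathrm{Pic}^G(S)=\mathbb{Z}[-K_S]$ forces any invariant curve to have class $m(-K_S)$, hence degree $3m$, so degree $\le 5$ means $m=1$; this is then excluded because $|-K_S|$ contains no $G$-invariant divisor. You never establish $\mathrm{Pic}^G(S)=\mathbb{Z}[-K_S]$ or the emptiness of $|-K_S|^G$, both of which are essential inputs. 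Second, your justification for the upper bound rests on the claim that a finite group preserves a quadratic form on its ambient linear representation; this is false in general (finite groups preserve Hermitian forms, not quadratic ones). In the present case an invariant quadric does exist (e.g.\ the restriction of $\sum_{i=1}^{6}x_i^2$), so the upper bound survives, but it needs this direct verification rather than the general principle you invoke.
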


\begin{proof}
One can check that $\mathrm{Pic}^G(S)=\mathbb{Z}[-K_S]$.
Note that the group $G$ is missed in \cite[Theorem 6.14]{DI}.
Note also that the linear system $|-K_S|$ does not contain $G$-invariant divisors,
but $|-2K_S|$ contains a $G$-invariant divisor. 
Applying Lemma~\ref{lem:orbits} and \cite[Lemma 5.1]{CheltsovGAFA}, we obtain $\alpha_G(S)=2$.
\end{proof}

\begin{lemm}
\label{lem:curves}
Let $C\subset X$ be a $G$-irreducible curve of degree $<12$. Then $C\subset S$.
\end{lemm}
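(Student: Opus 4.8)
\textit{Plan.} The idea is to pit $C$ against the unique $G$-invariant hyperplane section $S$. Suppose for contradiction that $C\not\subset S$. Since $S$ is cut out on $X$ by a hyperplane we have $S\sim H$, so the scheme-theoretic intersection $C\cap S$ is an effective $0$-cycle of degree $C\cdot S=C\cdot H=\deg C<12$. As both $C$ and $S$ are $G$-invariant, $C\cap S$ is a $G$-invariant effective $0$-cycle: its support is a union of $G$-orbits, and because $G$ permutes the points of each orbit transitively while preserving the intersection scheme, the local intersection multiplicity is constant along each orbit. Since $C$ is a curve, $\deg C\ge 1$, so this $0$-cycle is nonempty.

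First I would pin down the support using Lemma~\ref{lem:orbits}. Every $G$-orbit in $X$ of length $<12$ has length $6$ or $9$, and any longer orbit has length $\ge 12>\deg C$; hence the support of $C\cap S$ is a union of orbits each of length $\ge 6$. An effective $0$-cycle of degree $d$ has at most $d$ points in its support, so if $\deg C\le 5$ the support would be a nonempty $G$-invariant set of at most five points, which is impossible; thus already $C\subset S$ in that range. For $6\le\deg C<12$ the support cannot contain two orbits (that would force length $\ge 12$), so it is a single orbit $\Sigma$ with $|\Sigma|\in\{6,9\}$, and the constant multiplicity forces $C\cap S=\Sigma$ reduced with $\deg C\in\{6,9\}$. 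Note $\Sigma$ is not the orbit of nodes, since $S$ is smooth and therefore contains no singular point of $X$. This reduces everything to two configurations: a degree-$6$ curve meeting $S$ transversally along a length-$6$ orbit, and a degree-$9$ curve meeting $S$ transversally along a length-$9$ orbit.

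The remaining, and genuinely harder, step is to exclude these two configurations, and this is where I expect the main obstacle to lie. Concretely I would first classify the short $G$-orbits of points lying on the smooth Fermat cubic surface $S$: for example the diagonal point $[1:1:-2:1:1:-2]$ has stabilizer $\fD_4$ and so spans a length-$9$ orbit on $S$, and one must decide whether length-$6$ orbits occur on $S$ at all. Given such an orbit $\Sigma\subset S$, I would then show that no $G$-irreducible curve $C\subset X$ with $C\not\subset S$ can cut out exactly $\Sigma$ transversally. A $G$-irreducible curve of degree $6$ or $9$ is a single $G$-orbit of lines, conics, or plane cubics, so this amounts to the concrete claim that every $G$-orbit of lines and conics on the $9$-nodal cubic $X$ that is short enough either lies on $S$ or has length $\ge 12$. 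I expect this to follow by combining an explicit description of the lines on $X$ (and of how they meet the nine planes $\Pi_{i,j}$ and the invariant surface $S$) with the rigidity $\Pic^G(S)=\mathbb{Z}[-K_S]$ obtained in the proof of Lemma~\ref{lem:alpha}, which severely constrains which reduced $G$-orbits of $6$ or $9$ points can arise as a transversal section of a curve on $S$. The formal part (reduction to $\deg C\in\{6,9\}$) is routine; the crux is this orbit-and-line bookkeeping on the $9$-nodal cubic and its Fermat hyperplane section.
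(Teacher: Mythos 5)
Your first paragraph and the reduction to $\deg C\in\{6,9\}$ with $C\cap S$ a single reduced orbit of length $6$ or $9$, met transversally at smooth points of $C$, is correct and is exactly how the paper's proof begins. But the rest of your proposal is a plan, not a proof, and you yourself flag the exclusion of the two remaining configurations as the crux; that step is genuinely missing, and the sketch you give for it is both vaguer than and partly at odds with what actually works. First, your claim that ``a $G$-irreducible curve of degree $6$ or $9$ is a single $G$-orbit of lines, conics, or plane cubics'' is false as stated: $C$ could be an irreducible $G$-invariant curve of degree $6$ or $9$ (an orbit of size one), and degree-$3$ components need not be planar (they can be twisted cubics); also, orbits of conics or of three cubics cannot occur at all, because $G=\fS_3^2\rtimes C_2$ has no subgroups of index $3$ --- a fact your proposal never uses. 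Second, your proposed tools (classifying short orbits on $S$, the rigidity $\Pic^G(S)=\bZ[-K_S]$, and ``bookkeeping'' of lines on $X$) do not obviously close either case, and they are not what the paper uses.

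The paper's exclusion argument runs as follows. If $C$ is irreducible, then $G$ acts faithfully on $C$, so the stabilizer of any of the (smooth, by your transversality step) points of $C\cap S$ is cyclic; since that orbit has length $6$ or $9$, the stabilizer would be a cyclic subgroup of index $6$ or $9$, i.e.\ of order $12$ or $8$, and $G$ has no such cyclic subgroups (its elements have order at most $6$). If $C$ is reducible with $r$ components of degree $d_1$, then $d=d_1r$ and $r=[G:H_1]$ where $H_1$ is the stabilizer of a component; the absence of index-$3$ subgroups leaves only $(d,r,d_1)=(9,9,1)$ with $H_1\simeq\fD_4$, $(6,6,1)$ with $H_1\simeq\fD_6$, and $(6,2,3)$ with $H_1$ of index $2$. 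Each case is killed by decomposing the $H_1$-representation on $k^5$: for $\fD_4$ and $\fD_6$ one checks that the only $H_1$-invariant lines contained in $X$ (coming from the unique irreducible $2$-dimensional summands, the other candidate lines lying in invariant planes that meet $X$ in irreducible cubics) already lie in $S$; for the index-$2$ case, the unique $H_1$-invariant hyperplane is the one cutting out $S$, and every $H_1$-invariant plane lies in it, so a degree-$3$ component (plane cubic or twisted cubic) is forced into $S$. None of this appears in your proposal, so as it stands the lemma is not proved.
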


\begin{proof}
Assume $C\not\subset S$. Set $d=\mathrm{deg}(C)$.
Intersecting $C$ with $S$,
we immediately obtain $d=6$ or $d=9$, by Lemma~\ref{lem:orbits}.
Moreover, we also see that 
$$
|S\cap C|=d,
$$
so that $C$ is smooth at every point in $S\cap C$, and $S$ intersects $C$ transversally.
Hence, if $C$ is irreducible and $C\not\subset S$, then $G$ acts faithfully on $C$, 
which implies that the stabilizer of any point in $C\cap S$ is cyclic, which is impossible,
since $G$ does not have cyclic subgroups of index $6$ and $9$.

To complete the proof, we may assume that $C$ is reducible.
Let $r$ be the number of its irreducible components.
Write 
$$
C=C_1+\cdots +C_r,
$$
where each $C_i$ is an irreducible component of $C$. Set $d_1=\mathrm{deg}(C_1)$, and let $H_1$ be the stabilizer of the component $C_1$ in $G$.
Then $d=d_1r$, and, since $G$ does not have subgroups of index $3$, we have one of the following cases:
\begin{enumerate}
\item[(1)] $d=9$, $r=9$, $d_1=1$, $H_1\simeq\mathfrak{D}_4$,
\item[(2)] $d=6$, $r=6$, $d_1=1$, $H_1\simeq\mathfrak{D}_6$,
\item[(3)] $d=6$, $r=2$, $d_1=3$, $H_1\simeq\mathfrak{S}_3^2$ or $H_1\simeq C_3^2\rtimes C_4$.
\end{enumerate}
We exclude these cases one by one. In Case (1), there is a unique class of subgroups isomorphic to $\fD_4$, and the $\fD_4$-linear representation decomposes as
$$
\bP(\rI\oplus\chi^2\oplus V),
$$
i.e., a sum of the trivial representation $\rI$, two copies of a nontrivial 1-dimensional subrepresentation $\chi$, and an irreducible 2-dimensional representation $V$. By Schur's lemma, $V$ is the unique irreducible 2-dimensional representation in the ambient space of $X$. The projectivization $\bP(V)$ defines an invariant line contained in $S$,
$$
l=\{x_1+x_2=x_3=x_5=0\}\subset S.
$$
The plane $\bP(\rI\oplus\chi^2)\subset \bP^4$ intersects $X$ along an irreducible cubic curve, and contains no line. It follows that $l$ is the only $H_1$-invariant line in $X$ and thus $C\subset S$.

In Case (2), there are two classes of subgroups isomorphic to $\fD_6$. In one class, the $\fD_6$-linear representation is 
$$
\bP(\rI^2\oplus\chi\oplus V),
$$
i.e., the sum of two copies of the trivial 1-dimensional representation $\rI$, a nontrivial 1-dimensional representation $\chi$ and an irreducible 2-dimensional representation $V$. Again, $V$ is the unique irreducible 2-dimensional representation. But in this case, the line $\bP(V)$ is not contained in $X$. And the plane $\bP(\rI^2\oplus\chi)$ intersects $X$ along an irreducible cubic curve. Therefore, there is no $H_1$-invariant line. The other class of $\fD_6$ decomposes as representation as
$$
\bP(\chi\oplus V_1\oplus V_2),
$$
i.e., the sum of a nontrivial 1-dimensional representation $\chi$ and two nonisomorphic irreducible 2-dimensional representations $V_1$ and $V_2$. Here, $\bP(V_1)$ defines a line contained in $S$:
$$
l=\{x_1-x_4=x_2-x_4=x_3+x_4+x_5=0\}\subset S,
$$
while $\bP(V_2)$ is not contained in $X$. In this case, we also have $C\subset S$.

In Case (3), suppose that $d=6$, $r=2$, $d_1=3$. 
Then the hyperplane 
$$
\{x_1+x_2+x_3+x_4+x_5+x_6=0\}
$$ 
is the unique $H_1$-invariant hyperplane,
and every $H_1$-invariant plane in $\mathbb{P}^4$ is contained in this hyperplane.
This implies that $C\subset S$.
\end{proof}


\begin{theo}
The Fano threefold $X$ is $G$-birationally super-rigid.
\end{theo}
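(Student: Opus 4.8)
The plan is to run the equivariant Noether--Fano--Iskovskikh method, using the three preliminary lemmas to restrict the possible maximal centers, and the bound $\alpha_G(S)=2$ from Lemma~\ref{lem:alpha} to eliminate the curve centers. First I would record that $\mathrm{rk}\,\mathrm{Cl}^G(X)=1$, so that $X$ is a $G$-Mori fiber space and super-rigidity is meaningful: here $\mathrm{Cl}(X)\cong\bZ^5$ is generated by $H$, the plane classes $\widetilde\Pi_{i,j}$ and the exceptional classes $E_i$ modulo the six relations displayed above, and one checks directly that $G$ permutes the nine planes and nine exceptional classes so that the invariant sublattice is $\bZ[H]$. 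Recall also $-K_X=2H$ and $H^3=3$, so that for two general members $M_1,M_2$ of any mobile $G$-invariant linear system $\cM\sim_{\bQ}-nK_X=2nH$ one has $M_1\cdot M_2\cdot H=4n^2H^3=12n^2$. Now suppose $X$ is not $G$-super-rigid. By the equivariant Noether--Fano inequality there is such an $\cM$ for which the pair $\big(X,\tfrac1n\cM\big)$ is not canonical; let $Z$ be a $G$-irreducible center of non-canonicity, which is either a $G$-invariant curve or a single $G$-orbit of points.

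Suppose first that $Z$ is a curve $C$. Non-canonicity along the (smooth, generic) point of $C$ forces $\mathrm{mult}_C\cM>n$, and since $M_1\cdot M_2-(\mathrm{mult}_C\cM)^2C$ is effective, intersecting with $H$ gives $12n^2\ge(\mathrm{mult}_C\cM)^2\deg C>n^2\deg C$, hence $\deg C<12$. By Lemma~\ref{lem:curves} this means $C\subset S$. As $\cM$ is mobile, $S$ is not a fixed component, so the restriction $\cM|_S$ is defined, with $\cM|_S\sim 2nH|_S=-2nK_S$ and $\mathrm{mult}_C\big(\tfrac1n\cM|_S\big)\ge\tfrac1n\mathrm{mult}_C\cM>1$. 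On the other hand $\tfrac{1}{2n}\cM|_S$ is a $G$-invariant effective $\bQ$-divisor $\sim_{\bQ}-K_S$, so by Lemma~\ref{lem:alpha} the pair $\big(S,\lambda\cdot\tfrac1{2n}\cM|_S\big)$ is log canonical for every $\lambda<\alpha_G(S)=2$. Taking $\lambda$ close to $2$ makes the coefficient of $C$ in $\lambda\cdot\tfrac1{2n}\cM|_S$ still exceed $1$, contradicting log canonicity. Thus no curve center occurs.

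It remains to treat the case $Z=\Sigma$ a $G$-orbit of points. If $\Sigma$ consists of smooth points of $X$, Corti's $4n^2$-inequality gives $\mathrm{mult}_p(M_1\cdot M_2)>4n^2$ at each $p\in\Sigma$, whence $12n^2\ge\sum_{p\in\Sigma}\mathrm{mult}_p(M_1\cdot M_2)>4n^2|\Sigma|$ and $|\Sigma|\le2$; if $\Sigma$ consists of nodes, the local non-canonicity inequality at an ordinary double point gives the (much weaker, and amply sufficient) bound $\mathrm{mult}_p(M_1\cdot M_2)>2n^2$, whence $|\Sigma|\le6$. In either case $|\Sigma|<12$, so Lemma~\ref{lem:orbits} forces $|\Sigma|\in\{6,9\}$; but the nine nodes form a single $G$-orbit of length $9$, while any orbit of smooth points has length $\le2$, so both possibilities are excluded. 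Hence $\big(X,\tfrac1n\cM\big)$ admits no center of non-canonicity, contradicting Noether--Fano, and $X$ is $G$-birationally super-rigid.

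I expect the main difficulty to be the careful treatment of the nodes: one must invoke the correct local non-canonicity inequality at an ordinary double point (in place of the smooth $4n^2$-inequality) and verify that it still outlaws the length-$9$ node orbit, and one must check that the global intersection number $M_1\cdot M_2\cdot H=12n^2$ legitimately bounds the sum of local multiplicities across the singular points. A secondary technical point is confirming $\mathrm{rk}\,\mathrm{Cl}^G(X)=1$ from the explicit $G$-action on the generators $\widetilde\Pi_{i,j},E_i,H$, and ensuring the restriction $\cM|_S$ behaves as claimed when the maximal curve passes through nodes of $X$.
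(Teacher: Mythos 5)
Your reduction via Noether--Fano and your treatment of curve centers are sound and essentially coincide with the paper's own argument: the cycle computation gives $\deg C<12$, Lemma~\ref{lem:curves} forces $C\subset S$, and $\alpha_G(S)=2$ (Lemma~\ref{lem:alpha}) yields the contradiction. The genuine gap is in the point-orbit case, and it sits exactly at the step you flagged as needing verification: the inequality $12n^2=M_1\cdot M_2\cdot H\ge\sum_{p\in\Sigma}\mathrm{mult}_p(M_1\cdot M_2)$ is not valid. The degree of an effective $1$-cycle bounds its multiplicity at a \emph{single} point (intersect with a general hyperplane through that point), but it does not bound the sum of multiplicities over several points --- a plane curve of degree $d$ with many nodes already violates such a bound. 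To control the sum you must produce a hypersurface of some degree $m$ passing through every point of $\Sigma$ and containing no component of the cycle $M_1\cdot M_2$; the conclusion then weakens to $12mn^2>4n^2|\Sigma|$. Since $6$ or $9$ points spanning $\bP^4$ lie on no hyperplane, you are forced to $m\ge 2$, and for $m=2$, $|\Sigma|=6$ the inequality reads $24n^2>24n^2$, which is false; moreover you would have to rule out components of $M_1\cdot M_2$ lying in the base locus of the quadrics through $\Sigma$. As Lemma~\ref{lem:orbits} allows orbits of length $6$, orbits of six smooth points survive your argument, and the same defect undermines your nodal case (where the orbit has length $9$, and your bound $|\Sigma|\le 6$ --- which should in any case read $|\Sigma|\le 5$ --- rests on the invalid summation).

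The paper closes these two cases with different tools, and that is where the real content of its proof lies. At the nodes it exploits the special geometry of $X$: each plane $\Pi_{i,j}\subset X$ contains four nodes, so if the pair were non-canonical along the (single) orbit of nodes, one writes $\lambda\widetilde{M}\sim_{\bQ}f^*(-K_X)-a\sum_{i=1}^{9}E_i$ with $a>1$ (Corti's theorem at ordinary double points), and intersecting with the strict transform of a general conic in $\Pi_{i,j}$ through its four nodes gives $0\le\lambda\widetilde{M}\cdot\widetilde{C}_2=4-4a<0$, a contradiction. At smooth points, it uses that the pair is canonical in a punctured neighborhood of the center $P$ to upgrade to ``$(X,\tfrac{3\lambda}{2}\mathcal{M})$ is not log canonical at $P$'' via \cite[Remark~3.6]{VAZ}, and then concludes by the argument of \cite[Proposition~3.5]{VAZ}. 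Your proposal needs inputs of this kind; the direct multiplicity count it relies on cannot be repaired on its own.
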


\begin{proof}
Suppose that $X$ is not  $G$-birationally super-rigid.
Then it follows from the equivariant version of the Noether--Fano inequality \cite{CheltsovShramov} that 
there exists a $G$-invariant non-empty mobile linear system $\mathcal{M}$ on $X$ such that
the singularities of the log pair $(X,\lambda\mathcal{M})$ are not canonical for $\lambda\in\mathbb{Q}_{>0}$
such that $\lambda\mathcal{M}\sim_{\mathbb{Q}} -K_X$. We seek a contradiction.

First, we claim that the singularities of the log pair $(X,\lambda\mathcal{M})$ 
are canonical away from finitely many points. 
Indeed, if this is not the case, then there exists a $G$-irreducible curve $C\subset X$
such that 
$$
\mathrm{mult}_{C}\big(\mathcal{M}\big)>\frac{1}{\lambda},
$$
which immediately implies that the degree of $C$ is less than $12$, which implies that 
$C\subset S$ by Lemma~\ref{lem:curves}, so that the log pair $(S,\lambda\mathcal{M}\vert_{S})$ is not log canonical,
which contradicts Lemma~\ref{lem:alpha}, since $\lambda\mathcal{M}\vert_{S}\sim_{\mathbb{Q}}-2K_S$.

Next, we claim that the log pair $(X,\lambda\mathcal{M})$ is canonical at every singular point of $X$.
Indeed, let $f\colon\widetilde{X}\to X$ be the blow up of all singular points of $X$,
let $E_1,\ldots,E_9$ be the $f$-exceptional surfaces, let $\widetilde{\mathcal{M}}$ be the strict transform on $\widetilde{X}$
of the~linear system $\mathcal{M}$, and let $\widetilde{M}$ be a general surface in  $\widetilde{\mathcal{M}}$.
Then, since $\mathrm{Sing}(X)$ forms one $G$-orbit, we have
$$
\lambda\widetilde{M}\sim_{\mathbb{Q}}f^*\big(-K_{X}\big)-a\sum_{i=1}^{9}E_i,
$$
for some  integer $a>1$, by \cite[Theorem~1.7.20]{CheltsovUMN} or \cite[Theorem~3.10]{Co00}. 
Recall that $X$ contains $9$ planes 
$$
\Pi_{i,j}=\{x_i=0,x_{3+j}=0\}\subset \bP^4,
$$
and each of them contains four singular points of $X$. 
Let $\Pi$ be one of the planes, $C_2$ a general conic in $\Pi$ that contains  $\Pi\cap\mathrm{Sing}(X)$,
and $\widetilde{C}_2$ its strict transform on $\widetilde{X}$. Then $\widetilde{C}_2\not\subset\widetilde{M}$, so that
\begin{align*}
0\leq\lambda\widetilde{M}\cdot\widetilde{C}_2& =\left(f^*\big(-K_{X}\big)-a\sum_{i=1}^{9}E_i\right)\cdot\widetilde{C}_2 \\
& =4-a\sum_{i=1}^{9}E_i\cdot\widetilde{C}_2=4-4a<0,
\end{align*}
which is absurd. 

Let $P$ be a point in $X$ such that  the log pair $(X,\lambda\mathcal{M})$ is not canonical at $P$.
Then $(X,\lambda\mathcal{M})$ is canonical in a punctured neighborhood of $P$,
and it follows from \cite[Remark~3.6]{VAZ} that the log pair $(X,\frac{3\lambda}{2}\mathcal{M})$ is not log canonical at $P$.
Arguing as in the proof of \cite[Proposition~3.5]{VAZ}, we obtain a contradiction.
\end{proof}

\bibliographystyle{plain}
\bibliography{sing-cube}

\begin{thebibliography}{10}

\bibitem{ACV}
J.~D. Achter, S.~Casalaina-Martin, and Ch. Vial.
\newblock On descending cohomology geometrically.
\newblock {\em Compos. Math.}, 153(7):1446--1478, 2017.

\bibitem{All}
D.~Allcock.
\newblock The moduli space of cubic threefolds.
\newblock {\em J. Algebr. Geom.}, 12(2):201--223, 2003.

\bibitem{Chelcalabi}
C.~Araujo, A.-M. Castravet, I.~Cheltsov, K.~Fujita, A.-S. Kaloghiros,
  J.~Martinez-Garcia, C.~Shramov, H.~S\"{u}\ss, and N.~Viswanathan.
\newblock {\em The {C}alabi problem for {F}ano threefolds}, volume 485 of {\em
  London Mathematical Society Lecture Note Series}.
\newblock Cambridge University Press, Cambridge, 2023.

\bibitem{Avilov-note}
A.~Avilov.
\newblock Automorphisms of singular cubic threefolds and the {Cremona} group.
\newblock {\em Math. Notes}, 100(3):482--485, 2016.

\bibitem{Avilov}
A.~Avilov.
\newblock Automorphisms of singular three-dimensional cubic hypersurfaces.
\newblock {\em Eur. J. Math.}, 4(3):761--777, 2018.

\bibitem{avilov-forms}
A.~Avilov.
\newblock On the forms of the {S}egre cubic.
\newblock {\em Mat. Zametki}, 107(1):3--10, 2020.

\bibitem{baylebeauville}
L.~Bayle and A.~Beauville.
\newblock Birational involutions of {${\bf P}^2$}.
\newblock {\em Asian J. Math.}, 4(1):11--17, 2000.
\newblock Kodaira's issue.

\bibitem{Beauville2012}
A.~Beauville.
\newblock Non-rationality of the symmetric sextic {Fano} threefold.
\newblock In {\em Geometry and arithmetic. Based on the conference, Island of
  Schiermonnikoog, Netherlands, September 2010}, pages 57--60. Z{\"u}rich:
  European Mathematical Society (EMS), 2012.

\bibitem{Beauville2013}
A.~Beauville.
\newblock Non-rationality of the {{\({\mathfrak S}_6\)}}-symmetric quartic
  threefolds.
\newblock {\em Rend. Semin. Mat., Univ. Politec. Torino}, 71(3-4):385--388,
  2013.

\bibitem{BW}
O.~Benoist and O.~Wittenberg.
\newblock The {Clemens}-{Griffiths} method over non-closed fields.
\newblock {\em Algebr. Geom.}, 7(6):696--721, 2020.

\bibitem{BW-tor}
O.~Benoist and O.~Wittenberg.
\newblock Intermediate {Jacobians} and rationality over arbitrary fields.
\newblock {\em Ann. Sci. {\'E}c. Norm. Sup{\'e}r. (4)}, 56(4):1029--1084, 2023.

\bibitem{Blanc}
J.~Blanc.
\newblock Elements and cyclic subgroups of finite order of the {Cremona} group.
\newblock {\em Comment. Math. Helv.}, 86(2):469--497, 2011.

\bibitem{blancfinite}
J.~Blanc, I.~Cheltsov, A.~Duncan, and Yu. Prokhorov.
\newblock Finite quasisimple groups acting on rationally connected threefolds.
\newblock {\em Math. Proc. Camb. Philos. Soc.}, 174(3):531--568, 2023.

\bibitem{BogPro}
F.~Bogomolov and Yu. Prokhorov.
\newblock On stable conjugacy of finite subgroups of the plane {C}remona group,
  {I}.
\newblock {\em Cent. Eur. J. Math.}, 11(12):2099--2105, 2013.

\bibitem{CGH}
S.~Casalaina-Martin, S.~Grushevsky, K.~Hulek, and R.~Laza.
\newblock Complete moduli of cubic threefolds and their intermediate
  {Jacobians}.
\newblock {\em Proc. Lond. Math. Soc. (3)}, 122(2):259--316, 2021.

\bibitem{CheltsovGAFA}
I.~Cheltsov.
\newblock Log canonical thresholds of del {Pezzo} surfaces.
\newblock {\em Geom. Funct. Anal.}, 18(4):1118--1144, 2008.

\bibitem{CFKK}
I.~Cheltsov, M.~Fedorchuk, K.~Fujita, and A.-S. Kaloghiros.
\newblock {K}-moduli of pure states of four qubits, 2024.
\newblock {\tt arXiv:2412.19972}.

\bibitem{CP2010}
I.~Cheltsov and J.~Park.
\newblock Sextic double solids.
\newblock In {\em Cohomological and geometric approaches to rationality
  problems}, volume 282 of {\em Progr. Math.}, pages 75--132. Birkh\"{a}user
  Boston, Boston, MA, 2010.

\bibitem{CheltsovPokora}
I.~Cheltsov and P.~Pokora.
\newblock On {K}-stability of {$\Bbb{P}^3$} blown up along a quintic elliptic
  curve.
\newblock {\em Ann. Univ. Ferrara Sez. VII Sci. Mat.}, 70(3):1165--1192, 2024.

\bibitem{CSar}
I.~Cheltsov and A.~Sarikyan.
\newblock Equivariant pliability of the projective space.
\newblock {\em Selecta Math. (N.S.)}, 29(5):Paper No. 71, 84, 2023.

\bibitem{VAZ}
I.~Cheltsov, A.~Sarikyan, and Z.~Zhuang.
\newblock Birational rigidity and alpha invariants of {F}ano varieties, 2023.
\newblock {\tt arXiv:2304.11333}.

\bibitem{CS}
I.~Cheltsov and C.~Shramov.
\newblock {\em Cremona groups and the icosahedron}.
\newblock Monographs and Research Notes in Mathematics. CRC Press, Boca Raton,
  FL, 2016.

\bibitem{CheltsovShramov}
I.~Cheltsov and C.~Shramov.
\newblock {\em Cremona groups and the icosahedron}.
\newblock Monogr. Res. Notes Math. Boca Raton, FL: CRC Press, 2016.

\bibitem{CTZ}
I.~Cheltsov, Yu. Tschinkel, and Zh. Zhang.
\newblock Equivariant geometry of the {S}egre cubic and the {B}urkhardt
  quartic, 2023.
\newblock {\tt arXiv:2308.15271}.

\bibitem{CheltsovWilson}
I.~Cheltsov and A.~Wilson.
\newblock Del {Pezzo} surfaces with many symmetries.
\newblock {\em J. Geom. Anal.}, 23(3):1257--1289, 2013.

\bibitem{CheltsovUMN}
I.~A. Cheltsov.
\newblock Birationally rigid {Fano} varieties.
\newblock {\em Russ. Math. Surv.}, 60(5):875--965, 2005.

\bibitem{CG}
C.~H. Clemens and P.~A. Griffiths.
\newblock The intermediate {Jacobian} of the cubic threefold.
\newblock {\em Ann. Math. (2)}, 95:281--356, 1972.

\bibitem{CTP}
J.-L. Colliot-Th{\'e}l{\`e}ne and A.~Pirutka.
\newblock Three-dimensional quartic threefolds: stable non-rationality.
\newblock {\em Ann. Sci. {\'E}c. Norm. Sup{\'e}r. (4)}, 49(2):371--397, 2016.

\bibitem{CT-quad}
J.-L. Colliot-Th\'{e}l\`ene, J.-J. Sansuc, and P.~Swinnerton-Dyer.
\newblock Intersections of two quadrics and {C}h\^{a}telet surfaces. {I}.
\newblock {\em J. Reine Angew. Math.}, 373:37--107, 1987.

\bibitem{CorayCo}
D.~F. Coray, D.~J. Lewis, N.~I. Shepherd-Barron, and P.~Swinnerton-Dyer.
\newblock Cubic threefolds with six double points.
\newblock In {\em Number theory in progress, {V}ol. 1
  ({Z}akopane-{K}o\'{s}cielisko, 1997)}, pages 63--74. de Gruyter, Berlin,
  1999.

\bibitem{Corti1995}
A.~Corti.
\newblock Factoring birational maps of threefolds after {Sarkisov}. {Appendix}:
  {Surfaces} over nonclosed fields.
\newblock {\em J. Algebr. Geom.}, 4(2):223--254, appendix 248--254, 1995.

\bibitem{Co00}
A.~Corti.
\newblock Singularities of linear systems and 3-fold birational geometry.
\newblock In {\em Explicit birational geometry of 3-folds}, pages 259--312.
  Cambridge: Cambridge University Press, 2000.

\bibitem{Cynk}
S.~Cynk.
\newblock Defect of a nodal hypersurface.
\newblock {\em Manuscr. Math.}, 104(3):325--331, 2001.

\bibitem{dF}
T.~de~Fernex.
\newblock On planar {Cremona} maps of prime order.
\newblock {\em Nagoya Math. J.}, 174:1--28, 2004.

\bibitem{DolgachevBook}
I.~V. Dolgachev.
\newblock {\em Classical algebraic geometry. {A} modern view}.
\newblock Cambridge: Cambridge University Press, 2012.

\bibitem{DI}
I.~V. Dolgachev and V.~A. Iskovskikh.
\newblock Finite subgroups of the plane {C}remona group.
\newblock In {\em Algebra, arithmetic, and geometry: in honor of {Y}u. {I}.
  {M}anin. {V}ol. {I}}, volume 269 of {\em Progr. Math.}, pages 443--548.
  Birkh\"{a}user Boston, Boston, MA, 2009.

\bibitem{Finkelnbergcubic}
H.~Finkelnberg and J.~Werner.
\newblock Small resolutions of nodal cubic threefolds.
\newblock {\em Nederl. Akad. Wetensch. Indag. Math.}, 51(2):185--198, 1989.

\bibitem{lifting}
V\'ictor Gonz\'alez-Aguilera, Alvaro Liendo, and Pedro Montero.
\newblock On the liftability of the automorphism group of smooth hypersurfaces
  of the projective space.
\newblock {\em Israel J. Math.}, 255(1):283--310, 2023.

\bibitem{HaconMcKernan2013}
Chr.~D. Hacon and J.~McKernan.
\newblock The {Sarkisov} program.
\newblock {\em J. Algebr. Geom.}, 22(2):389--405, 2013.

\bibitem{HKT-conic}
B.~Hassett, A.~Kresch, and Yu. Tschinkel.
\newblock Stable rationality and conic bundles.
\newblock {\em Math. Ann.}, 365(3-4):1201--1217, 2016.

\bibitem{HT-determinant}
B.~Hassett and Yu. Tschinkel.
\newblock Flops on holomorphic symplectic fourfolds and determinantal cubic
  hypersurfaces.
\newblock {\em J. Inst. Math. Jussieu}, 9(1):125--153, 2010.

\bibitem{HT-intersect}
B.~Hassett and Yu. Tschinkel.
\newblock Equivariant geometry of odd-dimensional complete intersections of two
  quadrics.
\newblock {\em Pure Appl. Math. Q.}, 18(4):1555--1597, 2022.

\bibitem{HT-torsor}
B.~Hassett and Yu. Tschinkel.
\newblock Torsors and stable equivariant birational geometry.
\newblock {\em Nagoya Math. J.}, 250:275--297, 2023.

\bibitem{JahnkePeternellRadloff2011}
P.~Jahnke, Th. Peternell, and I.~Radloff.
\newblock Threefolds with big and nef anticanonical bundles. {II}.
\newblock {\em Cent. Eur. J. Math.}, 9(3):449--488, 2011.

\bibitem{KT-Cremona}
A.~Kresch and Yu. Tschinkel.
\newblock Cohomology of finite subgroups of the plane {C}remona group, 2022.
\newblock {\tt arXiv:2203.01876}.

\bibitem{BnG}
A.~Kresch and Yu. Tschinkel.
\newblock Equivariant birational types and {B}urnside volume.
\newblock {\em Ann. Sc. Norm. Super. Pisa Cl. Sci. (5)}, 23(2):1013--1052,
  2022.

\bibitem{Kur}
A.~Kuribayashi and K.~Komiya.
\newblock On {W}eierstrass points and automorphisms of curves of genus three.
\newblock In {\em Algebraic geometry ({P}roc. {S}ummer {M}eeting, {U}niv.
  {C}openhagen, {C}openhagen, 1978)}, volume 732 of {\em Lecture Notes in
  Math.}, pages 253--299. Springer, Berlin, 1979.

\bibitem{KuznetsovProkhorov2022}
A.~Kuznetsov and Y.~Prokhorov.
\newblock Rationality over nonclosed fields of {F}ano threefolds with higher
  geometric {P}icard rank.
\newblock {\em J. Inst. Math. Jussieu}, 23(1):207--247, 2024.

\bibitem{LauterSerre}
K.~Lauter.
\newblock Geometric methods for improving the upper bounds on the number of
  rational points on algebraic curves over finite fields. {Appendice} by {J}.
  -{P}. {Serre}.
\newblock {\em J. Algebr. Geom.}, 10(1):19--36, 2001.

\bibitem{sari}
A.~Pinardin, A.~Sarikyan, and E.~Yasinsky.
\newblock Linearization problem for finite subgroups of the plane {C}remona
  group, 2024.
\newblock {\tt arXiv:2412.12022}.

\bibitem{reichsteinyoussinessential}
Z.~Reichstein and B.~Youssin.
\newblock Essential dimensions of algebraic groups and a resolution theorem for
  {$G$}-varieties.
\newblock {\em Canad. J. Math.}, 52(5):1018--1056, 2000.
\newblock With an appendix by J\'{a}nos Koll\'{a}r and Endre Szab\'{o}.

\bibitem{ShB}
N.~I. Shepherd-Barron.
\newblock The {H}asse principle for 9-nodal cubic threefolds, 2012.
\newblock {\tt arXiv:1210.5178}.

\bibitem{TYZ-3}
Yu. Tschinkel, K.~Yang, and Z.~Zhang.
\newblock Equivariant birational geometry of linear actions.
\newblock {\em EMS Surv. Math. Sci.}, 11(2):235--276, 2024.

\bibitem{Tziolas}
N.~Tziolas.
\newblock Terminal 3-fold divisorial contractions of a surface to a curve. {I}.
\newblock {\em Compos. Math.}, 139(3):239--261, 2003.

\bibitem{viktorova}
S.~Viktorova.
\newblock On the classification of singular cubic threefolds, 2023.
\newblock {\tt arXiv:2304.10452}.

\bibitem{voisin}
C.~Voisin.
\newblock Unirational threefolds with no universal codimension {{\(2\)}} cycle.
\newblock {\em Invent. Math.}, 201(1):207--237, 2015.

\bibitem{weiyu}
L.~Wei and X.~Yu.
\newblock Automorphism groups of smooth cubic threefolds.
\newblock {\em J. Math. Soc. Japan}, 72(4):1327--1343, 2020.

\end{thebibliography}

\end{document}